\documentclass[11pt, twoside]{article}

\usepackage[english]{babel}

\usepackage{bbm}
\usepackage{amssymb}
\usepackage{amsfonts}
\usepackage{amsmath}
\usepackage{amsthm}
\usepackage{color}
\usepackage{mathrsfs}
\usepackage{txfonts}
\usepackage{bbm}
\usepackage{enumerate}
\usepackage{anysize}
\usepackage{indentfirst}

\usepackage{latexsym}

\usepackage[colorlinks=true,
linkcolor=blue,
citecolor=red,
urlcolor=magenta,
]{hyperref}

\usepackage[title,titletoc]{appendix}

\allowdisplaybreaks

\newcommand{\dd}{\,d}

\newtheorem{theorem}{Theorem}[section]
\newtheorem{lemma}[theorem]{Lemma}
\newtheorem{corollary}[theorem]{Corollary}
\newtheorem{proposition}[theorem]{Proposition}

\theoremstyle{definition}
\newtheorem{remark}[theorem]{Remark}

\newcounter{assum}

\renewcommand{\appendix}{\par
\setcounter{section}{0}%
\setcounter{subsection}{0}%
\setcounter{subsubsection}{0}%
\gdef\thesection{\@Alph\c@section}%
\gdef\thesubsection{\@Alph\c@section.\@arabic\c@subsection}%
\gdef\theHsection{\@Alph\c@section.}%
\gdef\theHsubsection{\@Alph\c@section.\@arabic\c@subsection}%
\csname appendixmore\endcsname
}

\numberwithin{equation}{section}

\def\XXint#1#2#3{{\setbox0=\hbox{$#1{#2#3}{\int}$ }
\vcenter{\hbox{$#2#3$ }}\kern-.6\wd0}}

\def\XXint#1#2#3{{\setbox0=\hbox{$#1{#2#3}{\int}$ }
\vcenter{\hbox{$#2#3$ }}\kern-.55\wd0}}

\begin{document}

\arraycolsep=1pt

\title{\bf\Large Stability for the Affine Sobolev Inequality\\ and its Critical Points for $p \ge 2$
\footnotetext{ \hspace{-0.35cm} 2020 \emph{Mathematics Subject Classification}. Primary 46E35;
Secondary 26D10.
\endgraf \emph{Key words and phrases}  Affine Sobolev inequality, gradient stability, quantitative inequalities.
\endgraf
R.L.F. acknowledges partial support through US National Science
Foundation grant DMS-1954995, as well as through the German Research Foundation through EXC-2111-390814868,
TRR 352–Project-ID 470903074, and
FR 2664/3-1.
Y.L. and D.Y. are supported by the National
Natural Science Foundation of China
(Grant Nos. 12431006 and 12371093),
the Beijing Natural Science Foundation (Grant No. 1262011),
and the Fundamental Research Funds
for the Central Universities (Grant No. 2253200028).}}
\author{Rupert L. Frank, Yinqin Li and Dachun Yang}
\date{\today}
\maketitle

\vspace{-0.6cm}

\begin{center}
\begin{minipage}{13cm}
{\small {\bf Abstract}\quad
We prove stability for the affine Sobolev inequality for exponents $p\geq 2$ with best possible norm and best possible stability exponent. We also show a corresponding result for critical points of the functional in the absence of bubbling. An important ingredient in our proof is the classification of positive energy solutions to the critical affine $p$-Laplace equation.
}
\end{minipage}
\end{center}


\tableofcontents


\section{Introduction and main results}

Sharp Sobolev inequalities are central objects in nonlinear partial differential equations and geometric analysis. As we will describe later in some detail, in the last two decades there has been a lot of research concerning the stability question for these sharp inequalities and their critical points.

The aim of the present paper is to study the stability problem for the
\emph{affine Sobolev inequality}. This is a strengthening of the
classical Sobolev inequality that concerns the \emph{affine Sobolev energy} $\mathcal{E}_p(f)$, defined for $f\in\dot W^{1,p}(\mathbb R^n)$, the homogeneous Sobolev space (whose definition we recall after \eqref{eq-Sob}), by
\begin{align*}
  \mathcal E_p(f)
  :=
  \left[
    \int_{\mathbb S^{n-1}}
    \|\nabla_\xi f\|_p^{-n}\,d\xi
  \right]^{-\frac1n}.
\end{align*}
Here $\nabla_\xi f = \xi\cdot\nabla f$ and $\|\cdot\|_p$ denotes the $L^p$-norm on $\mathbb R^n$. The affine Sobolev inequality states that for $p\in[1,n)$ there is a constant $S_{\rm aff}>0$, depending only on $n$ and $p$, such that, for all $f\in \dot W^{1,p}$,
\begin{align}\label{eq-affSob}
\mathcal{E}_p(f)\ge S_{\rm aff}\|f\|_{p^*}.
\end{align}
Here $p^*=np/(n-p)$.

An important feature of the affine Sobolev inequality is its large symmetry group and, in particular, its affine invariance, which is \emph{not} shared by the classical Sobolev inequality. More specifically, if
$$
\tilde f(x) := |\det B \,|^\frac{n-p}{np} \, f(Bx+b)
$$
for some $B\in \operatorname{GL}(n)$ and $b\in\mathbb R^n$, then
$$
\frac{\mathcal{E}_p(\tilde f)}{\|\tilde f\|_{p^*}} = \frac{\mathcal{E}_p(f)}{\|f\|_{p^*}}.
$$
In part due to this invariance, the affine Sobolev inequality is deeply related to the Brunn--Min\-kow\-ski theory in convex geometry; see the survey \cite{g02} as well as the monographs \cite{bfr26,s14}.

In what follows we are interested in the sharp form of the affine Sobolev inequality \eqref{eq-affSob} and we agree to let $S_{\rm aff}$ denote the best possible constant in this inequality. It was found by Zhang \cite{z99} for $p=1$ and
by Lutwak et al.\ \cite{lyz02} for $p\in(1,n)$, who also characterized the optimizers in this inequality, namely the set
\begin{equation}
    \label{eq:defmaff}
    \mathcal{M}_{\rm aff}:=\left\{ f\in \dot W^{1,p}\setminus\{{\bf 0}\} :\ \mathcal{E}_p(f)= S_{\rm aff}\|f\|_{p^*} \right\}.
\end{equation}

Our first main result quantifies the fact that, when $p\geq 2$, functions $f\in \dot W^{1,p}$ for which $\mathcal{E}_p(f)/\|f\|_{p^*}$ is close to $S_{\rm aff}$ are close to elements in $\mathcal M_{\rm aff}$.

\begin{theorem}\label{thm-stab}
If $p\in[2,n)$, then there is a positive constant $c_{n,p}$,
depending only on $n$ and $p$,
such that, for any $f\in \dot{W}^{1,p}$,
\begin{align}\label{eq-stab}
&{\mathcal{E}_p(f)}^p
-S_{\rm aff}^p\|f\|_{p^*}^p\notag\\
&\quad\ge c_{n,p}
\inf_{v\in \mathcal M_{\rm aff},\, T\in \operatorname{SL}(n)}\left\{\|\nabla(f\circ T -v)\|_{p}^p
+\int_{\mathbb{R}^n}
|\nabla v|^{p-2}|\nabla(f\circ T- v)|^2
\right\}.
\end{align}
\end{theorem}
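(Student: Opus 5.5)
We will prove Theorem \ref{thm-stab} with the usual two-step scheme: a local stability estimate near $\mathcal M_{\rm aff}$, combined with a compactness argument that reduces to that regime. The affine structure enters through a normalization: using $T\in\operatorname{SL}(n)$ we may put $f$ in an ``isotropic position''. Concretely, for $f\in\dot W^{1,p}$ consider the $L^p$ projection body / LYZ body $\Gamma_p f$, whose norm is $\xi\mapsto\|\nabla_\xi f\|_p$. Then $\mathcal E_p(f)$ is, up to a dimensional constant, $|\Pi^\circ_p f|^{-1/n}$, and by John/LYZ-type position theory there is $T\in\operatorname{SL}(n)$ making the associated $L_p$ John ellipsoid of $f\circ T$ a ball. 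The key fact we plan to use (as in Lutwak--Yang--Zhang) is that in this position
\[
\mathcal E_p(f\circ T)^p\;\ge\;\text{(explicit monotone function of)}\; \|\nabla (f\circ T)\|_p^p,
\]
with the deficient part controlled. More precisely, we aim for $\mathcal E_p(g)^p-c_{n,p}'\|\nabla g\|_p^p\ge 0$ with equality iff $\xi\mapsto\|\nabla_\xi g\|_p$ is constant, and quantitatively
\[
\mathcal E_p(g)^p \ge c'_{n,p}\|\nabla g\|_p^p - C\,\mathrm{dist}(g)
\]
is \emph{not} what we want. Instead we split
\[
\mathcal E_p(g)^p - S_{\rm aff}^p\|g\|_{p^*}^p=\bigl(\mathcal E_p(g)^p-c'_{n,p}\|\nabla g\|_p^p\bigr)+c'_{n,p}\bigl(\|\nabla g\|_p^p-S^p\|g\|_{p^*}^p\bigr),
\]
where $S$ is the sharp Sobolev constant and $c'_{n,p}S^p=S_{\rm aff}^p$. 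When $g=f\circ T$ is normalized so the first bracket vanishes to leading order (isotropic position), both brackets are nonnegative: the first by Hölder/Jensen on the sphere (the affine energy is dominated by the $L^p$ average of $\|\nabla_\xi g\|_p$ only after a suitable minimization over $T$), and the second is the classical Sobolev deficit.

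So the steps are as follows. (1) Show that $\inf_{T\in\operatorname{SL}(n)}\|\nabla(f\circ T)\|_p^p = c_{n,p}'^{-1}\mathcal E_p(f)^p$. This is the known identity: minimizing $\int_{\mathbb S^{n-1}}\|\nabla_{T\xi}f\|_p^p\,d\xi$ over $\operatorname{SL}(n)$ equals a constant times the volume-normalized quantity, by the $L_p$ John ellipsoid / polar projection body theory. The infimum is attained (coercivity on $\operatorname{SL}(n)$ for $f\neq0$). (2) Choose $T$ attaining it. Then $\mathcal E_p(f)^p-S_{\rm aff}^p\|f\|_{p^*}^p = c'_{n,p}\bigl(\|\nabla(f\circ T)\|_p^p-S^p\|f\circ T\|_{p^*}^p\bigr)$, using $\|f\circ T\|_{p^*}=\|f\|_{p^*}$. (3) Apply the known sharp gradient stability for the classical Sobolev inequality with $p\ge2$ (Figalli--Zhang, with the weighted term $\int|\nabla v|^{p-2}|\nabla(g-v)|^2$), giving a lower bound by the distance of $f\circ T$ to Sobolev optimizers $v$. (4) Observe that Sobolev optimizers (Aubin--Talenti bubbles) belong to $\mathcal M_{\rm aff}$, since for radial bubbles $\|\nabla_\xi v\|_p$ is constant in $\xi$ and $v$ achieves equality in both steps. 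Hence the infimum over $\mathcal M_{\rm aff}$ is at most the one over bubbles, yielding \eqref{eq-stab}.

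The main obstacle is step (1): establishing the exact identity that minimizing the Euclidean Dirichlet energy over $\operatorname{SL}(n)$ reproduces $\mathcal E_p$ up to the constant. One direction, $\mathcal E_p(f)^p\le c'\|\nabla (f\circ T)\|_p^p$ for every $T$, follows from affine invariance and Jensen/Hölder on the sphere, since $\|\nabla g\|_p^p$ dominates a constant times $\int_{\mathbb S^{n-1}}\|\nabla_\xi g\|_p^p d\xi$ only when $p$-th powers of directional derivatives are averaged. I expect the identity to fail in general: $\|\nabla g\|_p^p=\int|\nabla g|^p$ is not an average of $\|\nabla_\xi g\|_p^p$ except for $p=2$. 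So I would instead work with the intermediate energy $\int_{\mathbb S^{n-1}}\|\nabla_\xi g\|_p^p\,d\xi=c\int \|\nabla g\|^p$-type expression, prove stability for that functional (its optimizers are again bubbles up to $\operatorname{SL}(n)$, and a Figalli--Zhang type argument adapts), and relate it to $|\nabla v|^{p-2}|\nabla(g-v)|^2$ near radial $v$. If this adaptation is too costly, the fallback is a local second-variation analysis around $\mathcal M_{\rm aff}$ using nondegeneracy of the manifold $\mathcal M_{\rm aff}$, which the abstract indicates is tied to the classification of critical points, together with a compactness argument via concentration-compactness for far-away $f$.
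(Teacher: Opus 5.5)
You are right to distrust step (1), but the reason you give is wrong, and the failure breaks steps (2)--(3). By Tonelli, $\int_{\mathbb S^{n-1}}\|\nabla_\xi g\|_p^p\,d\xi=\alpha_{n,p}^p\|\nabla g\|_p^p$ for every $p$ (this is \eqref{eq-lp}). So your ``intermediate energy'' is just the Dirichlet energy again.

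The real issue is a comparison of power means. $\mathcal E_p(g)$ is the $(-n)$-th power mean of $h_g(\xi):=\|\nabla_\xi g\|_p$ over $\mathbb S^{n-1}$, while $\|\nabla g\|_p$ is a multiple of its $p$-th power mean. Hölder gives $\mathcal E_p(g)^p\le c'\|\nabla g\|_p^p$ (see \eqref{eq-holder}). So your first bracket is $\le 0$, not $\ge 0$, with equality only if $h_g$ is constant on the sphere.

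Since $h_{g\circ T}(\xi)=h_g(T\xi)$ for $T\in\operatorname{SL}(n)$, minimizing over $T$ only replaces the unit ball $K_g=\{h_g\le 1\}$ by $T^{-1}K_g$. For $p=2$, $K_g$ is an ellipsoid and can be made a ball, which gives \eqref{eq-p=2}. For $p\neq 2$, $K_g$ is a general origin-symmetric convex body, and no $T$ makes $h_{g\circ T}$ constant. Consequently, in general $\mathcal E_p(f)^p-S_{\rm aff}^p\|f\|_{p^*}^p<c'\bigl(\|\nabla(f\circ T)\|_p^p-S_{\rm Sob}^p\|f\|_{p^*}^p\bigr)$ for \emph{every} $T$. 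The identity in step (2) is therefore false, and the Figalli--Zhang lower bound for the Sobolev deficit of $f\circ T$ gives no lower bound for the smaller affine deficit. This is exactly why \eqref{eq-p} holds only with non-sharp constants when $p\neq 2$.

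Your fallback (``local second variation plus compactness'') names the right strategy but omits every ingredient specific to the affine problem. Three pieces are needed.

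\begin{enumerate}
\item A compactness theorem for affine almost-minimizers (Theorem \ref{thm-comp}). It uses an $\operatorname{SL}(n)$ normalization via \eqref{eq-p}, a profile decomposition, a Brezis--Lieb splitting of the directional energies that is uniform in $\xi$, and a reverse Minkowski inequality to split $\mathcal E_p^p$ over the profiles.

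\item An expansion of the negative-power spherical mean (Proposition \ref{prop-neg-Lebe}). Combined with the Figalli--Zhang pointwise inequality, it yields Proposition \ref{prop-expan-local}. There the second-order term is the classical one \emph{minus} the variance term $(n+p)|\mathbb S^{n-1}|^{-1-\frac{2p}{n}}\mathcal E_p(U)^{-p}\operatorname{Var}(U,\varphi)$, which is precisely the quantitative form of the Hölder gap above.

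\item An affine spectral gap (Theorems \ref{thm-spectral} and \ref{thm-spectral-wei}). It shows that for $\varphi$ orthogonal to the enlarged tangent space, which includes $\langle Bx,\nabla U\rangle$ with $B$ symmetric and trace-free, the quadratic form minus this variance still exceeds $(p^*-1)S_{\rm aff}^p$ by a fixed margin. The proof uses a spherical-harmonic decomposition. By Funk--Hecke the variance only sees even $\ell\ge 2$. The case $\ell=2$ needs the rank-one Lemma \ref{lem-rankone}, because the bound is saturated exactly by $rU'$, i.e.\ by the trace-free affine directions. The even $\ell\ge 4$ are handled by the monotonicity $\lambda_{2k,p}<\lambda_{2,p}=1$.
\end{enumerate}

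Finally, the classification of critical points is not the source of nondegeneracy here; it is used only for Theorem \ref{thm-bubble}.
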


\begin{remark}
\begin{enumerate}[{\rm(i)}]
  \item This improves upon a result in a concurrent and independent work \cite{flz26b}. Indeed, we obtain a stronger conclusion than \eqref{eq-stab} in our proof; see \eqref{eq-stabstrengthened} below. Importantly, in \cite{flz26b} only the first term on the right-hand side of \eqref{eq-stab} (or its analogue in \eqref{eq-stabstrengthened}) appears.
  \item The two terms on the right-hand side are, respectively, the $p$-th and the second power of a norm. The first term, involving the $p$-th power, is reminiscent of work by Figalli and Zhang on the classical Sobolev inequality \cite{fz22}. In Section \ref{Sec-sharp} we will prove that this $p$-th power of the distance is best possible, again in analogy with the classical Sobolev inequality. The second term, involving the square of a norm, is reminiscent of \cite{fp24} and the quadratic, rather than $p$-th order vanishing of the distance is crucial in some applications \cite{iz25}. In Section \ref{Sec-sharp} we will show that this power $2$ of the norm is best possible. In this sense our result is optimal.
  \item The infimum over $B\in \operatorname{SL}(n)$ on the right-hand side of \eqref{eq-stab} makes the right-hand side affine invariant. Thus, our stability inequality inherits the same symmetries as the underlying sharp affine Sobolev inequality.
  \item  We should stress that the thrust of Theorem \ref{thm-stab} is the case $p>2$, as the result for $p=2$ follows easily from well-known results; see \eqref{eq-p=2} below. The assumption $p\geq 2$ is for technical reasons and it is an open problem to remove this restriction.
\end{enumerate}
\end{remark}

Our second main result concerns stability for critical points of the functional $\mathcal E_p(f)/\|f\|_{p_*}$ in the absence of bubbling. The Euler--Lagrange equation of the affine Sobolev quotient involves the \emph{affine $p$-Laplacian} $\Delta_p^{\rm aff}$, defined by
\begin{align}\label{eq:defaffinelapl}
\Delta_{p}^{\rm aff}(f) :=
\mathcal{E}_p(f)^{n+p}\operatorname{div}
\left(\int_{\mathbb{S}^{n-1}}
\|\nabla_\xi f\|_p^{-n-p}|\nabla_\xi f|^{p-2}[\nabla_\xi f]\xi\,d\xi\right).
\end{align}
For $f\in \dot W^{1,p}$ we have $\Delta_p^{\rm aff}(f)\in\dot W^{-1,p'}$, where $p'$ is the H\"older dual exponent of $p$, satisfying $\frac1p+\frac1{p'}=1$. Note that $\Delta_p^{\rm aff}$ is a nonlocal, quasilinear differential operator. It arises naturally in the context of the affine Sobolev inequality, since
$$
\frac{d}{dt}\Big|_{t=0} \mathcal E_p(f+tg) = - \mathcal E_p(f)^{1-p} \int_{\mathbb R^n} g \Delta_{p}^{\rm aff}(f).
$$
The affine invariance of the affine Sobolev energy shows that, for every $B\in \operatorname{GL}(n)$,
\begin{align}\label{eq-T}
\Delta_p^{\rm aff}(f\circ B)=|\det B\,|^{\frac pn} \,
(\Delta_p^{\rm aff}f)\circ B.
\end{align}

Let us introduce
\begin{equation}
    \label{eq:defmaff1}
    \mathcal M_{\rm aff}^{(1)} := \left\{ f\in \mathcal M_{\rm aff} : \|f\|_{p_*}=1 \right\}.
\end{equation}
Then the Euler--Lagrange equation for optimizers together with the normalization proves that, for all $f\in\mathcal M_{\rm aff}^{(1)}$,
\begin{equation}
    \label{eq:el}
    -\Delta_{p}^{\rm aff}(f) = S_{\rm aff}^p|f|^{p^*-2}f.
\end{equation}

Our second main result will say that if $f$ `almost' satisfies \eqref{eq:el} and its energy $\mathcal{E}_p(f)$ is not too far from $S_{\rm aff}$, then it is close to $\mathcal M_{\rm aff}^{(1)}$ in a quantitative sense. The quantity that measures that \eqref{eq:el} is almost satisfied is the following \emph{affine Euler--Lagrange residual}
\begin{align}
\label{eq:elresidual}
\delta_{\rm aff}(f)
:=\sup_{T\in \operatorname{SL}(n)} \left\|\Delta_p^{\rm aff}\left(f\circ T\right) + S_{\rm aff}^p |f\circ T|^{p^*-2} (f\circ T) \right\|_{
\dot{W}^{-1,p'}}.
\end{align}

\begin{theorem}\label{thm-bubble}
Let $p\in[2,n)$. There is a positive constant
$c_{n,p}$, depending only on $n$ and $p$,
such that, if $f\in\dot{W}^{1,p}$ satisfies
\begin{align}\label{thm-bubble-e8}
\frac12 S_{\rm aff}^p\le \mathcal{E}_p(f)^p\le\frac32 S_{\rm aff}^p,
\end{align}
then
\begin{align}\label{thm-bubble-e1}
\delta_{\rm aff}(f)\ge c_{n,p}
\inf_{T\in \operatorname{SL}(n),v\in\mathcal{M}_{\rm aff}^{(1)}}
\left\{\|\nabla(f\circ T-v)\|_p^{p-1}
+\left[\int_{\mathbb{R}^n}|\nabla v|^{p-2}|\nabla(f\circ T-v)|^2\right]^{\frac{p-1}{p}}\right\}.
\end{align}
\end{theorem}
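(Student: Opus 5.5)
We argue by contradiction and compactness, combined with a local analysis near $\mathcal M_{\rm aff}^{(1)}$. First we reduce to a normalized, well-positioned $f$. Both sides of \eqref{thm-bubble-e1} are invariant under $f\mapsto f\circ T$ with $T\in\operatorname{SL}(n)$. By \eqref{eq-T}, $\Delta_p^{\rm aff}$ is also $\operatorname{SL}(n)$-equivariant, and the $\dot W^{-1,p'}$ norm is translation invariant. We may therefore precompose $f$ with an $\operatorname{SL}(n)$ map chosen so that $f$ is in a \emph{normalized position}. Concretely, the $L^p$ norms $\xi\mapsto\|\nabla_\xi f\|_p$ should be comparable to a constant uniformly in $\xi$, as in the isotropic/John-type position used for the affine Sobolev inequality. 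In this position $\|\nabla f\|_p\lesssim\mathcal E_p(f)\lesssim S_{\rm aff}$ by \eqref{thm-bubble-e8}. Since the right-hand side of \eqref{thm-bubble-e1} is bounded in this position, it suffices to prove the inequality when $\delta_{\rm aff}(f)\le\varepsilon_0$ for a small $\varepsilon_0$.

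\emph{Step 1 (qualitative closeness).} Take a sequence $f_k$ in normalized position with energies in the window \eqref{thm-bubble-e8} and $\delta_{\rm aff}(f_k)\to0$. This is a Palais--Smale sequence for the affine functional with bounded classical energy. We apply a Struwe-type profile decomposition for $\dot W^{1,p}$ modulo translations and dilations, using concentration compactness for the $p$-Laplacian-type structure. Since $\Delta^{\rm aff}_p$ depends only on the directional norms $\|\nabla_\xi f\|_p$ besides the local term, weak limits of profiles are positive-energy (or sign-changing) solutions of the critical affine $p$-Laplace equation \eqref{eq:el}. The energy window $\mathcal E_p^p\le\frac32S^p_{\rm aff}$ together with the energy quantization for the nonlocal operator rules out two or more bubbles, and also rules out a nonminimal profile. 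The classification of positive energy solutions to the critical affine $p$-Laplace equation, announced in the abstract, then identifies the single profile as an element of $\mathcal M_{\rm aff}^{(1)}$. Hence $\inf_{T,v}\|\nabla(f_k\circ T-v)\|_p\to0$. This step is where I expect the main obstacle. The nonlocal coefficient $\|\nabla_\xi f\|_p^{-n-p}$ must pass to the limit along each profile. Sign-changing profiles must be excluded, or their energy shown to be at least $2^{p/n}S^p_{\rm aff}$. Energy in the $\xi$-averaged sense must also be shown to split additively.

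\emph{Step 2 (local quantitative estimate).} Now take $f=v+\rho$ with $v\in\mathcal M_{\rm aff}^{(1)}$ and $T$ (absorbed into $f$) chosen optimally, so that $\rho$ is orthogonal in the weighted sense to the tangent space of the manifold $\operatorname{SL}(n)\cdot\mathcal M_{\rm aff}^{(1)}$. Test the residual against $\rho$, which gives
\begin{equation*}
\delta_{\rm aff}(f)\|\nabla\rho\|_p\ge\Big|\int\rho\,\big(\Delta_p^{\rm aff}f+S_{\rm aff}^p|f|^{p^*-2}f\big)\Big|.
\end{equation*}
Since $v$ solves \eqref{eq:el}, the right-hand side equals the difference of the linearizations, namely $\langle -\Delta^{\rm aff}_p(v+\rho)+\Delta_p^{\rm aff}v,\rho\rangle-S^p_{\rm aff}\langle|f|^{p^*-2}f-v^{p^*-1},\rho\rangle$. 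For $p\ge2$ we use the standard vector inequality $\big(|a+b|^{p-2}(a+b)-|a|^{p-2}a\big)\cdot b\gtrsim|a|^{p-2}|b|^2+|b|^p$, applied directionwise to $\nabla_\xi$. The nonlocal prefactors are expanded to second order, as in the proof of Theorem \ref{thm-stab}. Together these give a lower bound
\begin{equation*}
\gtrsim\int|\nabla v|^{p-2}|\nabla\rho|^2+\|\nabla\rho\|_p^p,
\end{equation*}
minus the quadratic form of the linearized operator at $v$. We then invoke the spectral gap (nondegeneracy) established for Theorem \ref{thm-stab}: on the weighted orthogonal complement of the tangent space, the second variation is coercive with respect to the $|\nabla v|^{p-2}$-weighted norm. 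A small portion of the lower bound absorbs the subcritical error terms, which are $o$ of these quantities since $\|\nabla\rho\|_p$ is small.

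\emph{Step 3 (conclusion).} Let $A=\|\nabla\rho\|_p^p$ and $B=\int|\nabla v|^{p-2}|\nabla\rho|^2$. Step 2 gives $\delta\,\|\nabla\rho\|_p\gtrsim A+B$. Hölder's inequality gives $B\lesssim\|\nabla v\|_p^{p-2}\|\nabla\rho\|_p^2$. Dividing, we get $\delta\gtrsim A^{(p-1)/p}$ directly. We also get $\delta\gtrsim B/\|\nabla\rho\|_p\ge B/(A^{1/p})$. Splitting into the cases $A\le B$ and $A\ge B$, the second bound yields $\delta\gtrsim B^{(p-1)/p}$ when $A\le B$, while the first gives it when $A\ge B$. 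Adding the two estimates gives \eqref{thm-bubble-e1}.
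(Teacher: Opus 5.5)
The genuine gap is the modulation in Step 2. You minimize over $v\in\mathcal M_{\rm aff}^{(1)}$, i.e.\ at \emph{fixed amplitude}. That makes $\rho$ orthogonal only to $\partial_i v$, the dilation generator and the trace-free generators $\langle Mx,\nabla v\rangle$, not to $v$ itself. But every spectral gap in the paper (Lemma \ref{fz-spectral}, Theorems \ref{thm-spectral} and \ref{thm-spectral-wei}) needs orthogonality to all of $T_U\mathcal M_{\rm aff}$ in \eqref{eq-tangent}, which contains $U$. This matters because the amplitude direction is a negative direction of the second variation. Since $\operatorname{Var}(U,U)=0$, at $\varphi=U$ the left-hand side of \eqref{thm-spectral-e0} minus $(p^*-1)S_{\rm aff}^p\int U^{p^*-2}\varphi^2$ equals $(p-1)S_{\rm aff}^p-(p^*-1)S_{\rm aff}^p=-(p^*-p)S_{\rm aff}^p<0$; this is $L^{(0)}_U(U)=-(p^*-p)S_{\rm aff}^pU$. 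Meanwhile the mixed term between $U$ and any $\psi\perp U$ vanishes. Concretely, with $P$ as in \eqref{def-P}, $f=(1+t)U$ gives $\rho=tU$, which satisfies all your orthogonality conditions, yet
\[
\int_{\mathbb R^n}P(f)\rho=tS_{\rm aff}^p\big[(1+t)^{p-1}-(1+t)^{p^*-1}\big]\approx-(p^*-p)S_{\rm aff}^p\,t^2<0 .
\]
Worse, take $\rho=tU+s\psi$ with $\psi$ radial, supported in an annulus and orthogonal to $T_U\mathcal M_{\rm aff}$, and choose $s/t$ so that the quadratic form vanishes. Then $\int P(U+\rho)\rho$ is of cubic order while $\int|\nabla U|^{p-2}|\nabla\rho|^2\sim t^2+s^2$. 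So your claimed bound $|\int\rho P(f)|\gtrsim A+B$ is false, and testing the residual against $\rho$ alone cannot prove the theorem.

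What is missing is the paper's two-step treatment of the amplitude.
\begin{itemize}
\item First, modulate on the whole cone $\mathcal M_{\rm aff}$ with a \emph{free} amplitude $c_1$ (Lemma \ref{lem-orth}), so that $\varphi=g-c_1U\perp T_U\mathcal M_{\rm aff}$. Then $\int P(c_1U)\varphi=0$, because $P(c_1U)=S_{\rm aff}^p(c_1^{p-1}-c_1^{p^*-1})U^{p^*-1}$. Proposition \ref{prop-expan-lap} with Theorem \ref{thm-spectral-wei} (for $c_1$ near $1$) gives $A+B\lesssim\delta_{\rm aff}(f)\|\nabla\varphi\|_p$, with $A,B$ now computed for $\varphi$, and your Step 3 then applies verbatim.
\item Second, test the residual against $g$ itself. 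Proposition \ref{prop-expan-E} and $\int U^{p^*-1}\varphi=0$ give $S_{\rm aff}^p|c_1^p-c_1^{p^*}|\lesssim\delta_{\rm aff}(f)+A+B$, hence $|c_1-1|\lesssim\delta_{\rm aff}(f)$. Replacing $c_1U$ by $U\in\mathcal M_{\rm aff}^{(1)}$ then costs $|c_1-1|^{p-1}+|c_1-1|^{2(p-1)/p}\lesssim\delta_{\rm aff}(f)$, precisely because $p\ge2$.
\end{itemize}

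Two smaller points. Your crude inequality $(|a+b|^{p-2}(a+b)-|a|^{p-2}a)b\gtrsim|a|^{p-2}b^2+|b|^p$ is too weak here. The quadratic form beats $(p^*-1)S_{\rm aff}^p\int U^{p^*-2}\varphi^2$ only by a fixed margin, so the quadratic term must come with constant $1-\theta$ for small $\theta$. That forces the truncated-weight form \eqref{eq:pointwise-LZ} and hence the weighted gap of Theorem \ref{thm-spectral-wei}.

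Your Step 1 is, in outline, the paper's Theorem \ref{thm-critical}, and the issues you flag are resolved there as follows. The limiting directional profile $h_\infty$ is frozen, reducing to the fixed operator $\Delta_p^{H_\infty}$. The limit equation is tested with $V_{j,\pm}$, which via H\"older and \eqref{eq-affSob} gives $\|V_{j,\pm}\|_{p^*}\ge1$ against the budget $\sum_j\|V_j\|_{p^*}^{p^*}\le\frac32$. So no additive splitting of $\mathcal E_p$ is needed.
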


\begin{remark}
\begin{enumerate}[{\rm(i)}]
  \item We obtain a slightly stronger conclusion than \eqref{thm-bubble-e1}; see \eqref{thm-bubble-e1strengthened} below.
  \item The two terms on the right-hand side of \eqref{thm-bubble-e1} are, respectively, the $(p-1)$-st and the $(p-1)/(2p)$-th power of a norm. The first term, involving the power $p-1$, is reminiscent of work by Liu and Zhang on the classical Sobolev inequality \cite{lz25}. In Section \ref{Sec-sharp} we will prove that this $(p-1)$-st power of the norm is best possible, again in analogy with critical points of the classical Sobolev inequality. We will also show there that the power $(p-1)/(2p)$ of the second norm is best possible, at least in the strengthened form \eqref{thm-bubble-e1strengthened} below. As far as we know, such a weighted $L^2$-norm has not appeared before in stability questions for critical points.
  \item The infimum over $T\in \operatorname{SL}(n)$ on the right-hand side of \eqref{thm-bubble-e1} makes the right-hand side affine invariant, while the left-hand side is affine invariant by definition of the residual $\delta_{\rm aff}(f)$.
  \item The term $\delta_{\rm aff}$ is the natural affine invariant quantity to measure the affine Euler--Lagrange residual.
Indeed, in \eqref{eq-p} below we will recall that $\mathcal E_p(f)$ is equivalent to $\min_{T\in \operatorname{SL}(n)} \|\nabla(f\circ T)\|_p$, from which it follows that, with $\langle\cdot,\cdot\rangle$ denoting the $\dot W^{-1,p'}\times \dot W^{1,p}$ duality,
\begin{align*}
\delta_{\rm aff}(f)
& =\sup_{T\in\operatorname{SL}(n)}\sup_{\varphi\ne {\bf 0}}
\frac{|\langle \Delta_p^{\rm aff}\left(f\circ T\right) + S_{\rm aff}^p |f\circ T|^{p^*-2} (f\circ T),\varphi\rangle|}{\|\nabla \varphi\|_p}\\
&=\sup_{\psi\ne {\bf 0}}\sup_{T'\in\operatorname{SL}(n)}
\frac{|\langle \Delta_p^{\rm aff}\left(f\right) + S_{\rm aff}^p |f|^{p^*-2} f,\psi\rangle|}{\|\nabla(\psi\circ T')\|_p} \\
& =\sup_{\psi\ne {\bf 0}}\frac{|\langle \Delta_p^{\rm aff}\left(f\right) + S_{\rm aff}^p |f|^{p^*-2} f,\psi\rangle|}{\inf_{T'\in\operatorname{SL}(n)}\|\nabla(\psi\circ T')\|_p} \\
&
\sim \sup_{\psi\ne {\bf 0}}\frac{|\langle \Delta_p^{\rm aff}\left(f\right) + S_{\rm aff}^p |f|^{p^*-2} f,\psi\rangle|}{\mathcal{E}_p(\psi)}.
\end{align*}
Here we used \eqref{eq-T}. This shows that $\delta_{\rm aff}$
is the natural dual ``norm'' associated with the affine Sobolev energy.
  \item As in Theorem \ref{thm-stab}, the thrust of Theorem \ref{thm-bubble} is the case $p>2$, since the result for $p=2$ follows easily from a theorem of Ciraolo et al.\ \cite{cfm18} and \eqref{eq:affinelapl2} below. Again, the assumption $p\geq 2$ is for technical reasons and it is an open problem to remove this restriction.
  \item One \emph{important ingredient} in the proof of Theorem \ref{thm-bubble} is a classification result for solutions of the Euler--Lagrange equation for the affine Sobolev quotient. This is stated and proved in Section~\ref{sec:classification}.
\end{enumerate}
\end{remark}


\subsection{Background}

\subsubsection*{The affine vs.~the classical Sobolev inequality}

It is natural to compare the affine Sobolev inequality with its classical variant, which states that for $p\in[1,n)$ there is a constant $S_{\rm Sob}$, depending only on $n$ and $p$, such that,
for all $f\in \dot W^{1,p}$,
\begin{align}\label{eq-Sob}
  \|\nabla f\|_p
  \ge
  S_{\rm Sob}\|f\|_{p^*}.
\end{align}
We recall that the homogeneous Sobolev space $\dot W^{1,p}$ consists of all weakly differentiable functions $f$ on $\mathbb R^n$ whose weak gradient belows to $L^p$ and which vanish at infinity in the sense that $|\{ |f|>\lambda\}|<\infty$ for any $\lambda>0$.

We agree to let $S_{\rm Sob}$ denote the \emph{sharp constant} in inequality \eqref{eq-Sob}. It was found by Rodemich, Aubin and Talenti \cite{r66,a76,t76}, who also characterized the optimizers in this inequality; that is, they determined the set
\begin{equation}
    \label{eq:defmsob}
    \mathcal{M}_{\rm Sob}:=\left\{ f\in \dot W^{1,p}\setminus\{{\bf 0}\} :\ \|\nabla f\|_p
    = S_{\rm Sob}\|f\|_{p^*} \right\}.
\end{equation}

An important feature of the sharp affine Sobolev inequality is that it strengthens the sharp classical Sobolev inequality. Indeed, by H\"older's inequality we find
\begin{align}\label{eq-holder}
  \mathcal E_p(f)\le
    |\mathbb{S}^{n-1}|^{-\frac1n-\frac1p}
  \left[
    \int_{\mathbb S^{n-1}}
    \|\nabla_\xi f\|_p^p\,d\xi
  \right]^{\frac1p}
  =
  |\mathbb{S}^{n-1}|^{-\frac1n-\frac1p}
\alpha_{n,p}
  \|\nabla f\|_p
\end{align}
with
\begin{align}\label{eq-alpha}
\alpha_{n,q}:=\left[\int_{\mathbb{S}^{n-1}}|e\cdot\xi|^q\,d\xi\right]^{\frac1q}
= \left[ \frac{2\pi^{\frac{n-1}{2}}\Gamma(\frac{q+1}{2})}{\Gamma(\frac{q+n}{2})} \right]^\frac1q,
\quad\forall\,e\in\mathbb{S}^{n-1}.
\end{align}
By comparing the explicit values of the constants, one finds that
\begin{equation}
    \label{eq-relaSob}
    S_{\rm Sob} = \left( |\mathbb{S}^{n-1}|^{-\frac1n-\frac1p}
\alpha_{n,p} \right)^{-1} S_{\rm aff},
\end{equation}
so the sharp affine Sobolev inequality implies the sharp classical Sobolev inequality.

We also note that
\begin{align}
    \label{eq:energyradial}
    \mathcal E_p(f)
  =
  |\mathbb{S}^{n-1}|^{-\frac1n-\frac1p} \,
\alpha_{n,p} \,
  \|\nabla f\|_p
  \qquad\text{for all radial}\ f\in \dot W^{1,p} \,,
\end{align}
since in this case $\|\nabla_\xi f\|_p$ is independent of $\xi\in\mathbb S^{n-1}$.

The affine Sobolev inequality is mostly of interest when $p\neq 2$, because for $p=2$ one finds, for any $f\in\dot{W}^{1,2}$,
\begin{align}\label{eq-p=2}
  \mathcal E_2(f)
  =
   |\mathbb{S}^{n-1}|^{-\frac1n-\frac12}
\alpha_{n,2}
  \min_{T\in \operatorname{SL}(n)}
  \|\nabla(f\circ T)\|_2;
\end{align}
see, for instance, \cite[Lemma 2.1 and Corollary 2.2]{st18}. The identity \eqref{eq-p=2} is special to the Hilbertian case and
does not hold for general $p$. For general $p\in(1,n)$, one only has the equivalence
\begin{align}\label{eq-p}
\mathcal E_p(f) \sim \min_{T\in \operatorname{SL}(n)} \|\nabla(f\circ T)\|_p,
\end{align}
where the positive equivalence constants depend only on $n$ and $p$; see \cite[Theorem 1.2]{hl16}. Moreover, these equivalence constants are not the constant appearing in \eqref{eq-relaSob}.
Thus, the above linear reduction is not available
in the non-Hilbertian case.

In passing we mention some other developments of affine Sobolev inequalities,
for example, the affine Sobolev--Zhang inequality on $BV(\mathbb R^n)$ \cite{w12}, the asymmetric affine Sobolev inequalities
\cite{hs09}, the affine Moser--Trudinger and Morrey--Sobolev inequalities \cite{clyz09} and fractional affine inequalities
\cite{DLTYY25,hl24,hl24-2}.

There is also a natural spectral and PDE side of the affine theory. Haddad et al.\
\cite{hjm21} developed affine Poincar\'e inequalities and affine spectral
inequalities. They studied in detail the affine $p$-Laplacian \eqref{eq:defaffinelapl} associated with the affine $p$-energy. Similar to \eqref{eq:energyradial}, we have
\begin{align*}
    \Delta_p^{\rm aff}(f) = |\mathbb S^{n-1}|^{-1-\frac pn}\, \alpha_{n,p}^p \, \Delta_p(f)
    \qquad\text{for all radial}\ f\in\dot W^{1,p},
\end{align*}
where $\Delta_p(f) = \operatorname{div}(|\nabla f|^{p-2}\nabla f)$ denotes the classical $p$-Laplacian of $f$.

In the Hilbertian case $p=2$, which was studied in \cite{st18}, one sees that
$$
\Delta_2^{\rm aff}(f) = \operatorname{div} (M_f \nabla f)
$$
with the positive definite matrix
$$
M_f := \mathcal{E}_2(f)^{n+2} \ \int_{\mathbb{S}^{n-1}}
\|\nabla_\xi f\|_2^{-n-2} \xi \otimes \xi \,d\xi .
$$
It follows easily \cite[Eq.~(2.14)]{st18} that for every $f$ there is a $T\in\operatorname{SL}(n)$ such that
\begin{align*}
    \Delta_2^{\rm aff}(f) \circ T = |\mathbb S^{n-1}|^{-1-\frac 2n}\, \alpha_{n,2}^2 \, \Delta(f\circ T) \,.
\end{align*}
It follows that
\begin{align}
    \label{eq:affinelapl2}
    \delta_{\rm aff}(f) \geq |\mathbb S^{n-1}|^{-1-\frac 2n}\, \alpha_{n,2}^2 \, \inf_{T\in\operatorname{SL}(n)} \left\|\Delta(f\circ T) + S_{\rm Sob}^2\, |f\circ T|^{2^*-2} (f\circ T) \right\|_{\dot W^{-1,2}}
\end{align}
when $p=2$.
We are not aware of a corresponding result for $p\neq 2$, even with a worse constant.


\subsubsection*{Stability of the classical and the affine Sobolev inequalities}

Lions \cite{l85} has proved qualitative stability for the classical Sobolev inequality in the sense that if the \emph{Sobolev
deficit}
\begin{align*}
\delta_{\rm Sob}(f):=\frac{\|\nabla f\|_p}{\|f\|_{p^*}}
  -S_{\rm Sob}
\end{align*}
is small, then the distance of $f$ to the set $\mathcal M_{\rm Sob}$ is small. A natural problem, raised by Brezis and Lieb \cite{bl85}, is whether there is a quantitative version of this stability result, in the sense that the Sobolev deficit controls a suitable distance from a function to the manifold $\mathcal M_{\rm Sob}$.
For $p=2$, this question was answered affirmatively by Bianchi and Egnell \cite{be91}. They
proved that the deficit controls the squared $\dot W^{1,2}$-distance to
$\mathcal M_{\rm Sob}$. Their estimate is sharp both in the strength of the distance and in
the exponent. A stability estimate that reflects the optimal dependence of the constant on the dimension $n$ was recently obtained in \cite{deffl25}.

For $p\neq2$, the stability problem is more difficult, since the Hilbert space structure is no longer available. After preliminary works in \cite{cfmp09,fn19,n20} the sharp gradient stability estimate for the full range $1<p<n$ was proved by
Figalli and Zhang \cite{fz22}. More precisely, if $p\in(1,n)$, then
there is a positive constant $c$, depending only on $n$ and $p$,
such that, for all $f\in\dot{W}^{1,p}$,
\begin{align}\label{eq:figallizhang}
 \delta_{\rm Sob}(f)
  \ge
  c
  \inf_{v\in\mathcal M_{\rm Sob}}
  \left(
  \frac{\|\nabla f-\nabla v\|_p}{\|\nabla f\|_p}
  \right)^{\max\{2,p\}}.
\end{align}
Thus, the classical Sobolev deficit controls the strongest natural distance to the family of extremal functions, namely the distance at the level of gradients.

Moreover, the exponent $\max\{2,p\}$ is sharp. In particular, the fact that the exponent is strictly bigger than two for $p>2$ is surprising. This Figalli--Zhang phenomenon has since then also been observed in other stability inequalities, including \cite{glz25,fp24,wz25,fpr25}. As we have already mentioned after Theorem \ref{thm-stab}, it was noted in \cite{fp24} and later independently in \cite{iz25} that the Figalli--Zhang bound can be strengthened for $p>2$ to
$$
\delta_{\rm Sob}(f)
  \ge
  c
  \inf_{v\in\mathcal M_{\rm Sob}}
  \left\{
  \frac{\|\nabla f-\nabla v\|_p^p}{\|\nabla f\|_p^p} + \frac{\||\nabla v|^{(p-2)/2}(\nabla f-\nabla v)\|_2^2}{\|\nabla f \|_p^2}
  \right\}.
$$
The second term on the right-hand side is important in some applications \cite{iz25}.

In passing we mention a completely different mechanism for stability of functional inequalities with an exponent strictly bigger than two, which was explored, for instance, in \cite{ens22,f24,fp24a}. See also the survey \cite{f22}.

So far, our discussion of stability has focused on the classical Sobolev inequality. There are also a few existing works in the affine case. Wang \cite{w13} studied
stability for the affine P\'olya--Szeg\H{o} principle and Nguyen \cite{n16} proved a stability result for
the affine Sobolev inequality on $BV(\mathbb R^n)$. Finding the optimal exponent in the latter case, however, is still an open problem. In addition to the work \cite{flz26b} by Fan et al., which we have already mentioned and where a slightly weaker form of Theorem \ref{thm-stab} was proved, we mention the sharp stability result for the affine fractional Sobolev inequality \cite{flz26} by the same authors.


\subsubsection*{Stability of critical points of the classical Sobolev inequalities}

Another natural direction is to study stability at the level of critical
points, rather than minimizers, of the Sobolev quotient. More precisely, one
may ask whether a function that almost solves the Euler--Lagrange equation
associated with the Sobolev quotient must be quantitatively close to a sum of far-apart optimizers. In the setting of the classical Sobolev inequality, this means bounding the
distance from $u$ to the family of sums of Aubin--Talenti functions in terms of the $\dot W^{-1,p'}$-norm of the residual
$$
-\operatorname{div}(|\nabla f|^{p-2}\nabla f)
- S_{\rm Sob}^p \, |f|^{p^*-2}f \,.
$$

The qualitative theory goes back to the compactness theorem of
Struwe \cite{s84}. In the case $p=2$, Struwe proved that a nonnegative
Palais--Smale sequence for the critical Sobolev functional decomposes, up to
a small error in $\dot W^{1,2}$, into a sum of weakly interacting Aubin--Talenti functions. This phenomenon is usually referred to as bubbling and therefore the minimizers are often referred to as `bubbles'. The
corresponding compactness theory for the critical $p$-Laplacian was
later developed by Alves \cite{a02} and Mercuri and Willem \cite{mw10}. Thus,
under an a priori energy condition that excludes the appearance of more
than one bubble, the qualitative theory implies that an almost critical
point is close to a single bubble.

The first sharp quantitative result in this direction was obtained, for
$p=2$ and in the single-bubble regime, by Ciraolo et al.\
\cite{cfm18}. They proved a linear estimate of the $\dot W^{1,2}$-distance
from a bubble in terms of the Euler--Lagrange residual. The multiple-bubble
case was then studied by Figalli and Glaudo \cite{fg20}, who proved that the
same linear control holds in dimensions $3\le n\le5$, while they showed that it fails in
dimensions $n\ge6$. The sharp quantitative estimates in the higher
dimensional bubbling regime were later obtained by Deng et al.\ \cite{dsw25}. The bound is no longer linear due to the interaction of the bubbles.

For general $1<p<n$, the problem is substantially more delicate and still not completely solved. Recently, Liu and Zhang \cite{lz25}, following the
sharp stability framework of Figalli and Zhang \cite{fz22}, proved the sharp
single-bubble estimate
$$
\inf_{v\in\mathcal M_{\rm Sob}^{(1)}} \left\|\nabla f-\nabla v \right\|_{p}^{\max\{1,p-1\}}
\le
C
\left\|\operatorname{div}(|\nabla f|^{p-2}\nabla f)
+ S_{\rm Sob}^p\, |f|^{p^*-2}f \right\|_{W^{-1,p'}}
$$
under the assumption
$$
\frac12\, S_{\rm Sob}^p \leq \|\nabla f\|_p^p \leq \frac32\, S_{\rm Sob}^p \,.
$$
Here
\begin{equation*}
    \mathcal M_{\rm Sob}^{(1)} := \left\{ f\in \mathcal M_{\rm Sob} : \|f\|_{p_*}=1 \right\}.
\end{equation*}
This result gives the sharp quantitative stability of the Struwe-type
decomposition in the absence of bubbling. The exponent $\max\{1,p-1\}$ is optimal and is related to the exponent $\max\{2,p\}$ in \eqref{eq:figallizhang}.

Independently, Ciraolo and Gatti \cite{cg26} developed a different approach based on $P$-functions and integral identities.
Their method yields a quantitative closeness result for perturbations of the
critical $p$-Laplace equation, with a non-sharp exponent. Although the exponent
is not optimal, their approach is rather flexible. This has recently been
exploited by Antonini et al.\ \cite{acg26} to study classification,
Struwe-type decomposition, and stability for the anisotropic critical
$p$-Laplace equation. The analogue of the multi-bubble results by Figalli and Glaudo \cite{fg20} and Deng et al.\ \cite{dsw25} is still open.

Related to the `other' mechanism for non-quadratic stability for Sobolev-type inequalities mentioned before, there is a corresponding `other' mechanism for non-linear stability for critical points. In the setting of Caffarelli--Kohn--Nirenberg inequalities this was investigated in \cite{zz26,ww26}.


\subsection{Organization of this paper}

\subsubsection*{Outline of the proof of Theorem \ref{thm-stab}}

The overall strategy follows the sharp
gradient stability argument of Figalli--Zhang \cite{fz22}
(see also \cite{be91,fn19}). We first use a compactness argument to reduce the problem to functions that are close, up to
affine transformations, to the manifold $\mathcal M_{\rm aff}$ in Section \ref{sec-compact}.
Then the main new difficulty is the expansion of the affine energy in the sense of
\cite[Section 2]{fz22},
which is the objective of Section \ref{sec-expan}. Unlike the classical
Dirichlet energy, $\mathcal E_p$ is obtained from the directional quantities
$\{\|\nabla_\xi f\|_p^p\}_{\xi\in\mathbb S^{n-1}}$
through a nonlinear negative-power average on the sphere. Therefore, after expanding
the directional energies, one also has to expand the outer affine functional. This
produces an additional second-order term of variance type on the sphere
[see \eqref{prop-expan-local-e2}]. Such a
term is absent in the classical Sobolev inequality. Its role is to detect the additional
trace-free affine directions in the tangent space of $\mathcal M_{\rm aff}$.

A key point is to control this new variance term. Following the affine Hessian
viewpoint used in the affine fractional setting of Fan et al.\ \cite{flz26}, we
establish an affine spectral gap inequality involving
this variance term in Section \ref{sec-spectral}
(precisely, Theorems \ref{thm-spectral} and \ref{thm-spectral-wei}). A similar analysis has appeared independently and concurrently in \cite{flz26b}. Combining these and the nonlinear
expansion, we obtain the desired lower bound for the affine deficit,
and complete the proof of Theorem \ref{thm-stab} in Section \ref{sec-proof}.


\subsubsection*{Outline of the proof of Theorem \ref{thm-bubble}}

The overall strategy follows Liu and Zhang \cite[Theorem 1.2]{lz25}.
As a preliminary step, Section \ref{sec:classification} classifies the solutions of the Euler--Lagrange equation for the affine Sobolev quotient. Relying on this classification, we use a compactness argument to reduce the problem to functions that are close, up to affine transformations, to the manifold $\mathcal M_{\rm aff}^{(1)}$ in Section \ref{sec-compact}. Then we expand the affine Laplacian around a normalized affine Aubin--Talenti bubble in
Proposition \ref{prop-expan-lap}. The expansion is parallel to the one used
in the proof of Theorem \ref{thm-stab}: after expanding the directional terms
and the outer affine functional,
one obtains the same variance operator on $\mathbb S^{n-1}$. Hence,
we need to establish the corresponding
affine spectral gap inequality in Theorem \ref{thm-spectral-wei}.

There is, however, an additional difficulty in the critical point problem. The normalized
affine bubbles solve the Euler--Lagrange equation with a \emph{fixed amplitude}, while the
spectral gap is naturally formulated after removing the full tangent space generated by the
amplitude, scaling, translation, and trace-free affine directions. To handle this
mismatch, we first perform the modulation on the enlarged cone of affine bubbles with
a free amplitude parameter. This gives the desired orthogonality with respect to this
full tangent space of directions. We then test the residual against the approximate bubble to
prove that the optimal amplitude is close to one [see \eqref{thm-bubble-e4}].
This is the affine counterpart of the
coefficient estimate in Figalli and Glaudo \cite[Proposition 3.11]{fg20}. Combining this amplitude estimate
with the above affine spectral gap inequality and the nonlinear expansion, we obtain the
desired quantitative stability estimate for the affine Euler--Lagrange equation.


\subsection{Explicit parametrization}

When presenting our main results, Theorems \ref{thm-stab} and \ref{thm-bubble}, we have opted for an `abstract' formulation, in the sense that only the sets $\mathcal M_{\rm aff}$ and $\mathcal{M}_{\rm aff}^{(1)}$ appear and not their explicit form. However, when proving these results, we need the explicit form of all the elements of these sets. We collect the relevant information here for future reference.

Before doing this, we can introduce slightly stronger forms of our main theorem, which we will show. The strengthening of Theorem \ref{thm-stab} is as follows: If $p\in[2,n)$, then there is a positive constant $c_{n,p}$,
depending only on $n$ and $p$,
such that, for any $f\in \dot{W}^{1,p}$,
\begin{align}\label{eq-stabstrengthened}
&{\mathcal{E}_p(f)}^p
-S_{\rm aff}^p\|f\|_{p^*}^p\notag\\
&\quad\ge c_{n,p}
\inf_{v\in \mathcal M_{\rm Sob},\, T\in \operatorname{SL}(n)}\left\{\|\nabla(f\circ T -v)\|_{p}^p
+\int_{\mathbb{R}^n}
|\nabla v|^{p-2}|\nabla(f\circ T- v)|^2
\right\}.
\end{align}
The strengthening of Theorem \ref{thm-bubble} is as follows: If $p\in[2,n)$, then there is a positive constant
$c_{n,p}$, depending only on $n$ and $p$,
such that, for every $f\in\dot{W}^{1,p}$
satisfying \eqref{thm-bubble-e8},
\begin{align}\label{thm-bubble-e1strengthened}
\delta_{\rm aff}(f)\ge c_{n,p}
\inf_{T\in \operatorname{SL}(n),v\in\mathcal{M}_{\rm Sob}^{(1)}}
\left\{\|\nabla(f\circ T-v)\|_p^{p-1}
+\left[\int_{\mathbb{R}^n}|\nabla v|^{p-2}|\nabla(f\circ T-v)|^2\right]^{\frac{p-1}{p}}\right\}.
\end{align}
The strengthening consists in the fact that in \eqref{eq-stabstrengthened} and \eqref{thm-bubble-e1strengthened} the sets $\mathcal M_{\rm Sob}$ and $\mathcal M_{\rm Sob}^{(1)}$ appear, rather than the larger sets $\mathcal M_{\rm aff}$ and $\mathcal M_{\rm aff}^{(1)}$. We mention that
the version of \eqref{eq-stabstrengthened} containing only
the first term on the right-hand side was proved in \cite{flz26b}.

\bigskip

We now turn to the explicit parametrization of the sets $\mathcal M_{\rm aff}$, $\mathcal M_{\rm Sob}$, and their normalized variants. An important role is played by the function
\begin{equation}
    \label{eq:standardminimizer}
    U(x) := c_0 \, (1+|x|^{p'})^{\frac{p-n}{p}},
\end{equation}
where the constant $c_0>0$, depending on $n$ and $p$, is determined in such a way that
\begin{equation}
    \label{eq:standardminimizernorm}
    \|U\|_{p^*}=1.
\end{equation}
Then the theorem by Lutwak et al.\ \cite{lyz02} (see also \cite{hjm16}) says that $U$ is the unique optimizer of the affine Sobolev inequality \eqref{eq-affSob} up to symmetries. More precisely, for the set $\mathcal M_{\rm aff}$, defined in \eqref{eq:defmaff}, we have
\begin{align}\label{def-min}
\mathcal{M}_{\rm aff}=\left\{c\, |\det B \,|^{\frac{p-n}{np}} \,
U\left(B^{-1}(\cdot-x_0)\right):
c\in\mathbb{R}\setminus\{0\},\
B\in\operatorname{GL}(n),\ x_0\in\mathbb{R}^n\right\}.
\end{align}

This theorem should be compared with the Rodemich--Aubin--Talenti theorem, which characterizes the optimizers of the classical Sobolev inequality \eqref{eq-Sob}. In this classical setting, $U_0$ is still the unique minimizer up to symmetries of the inequality, but the group of symmetries is smaller. We see that $\mathcal M_{\rm Sob}$, defined in \eqref{eq:defmsob}, is given by the same formula as \eqref{def-min}, but with matrices $B$ restricted to be of the form $B=\lambda Q$ with $\lambda>0$ and $Q\in O(n)$.

We prefer to work with a different parametrization of $\mathcal M_{\rm aff}$ from that
in \eqref{def-min}. For every $B\in \operatorname{GL}(n)$,
by the polar decomposition,
there are unique $\lambda\in(0,\infty)$,
$S\in M_n(\mathbb{R})$ satisfying $S=S^T$ and $\operatorname{Tr} S=0$,
and $Q\in O(n)$ such that
$B=\lambda Q e^{S}$.
Since $U$ is radial,
it is invariant under $Q\in O(n)$ and hence, for every
$x_0\in\mathbb R^n$,
$$
U\left(B^{-1}(\cdot-x_0)\right)
=U\left(\lambda^{-1} e^{-S}(\cdot-x_0)\right).
$$
Therefore, by \eqref{def-min},
we obtain
\begin{align}\label{def-min-equiv}
\mathcal{M}_{\rm aff}=\left\{cT_{\lambda,S,x}U:
c\in\mathbb{R}\setminus\{0\},\ \lambda\in(0,\infty),
\ S=S^T,\ \operatorname{Tr}S=0,\ x\in\mathbb{R}^n\right\},
\end{align}
where, for any $f\in L^1_{\rm loc}$,
\begin{align}
\label{eq:deftlambdasx}
T_{\lambda,S,x}f(\cdot):=\lambda^{-\frac{n-p}{p}}f\left(\lambda^{-1}e^{-S}
(\cdot-x)\right).
\end{align}
Here and in what follows,
we refer to $(c,\lambda,S,x)$ as the \emph{affine transformation parameters}
and
$cT_{\lambda,S,x}$ as the \emph{affine transformation}.
The advantage
of this parametrization is that the parameters
$(c,\lambda,S,x)$ are uniquely determined by the corresponding extremal
function. Moreover, we define
$$
\mathscr{A}:=(\mathbb{R}\setminus\{0\})\times(0,\infty)
\times\left\{
S\in M_n(\mathbb{R}): S=S^T,\ \operatorname{Tr}S=0
\right\}\times\mathbb{R}^n
$$
and consider $\mathscr A$ as a finite-dimensional parameter space, endowed
with the topology induced from
$
\mathbb R\times(0,\infty)\times M_n(\mathbb R)\times\mathbb R^n.
$

After introducing these notation, we can restate our first main result \eqref{thm-bubble-e1strengthened} as follows: If $p\in[2,n)$, then there is a positive constant $c_{n,p}$,
depending only on $n$ and $p$,
such that, for any $f\in \dot{W}^{1,p}$,
\begin{align}\label{eq-stab-e}
&{\mathcal{E}_p(f)}^p
-S_{\rm aff}^p\|f\|_{p^*}^p\notag\\
&\quad\ge c_{n,p}
\inf_{(c,\lambda,S,x)\in\mathscr{A}}\left\{\|\nabla(T_{\lambda,S,x}f-cU)\|_{p}^p
+\int_{\mathbb{R}^n}
|c\nabla U|^{p-2}|\nabla(T_{\lambda,S,x}f-cU)|^2
\right\}.
\end{align}
The equivalence of \eqref{thm-bubble-e1strengthened} and \eqref{eq-stab-e} follows from a simple change of variables, noting that the functions $c \lambda^\frac{n-p}{p} U(\lambda \cdot +x)$ exhaust $\mathcal M_{\rm Sob}$ as $(c,\lambda,x)$ run through $(\mathbb R\setminus\{0\})\times(0,\infty) \times\mathbb R^n$.

Similarly, we can restate our second main result \eqref{thm-bubble-e1strengthened} as follows: If $p\in[2,n)$, then there is a positive constant
$c_{n,p}$, depending only on $n$ and $p$,
such that, for every $f\in\dot{W}^{1,p}$
satisfying \eqref{thm-bubble-e8},
\begin{align}\label{thm-bubble-e6}
\delta_{\rm aff}(f)\ge c_{n,p}
\inf_{\genfrac{}{}{0pt}{}{(c,\lambda,S,x)\in\mathscr{A}}{c\in\{-1,1\}}}\left\{\|\nabla (T_{\lambda,S,x}f-cU)\|_p^{p-1}
+\left[\int_{\mathbb{R}^n}
|\nabla U|^{p-2}|\nabla (T_{\lambda,S,x}f-cU)|^2\right]^{
\frac{p-1}{p}}\right\}.
\end{align}

In what follows we will prove our main results in the forms \eqref{eq-stab-e} and \eqref{thm-bubble-e6}.

\medskip

\emph{Notation.}
Let $\mathbb{N}:=\{1,2,\ldots\}$ and $\mathbb{Z}_+:=\mathbb{N}\cup\{0\}$.
In addition, we always denote by $C$ a
\emph{positive constant} that is independent
of the main parameters involved, but may vary from line to line.
We use $C_{\alpha,\dots}$ to denote a positive constant depending
on the indicated parameters $\alpha,\, \dots$.
The notation $f\lesssim g$ means $f\le Cg$
and, if $f\lesssim g\lesssim f$, then we write $f\sim g$.
Moreover, the notation $s\to 0^+$
means that there is a $\delta\in(0,\infty)$
such that $s\in (0,\delta)$ and $s\to 0$.


\section{Classification of positive energy solutions to
the critical affine $p$-Laplace equation}\label{sec:classification}

In this section, we show the following classification theorem
of the positive energy solutions to the critical affine equation. We recall that the set $\mathcal M_{\rm aff}^{(1)}$ was defined in \eqref{eq:defmaff1}.

\begin{theorem}\label{thm-solu}
If $p\in(1,n)$ and $f$ is a solution of the critical equation
\begin{align*}
\begin{cases}
\displaystyle{-\Delta_p^{{\rm aff}}(f)=S_{\rm aff}^p f^{p^*-1}},\\
f>0,\quad f\in\dot{W}^{1,p},
\end{cases}
\end{align*}
then $f\in\mathcal{M}^{(1)}_{\rm aff}$.
\end{theorem}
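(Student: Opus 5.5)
The plan is to reduce to the classical classification of positive solutions of the critical $p$-Laplace equation by an affine change of variables. For $f\in\dot W^{1,p}$ with $f>0$ solving the equation, I would introduce the symmetric positive definite matrix
\begin{align*}
A_f:=\mathcal{E}_p(f)^{n+p}\int_{\mathbb{S}^{n-1}}\|\nabla_\xi f\|_p^{-n-p}\,\xi\otimes\xi\ |\nabla_\xi f|^{p-2}\ \cdots\,d\xi .
\end{align*}
The factor $|\nabla_\xi f|^{p-2}$ depends on $x$, so such a matrix does not exist pointwise unless $p=2$. The linear reduction of \eqref{eq:affinelapl2} is therefore unavailable, and I would instead argue variationally rather than through the equation directly.

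\textbf{Step 1 (energy identity).} Test the equation with $f$ itself. Using $\frac{d}{dt}|_{t=1}\mathcal E_p(tf)=\mathcal E_p(f)$ together with the derivative formula stated after \eqref{eq:defaffinelapl}, one gets $\int f(-\Delta_p^{\rm aff}f)=\mathcal E_p(f)^p$. The equation then gives $\mathcal E_p(f)^p=S_{\rm aff}^p\|f\|_{p^*}^{p^*}$.

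\textbf{Step 2 (the solution is an optimizer).} This is the main obstacle. Knowing only that $f$ is a critical point does not make it a minimizer. I would use the affine P\'olya--Szeg\H{o} principle underlying \cite{lyz02}: with $f^\star$ the symmetric decreasing rearrangement, $\mathcal E_p(f)\ge \mathcal E_p(f^\star)$, and by \eqref{eq:energyradial} $\mathcal E_p(f^\star)$ is a multiple of $\|\nabla f^\star\|_p$. The key device is the LYZ reformulation $\mathcal E_p(f)=c\,\|\nabla(f\circ T_f)\|_p$, where $T_f\in\operatorname{SL}(n)$ puts the $L^p$ projection body of $f$ into a suitable isotropic position (an $L_p$-John / Petty type position). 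I would try to show that in this position the affine equation for $g:=f\circ T_f$ becomes exactly the classical one, $-\Delta_p g=S_{\rm Sob}^p g^{p^*-1}$ up to the constant in \eqref{eq-relaSob}. The heuristic is that the first variation of $\mathcal E_p$ at $g$ coincides with that of $c\|\nabla\cdot\|_p$ whenever the minimum over $\operatorname{SL}(n)$ of $\|\nabla(g\circ T)\|_p$ is attained at $T=I$, since $\mathcal E_p\le c\min_T\|\nabla(\cdot\circ T)\|_p$ with equality at $g$. Whether this equality actually holds is exactly the delicate point. If it fails, the fallback is a moving-plane or integral-identity argument (Serrin--Zou / Ciraolo--Figalli--Roncoroni / V\'etois) adapted to the nonlocal operator, whose coefficients $\|\nabla_\xi f\|_p$ are constants once $f$ is frozen. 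Concretely, freeze the Finsler-type norm $H(\zeta)$ defined by
\begin{align*}
H(\zeta)^p:=\mathcal E_p(f)^{n+p}\int_{\mathbb S^{n-1}}\|\nabla_\xi f\|_p^{-n-p}|\xi\cdot\zeta|^p\,d\xi/p .
\end{align*}
Then $-\Delta_p^{\rm aff}f=-\operatorname{div}(\nabla_\zeta H^p(\nabla f)/p)$, and $f$ solves an anisotropic critical $p$-Laplace equation with a fixed smooth, uniformly convex, even norm $H$ (its unit ball is a dilate of the polar $L_p$ projection body).

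\textbf{Step 3 (classify and conclude).} Invoke the classification of positive $\dot W^{1,p}$ solutions of the anisotropic critical equation (Ciraolo--Figalli--Roncoroni). This gives $f(x)=a\,(1+b\,H^\circ(x-x_0)^{p'})^{(p-n)/p}$, with $H^\circ$ the dual norm. It then remains to check a consistency condition: $H$ is itself determined by $f$, so we need to show $H^\circ$ is Euclidean after a linear map. For $f$ of this form, $\|\nabla_\xi f\|_p$ is computable, and self-consistency should force the unit ball of $H^\circ$ to be a fixed point of the operator $K\mapsto$ polar $L_p$ projection body of $K$. Such fixed points are exactly ellipsoids, by the Petty / LYZ $L_p$ projection inequality equality case. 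This puts $f$ in the form \eqref{def-min}. Finally, Step 1 fixes the normalization $\|f\|_{p^*}=1$, so $f\in\mathcal M_{\rm aff}^{(1)}$.
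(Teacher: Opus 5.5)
Your fallback in Step 2 is the paper's route: freeze $H_f$, read the equation as the anisotropic critical equation $-\Delta_p^{H_f}f=S_{\rm aff}^pf^{p^*-1}$, and apply Ciraolo--Figalli--Roncoroni \cite{cfr20}. You still owe the uniform ellipticity of $\nabla^2(\frac12H_f^2)$, which the paper proves as Lemma \ref{lem-solu}. Your first route, via an isotropic position, does not work for $p\neq2$: equality in \eqref{eq-holder} would force $\|\nabla_\xi f\|_p$ to be independent of $\xi$, and \eqref{eq-p=2} has no analogue.

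Your description of $H_f$ is also off, and this matters at the end. One has $H_f(\zeta)^p=(n+p)\mathcal E_p(f)^{n+p}\int_{K_f}|\zeta\cdot\xi|^p\,d\xi$ with $K_f:=\{\xi:\|\nabla_\xi f\|_p\le1\}$. So $H_f$ is a multiple of the support function of the $L_p$ centroid body $\Gamma_pK_f$, and the unit ball of $H_f^*$ is a dilate of $\Gamma_pK_f$, not of $K_f$. For $f=\Phi(\|\cdot\|_K)$, a coarea computation (as in \eqref{prop-variation-e11}) gives $\|\nabla_\xi f\|_p\propto h_{\Pi_pK}(\xi)$. Hence $K_f$ is a dilate of $\Pi_p^\circ K$, and the self-consistency condition reads $K\propto\Gamma_p\Pi_p^\circ K$, not $K\propto\Pi_p^\circ K$.

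The genuine gap is the claim that these fixed points are ellipsoids ``by the equality case of the $L_p$ Petty projection inequality''. That equality case characterizes \emph{maximizers} of $V(K)^{(n-p)/p}V(\Pi_p^\circ K)$, not fixed points. In the argument of \cite{lyz00}:
\begin{enumerate}
\item $V_p(K,\Gamma_p\Pi_p^\circ K)$ equals a constant depending only on $n$ and $p$;
\item the $L_p$ Minkowski inequality then bounds $V(K)^{n-p}V(\Gamma_p\Pi_p^\circ K)^p$ from above, with equality iff $K\propto\Gamma_p\Pi_p^\circ K$;
\item the $L_p$ Busemann--Petty centroid inequality applied to $L=\Pi_p^\circ K$ finishes.
\end{enumerate}
A fixed point saturates only the Minkowski step. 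Equality in Petty, and hence an ellipsoid, would also require $\Pi_p^\circ K$ to be extremal for Busemann--Petty, and nothing provides this. This is the critical-point-versus-optimizer issue you flagged yourself in Step 2, now reappearing at the level of convex bodies. Rigidity of such fixed points is a genuinely hard problem (compare \cite{dw26,msy25}).

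It is exactly what the paper's Proposition \ref{prop-variation} supplies, for the $C^2_+$ body produced by \cite{cfr20}:
\begin{enumerate}
\item Continuous Steiner symmetrizations $S_\xi^tK$ preserve $\int\sigma(u_t)$, since $|S_\xi^tK|=|K|$, so criticality of $J_\sigma$ gives $\left.\frac{d}{dt}\right|_{t=0}\mathcal E_p(u_t)^p=0$.
\item The fiberwise comparison \eqref{eq:inequalitiesab} then yields $\langle f_\xi\rangle=\langle g_\xi\rangle$.
\item By Lemma \ref{linear}, the midpoints of all chords parallel to $\xi$ are coplanar, for every $\xi$.
\item Bertrand--Brunn then gives an ellipsoid.
\end{enumerate}
Without a rigidity argument of this kind, your proof stops at ``$f$ is a Talenti-type profile in a norm self-consistent with $f$''. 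That does not yet place $f$ in $\mathcal M_{\rm aff}$.
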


There are two important ingredients in our proof of this theorem. The first is a classification theorem for anisotropic $p$-Laplace equations by Ciraolo et al.~\cite{cfr20}. To apply this theorem, we interpret our affine $p$-Laplace equation as an anisotropic $p$-Laplace equation with an anisotropic norm that depends on $f$ itself \cite{hjm21}. The result from \cite{cfr20} then implies, among other things, that the superlevel sets of the solution are concentric unit balls in the dual of the given anisotropic norm. It is at this point that our second ingredient comes in. That is, we prove that if the superlevel sets of a critical point are concentric balls with respect to the Minkowski functional of a sufficiently smooth convex set, then this set is necessarily an ellipsoid. This is the content of Proposition \ref{prop-variation}, which is the technical main result of this section. Combining these two ingredients one easily obtains the classification result in Theorem \ref{thm-solu}.

We begin the proof by explaining in more detail the rewriting of the affine $p$-Laplacian as an anisotropic $p$-Laplacian. Indeed, for $f\in\dot{W}^{1,p}$,
we have
\begin{align}\label{eq-Lap-Hf}
\Delta_p^{\rm aff}(f)=\operatorname{div}\left(\nabla\left(\frac{H_f^p}{p}\right)(\nabla f)\right),
\end{align}
where, for every $\zeta\in\mathbb{R}^n$,
\begin{align}
\label{eq:defhf}
H_f(\zeta):=\mathcal{E}_p(f)^{1+\frac np}\left(
\int_{\mathbb{S}^{n-1}}\|\nabla_\xi f\|_p^{-n-p}|\zeta\cdot\xi|^p\,d\xi\right)^{\frac1p}.
\end{align}
The right-hand side of \eqref{eq-Lap-Hf} is precisely the
anisotropic $p$-Laplacian (or Finsler $p$-Laplacian) $\Delta_{p}^{H_f}$
associated with the norm $H_f$. To apply the result in \cite{cfr20}, we still need to verify that
the Hessian $\nabla^2 (\frac12H_f^2)$ satisfies
the uniform positive definiteness condition \cite[(1.11)]{cfr20}.
Hence, we first establish the following claim.

\begin{lemma}\label{lem-solu}
Let $p\in(1,\infty)$ and let $f\in\dot{W}^{1,p}$ be
a non-constant function. Then
there are positive constants $\lambda$ and $\Lambda$, depending only on $n$,
$p$, and $f$, such that, for all $\zeta,\eta\in\mathbb{R}^n$,
\begin{align}\label{lem-solu-e0}
\lambda \, |\eta|^2\le
\left\langle\nabla^2\left(\frac{1}{2}H_{f}^2\right)(\zeta) \, \eta,\eta\right\rangle
\le \Lambda \, |\eta|^2.
\end{align}
\end{lemma}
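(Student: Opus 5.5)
The plan is to reduce \eqref{lem-solu-e0} to an explicit formula for the Hessian of $\frac12 H_f^2$ as a weighted spherical integral, and then bound that integral using only that the weight is pinched between two positive constants. Write
$$
w(\xi):=\mathcal E_p(f)^{n+p}\,\|\nabla_\xi f\|_p^{-n-p},\qquad G(\zeta):=H_f(\zeta)^p=\int_{\mathbb S^{n-1}} w(\xi)\,|\zeta\cdot\xi|^p\,d\xi .
$$
Since $H_f$ is positively $1$-homogeneous, $\nabla^2(\frac12H_f^2)$ is $0$-homogeneous. So it suffices to treat $\zeta\in\mathbb S^{n-1}$; the point $\zeta=0$ is understood in the usual a.e. or limiting sense.

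\textbf{Step 1: the weight is pinched.} The map $\xi\mapsto\|\nabla_\xi f\|_p$ is continuous on $\mathbb S^{n-1}$, since it is a seminorm of $\xi$, and it is bounded above by $\|\nabla f\|_p$. It is also strictly positive. Indeed, if $\nabla_\xi f=0$ a.e., then $f$ is constant along every line in direction $\xi$. Together with the vanishing-at-infinity condition $|\{|f|>\lambda\}|<\infty$, this forces $f=0$, contradicting that $f$ is non-constant. By compactness of the sphere, $0<m\le\|\nabla_\xi f\|_p\le M$. Also $\mathcal E_p(f)\in(0,\infty)$. Hence $c_1\le w\le c_2$ for constants depending on $n$, $p$ and $f$.

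\textbf{Step 2: differentiating under the integral.} I will show that $G\in C^2(\mathbb R^n\setminus\{0\})$ with
$$
\nabla G(\zeta)=p\int w\,|\zeta\cdot\xi|^{p-2}(\zeta\cdot\xi)\,\xi\,d\xi,\qquad \nabla^2G(\zeta)=p(p-1)\int w\,|\zeta\cdot\xi|^{p-2}\,\xi\otimes\xi\,d\xi .
$$
For $p\ge2$ this is routine. For $1<p<2$ the justification relies on the local uniform integrability of $|\zeta\cdot\xi|^{p-2}$ on $\mathbb S^{n-1}$ for $\zeta$ near a fixed nonzero $\zeta_0$, which holds because $p-2>-1$. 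One then applies dominated convergence, or writes the second derivative as a limit of difference quotients of the first. I expect this regularity check to be the main technical obstacle, although it is standard.

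\textbf{Step 3: the quadratic form.} Since $\frac12H_f^2=\frac12G^{2/p}$, a direct computation gives
$$
\Big\langle\nabla^2\big(\tfrac12H_f^2\big)(\zeta)\eta,\eta\Big\rangle
=G^{\frac2p-2}\Big[(p-1)\,G\,A+(2-p)\,B^2\Big],
$$
where
$$
A:=\int w|\zeta\cdot\xi|^{p-2}(\eta\cdot\xi)^2\,d\xi,\qquad B:=\int w|\zeta\cdot\xi|^{p-2}(\zeta\cdot\xi)(\eta\cdot\xi)\,d\xi .
$$
Cauchy--Schwarz with the measure $w|\zeta\cdot\xi|^{p-2}d\xi$ gives $B^2\le GA$. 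Therefore the form lies between $\min\{1,p-1\}\,G^{\frac2p-1}A$ and $\max\{1,p-1\}\,G^{\frac2p-1}A$. When $p>2$ one uses $(p-1)GA-(p-2)B^2\ge GA$; when $p\le2$ the $B^2$ term is nonnegative and at most $GA$.

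\textbf{Step 4: bounding $G$ and $A$.} By Step 1 and \eqref{eq-alpha}, for $|\zeta|=1$ we have $c_1\alpha_{n,p}^p\le G(\zeta)\le c_2\alpha_{n,p}^p$. Likewise $c_1A_0\le A\le c_2A_0$, where
$$
A_0(\zeta,\eta):=\int_{\mathbb S^{n-1}}|\zeta\cdot\xi|^{p-2}(\eta\cdot\xi)^2\,d\xi .
$$
By rotation invariance, $A_0$ is a fixed quadratic form $a|\eta|^2+b(\eta\cdot\zeta)^2$. Its coefficients are finite because $p>1$. It is positive definite, since the integrand vanishes identically only if $\eta=0$. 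Hence $A_0\sim|\eta|^2$ uniformly in $\zeta\in\mathbb S^{n-1}$. Combining Steps 3 and 4 yields \eqref{lem-solu-e0} with $\lambda,\Lambda$ depending only on $n$, $p$ and $f$ (through $m$, $M$ and $\mathcal E_p(f)$).
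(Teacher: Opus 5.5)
Your proposal is correct and follows essentially the same route as the paper: the same Hessian formula for $\frac12 G^{2/p}$, the same Cauchy--Schwarz bound $B^2\le GA$ with a case split at $p=2$, and then positivity of the $A$-term. The only difference is cosmetic. The paper rewrites $H_f^p$ via polar coordinates as an integral over the convex body $K=\{\xi:\|\nabla_\xi f\|_p\le 1\}$ and gets the uniform lower bound from continuity and compactness, whereas you keep the spherical integral, pinch the weight, and compare with the isotropic integral of Lemma~\ref{lem-wu}, which gives explicit constants; you also justify differentiating under the integral for $1<p<2$, which the paper takes for granted.
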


\begin{proof}
Define $K:=\{\xi\in\mathbb{R}^{n}:\|\nabla_\xi f\|_p\le 1\}$. Since $f\in\dot{W}^{1,p}$ is non-constant, from
\cite[(3.1)]{n16}, it follows that $K$ is an origin-symmetric
convex body with $\mathbf{0}\in\operatorname{Int} K$. (A \emph{convex body} is a convex set that is compact with nonempty interior.)
For every $\zeta\in\mathbb{R}^n$
and $t\in(0,\infty)$, define
$$F(\zeta):=\frac{1}{|K|}\int_K|\zeta\cdot\xi|^p\,d\xi$$ and $g(t):=\frac12 t^{\frac 2p}.$
Note that, applying polar coordinates, we find that, for every $\zeta\in\mathbb{R}^n$,
\begin{align*}
H_f(\zeta)^p=
(n+p) \, \mathcal{E}_p^{n+p}(f)\int_{K} |\zeta\cdot\xi|^p\,d\xi.
\end{align*}
Hence, to complete the present proof,
we only need to establish \eqref{lem-solu-e0}
with $\frac12H_f^2$ therein replaced by $g\circ F$.
By the homogeneity, we only need to consider $\eta\in\mathbb{S}^{n-1}$.
For any $t\in(0,\infty)$, define $\Phi(t):=g(F(\zeta+t\eta))$. Then, by the chain rule, we have
\begin{align}\label{lem-solu-e1}
\left\langle\nabla^2\left(g\circ F\right)(\zeta)\,\eta,\eta\right\rangle
=\Phi''(0)
=g'(F(\zeta))\left\langle\nabla^2 F(\zeta)\,\eta,\eta\right\rangle
+g''(F(\zeta))\left(\nabla F(\zeta)\cdot \eta\right)^2.
\end{align}
Note that
\begin{align*}
\nabla F(\zeta)=\frac{p}{|K|}\int_K|\zeta\cdot\xi|^{p-2}(\zeta\cdot\xi)\xi\,d\xi
\end{align*}
and
\begin{align*}
\left\langle\nabla^2 F(\zeta)\,\eta,\eta\right\rangle=
\frac{p(p-1)}{|K|}\int_K |\zeta\cdot\xi|^{p-2}(\eta\cdot\xi)^2\,d\xi.
\end{align*}
Substituting these into \eqref{lem-solu-e1}, we obtain
\begin{align}\label{lem-solu-e2}
\left\langle\nabla^2\left(g\circ F\right)(\zeta)\, \eta,\eta\right\rangle
&=\frac{p-1}{|K|}F(\zeta)^{\frac2p-1}
\int_K|\zeta\cdot\xi|^{p-2}(\eta\cdot\xi)^2\,d\xi\notag\\
&\qquad+\frac{2-p}{|K|^2}F(\zeta)^{\frac2p-2}
\left[\int_{K}|\zeta\cdot\xi|^{p-2}(\zeta\cdot\xi)(\eta\cdot\xi)\,d\xi\right]^2.
\end{align}
Therefore, from the Cauchy--Schwarz inequality, we easily infer that
the upper estimate in \eqref{lem-solu-e0} holds.
We now focus on the lower estimate. Indeed, by the homogeneity again,
we conclude that the left-hand side
of \eqref{lem-solu-e2} is homogeneous of degree $0$
with respect to $\zeta$.
This implies that we only need to consider $\zeta\in\mathbb{S}^{n-1}$.
Next, we consider the following two cases for $p$.

\emph{Case 1. $p\in(1,2]$}.
In this case, we can only consider the first term in
the right-hand side of \eqref{lem-solu-e2}.
Moreover, from the fact that $K$ has nonempty interior,
it follows that $|K|>0$ and hence
\begin{align*}
\int_K|\zeta\cdot\xi|^{p-2}(\eta\cdot\xi)^2\,d\xi
&\ge\left(\sup_{u\in K}|u|\right)^{p-2}\int_{K}(\eta\cdot\xi)^2\,d\xi\\
&=\left(\sup_{u\in K}|u|\right)^{p-2}\int_{K\setminus\{\xi:\eta\cdot\xi=0\}}(\eta\cdot\xi)^2\,d\xi>0.
\end{align*}
By this, the compactness of $\mathbb{S}^{n-1}$, and the continuity of
the corresponding integral functional, there is a $\lambda\in(0,\infty)$
such that, for all $\zeta,\eta\in\mathbb{S}^{n-1}$,
\begin{align*}
\left\langle\nabla^2\left(g\circ F\right)(\zeta)\, \eta,\eta\right\rangle
\ge \frac{p-1}{|K|}F(\zeta)^{\frac2p-1}
\int_K|\zeta\cdot\xi|^{p-2}(\eta\cdot\xi)^2\,d\xi\ge\lambda.
\end{align*}
This, together with the homogeneity, proves the lower estimate in \eqref{lem-solu-e0}
in this case.

\emph{Case 2. $p\in(2,\infty)$}.
From H\"older's inequality, we deduce that
\begin{align*}
\left[\int_{K}|\zeta\cdot\xi|^{p-2}(\zeta\cdot\xi)(\eta\cdot\xi)\,d\xi\right]^2
&\le \int_K|\zeta\cdot\xi|^p\,d\xi\int_K|\zeta\cdot\xi|^{p-2}(\eta\cdot\xi)^2\,d\xi\\
&=|K|F(\zeta)\int_K|\zeta\cdot\xi|^{p-2}(\eta\cdot\xi)^2\,d\xi.
\end{align*}
Then, applying the fact that $K$ has an interior again,
we obtain
\begin{align*}
\operatorname{LHS}\eqref{lem-solu-e2}&\ge
\frac{1}{|K|}F(\zeta)^{\frac2p-1}
\int_K|\zeta\cdot\xi|^{p-2}(\eta\cdot\xi)^2\,d\xi\\
&=\frac{1}{|K|}F(\zeta)^{\frac2p-1}
\int_{K\setminus(\{\xi:|\zeta\cdot\xi||\eta\cdot\xi|=0\})}
|\zeta\cdot\xi|^{p-2}(\eta\cdot\xi)^2\,d\xi>0.
\end{align*}
Here and in what follows, we use $\operatorname{LHS}$ and $\operatorname{RHS}$
to denote, respectively, the left-hand side and the right-hand side.
Hence, similar to Case 1, we can conclude the lower estimate in \eqref{lem-solu-e0}
in this case.

This then completes the present proof.
\end{proof}

For a convex body $K\subset\mathbb R^n$ we define the \emph{Minkowski functional} by setting
\begin{align*}
\|x\|_K:=\inf\left\{\lambda\in(0,\infty):
\lambda x\in K\right\}
\qquad\text{for all}\ x\in\mathbb R^n \,.
\end{align*}
We say that a convex body $K$ is of class $C^2_+$ if
its boundary is $C^2$ with everywhere positive curvature.
To show the classification result Theorem \ref{thm-solu}, we need the classification result for convex bodies.

\begin{proposition}\label{prop-variation}
Let $K\subset\mathbb{R}^n$ be an
origin-symmetric convex body of class $C^2_+$ with $\mathbf{0}\in\operatorname{int}K$.  Let
$\Phi\in C[0,\infty)\cap C^2(0,\infty)$ satisfy
\begin{equation} \label{eq:profile-assumption}
 0<\int_0^\infty
 \left[
   |\Phi'(r)|^p+
   |r\Phi''(r)|^p
 \right]r^{n-1}\dd r<\infty
\end{equation}
and let $\sigma$ be a Borel function on $\mathbb{R}$.
If
\begin{equation*}
 J_\sigma(v)
 :=\frac1p\mathcal E_p(v)^p-
   \int_{\mathbb{R}^n}\sigma(v(x))\,d x
\end{equation*}
is well defined and Fr\'echet differentiable at $u(\cdot):=\Phi(\|\cdot\|_{K})$ on $\dot W^{1,p}$
and
\begin{equation}
 DJ_\sigma(u)=0,
 \label{eq:criticality}
\end{equation}
then $K$ is an ellipsoid centered at the origin.
\end{proposition}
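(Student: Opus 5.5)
The plan is to exploit criticality of $J_\sigma$ at $u=\Phi(\|\cdot\|_K)$ in two ways: through the pointwise Euler--Lagrange equation, and through a family of domain variations designed so that the potential term $\int\sigma(u)$ contributes nothing at first order. Throughout, $\nabla u(x)=\Phi'(r)\nabla\|x\|_K$ with $r=\|x\|_K$. Here $\nabla\|x\|_K=\nu_K(x)/h_K(\nu_K(x))$ is $0$-homogeneous and $C^1$ away from the origin, because $K$ is $C^2_+$. Assumption \eqref{eq:profile-assumption} is exactly what makes all first variations below finite, and guarantees that $\Phi'\not\equiv0$.

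\emph{Step 1 (structure of $\mathcal E_p(u)$ and of $H_u$).} Using the coarea formula over the level sets $\partial(rK)$, I would compute
\[
\|\nabla_\xi u\|_p^p=\Big(\int_0^\infty|\Phi'(r)|^pr^{n-1}\,dr\Big)\int_{\partial K}\frac{|\xi\cdot\nu_K|^p}{h_K(\nu_K)^{p-1}}\,d\mathcal H^{n-1}.
\]
Hence $\xi\mapsto\|\nabla_\xi u\|_p$ is, up to a constant, the support function of the $L_p$ projection body $\Pi_pK$. Consequently the norm $H_u$ in \eqref{eq:defhf} is, up to a constant, the norm whose $p$-th power is $\int_{(\Pi_pK)^\circ}|\zeta\cdot\xi|^p\,d\xi$, i.e.\ the support function of $\Gamma_p(\Pi_pK)^\circ$. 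By \eqref{eq-Lap-Hf}, the criticality condition \eqref{eq:criticality} reads, in the weak sense,
\[
\operatorname{div}\big(\nabla(H_u^p/p)(\nabla u)\big)=-\sigma'(u),
\]
with the right-hand side understood distributionally.

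\emph{Step 2 (volume-preserving test variations).} For a trace-free matrix $A$ and $\psi\in C_c^\infty(0,\infty)$, I would take the flow $\varphi_t$ generated by the field $W(x)=\psi(\|x\|_K)Ax$ and test with $u\circ\varphi_t$. The key observation is
\[
\operatorname{div}W=\psi'(\|x\|_K)\,\nabla\|x\|_K\cdot Ax .
\]
Integrated against any function of $\|x\|_K$, this vanishes. Indeed, via coarea it reduces to $\frac{d}{dt}\big|_{t=0}|\{x:\|e^{-tA}x\|_K<r\}|$, which is $0$ because $\operatorname{tr}A=0$. Hence
\[
\frac{d}{dt}\Big|_{t=0}\int\sigma(u\circ\varphi_t)=0,
\]
and criticality yields $\frac{d}{dt}\big|_{t=0}\mathcal E_p(u\circ\varphi_t)=0$.

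Expanding this derivative with the energy--momentum tensor of the Finsler energy gives two contributions. The $\psi$-term (no derivative on $\psi$) is the infinitesimal $\operatorname{SL}(n)$-action and vanishes by affine invariance. The $\psi'$-term factorises by $0$-homogeneity as
\[
\Big(\int_0^\infty|\Phi'|^p r^{n}\psi'(r)\,dr\Big)\,Q_K(A).
\]
Here $Q_K(A)$ is an explicit surface integral over $\partial K$ involving $\nu_K$, $\nabla H_u^p(\nu_K)$ and $Ax$. Choosing $\psi$ with the radial factor nonzero gives $Q_K(A)=0$ for every trace-free $A$. Equivalently, a certain symmetric matrix $\int_{\partial K}x\otimes(\cdots)\,d\mathcal H^{n-1}$ is a multiple of the identity.

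\emph{Step 3 (identifying $K$).} I would combine Step 1 and Step 2 as follows. Restricting the Euler--Lagrange equation to a level set, its left-hand side must be a function of $r$ alone. This says that the anisotropic (Wulff) mean curvature of $\partial K$ with respect to $H_u$ is constant. By the anisotropic Alexandrov theorem, $K$ is then a dilate of the Wulff shape of $H_u$, i.e.\ $\|x\|_K=c\,H_u^{*}(x)$. Inserting Step 1 turns this into a fixed-point relation between $K$ and $\Gamma_p(\Pi_pK)^\circ$ (polar $L_p$ centroid of the polar $L_p$ projection body).

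The main obstacle is closing this fixed-point relation, since fixed points of such operators are not known to be ellipsoids in general. So I would not rely on the relation alone. Instead I would use the isotropy identity $Q_K(A)=0$ from Step 2: after an $\operatorname{SL}(n)$ change of variables that puts $K$ in a suitable isotropic position, this identity should force the equality case of the $L_p$ Busemann--Petty centroid/Petty projection inequalities of Lutwak--Yang--Zhang, whose equality cases are exactly origin-centred ellipsoids. Verifying that $Q_K(A)=0$ really is the first variation of that affine-invariant volume ratio, and hence characterises its maximisers, is where I expect the real work to lie.
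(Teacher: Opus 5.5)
You can make your reduction rigorous, but the step you flag as ``the real work'' is not a verification. It is where a genuinely new idea is needed, and the route you propose cannot supply it.

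\textbf{Step 3 needs a different justification.} Since $\sigma$ is only Borel, $\sigma'(u)$ is meaningless. You must instead use that $D\!\int\sigma(\cdot)(u)$ annihilates test functions with zero averages on the level sets $\partial(rK)$. With $\phi:=|\Phi'|^{p-2}\Phi'$, $\omega\in\partial K$ and $\nu=\nu_K(\omega)$, one computes
\[
\operatorname{div}\big(\nabla(H_u^p/p)(\nabla u)\big)(r\omega)=\phi'(r)\,\frac{H_u(\nu)^p}{h_K(\nu)^p}+\phi(r)\,r^{-1}\kappa(\omega),
\]
where $\kappa$ is a curvature-type term. This is not an anisotropic mean curvature. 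Constancy in $\omega$ nevertheless forces $H_u/h_K$ to be constant directly: otherwise $\phi$ would be a pure power of $r$ on all of $(0,\infty)$, contradicting \eqref{eq:profile-assumption}. So you do reach $K\propto\Gamma_p\Pi_p^{*}K$.

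\textbf{Your isotropy identity is then empty.} After Euler's relation, $Q_K(A)=c\int_{\partial K}\big(H_u(\nu)/h_K(\nu)\big)^p\,\nu\cdot Ax\,d\mathcal H^{n-1}(x)$. With the ratio constant this equals $c'\operatorname{tr}(A)\,|K|=0$ for \emph{every} $K$, so Step 2 adds nothing.

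\textbf{The deeper gap.} Vanishing first variations can only show that $K$ is a critical point of the affine-invariant functional $L\mapsto|L|^{(n-p)/p}|\Pi_p^{*}L|$. Indeed, $K\propto\Gamma_p\Pi_p^{*}K$ is exactly its first-order condition. The Lutwak--Yang--Zhang equality cases, however, characterise \emph{maximisers}. Showing that such critical points (fixed points of $\Gamma_p\Pi_p^{*}$) are ellipsoids is precisely the $L^p$ projection--centroid problem \cite{dw26}. No choice of position, and no family of variations carrying no sign information, will close your argument.

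\textbf{How the paper closes it.} It uses a deformation with a built-in inequality: continuous Steiner symmetrization $K_t=S^t_\xi K$ in an arbitrary direction $\xi$. Since $|K_t|=|K|$, the coarea formula makes $\int\sigma(u_t)$ exactly constant. Together with $\|u_t-u\|_{\dot W^{1,p}}\lesssim t$, criticality gives $\frac{d}{dt}\big|_{t=0}\mathcal E_p(u_t)^p=0$.

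On the other hand, the inequality $(a+b)^{1-p}(c+d)^p\le a^{1-p}c^p+b^{1-p}d^p$ shows that each fiber of $\{\zeta:\|\nabla_\zeta u_t\|_p\le1\}$ in direction $\xi$ contains the Steiner interpolation of the corresponding fiber at $t=0$. Hence $\mathcal E_p(u_t)^{-n}-\mathcal E_p(u)^{-n}$ is bounded below by an integral of nonnegative fiberwise defects. The vanishing derivative kills each defect to first order.

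On the central fiber this yields $\int_D\big(\langle g\rangle^{-p}-\langle f\rangle^{-p}\big)\big(\langle g\rangle-\langle f\rangle\big)\,dy=0$, in the notation \eqref{eq:langlenotation}. Hence $\langle f\rangle=\langle g\rangle$ and $f-g$ is linear, so chord midpoints are coplanar in every direction, and Bertrand--Brunn gives an ellipsoid. It is this one-sided fiberwise comparison, absent from your $\psi(\|x\|_K)Ax$ variations, that turns criticality into an equality case.
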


To prove this proposition, we need the continuous Steiner symmetrization (see, for example,
\cite{msy25}).
Fix a convex body $K\subset\mathbb R^n$ and a direction $\xi\in\mathbb S^{n-1}$, and let
$\overline D_\xi$ denote the image of the orthogonal projection
of $K$ onto $\xi^\perp$ and
$D_\xi:=\operatorname{int}\overline D_\xi.$
For every $x\in\mathbb R^n$,
we can uniquely decompose
$x=y+s\xi$ with
$y\in\xi^\perp$ and $s\in\mathbb R$.
For any $y\in\overline D_\xi$, define
$$
f_\xi(y)
:=
\max\{s\in\mathbb R:y+s\xi\in K\}
\quad\text{and}\quad
g_\xi(y)
:=
\max\{s\in\mathbb R:y-s\xi\in K\}.
$$
Since $K$ is convex, every fiber parallel to $\xi$ is an interval,
and hence
$$
K
=
\left\{
y+s\xi:
y\in\overline D_\xi,\
-g_\xi(y)\le s\le f_\xi(y)
\right\}.
$$
Moreover, $\overline D_\xi$ is convex and the functions
$f_\xi$ and $g_\xi$ are concave.
If $K\in C^2_+$, then
$f_\xi,g_\xi\in C^2(D_\xi)\cap C(\overline D_\xi)$ and, for any $y\in D_\xi$,
$-\nabla^2f_\xi(y)$ and $-\nabla^2g_\xi(y)$ are both positive definite.
For $t\in[0,2]$,  the \emph{continuous Steiner symmetrization} of $K$ in the direction
$\xi$ is defined by setting
$$
S_\xi^t K
:=
\left\{
y+s\xi:
y\in\overline D_\xi,
-g_{\xi,t}(y)\le s\le f_{\xi,t}(y)
\right\},
$$
where, for any $y\in\overline D_\xi$,
\begin{align*}
f_{\xi,t}(y)
:=
\left(1-\frac{t}{2}\right)f_\xi(y)
+\frac{t}{2}g_\xi(y)\quad\text{and}\quad
g_{\xi,t}(y)
:=
\frac{t}{2}f_\xi(y)
+\left(1-\frac{t}{2}\right)g_\xi(y).
\end{align*}
It is easy to verify that
$S_\xi^0K=K$,
that $S_\xi^1K$ is exactly the classical Steiner symmetrization
(see \cite{s14}) with respect to $\xi^\perp$, and that
$S_\xi^2K$ is the reflection of $K$ across $\xi^\perp$.

For every differentiable function $h$, define
\begin{equation}
    \label{eq:langlenotation}
    \langle h\rangle (x):=h(x)-x\cdot \nabla h(x)
    \qquad\text{for all}\ x\in\operatorname{Dom}(h) \,.
\end{equation}
Geometrically, $\langle h\rangle(x)$ is the vertical intercept of the tangent hyperplane to the graph of $h$
at $(x,h(x))$; equivalently, after normalization by $\sqrt{1+|\nabla h(x)|^2}$, it is the signed
distance of this tangent hyperplane from the origin. We use the following simple fact;
see also \cite[p.\,125]{lyz00}.

\begin{lemma}\label{linear}
    If $\operatorname{Dom}(h)$ is a convex body containing the origin and $h({\bf0})=0$, then $\langle h\rangle \equiv0$ implies that $h$ is linear.
\end{lemma}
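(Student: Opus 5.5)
The idea is to show that $\langle h\rangle\equiv 0$ forces $h$ to be positively homogeneous of degree one along rays from the origin, and then to use differentiability at the origin to upgrade this to linearity. Here $\langle h\rangle$ is defined in \eqref{eq:langlenotation}. Implicitly, $h$ is differentiable on $\operatorname{Dom}(h)$, including at the origin; in the application the domain will be $\overline D_\xi$ or $D_\xi$, and at boundary points we will work on the interior and extend by continuity.

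\textbf{Step 1: linearity along rays.} Fix $x\in\operatorname{Dom}(h)$. Since the domain is convex and contains $\mathbf 0$, the whole segment $\{tx: t\in[0,1]\}$ lies in the domain. Set $\varphi(t):=h(tx)$ for $t\in(0,1]$. The chain rule gives $\varphi'(t)=x\cdot\nabla h(tx)$. The hypothesis $\langle h\rangle(tx)=0$ reads $h(tx)=tx\cdot\nabla h(tx)$, that is, $t\varphi'(t)=\varphi(t)$. Hence $(\varphi(t)/t)'=0$ on $(0,1]$, so $\varphi(t)=t\varphi(1)$, i.e.
\begin{equation*}
h(tx)=t\,h(x)\qquad\text{for all } t\in(0,1].
\end{equation*}
This also agrees with $h(\mathbf 0)=0$ at $t=0$.

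\textbf{Step 2: identify the slope.} Differentiability at $\mathbf 0$ gives $h(tx)=t\,x\cdot\nabla h(\mathbf 0)+o(t)$ as $t\to0^+$. Comparing with Step 1, divide by $t$ and let $t\to 0^+$. This yields $h(x)=x\cdot\nabla h(\mathbf 0)$ for every $x$ in the domain, so $h$ is linear.

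\textbf{Main obstacle.} The only delicate point is regularity at the origin and at the boundary of the domain. If $h$ were only differentiable on the interior and $\mathbf 0$ lay on the boundary, Step 1 would still hold on open segments, but Step 2 would need a substitute.

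If differentiability at $\mathbf 0$ fails, I would use an alternative argument that relies only on interior differentiability. Differentiating $h(tx)=t\,h(x)$ in $x$ gives $\nabla h(tx)=\nabla h(x)$, so $\nabla h$ is constant along rays. Then $h$ is positively $1$-homogeneous and $\nabla h$ is $0$-homogeneous. A $1$-homogeneous function that is differentiable at $\mathbf 0$, or whose gradient extends continuously to $\mathbf 0$, is linear.

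In the intended application, $h=f_\xi-g_\xi$ (or a similar combination) is $C^2$ on the interior of the projected body, which contains the origin in its interior because $\mathbf 0\in\operatorname{int}K$. So the simple argument of Steps 1–2 applies directly, and continuity extends the conclusion to the closure.
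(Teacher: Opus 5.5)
Your proof is correct and follows essentially the same route as the paper's. The hypothesis $\langle h\rangle\equiv0$ gives $\frac{d}{dr}\bigl(h(rx)/r\bigr)=0$ on $(0,1]$, hence $h(rx)=r\,h(x)$, and comparing this with the first-order expansion of $h$ at the origin yields $h(x)=\nabla h(\mathbf 0)\cdot x$.
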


\begin{proof}
Indeed, for every $x\in \operatorname{Dom}(h)$,
$\langle h\rangle(x)=0$ gives $\frac{d}{dr}(\frac{h(rx)}{r})=0$
for every $r\in(0,1]$. Hence,
$h(rx)=rh(x)$ for every $x\in\operatorname{Dom}(h)$ and $r\in(0,1]$.
From this and $h({\bf0})=0$, we infer that, as $r\to0^+$,
\begin{align*}
rh(x)=h(rx)=h({\bf0})+r\nabla h({\bf 0})\cdot x+o(r)
=r\nabla h({\bf0})\cdot x+o(r) \,,
\end{align*}
and hence $h(x)=\nabla h({\bf0})\cdot x$.
This shows the above claim.
\end{proof}

The following proof of Proposition \ref{prop-variation} is close in
spirit to the continuous Steiner argument of Dai and Wang \cite{dw26},
inspired by the earlier approach of Milman et al.\ \cite{msy25}.
Here we use this mechanism in an infinitesimal variational form.

\begin{proof}[Proof of Proposition \ref{prop-variation}]
The idea is to apply the continuous Steiner symmetrization
in every direction $\xi\in\mathbb{S}^{n-1}$.
After a rotation of coordinates, we
may assume, without loss of generality, that $\xi=\mathbf{e}_n:=(0,\ldots,0,1)$. To simplify
the notation, we shall henceforth suppress the subscript $\xi$ from
all the quantities associated with this direction, except that we
write
$
K_t:=S_\xi^tK
$
for every $t\in[0,2]$.
For example, we write $D$ in place of
$D_\xi$. Thus,
$$
K_t
=
\left\{
(y,s):
y\in\overline D,\
-g_t(y)\le s\le f_t(y)
\right\}.
$$
In what follows, for any $x\in K$, we always decompose it as
$x=(y,s)$ with $y\in\overline{D}$.
Moreover, since $K$ is origin-symmetric,
we infer that $\overline{D}$ is also origin-symmetric in $\mathbb{R}^{n-1}$ and
$f(\cdot)=g(- \, \cdot)$, and hence
$f_t,g_t$ have the same property for every $t\in[0,2]$.
For any $t\in[0,2]$, define $u_{t}(\cdot):=\Phi(\|\cdot\|_{K_t})$.
Then $u_0=u$. We now divide this proof into the following five steps.

\medskip

\emph{Step 1.}
We first show that, for every sufficiently small $t$,
\begin{align}\label{prop-variation-e1}
\|u_t-u_0\|_{\dot{W}^{1,p}}
=\|\nabla u_t-\nabla u_0\|_p\lesssim t
\end{align}
with the implicit positive constant independent of $t$.
Define $H_t(\cdot):=\|\cdot\|_{K_t}$ .
Then, for every $t\in[0,2]$,
$\nabla u_t=(\Phi'\circ H_t)\nabla H_t$, which implies that
\begin{align*}
\nabla u_t-\nabla u_0=\left(\Phi'\circ H_t-\Phi'\cdot H_0\right)\nabla H_t
+\Phi'\circ H_0\left(\nabla H_t-\nabla H_0\right).
\end{align*}
Since $H_t$ is positively one-homogeneous and differentiable away
from the origin, its gradient is positively zero-homogeneous. Therefore,
by \eqref{eq:profile-assumption}, to prove \eqref{prop-variation-e1},
we only need to show that, for sufficiently small $t$,
\begin{align}\label{prop-vatiation-e2}
\|H_t-H_0\|_{L^\infty(\mathbb{S}^{n-1})}
+\|\nabla H_t-\nabla H_0\|_{L^\infty(\mathbb{S}^{n-1})}\lesssim t.
\end{align}
Indeed, applying the definition, we find that, for every $t\in[0,2]$ and
$(y,s)\in K_t$,
$(y,s+\frac{f(y)-g(y)}{2}t)\in K$. Then there is a positive constant $C_0$
such that $K_t\subset K+C_0 t B({\bf 0},1)$.
From this and ${\bf 0}\in\operatorname{Int}K$, we further deduce that
there is an $r_0\in(0,\infty)$ such that $B({\bf0},r_0)\subset K$ and hence
$K_t\subset (1+\frac{C_0}{r_0}t)K$ for all $t\in[0,2]$.
Similarly, it holds that, for all $t\in[0,2]$,
\begin{align}\label{prop-variation-e4}
\left(1-\frac{C_0}{r_0}t\right)K\subset K_t
\subset\left(1+\frac{C_0}{r_0}t\right)K.
\end{align}
Using this and the definition of $H_t$, we obtain, for every
$t\in[0,2]$, $\|H_t-H_0\|_{L^\infty(\mathbb{S}^{n-1})}\lesssim t.$
This completes the non-gradient estimate in \eqref{prop-vatiation-e2}.
On the other hand, fix $\omega\in\mathbb{S}^{n-1}$ and
define $x_t:=\frac{\omega}{H_t(\omega)}\in\partial K_t$. By the positive zero-homogeneity
and the gradient formula of Minkowski functionals at the boundary, we find that
\begin{align}\label{prop-variation-e3}
\nabla H_t(\omega)=\nabla H_t(x_t)=\frac{\nu_{K_t}(x_t)}{x_t\cdot\nu_{K_t}(x_t)},
\end{align}
where $\nu_{K_t}(x_t)$ denotes the outward unit normal to
$\partial {K_t}$ at $x_t$.
Applying the positivity of $H_0$ and the proved
non-gradient estimate in \eqref{prop-vatiation-e2}, we conclude that,
for sufficiently small $t$,
$|x_t-x_0|\lesssim t$ with the implicit positive constant
depending only on $n$ and $K$.
Since each $K_t$ is convex, the tangent hyperplane to $\partial K_t$
at each $\xi\in\partial K_t$ is a supporting hyperplane; that is,
\begin{align}\label{prop-variation-e5}
(z-\xi)\cdot\nu_{K_t}(\xi)\le 0
\quad\text{for every $z\in K_t$}.
\end{align}
Using \eqref{prop-variation-e4} and $B({\bf0},r_0)\subset K$, we obtain,
for sufficiently small $t$, $B({\bf0},\frac{r_0}{2})\subset K_t$.
Hence, combining this and \eqref{prop-variation-e5},
we further conclude that, for every sufficiently small $t$,
\begin{align}\label{prop-variation-e6}
\xi\cdot\nu_{K_t}(\xi)\ge \frac{r_0}{2}
\quad\text{for every $\xi\in K_t$}.
\end{align}
This, together with \eqref{prop-variation-e3} and
$|x_t-x_0|\lesssim t$ for sufficiently small $t$,
further implies that, to prove the gradient estimate in \eqref{prop-vatiation-e2},
it remains to show that
\begin{align}\label{prop-variation-e7}
|\nu_{K_t}(x_t)-\nu_{K}(x_0)|\lesssim t
\quad\text{for any sufficiently small $t$}.
\end{align}

Now, we prove \eqref{prop-variation-e7}.
Without loss of generality, we may assume that
$x_t=(y_t,f_t(y_t))$
lies on the upper graph of $\partial K_t$, where $y_t\in D_t$.
Define $\widehat{x}_t:=(y_t,f(y_t))\in\partial K$.
Then
\begin{align*}
\nu_{K_t}(x_t)=\frac{(-\nabla f_t(y_t),1)}{\sqrt{1+|\nabla f_t(y_t)|^2}}
\quad\text{and}\quad\nu_{K}(\widehat x_t)=\frac{(-\nabla f(y_t),1)}{\sqrt{1+|\nabla f(y_t)|^2}}.
\end{align*}
Thus, by the definition of $f_t$, we find that
$|\nu_{K_t}(x_t)-\nu_{K}(\widehat x_t)|\lesssim t$.
On the other hand, since $\partial K$ is a compact $C^2$ hypersurface, its Gauss map is
Lipschitz. Therefore,
$$
|\nu_{K}(\widehat x_t)-\nu_K(x_0)|
\lesssim|\widehat x_t-x_0|
\le |\widehat x_t-x_t|+|x_t-x_0|
\lesssim t.
$$
This then implies \eqref{prop-variation-e7} and hence
\eqref{prop-vatiation-e2}. Therefore, we complete the proof of \eqref{prop-variation-e1}.

\medskip

\emph{Step 2.}
For any $t\in[0,2]$,
from the coarea formula (see, for instance,
\cite[p.\,258]{f69}), the gradient formula for Minkowski functionals
[namely \eqref{prop-variation-e3}], a change of variables,
and the fact that $|K_t|=|K|$, it follows that
\begin{align*}
\int_{\mathbb{R}^n}\sigma(u_t(x))\,dx
&=\int_0^\infty\int_{\|x\|_{K_t}=r}
\sigma(\Phi(r))\frac{1}{|\nabla\|x\|_{K_t}|}\,d\mathcal{H}^{n-1}(\omega)\,dr\\
&=\int_0^\infty\int_{\partial K_t}\sigma(\Phi(r))r^{n-1}\omega\cdot \nu_{K_t}(\omega)
\,d\mathcal{H}^{n-1}(\omega)\,dr\\
&=\int_0^\infty \sigma(\Phi(r))r^{n-1}\,dr
\int_{\partial K_t}\omega\cdot \nu_{K_t}(\omega)
\,d\mathcal{H}^{n-1}(\omega)\\
&=n|K_t|\int_0^\infty \sigma(\Phi(r))r^{n-1}\,dr
\quad\text{by the divergence theorem}\\
&=n|K|\int_0^\infty \sigma(\Phi(r))r^{n-1}\,dr \,,
\end{align*}
where $\mathcal{H}^{n-1}$ denotes the $(n-1)$-dimensional Hausdorff measure.
Since the right side is independent of $t$, we obtain, in particular,
\begin{align}
    \label{eq:constundersymm}
    \int_{\mathbb{R}^n}\sigma(u_t(x))\,dx = \int_{\mathbb{R}^n}\sigma(u_0(x))\,dx \,.
\end{align}
Using \eqref{eq:criticality} and \eqref{prop-variation-e1}, we find that,
as $t\to0^+$,
$$J_\sigma(u_t)-J_\sigma(u_0)
=DJ_\sigma(u_0)[u_t-u_0]+o(\|u_t-u_0\|_{\dot W^{1,p}})=o(t),$$
so \eqref{eq:constundersymm} implies
\begin{align}\label{prop-variation-e8}
0=\left.\frac{d}{dt}\right|_{t=0}J_\sigma(u_t)
=\frac1p\left.\frac{d}{dt}\right|_{t=0}\mathcal{E}_p(u_t)^p.
\end{align}

\medskip

\emph{Step 3.}
We now analyze the continuous Steiner symmetrization on each fiber, respectively.
More precisely, for $z\in\mathbb{R}^{n-1}$ and $t\in[0,2]$, define
\begin{align*}
\alpha(z,t):=\max\left\{s\in\mathbb{R}:
\|\nabla u_t\cdot(z,-s)\|_p\le1\right\}\quad\text{and}\quad
\beta(z,t):=\max\left\{s\in\mathbb{R}:
\|\nabla u_t\cdot(z,s)\|_p\le1\right\}.
\end{align*}
By \eqref{prop-variation-e1}, the two convex bodies
$\{\xi\in\mathbb{R}^n:\|\nabla u\cdot\xi\|_p\le 1\}$
and $\{\xi\in\mathbb{R}^{n}:\|\nabla u_t\cdot\xi\|_p\le 1\}$
are comparable, with comparison
constants independent of all sufficiently small $t$.
Hence, for sufficiently small $t$, $\alpha(z,t)$ and $\beta(z,t)$ are uniformly bounded
with respect to $z$ and $t$.
Although these two functions are not necessarily monotone with respect to $t$, we may shift them
in a manner analogous to the symmetrization described above, thereby
obtaining the desired comparison. Indeed, for $z\in\mathbb{R}^{n-1}$
and $t\in[0,2]$, define
\begin{align*}
a(z,t):=\left(1-\frac t2\right)\alpha(z,0)
+\frac t2\beta(z,0)\quad\text{and}\quad
b(z,t):=\left(1-\frac t2\right)\beta(z,0)
+\frac t2\alpha(z,0).
\end{align*}
Note that
\begin{equation}
    \label{eq:aplusb}
    a(z,t)+b(z,t)=\alpha(z,0)+\beta(z,0) \,.
\end{equation}
We now show that, for every sufficiently
small $t$,
\begin{align}
    \label{eq:inequalitiesab}
    \beta(z,t)\ge b(z,t) \qquad\text{and}\qquad \alpha(z,t)\ge a(z,t) \,.
\end{align}
Applying the coarea formula argument again and \eqref{prop-variation-e3}, we obtain, for every
sufficiently small $t$
and every $\xi\in\mathbb{R}^{n}$,
\begin{align*}
\|\nabla u_t\cdot \xi\|_{p}^p
&=\int_0^\infty \int_{\partial K_t}
|\Phi'(r)|^p|\xi\cdot\nu_{K_t}(\omega)|^p
r^{n-1}[\omega\cdot\nu_{K_t}(\omega)]^{1-p}
\,d\mathcal{H}^{n-1}(\omega)\,dr\\
&=A_\Phi\int_{\partial K_t}
|\xi\cdot\nu_{K_t}(\omega)|^p
[\omega\cdot\nu_{K_t}(\omega)]^{1-p}
\,d\mathcal{H}^{n-1}(\omega),
\end{align*}
where $A_\Phi:=\int_0^\infty|\Phi'(r)|^pr^{n-1}\,dr$ and,
here and thereafter, we require $t$ to be sufficiently small because, by
\eqref{prop-variation-e6}, the term $\omega\cdot\nu_{K_t}(\omega)$ is
then positive and away from zero. Recall that the notation $\langle h\rangle$ was introduced in \eqref{eq:langlenotation}.
Thus, from a change of variables, we infer that,
for $(z,z_n)\in\mathbb{R}^{n-1}\times \mathbb{R}$,
\begin{align}\label{prop-variation-e11}
\|\nabla u_t\cdot(z,z_n)\|_p^p
&=A_\Phi\left\{\int_D [\langle f_t\rangle(y)]^{1-p}|z_n-z\cdot\nabla f_t(y)|^p\,dy
+\int_D [\langle g_t\rangle(y)]^{1-p}|z_n-z\cdot\nabla g_t(y)|^p\,dy\right\}\notag\\
&=2A_\Phi\int_D [\langle f_t\rangle(y)]^{1-p}|z_n-z\cdot\nabla f_t(y)|^p\,dy
\quad\text{by $f(\cdot)=g(- \, \cdot)$},
\end{align}
where $\langle f_t\rangle(\cdot)=\langle g_t\rangle(-\,\cdot)$ is also
positive and away from zero by \eqref{prop-variation-e6}.
Therefore, applying this and the elementary inequality
$(a+b)^{1-p}(c+d)^{p}\le a^{1-p}c^p+b^{1-p}d^p$ for
all $a,b\in(0,\infty)$, $c,d\in[0,\infty)$, and $p\in(1,\infty)$
(see \cite[Lemma 8]{lyz00}), we obtain
\begin{align*}
\|\nabla u_t\cdot(z,b(z,t))\|_p^p
&\le 2A_\Phi\int_D \left[\left(1-\frac t2\right)
\langle f\rangle(y)+\frac t2\langle f\rangle(-y) \right]^{1-p}\\
&\qquad\times\left[\left(1-\frac t2\right)|\beta(z,0)-z\cdot\nabla f(y)|
+\frac t2|\alpha(z,0)+z\cdot\nabla f(-y)|
\right]^p\,dy\\
&\le2A_\Phi\left(1-\frac t2\right)\int_D [\langle f\rangle(y)]^{1-p}|\beta(z,0)-z\cdot\nabla f(y)|^p\,dy\\
&\qquad+2A_\Phi\frac t2\int_D [\langle f\rangle(y)]^{1-p}|\alpha(z,0)+z\cdot\nabla f(y)|^p\,dy\\
&=\left(1-\frac t2\right)\|\nabla u_0\cdot(z,\beta(z,0))\|_p^p
+\frac t2\|\nabla u_0\cdot(z,-\alpha(z,0))\|_p^p
\le 1.
\end{align*}
This, combined with the definition of $\beta$,
further implies the first inequality in \eqref{eq:inequalitiesab}. The second one follows similarly.

\medskip

\emph{Step 4.}
We next apply \eqref{prop-variation-e8}
and \eqref{eq:inequalitiesab} to study the first order variation on each fiber.
We define
\[
F(z,t):=\|\nabla u_t\cdot(z,b(z,t))\|_p^p
\]
for appropriate $(z,t)\in\mathbb{R}^{n-1}\times \mathbb{R}$. In this step we prove that
\begin{align}
    \label{eq:limfder}
    \liminf_{t\to0^+}\frac{1-F({\bf 0},t)}{t}=0 \,.
\end{align}

Indeed, using polar coordinates and Tonelli's theorem, we find that,
for every sufficiently small~$t$,
\begin{align*}
\mathcal{E}_p(u_t)^{-n}
&=n\int_{\|\nabla u_t\cdot\xi\|_p\le 1}\,d\xi
=n\int_{\mathbb{R}^{n-1}}\int_{\|\nabla u_t\cdot(z,z_n)\|_p\le 1}\,dz_n\,dz\\
&=n\int_{\mathbb{R}^{n-1}}\left[\beta(z,t)+\alpha(z,t)\right]\,dz,
\end{align*}
which, together with \eqref{eq:aplusb} and \eqref{eq:inequalitiesab}, implies that
\begin{align}\label{prop-variation-e9}
\mathcal{E}_p(u_t)^{-n}-\mathcal{E}_p(u_0)^{-n}
&=n\int_{\mathbb{R}^{n-1}}
\left( \left[\beta(z,t)+\alpha(z,t)\right]
-\left[b(z,t)+a(z,t)\right]\right) dz \notag\\
&\ge n\int_{\mathbb{R}^{n-1}}
\left[\beta(z,t)-b(z,t)\right]\,dz.
\end{align}

Let $U\subset D$ be a fixed compact neighborhood of ${\bf 0}$ in $\mathbb{R}^{n-1}$.
Then, from the definition of $\beta$, we infer that, for every $z\in U$ and
every sufficiently small $t$,
\begin{align*}
0&\le 1-F(z,t)=\|\nabla u_t\cdot(z,\beta(z,t))\|_p^p-
\|\nabla u_t\cdot(z,b(z,t))\|_p^p\\
&=\int_{b(z,t)}^{\beta(z,t)}
\frac{\partial}{\partial s}\|\nabla u_t\cdot(z,s)\|_p^p\,ds\\
&=p\int_{b(z,t)}^{\beta(z,t)}
\int_{\mathbb{R}^n}|\nabla u_t(x)\cdot(z,s)|^{p-2}
\nabla u_t(x)\cdot(z,s)\partial_{x_n}u_t(x)\,dx\,ds\\
&\lesssim \int_{b(z,t)}^{\beta(z,t)}\|\nabla u_t\|_p^p\,ds\lesssim\beta(z,t)-b(z,t),
\end{align*}
where the penultimate step used the uniform boundedness of
$\alpha$ and $\beta$ (and hence of $b$) for sufficiently small $t$,
while the last step follows
from \eqref{prop-variation-e1}.
Combining this, Fatou's lemma, \eqref{prop-variation-e9}, and \eqref{prop-variation-e8},
we conclude that
\begin{align*}
0&\le \int_{\mathbb{R}^{n-1}}
\liminf_{t\to0^+}\frac{1-F(z,t)}t\,dz
\le\liminf_{t\to0^+}\int_{\mathbb{R}^{n-1}}
\frac{1-F(z,t)}{t}\,dz\\
&\lesssim\liminf_{t\to0^+}\int_{\mathbb{R}^{n-1}}\frac{\beta(z,t)-b(z,t)}{t}\,dz
\lesssim\lim_{t\to0^+}\frac{\mathcal{E}_p(u_t)^{-n}-\mathcal{E}_p(u_0)^{-n}}{t}\\
&=-\frac np\mathcal{E}_p(u_0)^{-n-p}
\left.\frac{d}{dt}\right|_{t=0}\mathcal{E}_p(u_t)^p=0.
\end{align*}
Thus, the function $z\mapsto\liminf_{t\to0^+}\frac{1-F(z,t)}{t}$
vanishes at every Lebesgue point and, in
particular, at every point of continuity.

Let us show that this implies \eqref{eq:limfder}.
Indeed, for every $z\in U$, it holds that $F(z,0)=1$ and hence,
for every sufficiently small $t$,
\begin{align}\label{prop-variation-e10}
\frac{1-F(z,t)}{t}=\frac{F(z,0)-F(z,t)}{t}=
-\int_0^1\partial_{x_n}F(z,\theta t)\,d\theta.
\end{align}
Note that
\begin{align*}
\partial_{x_n}F(z,t)=
2A_{\Phi}\int_{D}&
\left[\left(\frac{d}{dt}\langle f_t\rangle^{1-p}(y)\right)
|b(z,t)-z\cdot\nabla f_t(y)|^p\right.\\
&\quad\left.+[\langle f_t\rangle(y)]^{1-p}
\frac{d}{dt}|b(z,t)-z\cdot\nabla f_t(y)|^p\right]
\,dy
\end{align*}
Since $f_t$ and $b(z,t)$ depend affinely on $t$, the map
$s\mapsto \frac{d}{ds}|s|^p=p|s|^{p-2}s$ is continuous for $p>1$, and $\langle f_t\rangle$
is away from zero [by \eqref{prop-variation-e6}],
from \eqref{prop-variation-e10} and the Lebesgue dominated convergence
theorem, we deduce that
$U\ni z\mapsto\lim_{t\to0^+}\frac{1-F(z,t)}{t}=-\partial_{x_n}F(z,0)$ is continuous.
This, in particular, implies \eqref{eq:limfder}, as claimed.

\medskip

\emph{Step 5.}
We can now complete the proof. When $z={\bf 0}$, by definition,
we easily find that, for every $t\in[0,2]$, $\alpha({\bf0},t)
=\beta({\bf0},t)$ and hence $b({\bf0},t)=\beta({\bf0},0)$.
Applying this and \eqref{prop-variation-e11}, we further obtain, for sufficiently small $t$,
\begin{align*}
F({\bf0},t)
&=\|\nabla u_t\cdot({\bf0},b({\bf0},t))\|_p^p
=\|\nabla u_t\cdot({\bf0},\beta({\bf0},0))\|_p^p\\
&=2A_{\Phi}|\beta({\bf0},0)|^p
\int_{D}[\langle f_t\rangle(y)]^{1-p}\,dy \,.
\end{align*}
This, together with \eqref{eq:limfder},
implies that
\begin{align*}
0=\left.\frac{d}{dt}\right|_{t=0}\int_{D}[\langle f_t\rangle(y)]^{1-p}\,dy
=\int_D \frac{1-p}{2}[\langle f\rangle(y)]^{-p}
\langle g-f\rangle(y)\,dy \,.
\end{align*}
By this and $f(\cdot)=g(-\,\cdot)$, we further conclude
\begin{align*}
\int_D \left([\langle g\rangle(y)]^{-p}-[\langle f\rangle(y)]^{-p}\right)
[\langle g\rangle(y)-\langle f\rangle(y)]\,dy=0.
\end{align*}
Observe that $(a^{-p}-b^{-p})(a-b)\le 0$ for all $a,b\in(0,\infty)$.
Hence, it holds that
\[
\langle g\rangle(y)=\langle f\rangle(y)
\qquad\text{for all}\ y\in D \,.
\]
We have ${\bf 0}\in D$ (since ${\bf 0}\in K$) and $f({\bf0})-g({\bf0})=0$
(because $f(\cdot)=g(-\,\cdot)$). By Lemma \ref{linear}, these facts imply that $f-g$ is linear and hence that the chords of $K$ orthogonal
to $\mathbb{R}^{n-1}\times\{0\}$ have coplanar midpoints.
Since we can transfer $\mathbf{e}_n$ to arbitrary $\xi\in\mathbb{S}^{n-1}$,
the midpoints of all chords of $K$
parallel to any prescribed direction lie in a hyperplane transverse
to that direction. Consequently, applying the classical Bertrand--Brunn
theorem (see \cite[p.\,86]{t96} or \cite[p.\,123]{lyz00}),
we find that $K$ is an origin-symmetric
ellipsoid. This completes the proof of Proposition \ref{prop-variation}.
\end{proof}

We now give the proof of Theorem \ref{thm-solu}.

\begin{proof}[Proof of Theorem \ref{thm-solu}]
Recall that $H_f$ is defined in \eqref{eq:defhf}. By \eqref{eq-Lap-Hf}, we are considering the positive solutions of
the critical anisotropic Laplace equation
\begin{align*}
-\Delta_p^{H_f}(f)=S_{\rm aff}^pf^{p^*-1}.
\end{align*}
Noting that $H_f$ is a norm, we have $H_f(\zeta)\lesssim|\zeta|$ for all $\zeta\in\mathbb{R}^n$.
In addition, by the definition of $H_f$, \eqref{eq-holder},
and $f\in\dot{W}^{1,p}$,
we obtain
\begin{align*}
\int_{\mathbb{R}^n}H_f(\nabla f(x))^p\,dx
&=\mathcal{E}_p(f)^{n+p}
\int_{\mathbb{R}^n}\int_{\mathbb{S}^{n-1}}
\|\nabla_\xi f\|_{p}^{-n-p}|\nabla_\xi f(x)|^p\,d\xi\,dx\\
&=\mathcal{E}_p(f)^{n+p}\int_{\mathbb{S}^{n-1}}
\|\nabla_\xi f\|_{p}^{-n}\,d\xi=\mathcal{E}_p(f)^p
\lesssim\|\nabla f\|_{p}^p<\infty.
\end{align*}
Thus, $f$ has a finite $H_f$ anisotropic energy.
Furthermore, since $f$ is positive,
using this and Lemma \ref{lem-solu}, we find that
the Hessian $\nabla^2 (\frac12H_f^2)$ satisfies
condition \cite[(1.11)]{cfr20}. Consequently, by \cite[Theorem 1.1]{cfr20},
there are $c_{n,p},\lambda_0\in(0,\infty)$ and $x_0\in\mathbb{R}^n$ such that,
for all $x\in\mathbb{R}^n$,
\begin{align*}
f(x)=c_{n,p}\lambda_0^{-\frac{n}{p^*}}
\left[1+H_f^*\left(\lambda_0^{-1}(x-x_0)\right)^{p'}\right]^{-\frac{n-p}{p}},
\end{align*}
where $H^*_f(x):=\sup_{\zeta\ne{\bf 0}}\frac{|\zeta\cdot x|}{H_f(\zeta)}$ denotes the dual norm of $H_f$. Rescaling and translating $f$, we may assume that $\lambda_0=1$ and $x_0=0$. Thus, $f=\Phi(\|\cdot\|_K)$ with $K:=\{x\in\mathbb{R}^n:H^*_f(x)\le 1\}$ and $\Phi(r):=(1+t^{p'})^{-\frac{n-p}{p}}$.

Our goal is to apply Proposition
\ref{prop-variation}. We note that $K$ is an origin-symmetric convex body of class $C_+^2$
and ${\bf 0}\in\operatorname{Int}K$ (see, for instance,
\cite[p.\,123]{lyz00}), and that
the function $\Phi$
satisfies \eqref{eq:profile-assumption}.
Moreover, since $f$ is positive and satisfies
$-\Delta_p^{{\rm aff}}(f)=S_{\rm aff}^p f^{p^*-1}$, it is a critical point of $J_\sigma$ with $\sigma(t):=(S_{\rm aff}^p / p^*) |t|^{p^*}$. Thus, the assumptions of Propositions \ref{prop-variation} are satisfied and we infer that $K$ is an origin-symmetric ellipsoid.
Then there is a $(c,\lambda,S,x)\in\mathscr{A}$ with $\lambda\in(0,\infty)$ such that
$f=cT_{\lambda,S,x}U$.
Moreover, apply \eqref{eq:standardminimizernorm} and integration by parts,
we obtain
\begin{align*}
c^pS_{\rm aff}^p
=\mathcal{E}_p(f)^p=\left\langle-\Delta_p^{\rm aff}(f),f\right\rangle
=S_{\rm aff}^p\|f\|_{p^*}^{p^*}=c^{p^*}S_{\rm aff}^p,
\end{align*}
which further implies $c=1$.
Thus, $f=T_{\lambda,S,x}U$ and we complete the present proof.
\end{proof}


\section{Relative compactness}\label{sec-compact}

The two main results of this section give the relative compactness of minimizing sequences of the affine Sobolev inequality and of its critical points.
For $q\in[1,\infty)$, $\lambda\in(0,\infty)$,
$A\in \operatorname{SL}(n)$, and
$x\in {\mathbb R}^n$, define the \emph{general affine transformation}
$\mathcal{T}^{(q)}_{\lambda,A,x}$ by setting,
for every $f\in L^1_{\rm loc}$,
\begin{align}\label{def-tran-g}
{\mathcal T}_{\lambda,A,x}^{(q)} f(\cdot)
:=
\lambda^{-\frac nq}f\left(\lambda^{-1}A^{-1}(\cdot-x)\right).
\end{align}
Note that ${\mathcal T}_{\lambda,A,x}^{(q)}$ is an isometry on $L^q$
and, for every $\lambda\in(0,\infty)$,
every symmetric matrix $S$ satisfying $\operatorname{Tr}{S}=0$, and
every $x\in\mathbb{R}^n$,
\begin{align*}
{\mathcal T}_{\lambda, e^S,x}^{(p^*)}
=
T_{\lambda,S,x},
\end{align*}
where $T_{\lambda,S,x}$ was introduced in \eqref{eq:deftlambdasx}.

Unlike the affine transformations used in the parametrization of the extremal manifold in
\eqref{def-min-equiv}, for a general function one cannot use the polar decomposition to remove
the rotational part of a volume-preserving linear map. Indeed,
such a reduction is possible for the function $U$ only because it is radial.
Therefore, in the compactness argument below, we need to use the full affine action.

\begin{theorem}\label{thm-comp}
Let $p\in[2,n)$. If a sequence
$\{u_k\}_{k\in\mathbb{N}}$ in $\dot{W}^{1,p}$ satisfies
\begin{align*}
    \mathcal{E}_p(u_k)\to S_{\rm aff}
    \qquad\text{and}\qquad
    \|u_k\|_{p^*}\to1
    \qquad\text{as}\ k\to\infty \,,
\end{align*}
then there are $u\in\mathcal{M}_{\rm aff}$, $\{\lambda_k\}_{k\in\mathbb{N}}$
in $(0,\infty)$,
$\{x_k\}_{k\in\mathbb{N}}$
in $\mathbb{R}^n$, and $\{A_k\}_{k\in\mathbb{N}}$ in
$\operatorname{SL}(n)$ such that
\begin{align*}
    \mathcal{T}^{(p^*)}_{\lambda_k,A_k,x_k}u_k \to u
    \qquad\text{in}\ \dot W^{1,p} \ \text{as}\ k\to\infty \,.
\end{align*}
\end{theorem}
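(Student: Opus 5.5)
The idea is to reduce to the classical concentration-compactness theorem for the Sobolev inequality. The bridge is the equivalence \eqref{eq-p} between $\mathcal E_p$ and the best $\operatorname{SL}(n)$-normalized gradient norm.

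\emph{Step 1: normalize the linear map.} For each $k$, choose $A_k\in\operatorname{SL}(n)$ attaining (or nearly attaining) $\min_{T\in\operatorname{SL}(n)}\|\nabla(u_k\circ T)\|_p$. Set $v_k:=u_k\circ A_k$. Then $\mathcal E_p(v_k)=\mathcal E_p(u_k)\to S_{\rm aff}$ and $\|v_k\|_{p^*}\to1$, by affine invariance. By \eqref{eq-p}, $\|\nabla v_k\|_p\lesssim\mathcal E_p(v_k)$, so $\{v_k\}$ is bounded in $\dot W^{1,p}$.

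\emph{Step 2: concentration-compactness.} Apply Lions' concentration-compactness principle to $|v_k|^{p^*}$ and $|\nabla v_k|^p$, after first removing the scaling and translation by choosing $\lambda_k,x_k$ with
\[
\int_{B(0,1)}|w_k|^{p^*}=\tfrac12, \qquad w_k:=\mathcal T^{(p^*)}_{\lambda_k,I,x_k}v_k .
\]
Since $\mathcal E_p$ is still a minimizing-type quantity, we must exclude vanishing and dichotomy. Vanishing is excluded by the normalization. To exclude dichotomy, we need a superadditivity property for $\mathcal E_p^p$: if $w=w^1+w^2$ with asymptotically disjoint supports, then $\|\nabla_\xi w\|_p^p\approx\|\nabla_\xi w^1\|_p^p+\|\nabla_\xi w^2\|_p^p$ for every $\xi$. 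The map $\phi\mapsto(\int_{\mathbb S^{n-1}}\phi^{-n/p}\,d\xi)^{-p/n}$ on positive functions $\phi(\xi)$ is superadditive, by the reverse Minkowski inequality for exponent $-n/p<0$. Hence $\mathcal E_p(w)^p\ge\mathcal E_p(w^1)^p+\mathcal E_p(w^2)^p$ up to $o(1)$. Combined with \eqref{eq-affSob} and the strict concavity of $t\mapsto t^{p/p^*}$, this contradicts the splitting of mass. Concentration at points, for the weak limit $w$, is ruled out in the same way, using the measure inequality $\nu_j^{p/p^*}\lesssim\mu_j$ together with the normalization at scale $1$. This yields $w_k\to w$ strongly in $L^{p^*}$, with $w_k\rightharpoonup w$ weakly in $\dot W^{1,p}$ and $\|w\|_{p^*}=1$.

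\emph{Step 3: identify the limit and upgrade to strong convergence.} Each directional norm $\|\nabla_\xi\cdot\|_p$ is weakly lower semicontinuous. By Fatou's lemma applied to the negative power, $\mathcal E_p$ is weakly lower semicontinuous. Therefore
\[
S_{\rm aff}\le \mathcal E_p(w)\le\liminf_k\mathcal E_p(w_k)=S_{\rm aff},
\]
so $w\in\mathcal M_{\rm aff}$. For strong gradient convergence, note that by Step 1 the linear normalization was chosen optimally, so $\|\nabla w_k\|_p\sim\mathcal E_p(w_k)$. We still need $\|\nabla w_k\|_p\to\|\nabla w\|_p$. I would obtain it directionwise, showing $\|\nabla_\xi w_k\|_p\to\|\nabla_\xi w\|_p$ for a.e.\ $\xi$: equality in the Fatou and lower-semicontinuity chain, together with the strict convexity of $s\mapsto s^{-n/p}$, should force convergence of the directional norms in measure on the sphere. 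Integrating over $\xi$ then gives convergence of $\|\nabla w_k\|_p$, via $\int_{\mathbb S^{n-1}}\|\nabla_\xi g\|_p^p\,d\xi=\alpha_{n,p}^p\|\nabla g\|_p^p$ with dominated convergence using the uniform bound. Uniform convexity of $L^p$ then gives $\nabla w_k\to\nabla w$ in $L^p$. Composing the affine maps yields the $\mathcal T^{(p^*)}_{\lambda_k,A_k',x_k}$ in the statement.

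\emph{Main obstacle.} I expect Step 3's upgrade from weak to strong convergence to be the hardest part. Equality in the liminf only controls the harmonic-type average $\int_{\mathbb S^{n-1}}\|\nabla_\xi\cdot\|_p^{-n}$, not each direction individually. If the directional argument fails, I would instead follow a Brezis--Lieb splitting $w_k=w+r_k$. One would then show $\mathcal E_p(w_k)^p\ge\mathcal E_p(w)^p+c\,\mathcal E_p(r_k)^p-o(1)$, using the superadditivity above directionwise, where the Brezis--Lieb lemma holds for each $\|\nabla_\xi\cdot\|_p^p$. Combined with $\|r_k\|_{p^*}\to0$ and $\mathcal E_p(w)=S_{\rm aff}=\lim_k\mathcal E_p(w_k)$, this forces $\mathcal E_p(r_k)\to0$. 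The uniform $\operatorname{SL}(n)$-normalization from Step 1 then gives $\|\nabla r_k\|_p\to0$; this last transfer requires checking that the normalizing matrices for $r_k$ stay bounded, which is where care is needed.
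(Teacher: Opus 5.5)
Your proposal is correct in outline and takes a different route from the paper at both stages.

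\textbf{Stage 1: existence of a single optimal profile.} After the same $\operatorname{SL}(n)$-normalization via \eqref{eq-p}, the paper uses Jaffard's profile decomposition (Lemma~\ref{lem-pdecom}) rather than Lions' two lemmas. The convexity bound \eqref{thm-comp-e2}, the decoupling formulae of Appendix~\ref{App-C}, Arzel\`a--Ascoli for uniformity in $\xi$, Lemma~\ref{lem-BL}\eqref{it2-BL} and the reverse Minkowski inequality give $\lim_k\mathcal E_p(w_k)^p\ge\sum_j\mathcal E_p(V_j)^p$. Strict concavity then leaves a single profile $V\in\mathcal M_{\rm aff}$, which disposes of splitting and concentration in one stroke.

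Your Lions route rests on the same superadditivity, but two points need repair.
\begin{itemize}
\item The normalization must be the L\'evy one, $\sup_y\int_{B(y,1)}|w_k|^{p^*}=\int_{B(\mathbf 0,1)}|w_k|^{p^*}=\frac12$. A bubble concentrating at a point of $\partial B(\mathbf 0,1)$ can satisfy your condition. Total concentration is exactly the configuration in which every inequality of your chain is an equality, so only the normalization can exclude it.
\item Partial atoms are not excluded by the non-sharp bound $\nu_j^{p/p^*}\lesssim\mu_j$. You must either cut the atom off at small scales and rerun your dichotomy argument, or use the weak-$*$ limits $\mu_\xi$ of $|\nabla_\xi w_k|^p\,dx$. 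With the latter you need the sharp bound $S_{\rm aff}^p\nu_j^{p/p^*}\le[\int_{\mathbb S^{n-1}}\mu_\xi(\{x_j\})^{-n/p}\,d\xi]^{-p/n}$, followed by reverse Minkowski.
\end{itemize}
In both places, and in your Fatou step, you also need $\min_\xi\|\nabla_\xi w_k\|_p\gtrsim1$. This follows from the Lipschitz bound in $\xi$ and $\int_{\mathbb S^{n-1}}\|\nabla_\xi w_k\|_p^{-n}\,d\xi\lesssim1$, as in the proof of Theorem~\ref{thm-critical}.

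\textbf{Stage 2: strong convergence.} Here your route is genuinely better, and your ``main obstacle'' is not one. Set $h_k(\xi):=\|\nabla_\xi w_k\|_p$ and $h(\xi):=\|\nabla_\xi w\|_p$.
\begin{itemize}
\item Weak convergence gives $h\le\liminf_k h_k$ in \emph{every} direction, and $h\ge c>0$ since $w\neq0$.
\item By Arzel\`a--Ascoli, every subsequential uniform limit $h_\infty$ satisfies $h_\infty\ge h$ and $\int h_\infty^{-n}=S_{\rm aff}^{-n}=\mathcal E_p(w)^{-n}=\int h^{-n}$. Hence $h_\infty=h$.
\item So it is monotonicity of $s\mapsto s^{-n}$, not strict convexity, that forces $h_k\to h$ uniformly.
\item Then \eqref{eq-lp} gives $\|\nabla w_k\|_p\to\|\nabla w\|_p$, and Radon--Riesz in $L^p(\mathbb R^n;\mathbb R^n)$ concludes.
\end{itemize}
The Brezis--Lieb fallback is therefore unnecessary.

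The paper instead reduces to $V=U$ and uses three ingredients: the inequality $|x+y|^p\ge|x|^p+p|x|^{p-2}xy+c_p|y|^p$ (valid only for $p\ge2$), the radiality of $U$, and the Taylor bound \eqref{thm-comp-e9}. That argument is more quantitative: it bounds $\|\nabla\rho_k\|_p^p$ by the energy excess, in the spirit of Section~\ref{sec-expan}. But it is the only place where $p\ge2$ enters the paper's proof.

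Your closing argument uses neither $p\ge2$ nor radial symmetry. Combined with either your Stage~1 or the paper's profile-decomposition step, it gives the theorem for all $p\in(1,n)$. The paper only anticipates this in the remark following the statement.
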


With some additional work, one can probably prove this theorem for $p\in(1,2)$ as well, but the case $p\geq 2$ is all we need for Theorem \ref{thm-stab} and it allows for a simple argument at the end of the proof.

As we have already mentioned, the analogue of Theorem \ref{thm-comp} in the case of the classical Sobolev inequality is due to Lions \cite{l85}. Since then, there have been alternative proofs of this result; see, for example, \cite{fl22,dt25}. Yet another approach proceeds by profile decomposition, and it is this approach that we generalize to the affine case.

As an input, we need the profile decomposition in $\dot{W}^{1,p}$, which was established by Jaffard in \cite{j99}.
We first recall some necessary concepts.
Let $p\in(0,\infty)$.
For any $\mathfrak{g}:=(\lambda,x_0)\in(0,\infty)\times\mathbb{R}^n$,
define an operator $\mathfrak{g}$ on $L^1_{\operatorname{loc}}$ by setting,
for all $u\in L^1_{\operatorname{loc}}$,
\begin{align*}
\mathfrak{g}(u;p):=\lambda^{-\frac np}u(\lambda^{-1}[\cdot-x_0]);
\end{align*}
then the inverse mapping $\mathfrak{g}^{-1}$ is given by
\begin{align*}
\mathfrak{g}^{-1}(u;p)=\lambda^{\frac np}u(\lambda\cdot+x_0).
\end{align*}
A sequence $\{\mathfrak{g}_k\}_{k\in\mathbb{N}}=\{(\lambda_k,x_k)\}_{k\in\mathbb{N}}$
is said to \emph{tend to infinity} if
$|\log \lambda_k|+|x_k|\to\infty$ as $k\to\infty$.
Two sequences $\{\mathfrak{g}_{1,k}\}_{k\in\mathbb{N}}$
and $\{\mathfrak{g}_{2,k}\}_{k\in\mathbb{N}}$ taking the above form
are said to be \emph{orthogonal} if $\mathfrak{g}_{1,k}^{-1}\mathfrak{g}_{2,k}$
tends to infinity.
Then we have the following profile decomposition (see \cite{j99}).

\begin{lemma}\label{lem-pdecom}
Let $p\in(1,n)$. If $\{w_k\}_{k\in\mathbb{N}}$
is a bounded sequence in $\dot{W}^{1,p}$, then
there are $\{V_j\}_{j\in\mathbb{N}}$
in $\dot{W}^{1,p}$, $\{g_{j,k}\}_{j,k\in\mathbb{N}}
=\{(\lambda_{j,k},x_{j,k})\}_{j,k\in\mathbb{N}}$
in $(0,\infty)\times\mathbb{R}^n$, and $\{r_{k}^J\}_{J,k\in\mathbb{N}}$
in $\dot W^{1,p}$ such that
\begin{enumerate}[{\rm(i)}]
  \item\label{it1-pdecom} for all $J,k\in\mathbb{N}$,
  \begin{align*}
  w_k=\sum_{j=1}^{J}\mathfrak{g}_{j,k}(V_j;p^*)
  +r_k^{J};
  \end{align*}
  \item\label{it2-pdecom} for any $j_1,j_2\in\mathbb{N}$ with $j_1\ne j_2$,
  $\{\mathfrak{g}_{j_1,k}\}_{k\in\mathbb{N}}$
  and $\{\mathfrak{g}_{j_2,k}\}_{k\in\mathbb{N}}$ are orthogonal;
  \item\label{it3-pdecom} for any $J\in\mathbb{N}$ and $j\in\{1,\ldots,J\}$,
  $\mathfrak{g}_{j,k}^{-1}(r_k^J;p^*)\rightharpoonup0$ in $\dot{W}^{1,p}$
  as $k\to\infty$, and
  \begin{align*}
  \lim_{J\to\infty}\limsup_{k\to\infty}
  \|r_k^J\|_{p^*}=0.
  \end{align*}
  Here and in what follows, the notation $\rightharpoonup$ denotes \emph{weak convergence}.
\end{enumerate}
\end{lemma}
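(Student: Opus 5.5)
The plan is to prove the lemma by the standard iterative extraction of profiles. The sizes of the profiles are measured in $L^{p^*}$, where a Brezis--Lieb decoupling is available, and they are linked to the remainder through a refined Sobolev inequality. We avoid any decoupling of the $\dot W^{1,p}$-norm, which fails for $p\ne 2$ without a.e. convergence of gradients.

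\emph{Step 1 (extraction).} For a bounded sequence $\{h_k\}$ in $\dot W^{1,p}$, let $\mathcal V(\{h_k\})$ be the set of all weak limits in $\dot W^{1,p}$ of $\mathfrak g_k^{-1}(h_k;p^*)$ along subsequences, over all choices $\mathfrak g_k=(\lambda_k,x_k)$. Set $\eta(\{h_k\}):=\sup\{\|V\|_{p^*}:V\in\mathcal V\}$. Put $r_k^0:=w_k$. Given $r_k^{J-1}$, choose $V_J\in\mathcal V(\{r_k^{J-1}\})$ and $\mathfrak g_{J,k}$ with $\|V_J\|_{p^*}\ge\frac12\eta(\{r_k^{J-1}\})$, pass to a subsequence (diagonalizing in $J$), and set $r_k^J:=r_k^{J-1}-\mathfrak g_{J,k}(V_J;p^*)$. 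Since $\mathfrak g$ acts isometrically on $\dot W^{1,p}$, item (i) holds and $\{r_k^J\}_k$ stays bounded. By construction $\mathfrak g_{J,k}^{-1}(r_k^J;p^*)\rightharpoonup0$.

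\emph{Step 2 (orthogonality).} We prove (ii) by induction. Suppose $\mathfrak g_{i,k}^{-1}\mathfrak g_{J,k}$ does not tend to infinity for some $i<J$. Then along a subsequence it converges to some $\mathfrak g$. Because $\mathfrak g_{i,k}^{-1}(r_k^{J-1};p^*)\rightharpoonup0$ (this uses the inductive hypothesis that earlier profiles are mutually orthogonal, together with the fact that an orthogonal translate of a fixed profile converges weakly to $0$), we get $\mathfrak g_{J,k}^{-1}(r_k^{J-1};p^*)\rightharpoonup \mathfrak g^{-1}(0)=0$. This contradicts $V_J\ne0$. The same mechanism gives the weak convergence in (iii) for every $j\le J$.

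\emph{Step 3 (decay of $\eta$).} By Rellich's theorem on balls, after passing to subsequences, $\mathfrak g_{j,k}^{-1}(w_k;p^*)\to V_j+(\text{terms tending to }0)$ a.e. Iterating the Brezis--Lieb lemma in $L^{p^*}$ and using orthogonality then yields
\[
\sum_{j\ge1}\|V_j\|_{p^*}^{p^*}\le \limsup_k\|w_k\|_{p^*}^{p^*}<\infty .
\]
Hence $\|V_J\|_{p^*}\to0$, and therefore $\eta_J:=\eta(\{r_k^J\})\to0$.

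\emph{Step 4 (smallness of the remainder).} We use the refined Sobolev inequality
\[
\|f\|_{p^*}\lesssim\|\nabla f\|_p^{\,p/p^*}\,\|f\|_{\dot B^{-(n-p)/p}_{\infty,\infty}}^{\,1-p/p^*}.
\]
It remains to show $\limsup_k\|r_k^J\|_{\dot B}\lesssim\eta_J^{\theta}$ for some $\theta>0$. Suppose the Besov norm exceeds $\varepsilon$. Then there are $\lambda_k,x_k$ with
\[
\lambda_k^{(n-p)/p}\,|P_{\lambda_k}r_k^J(x_k)|\ge\varepsilon/2 .
\]
This quantity is the pairing of $\mathfrak g_k^{-1}(r_k^J;p^*)$ with a fixed Schwartz function $\psi$. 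Passing to a weak limit $V$ gives $|\langle V,\psi\rangle|\ge\varepsilon/2$, and hence $\|V\|_{p^*}\gtrsim\varepsilon$. So $\eta_J\gtrsim\varepsilon$, which proves the second part of (iii).

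The main obstacle I expect is Step 3. It requires justifying the a.e. convergence along a common diagonal subsequence for all $j$, and showing that cross terms between orthogonal profiles vanish in the iterated Brezis--Lieb argument. The latter is where orthogonality is genuinely used, splitting into the cases of diverging scales and diverging translations at comparable scales.
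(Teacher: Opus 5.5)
The overall scheme is the standard one: iterative extraction, orthogonality by induction, and a smallness argument through a refined Sobolev inequality in which the $\dot B^{-(n-p)/p}_{\infty,\infty}$ blocks are pairings of $\mathfrak g^{-1}(\cdot;p^*)$ with a fixed test function. The paper does not prove the lemma itself; it cites Jaffard \cite{j99}, whose argument follows the same plan with wavelet coefficients in place of your pairings. Steps 1--3 are sound.

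The gap is in Step 4. The refined inequality only gives
\[
\limsup_{k\to\infty}\|r_k^J\|_{p^*}\lesssim\Big(\limsup_{k\to\infty}\|\nabla r_k^J\|_p\Big)^{p/p^*}\eta_J^{\,1-p/p^*}.
\]
So $\eta_J\to0$ yields the second part of (iii) only if $\limsup_k\|\nabla r_k^J\|_p$ is bounded \emph{independently of $J$}. What you actually have is a bound in $k$ for each fixed $J$, namely $\|\nabla r_k^J\|_p\le\|\nabla w_k\|_p+\sum_{j\le J}\|\nabla V_j\|_p$. Step 3 controls only $\sum_j\|V_j\|_{p^*}^{p^*}$, which says nothing about $\sum_j\|\nabla V_j\|_p^p$, since the Sobolev inequality goes the wrong way. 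For $p=2$ the needed bound comes from Pythagoras. For $p\neq2$ you explicitly avoid any $\dot W^{1,p}$ decoupling, so as written a growing gradient norm of the remainders could cancel the decay of $\eta_J$.

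What is missing is a one-sided decoupling. It holds for every $p\in(1,\infty)$ and is exactly the convexity argument of \eqref{thm-comp-e2}--\eqref{thm-comp-e7}. Set $G_k^J:=\sum_{j\le J}\mathfrak g_{j,k}(V_j;p^*)$. Convexity of $|\cdot|^p$ on $\mathbb R^n$ gives
\[
\|\nabla w_k\|_p^p\ge\|\nabla G_k^J\|_p^p+p\int_{\mathbb R^n}|\nabla G_k^J|^{p-2}\nabla G_k^J\cdot\nabla r_k^J .
\]
By \eqref{lem-decou-ee}, whose proof works verbatim for $\mathbb R^n$-valued functions, together with the fixed-$J$ bound on $\nabla r_k^J$, the cross term equals $\sum_{j\le J}\int|\nabla V_j|^{p-2}\nabla V_j\cdot\nabla\mathfrak g_{j,k}^{-1}(r_k^J;p^*)+o(1)$. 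This tends to $0$ by the weak convergence you proved in Steps 1--2. By \eqref{lem-decou-e0}, $\|\nabla G_k^J\|_p^p\to\sum_{j\le J}\|\nabla V_j\|_p^p$. Hence $\sum_j\|\nabla V_j\|_p^p\le\liminf_k\|\nabla w_k\|_p^p$, and so $\limsup_k\|\nabla r_k^J\|_p\le 2\sup_k\|\nabla w_k\|_p$ for every $J$. This closes Step 4.

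Two side remarks.
\begin{enumerate}
\item Step 3 needs neither orthogonality nor cross terms. Rellich gives $\mathfrak g_{J,k}^{-1}(r_k^{J-1};p^*)\to V_J$ a.e.\ along a subsequence, and Brezis--Lieb then gives $\|r_k^{J-1}\|_{p^*}^{p^*}=\|V_J\|_{p^*}^{p^*}+\|r_k^J\|_{p^*}^{p^*}+o(1)$, which iterates directly. The obstacle you anticipate there is not the real one.
\item Your passage to a subsequence is necessary, not just convenient. The lemma as printed omits it, but it fails for $w_k$ alternating between $U$ and $2U$. This is harmless for the paper's applications.
\end{enumerate}
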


We also need some technical lemmas
that are presented in Appendices \ref{App-B} and \ref{App-C}.

\begin{proof}[Proof of Theorem \ref{thm-comp}]
By the assumption on $\{u_k\}_{k\in\mathbb{N}}$,
we find that $\{\mathcal{E}_p(u_k)\}_{k\in\mathbb{N}}$
is bounded. Consequently, by \eqref{eq-p}, for each $k\in\mathbb{N}$ there is a
$T_k\in\operatorname{SL}(n)$ such that $\|\nabla(u_k\circ T_k)\|_p\lesssim
\mathcal{E}_p(u_k)\lesssim1$.
This shows that $\{w_k\}_{k\in\mathbb{N}}:=\{u_k\circ T_k\}_{k\in\mathbb{N}}$
is a bounded sequence in $\dot{W}^{1,p}$.
Then, applying the profile decomposition, the conclusion of Lemma \ref{lem-pdecom}
holds for $\{w_k\}_{k\in\mathbb{N}}$ here. We use the same notation as
in Lemma \ref{lem-pdecom}.
By \eqref{eq-pdecom},
we obtain
\begin{align}\label{thm-comp-e1}
1=\lim_{k\to\infty}\|u_k\|_{p^*}^{p^*}=\lim_{k\to\infty}\|w_k\|_{p^*}^{p^*}
=\sum_{j=1}^{\infty}\|V_j\|_{p^*}^{p^*}.
\end{align}

Now, we turn to deal with the directional derivative terms.
For any $J,k\in\mathbb{N}$, define
$G_k^J:=\sum_{j=1}^{J}\mathfrak{g}_{j,k}(V_j;p^*)$.
Then, from Lemma \ref{lem-pdecom}\eqref{it1-pdecom} and
the convexity of the function $t\mapsto t^p$, it follows that,
for any $\xi\in\mathbb{S}^{n-1}$,
\begin{align}\label{thm-comp-e2}
\|\nabla_\xi w_k\|_p^p
\ge\|\nabla_\xi G_k^J\|_p^p+
p\int_{\mathbb{R}^n}|\nabla_\xi G_k^J|^{p-2}\nabla_\xi G_k^J
\nabla_\xi r_k^J.
\end{align}
Observe that, since $n(\frac1p - \frac1{p^*}) =1$,
\begin{align}\label{thm-comp-e4}
\nabla_\xi\mathfrak{g}_{j,k}(V_j;p^*)
&=\lambda_{j,k}^{-\frac np}
\nabla_\xi V_j\left(\frac{\cdot-x_{j,k}}{\lambda_{j,k}}\right)
=\mathfrak{g}_{j,k}(\nabla_\xi V_j;p).
\end{align}
Thus, by Lemma \ref{lem-pdecom}\eqref{it2-pdecom} and
\eqref{lem-decou-ee}, we find that, when $k\to\infty$,
$$
\left\|\sum_{j=1}^{J}|\nabla_\xi\mathfrak{g}_{j,k}(V_j;p^*)|^{p-2}
\nabla_\xi\mathfrak{g}_{j,k}(V_j;p^*)-|\nabla_\xi G_k^J|^{p-2}\nabla_\xi G_k^J\right\|_{p'}
\to0.
$$
This, combined with Lemma \ref{lem-pdecom}\eqref{it3-pdecom}, further implies that
$\|\nabla r^J_k\|_{p}$ is uniformly bounded in terms of $k$ and hence,
when $k\to\infty$,
\begin{align}\label{thm-comp-e3}
\int_{\mathbb{R}^n}|\nabla_\xi G_k^J|^{p-2}\nabla_\xi G_k^J
\nabla_\xi r_k^J
=\sum_{j=1}^{J}\int_{\mathbb{R}^n}|\nabla_\xi\mathfrak{g}_{j,k}(V_j;p^*)|^{p-2}
\nabla_\xi\mathfrak{g}_{j,k}(V_j;p^*)\nabla_\xi r_k^J+o(1).
\end{align}
From \eqref{thm-comp-e4} and a change of variables,
it follows that
\begin{align}\label{thm-comp-e5}
\int_{\mathbb{R}^n}|\nabla_\xi\mathfrak{g}_{j,k}(V_j;p^*)|^{p-2}
\nabla_\xi\mathfrak{g}_{j,k}(V_j;p^*)\nabla_\xi r_k^J
&=\int_{\mathbb{R}^n}|\mathfrak{g}_{j,k}(\nabla_\xi V_j;p)|^{p-2}
\mathfrak{g}_{j,k}(\nabla_\xi V_j;p)\nabla_\xi r_k^J\notag\\
&=\int_{\mathbb{R}^n}|\nabla_\xi V_j|^{p-2}
\nabla_\xi V_j\mathfrak{g}_{j,k}^{-1}(\nabla_\xi r_k^J;p)\notag\\
&=\int_{\mathbb{R}^n}|\nabla_\xi V_j|^{p-2}
\nabla_\xi V_j\nabla_\xi\mathfrak{g}_{j,k}^{-1}(r_k^J;p^*).
\end{align}
Note that
$\Phi_{j,\xi}:u\mapsto
\int_{\mathbb{R}^n}|\nabla_\xi V_j|^{p-2}\nabla_\xi V_j\nabla_\xi u$
is a bounded linear functional on $\dot{W}^{1,p}$;
indeed, using H\"older's inequality,
we find that, for any
$u\in\dot{W}^{1,p}$,
\begin{align}\label{thm-comp-e8}
|\Phi_{j,\xi}(u)|
\le \int_{\mathbb{R}^n}|\nabla V_j|^{p-1}|\nabla u|
\le\left(\int_{\mathbb{R}^n}|\nabla V_j|^p\right)^{1-\frac1p}
\left(\int_{\mathbb{R}^n}|\nabla u|^p\right)^{\frac1p}.
\end{align}
Consequently, Lemma \ref{lem-pdecom}\eqref{it3-pdecom},
combined with \eqref{thm-comp-e3} and \eqref{thm-comp-e5},
further implies that, for all $\xi\in\mathbb{S}^{n-1}$ and $J\in\mathbb{N}$,
\begin{align}\label{thm-comp-e6}
\lim_{k\to\infty}\int_{\mathbb{R}^n}|\nabla_\xi G_k^J|^{p-2}\nabla_\xi G_k^J
\nabla_\xi r_k^J=0.
\end{align}
On the other hand, by \eqref{lem-decou-e0} and \eqref{thm-comp-e4}, we conclude that,
when $k\to\infty$,
\begin{align*}
\|\nabla_\xi G_k^J\|_{p}^p
=\left\|\sum_{j=1}^{J}\mathfrak{g}_{j,k}(\nabla_\xi V_j;p)\right\|_p^p
=\sum_{j=1}^{J}\|\nabla_\xi V_j\|_p^p+o(1).
\end{align*}
From this, \eqref{thm-comp-e2}, and \eqref{thm-comp-e6}, we further infer that,
for all $J\in\mathbb{N}$,
when $k\to\infty$,
\begin{align}\label{thm-comp-e7}
\|\nabla_\xi w_k\|_p^p
\ge\sum_{j=1}^{J}\|\nabla_\xi V_j\|_p^p+o(1).
\end{align}

We need the uniformity with respect to $\xi\in\mathbb S^{n-1}$ of this asymptotic lower bound. Note that the remainder term $o(1)$ in \eqref{thm-comp-e7} is
\begin{align*}
\rho_k^J(\xi):=\|\nabla_\xi G_k^J\|_{p}^p
-\sum_{j=1}^{J}\|\nabla_\xi V_j\|_p^p+p
\int_{\mathbb{R}^n}|\nabla_\xi G_k^J|^{p-2}\nabla_\xi G_k^J
\nabla_\xi r_k^J.
\end{align*}
It is easy to verify that $\rho_k^J \in C(\mathbb{S}^{n-1})$.
We have proved that $\rho_k^J\to0$ pointwise as $k\to\infty$.
But, if we aim to further apply the
nonlinear Brezis--Lieb lemma given in Lemma \ref{lem-BL}\eqref{it2-BL}, we still need the
uniformity in $\xi\in\mathbb S^{n-1}$ of this convergence.

Observe that $\mathfrak{g}_{j,k}(\cdot;p^*)$ preserves
$\|\cdot\|_{\dot{W}^{1,p}}$, we find that
$\{G_{k}^J\}_{k\in\mathbb{N}}$ is a bounded sequence
in $\dot{W}^{1,p}$.
Hence, applying H\"older's inequality similar to
\eqref{thm-comp-e8} and applying the uniform boundedness
of $\|\nabla r^J_k\|_{p}$ in terms of $k$,
we can easily conclude that the sequence $\{\rho_k^J\}_{k\in\mathbb{N}}$
is equicontinuous and bounded in $C(\mathbb{S}^{n-1})$.
Then, from the Arzel\`a--Ascoli theorem,
it follows that, for any subsequence $\{\rho_{k_j}\}_{j\in\mathbb{N}}$,
there are a further subsequence $\{\rho^J_{k_{j_\ell}}\}_{\ell\in\mathbb{N}}$
and a function $\rho^J\in C(\mathbb{S}^{n-1})$
such that $\rho^J_{k_{j_\ell}}\to\rho^J$ in $C(\mathbb{S}^{n-1})$ as $\ell\to\infty$.
Noting that $\rho_k^J\to0$ pointwise as $k\to\infty$, we have
$\rho^J\equiv0$.
Then, by the arbitrariness of the above subsequence,
we find that the whole sequence $\{\rho_k^J\}_{k\in\mathbb{N}}$
converges to $0$ in $C(\mathbb{S}^{n-1})$
(that is, converges to $0$ uniformly). This proves the claimed uniformity.

Meanwhile, from \eqref{thm-comp-e1},
it follows that there is a
$J_0\in\mathbb{N}$ such that $V_{J_0}\ne 0$.
Then $\xi\mapsto\|\nabla_\xi V_{J_0}\|_p$
is a norm on $\mathbb{R}^n$ and hence
there is a $C_0\in(0,\infty)$ such that
$\|\nabla_\xi V_{J_0}\|_p\ge C_0$ for all
$\xi\in\mathbb{S}^{n-1}$
(see, for instance, \cite[(3.1)]{n16}).
Consequently,
by the fact that $\{\rho_k^J\}_{k\in\mathbb{N}}$
converges to $0$ uniformly, we conclude that,
for any $\xi\in\mathbb{S}^{n-1}$,
any $J>J_0$, and any sufficiently large $k$,
\begin{align*}
\sum_{j=1}^{J}\|\nabla_\xi V_j\|_p^p+\rho_k^J(\xi)\ge
\frac{C_0^p}{2}.
\end{align*}
This, together with Lemma \ref{lem-BL}\eqref{it2-BL} and \eqref{thm-comp-e7},
further implies that, for all $J>J_0$, when $k\to\infty$,
\begin{align*}
\mathcal{E}_p(w_k)^p
&\ge\left(\int_{\mathbb{S}^{n-1}}\left[\sum_{j=1}^{J}
\|\nabla_\xi V_j\|_p^p+\rho_k^J(\xi)\right]^{-\frac np}\,d\xi\right)^{-\frac pn}\\
&=\left(\int_{\mathbb{S}^{n-1}}\left[\sum_{j=1}^{J}
\|\nabla_\xi V_j\|_p^p\right]^{-\frac np}\,d\xi\right)^{-\frac pn}
+o(1)\\
&\ge\sum_{j=1}^{J}\mathcal{E}_p(V_j)^p+o(1)
\end{align*}
by the reverse Minkowski inequality (Lemma \ref{lem:minkowskireverse}).

Applying this, the assumption on $w_k$, and the arbitrariness of $J$, we then obtain
\begin{align*}
S_{\rm aff}^p
&=\lim_{k\to\infty}\mathcal{E}_p(w_k)^p
\ge\sum_{j=1}^{\infty}\mathcal{E}_p(V_j)^p\\
&\ge S_{\rm aff}^p\sum_{j=1}^{\infty}\|V_j\|_{p^{*}}^{p}
\quad\text{by \eqref{eq-affSob}}\\
&\ge S_{\rm aff}^p
\left(\sum_{j=1}^{\infty}\|V_j\|_{p^*}^{p^*}\right)^{\frac{p}{p^*}}
\quad\text{by the concavity of $t\mapsto t^{\frac{p}{p^*}}$}\\
&=S_{\rm aff}^p\quad\text{by \eqref{thm-comp-e1}}.
\end{align*}
This shows that all the above inequalities should be equalities.
Therefore, due to the strict concavity of $t\mapsto t^{\frac{p}{p^*}}$,
there is a unique non-zero function in $\{V_j\}_{j\in\mathbb{N}}$,
which we denote by $V$.
Then $\mathcal{E}_p(V)=S_{\rm aff}$ and $\|V\|_{p^*}=1$.
This implies $V\in\mathcal{M}_{\rm aff}$.
Moreover, by Lemma \ref{lem-pdecom}\eqref{it1-pdecom},
there are a sequence $\{\mathfrak{g}_k\}_{k\in\mathbb{N}}
=\{(\lambda_k,x_k)\}_{k\in\mathbb{N}}$
in $(0,\infty)\times\mathbb{R}^n$ and a sequence
$\{r_k\}_{k\in\mathbb{N}}$ in $L^{p^*}$ such that,
for every $k\in\mathbb{N}$,
\begin{align*}
w_k=\mathfrak{g}_k(V;p^*)+r_k\quad
\text{and}\quad\mathfrak{g}_k^{-1}(r_k;p^*)\rightharpoonup0\
\text{in $\dot{W}^{1,p}$ as $k\to\infty$}.
\end{align*}
Hence,
\begin{align*}
\mathfrak{g}_k^{-1}(w_k;p^*)=V+\mathfrak{g}_k^{-1}(r_k;p^*).
\end{align*}

Since $\mathfrak{g}_k^{-1}(w_k;p^*)$ takes the form
$\mathcal{T}^{(p^*)}_{\lambda_k,A_k,x_k}u_k$ with some $A_k\in \operatorname{SL}(n)$, we infer that,
if we can show $$\mathfrak{g}_k^{-1}(r_k;p^*)\to0\
\text{in $\dot{W}^{1,p}$ as $k\to\infty$,}$$
then we complete the present proof.
Furthermore, by \eqref{def-min}, the fact that
the mapping $\mathcal{T}^{(p^*)}_{\lambda_k,A_k,x_k}$ preserves both $\mathcal{E}_p(\cdot)$
and $\|\cdot\|_{p^*}$, and the weak convergence,
we can assume that $V=U$. That is,
we only need to prove that, if a sequence $\{\rho\}_{k\in\mathbb{N}}$
in $\dot{W}^{1,p}$ satisfies,
as $k\to\infty$,
$\mathcal{E}_{p}(U+\rho_k)\to S_{\rm aff}$ and
$\rho_k\rightharpoonup0$
in $\dot{W}^{1,p}$, then
$\rho_k\to0$ in $\dot{W}^{1,p}$.

We now show this assertion. Indeed,
applying $p\ge 2$ and Taylor's formula
(see also \cite[Lemma 2.1(ii)]{fz22}),
there is a positive constant $c_p$, depending only on
$p$, such that, for any $x,y\in\mathbb{R}$,
$$|x+y|^p\ge |x|^p+p|x|^{p-2}xy+c_p|y|^p.$$ (This is the point where the assumption $p\geq 2$ enters our proof.)
Thus, for all $\xi\in\mathbb{S}^{n-1}$ and $k\in\mathbb{N}$,
\begin{align*}
\|\nabla_\xi (U+\rho_k)\|_p^p
\ge \|\nabla_\xi U\|_p^p
+p\int_{\mathbb{R}^n}|\nabla_\xi U|^{p-2}
\nabla_\xi U\nabla_\xi \rho_k
+c_p\|\nabla_\xi \rho_k\|_p^p.
\end{align*}
Similar to the analysis of $\{\rho_k^J\}_{k\in\mathbb{N}}$
above (and even easier), we deduce that, as $k\to\infty$,
\begin{align*}
\mathcal{E}_p(U+\rho_k)^p
&\ge\left[\int_{\mathbb{S}^{n-1}}
\left( \|\nabla_\xi U\|_p^p
+c_p\|\nabla_\xi \rho_k\|_p^p\right)^{-\frac np}\,d\xi\right]^{-\frac pn}
+o(1)\\
&\ge S_{\rm aff}^p+o(1)\quad\text{by $\mathcal{E}_p(U)=S_{\rm aff}$}.
\end{align*}
This, combined with the assumption
$\mathcal{E}_p(U+\rho_k)\to S_{\rm aff}$,
further implies that
\begin{align}\label{thm-comp-e10}
\lim_{k\to\infty}\left[\int_{\mathbb{S}^{n-1}}
\left( \|\nabla_\xi U\|_p^p
+c_p\|\nabla_\xi \rho_k\|_p^p\right)^{-\frac np}\,d\xi\right]^{-\frac pn}
=S_{\rm aff}^p.
\end{align}
Since $U$ is radial, we have $\|\nabla_\xi U\|_p^p=a$
with some absolute positive constant $a$ independent of $\xi$.
Then, using the boundedness of $\{\rho_k\}_{k\in\mathbb{N}}$
in $\dot{W}^{1,p}$ and Taylor's formula, we find that,
for any $k\in\mathbb{N}$ and $\xi\in\mathbb{S}^{n-1}$, there is a $t\in(0,1)$
such that
\begin{align}\label{thm-comp-e9}
a^{-\frac np}-
\left(a+c_p\|\nabla_\xi \rho_k\|_p^p\right)^{-\frac np}
=\frac npc_p\left(a+c_pt\|\nabla_\xi \rho_k\|_p^p\right)^{-1-\frac np}
\|\nabla_\xi\rho_k\|_p^p
\gtrsim\|\nabla_\xi\rho_k\|_p^p,
\end{align}
where the implicit positive constant depends only on $n$ and $p$.
In addition, observe that
$S_{\rm aff}=\mathcal{E}_p(U)=a^{\frac1p}|\mathbb{S}^{n-1}|^{-\frac1n}$.
From this and \eqref{thm-comp-e9}, it further follows that,
when $k\to\infty$,
\begin{align*}
\|\nabla\rho_k\|_{p}^p
&\sim\int_{\mathbb{S}^{n-1}}\|\nabla_\xi\rho_k\|_{p}^p\,d\xi
\lesssim\int_{\mathbb{S}^{n-1}}
\left( a^{-\frac np}-\left(a+c_p\|\nabla_\xi \rho_k\|_p^p\right)^{-\frac np}\right) d\xi\\
&=S_{\rm aff}^{-n}-\int_{\mathbb{S}^{n-1}}
\left( \|\nabla_\xi U\|_p^p
+c_p\|\nabla_\xi \rho_k\|_p^p\right)^{-\frac np}\,d\xi\to0
\quad\text{by \eqref{thm-comp-e10}.}
\end{align*}
This shows $\lim_{k\to\infty}\rho_k=0$ in $\dot{W}^{1,p}$
and hence completes the present proof.
\end{proof}

Moreover, this
proof gives the following consequence, which may be of
independent interest.

\begin{corollary}
Let $p\in[2,n)$, and let $U$ be as in \eqref{eq:standardminimizer} and
\eqref{eq:standardminimizernorm}. If a sequence $\{\rho\}_{k\in\mathbb{N}}$
in $\dot{W}^{1,p}$ satisfies that,
as $k\to\infty$,
$$\mathcal{E}_{p}(U+\rho_k)\to S_{\rm aff}\quad\text{and}
\quad\rho_k\rightharpoonup0\
\text{in $\dot{W}^{1,p}$},
$$
then
$\rho_k\to0$ in $\dot{W}^{1,p}$.
\end{corollary}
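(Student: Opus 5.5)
The argument is exactly the last part of the proof of Theorem \ref{thm-comp}, isolated as a standalone statement.

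\emph{Step 1: pointwise lower bound.} Since $p\ge 2$, there is $c_p>0$ with $|x+y|^p\ge |x|^p+p|x|^{p-2}xy+c_p|y|^p$ for all real $x,y$ \cite[Lemma 2.1(ii)]{fz22}. Integrating with $x=\nabla_\xi U$ and $y=\nabla_\xi\rho_k$ gives, for every $\xi\in\mathbb S^{n-1}$,
$$\|\nabla_\xi(U+\rho_k)\|_p^p\ge a+p\,\Phi_\xi(\rho_k)+c_p\|\nabla_\xi\rho_k\|_p^p,$$
where $a=\|\nabla_\xi U\|_p^p$ is independent of $\xi$ because $U$ is radial, and $\Phi_\xi(\rho)=\int|\nabla_\xi U|^{p-2}\nabla_\xi U\,\nabla_\xi\rho$.

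\emph{Step 2: the linear term vanishes uniformly.} Each $\Phi_\xi$ is a bounded linear functional on $\dot W^{1,p}$ by H\"older, exactly as in \eqref{thm-comp-e8}. Weak convergence $\rho_k\rightharpoonup0$ therefore gives $\Phi_\xi(\rho_k)\to0$ for each fixed $\xi$. Weak convergence also makes $\|\nabla\rho_k\|_p$ bounded, so $\xi\mapsto\Phi_\xi(\rho_k)$ is uniformly bounded and equi-Lipschitz on the sphere. Arzel\`a--Ascoli upgrades the pointwise convergence to uniform convergence, as was done for $\rho_k^J$. Because $a>0$, the bracket stays uniformly bounded below, and the map $t\mapsto t^{-n/p}$ is uniformly continuous there. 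Monotonicity of $\mathcal E_p$ in the directional norms then yields
$$\mathcal E_p(U+\rho_k)^p\ge\Big[\int_{\mathbb S^{n-1}}(a+c_p\|\nabla_\xi\rho_k\|_p^p)^{-n/p}d\xi\Big]^{-p/n}+o(1).$$

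\emph{Step 3: conclusion.} The bracket on the right is at least $S_{\rm aff}^p=a|\mathbb S^{n-1}|^{-p/n}$. Since the left side tends to $S_{\rm aff}^p$ by hypothesis, $\int_{\mathbb S^{n-1}}(a+c_p\|\nabla_\xi\rho_k\|_p^p)^{-n/p}d\xi\to S_{\rm aff}^{-n}$. By the mean value theorem,
$$a^{-n/p}-(a+c_p s)^{-n/p}\gtrsim s \quad\text{for } s \text{ in a bounded range},$$
and this bounded range holds because $\rho_k$ is bounded. Integrating over the sphere gives $\int_{\mathbb S^{n-1}}\|\nabla_\xi\rho_k\|_p^p\,d\xi\to0$. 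Fubini shows $\int_{\mathbb S^{n-1}}\|\nabla_\xi\rho\|_p^p\,d\xi=\alpha_{n,p}^p\|\nabla\rho\|_p^p$, so $\rho_k\to0$ in $\dot W^{1,p}$.

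The only delicate point is Step 2: upgrading pointwise to uniform convergence in $\xi$ via equicontinuity. This is handled exactly as in the proof of Theorem \ref{thm-comp}.
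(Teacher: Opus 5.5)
Your proposal is correct and follows the paper's own argument, which derives this corollary from the final step of the proof of Theorem~\ref{thm-comp}. That step uses the same ingredients as yours: the Figalli--Zhang inequality $|x+y|^p\ge |x|^p+p|x|^{p-2}xy+c_p|y|^p$, uniform-in-$\xi$ vanishing of the linear term via equicontinuity and Arzel\`a--Ascoli, the mean value theorem for $t\mapsto t^{-n/p}$, and the identity $\int_{\mathbb S^{n-1}}\|\nabla_\xi\rho\|_p^p\,d\xi=\alpha_{n,p}^p\|\nabla\rho\|_p^p$.
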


If, in this statement, the convergence of the affine energy is replaced by
$$
\|\nabla(U+\rho_k)\|_p\to S_{\rm Sob},
$$
then the conclusion follows immediately from the Radon--Riesz property
(or equivalently, the uniform convexity) of $L^p$. The corollary
above can therefore be viewed as an affine analogue of this fact.

The following theorem gives the affine Struwe compactness result
for single bubble. We recall that the affine Euler--Lagrange residual $\delta_{\rm aff}(\cdot)$ is defined in \eqref{eq:elresidual}.

\begin{theorem}\label{thm-critical}
Let $p\in[2,n)$, and let
$\{u_k\}_{k\in\mathbb{N}}$ in $\dot{W}^{1,p}$ satisfy
\begin{align}\label{thm-critical-e1}
\frac12 \, S_{\rm aff}^p\le\mathcal{E}_p(u_k)^p\le\frac32 \, S_{\rm aff}^p
\quad\text{for every}\ k\in\mathbb N
\qquad\text{and}\qquad
\lim_{k\to\infty}\delta_{\rm aff}(u_k)=0.
\end{align}
Then, possibly passing to a subsequence,
there are $u\in\mathcal{M}_{\rm aff}^{(1)}$, $\{\lambda_k\}_{k\in\mathbb{N}}$
in $(0,\infty)$,
$\{x_k\}_{k\in\mathbb{N}}$
in $\mathbb{R}^n$, and $\{A_k\}_{k\in\mathbb{N}}$ in
$\operatorname{SL}(n)$ such that
$\mathcal{T}^{(p^*)}_{\lambda_k,A_k,x_k}u_k$
converges to $u$ in $\dot{W}^{1,p}$ as $k\to\infty$.
\end{theorem}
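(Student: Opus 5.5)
We first normalize in the affine direction. By \eqref{eq-p}, for each $k$ we choose $T_k\in\operatorname{SL}(n)$ such that $w_k:=u_k\circ T_k$ satisfies $\|\nabla w_k\|_p\lesssim\mathcal{E}_p(u_k)\lesssim 1$. Note that $\mathcal{E}_p(w_k)=\mathcal{E}_p(u_k)$. Since $T_k\in\operatorname{SL}(n)$, the residual $\delta_{\rm aff}$ is invariant under composition with $T_k$. Hence
$$\Delta_p^{\rm aff}(w_k)+S_{\rm aff}^p|w_k|^{p^*-2}w_k\to0\quad\text{in }\dot W^{-1,p'}.$$
Testing this against $w_k$ gives $\mathcal{E}_p(w_k)^p-S_{\rm aff}^p\|w_k\|_{p^*}^{p^*}\to0$. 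Therefore $\|w_k\|_{p^*}^{p^*}\in[\frac12,\frac32]+o(1)$, and in particular $w_k$ does not vanish. We then apply the profile decomposition, Lemma~\ref{lem-pdecom}, to $\{w_k\}$. Since $\lim_J\limsup_k\|r^J_k\|_{p^*}=0$ and $\|w_k\|_{p^*}$ stays away from zero, some profile $V_1\neq0$ exists. Let $v_k:=\mathfrak g_{1,k}^{-1}(w_k;p^*)$. Then $v_k\rightharpoonup V_1$ in $\dot W^{1,p}$, and $v_k$ is again of the form $\mathcal{T}^{(p^*)}_{\lambda_k,A_k,x_k}u_k$. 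Since this map commutes with $\Delta^{\rm aff}_p$ (use \eqref{eq-T} together with scaling), $v_k$ is still a Palais--Smale sequence in the sense above.

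Next we pass to the limit in the equation. The affine energy $\xi\mapsto\|\nabla_\xi v_k\|_p$ is a norm that is uniformly bounded above and below: the lower bound comes from the energy lower bound $\frac12S_{\rm aff}^p$ together with \eqref{eq-p}. Passing to a subsequence, we may therefore assume $\|\nabla_\xi v_k\|_p\to N(\xi)$ uniformly on $\mathbb S^{n-1}$, with $N>0$. The flux $\int_{\mathbb S^{n-1}}\|\nabla_\xi v_k\|_p^{-n-p}|\nabla_\xi v_k|^{p-2}\nabla_\xi v_k\,\xi\,d\xi$ is then an anisotropic $p$-Laplace flux with a converging weight, namely $\nabla(H_k^p/p)(\nabla v_k)$ with $H_k\to H$. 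This is the \textbf{main obstacle}: we must show that $\nabla v_k\to\nabla V_1$ a.e. (at least locally), so that the nonlinear flux converges weakly. For this we would adapt the Mercuri--Willem / Alves argument for the $p$-Laplacian, which uses the uniform ellipticity of $\nabla^2(\frac12H_k^2)$ from Lemma~\ref{lem-solu}. Concretely, we test the equation with $\phi(v_k-V_1)$ truncated, use the strict monotonicity of $\nabla(H^p/p)$, and control the critical term through concentration-compactness. The outcome is that $V_1$ is a weak solution of
$$-\operatorname{div}\big(\nabla(H^p/p)(\nabla V_1)\big)=S_{\rm aff}^p|V_1|^{p^*-2}V_1,$$
where $H$ is the limit of the $H_k$ (up to the factor $\mathcal{E}_p$).

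The limit equation is not automatically the affine equation for $V_1$, because $H$ is built from $N$ and not from $\|\nabla_\xi V_1\|_p$. To close this gap we use the energy bound to rule out a second bubble. Each nontrivial solution of such a critical equation has energy at least $S^{n}$ in the relevant normalization, by the affine Sobolev inequality combined with the reverse Minkowski inequality and the Brezis--Lieb splitting argument used in the proof of Theorem~\ref{thm-comp}, in particular \eqref{thm-comp-e7}. Consequently, if $v_k-V_1$ did not converge to zero, or if a second profile were present, the total energy would be at least $2^{p/n}S_{\rm aff}^p$ in the appropriate sense, exceeding the bound $\frac32S_{\rm aff}^p$. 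We expect that $\mathcal{E}_p(V_1)^p\ge S_{\rm aff}^p\|V_1\|_{p^*}^p$ and $\mathcal{E}_p(V_1)^p\le S_{\rm aff}^p\|V_1\|_{p^*}^{p^*}$ give $\|V_1\|_{p^*}\ge1$, so the whole energy is concentrated in $V_1$. Hence $\|v_k-V_1\|_{p^*}\to0$ and $N(\xi)=\|\nabla_\xi V_1\|_p$, which means $V_1$ solves the affine equation.

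Strong convergence $v_k\to V_1$ in $\dot W^{1,p}$ then follows as in the last step of the proof of Theorem~\ref{thm-comp}, using the inequality valid for $p\ge2$ and $\mathcal{E}_p(v_k)\to\mathcal{E}_p(V_1)$. The sign of $V_1$ must be fixed before invoking the classification. Testing the limit equation with $V_1^-$ and applying the energy threshold once more shows that $V_1$ does not change sign. After replacing $V_1$ by $\pm V_1$, positivity follows from the strong maximum principle for the anisotropic equation, which applies since $H$ is uniformly elliptic by Lemma~\ref{lem-solu}. Theorem~\ref{thm-solu} now gives $V_1\in\mathcal M_{\rm aff}^{(1)}$ up to sign, and hence $V_1\in\mathcal M_{\rm aff}^{(1)}$.
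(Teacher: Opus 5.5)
Your architecture is the paper's: normalize by $T_k$, take the profile decomposition, extract a uniform limit $N$ of $\xi\mapsto\|\nabla_\xi w_k\|_p$, pass to a fixed anisotropic operator $H$ built from $N$, run a Mercuri--Willem argument for the limit equation, do an energy count, and apply Theorem~\ref{thm-solu}. However, the energy count that excludes further profiles and sign changes has a genuine gap. Here $H(\zeta)^p=E_\infty^{n+p}\int N^{-n-p}|\zeta\cdot\xi|^p\,d\xi$ with $E_\infty:=(\int N^{-n}\,d\xi)^{-1/n}$.

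First, the threshold you state fails. In general $2^{p/n}S_{\rm aff}^p$ does not exceed $\frac32S_{\rm aff}^p$; for example, with $p=2$ and $n\ge4$ one has $2^{p/n}\le\sqrt2<\frac32$. The correct count uses $\mathcal{E}_p(w_k)^p-S_{\rm aff}^p\|w_k\|_{p^*}^{p^*}\to0$, so that $\sum_j\|V_j\|_{p^*}^{p^*}\le\frac32$. It combines this with $\|V\|_{p^*}\ge1$ for every nonzero profile and for every nonzero part $V_{j,\pm}$, since two such pieces would give $2>\frac32$.

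Second, and this is the real missing idea, you do not prove the inequality that produces $\|V\|_{p^*}\ge1$, namely $\mathcal{E}_p(V)^p\le S_{\rm aff}^p\|V\|_{p^*}^{p^*}$. You only say it is ``expected'', and the tools you cite do not give it. The profile solves the $H$-equation, not the affine one, so testing gives $\int H(\nabla V)^p=S_{\rm aff}^p\|V\|_{p^*}^{p^*}$. The Brezis--Lieb splitting only yields $N(\xi)\ge g(\xi):=\|\nabla_\xi V\|_p$, which points the wrong way. The reverse Minkowski inequality concerns sums of profiles, so it does not help either. What is needed is H\"older's inequality on $\mathbb{S}^{n-1}$ with exponents $\frac{n+p}{n}$ and $\frac{n+p}{p}$:
\[
\int_{\mathbb{S}^{n-1}}N^{-n-p}g^p\,d\xi\ \ge\ \Big(\int_{\mathbb{S}^{n-1}}N^{-n}\,d\xi\Big)^{\frac{n+p}{n}}\Big(\int_{\mathbb{S}^{n-1}}g^{-n}\,d\xi\Big)^{-\frac pn},
\]
that is, $\int H(\nabla V)^p\ge\mathcal{E}_p(V)^p$. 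The paper tests the limit equation with $V_{j,\pm}$ and uses this inequality to get uniqueness of the profile and its constant sign at once. This requires the limit equation for \emph{every} nonzero profile. So your Mercuri--Willem step must be run along each $\mathfrak g_{j,k}^{-1}(w_k;p^*)$, not only for $j=1$.

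A further step is unjustified: $N(\xi)=\|\nabla_\xi V_1\|_p$ does not follow from $\|v_k-V_1\|_{p^*}\to0$, because weak lower semicontinuity only gives $N\ge g$. You can close this as follows:
\[
E_\infty^p=\lim_k\mathcal{E}_p(v_k)^p=S_{\rm aff}^p\|V_1\|_{p^*}^{p^*}=\int H(\nabla V_1)^p=E_\infty^{n+p}\int N^{-n-p}g^p\,d\xi\le E_\infty^p,
\]
and equality forces $g=N$. After that, your strong-convergence step via the $p\ge2$ inequality does work, even without radial symmetry, because $\|\nabla_\xi v_k\|_p\to g$ uniformly. The paper proceeds in the opposite order. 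It first obtains $v_k\to V_1$ in $\dot W^{1,p}$ from the anisotropic Brezis--Lieb lemma and the energy identity, and only then reads off $N=\|\nabla_\xi V_1\|_p$.
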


\begin{proof}
Similar to the proof of Theorem \ref{thm-comp},
by \eqref{eq-p} and Lemma \ref{lem-pdecom},
we find that, for each $k\in\mathbb{N}$, there is
$T_k\in\operatorname{SL}(n)$ such that $\|\nabla(u_k\circ T_k)\|_p\lesssim
\mathcal{E}_p(u_k)$
and the profile decomposition holds for the
sequence $\{w_k\}_{k\in\mathbb{N}}:=\{u_k\circ T_k\}_{k\in\mathbb{N}}$.
In the remainder of this proof, let all the notation
be the same as in Lemma \ref{lem-pdecom}.

In the classical critical $p$-Laplacian argument, once the profile
decomposition has been obtained, condition \eqref{thm-critical-e1}
and the monotonicity of the fixed nonlinear map
$\xi\mapsto |\xi|^{p-2}\xi$
are used to prove that every nonzero $V_j$ in the profile decomposition solves the limiting critical
equation; see \cite{mw10}. However, as pointed out in \eqref{eq-Lap-Hf},
the affine setting contains an additional
sequence-dependent geometric variable.
Consequently, we need to
first extract a limiting directional energy profile.
After this, the rescaled sequence can be treated
as an approximate sequence for a fixed anisotropic functional.

More precisely,
for any $k\in\mathbb{N}$
and $\xi\in\mathbb{S}^{n-1}$,
define $h_k(\xi):=\|\nabla_\xi w_k\|_p$;
then, from \eqref{thm-critical-e1}, it follows that, for every $\xi_1,\xi_2\in\mathbb{S}^{n-1}$,
\begin{align}\label{thm-critical-e2}
|h_k(\xi)|\le\|\nabla w_k\|_p\lesssim S_{\rm aff}
\quad\text{and}\quad|h_k(\xi_1)-h_k(\xi_2)|\le\|\nabla w_k\|_p
|\xi_1-\xi_2|\lesssim S_{\rm aff}|\xi_1-\xi_2|.
\end{align}
Therefore, applying the Arzel\`a--Ascoli theorem, there are $h_\infty\in C(\mathbb{S}^{n-1})$
and a subsequence, still denoted by $\{h_k\}_{k\in\mathbb{N}}$, such that
$h_k\to h_\infty$ in $C(\mathbb{S}^{n-1})$ as $k\to\infty$.
Moreover, for fixed $k\in\mathbb{N}$, define $c_k:=\min\{h_k(\xi):\xi\in\mathbb{S}^{n-1}\}$ and choose $\xi_k\in\mathbb S^{n-1}$ such that $h_k(\xi_k)=c_k$. Then, by \eqref{thm-critical-e2}, we obtain,
for any $\eta\in\mathbb{S}^{n-1}$ such that $|\eta-\xi_k|< \epsilon c_k$,
\begin{align*}
|h_k(\eta)-h_k(\xi_k)|\lesssim|\eta-\xi_k|< \epsilon c_k.
\end{align*}
It follows that we can choose $\epsilon>0$ so small, depending only on $n$ and $p$,
such that $h_k(\eta)\sim c_k$ for $|\eta-\xi_k|<\epsilon c_k$. This, together with \eqref{thm-critical-e1},
further implies that
\begin{align*}
1\gtrsim\mathcal{E}_p(u_k)^{-n}
=\mathcal{E}_k(w_k)^{-n}
=\int_{\mathbb{S}^{n-1}}h_k(\eta)^{-n}\,d\eta
\gtrsim\int_{\{\eta\in\mathbb{S}^{n-1}:|\eta-\xi_k|< \epsilon c_k\}}
c_k^{-n}\,d\eta\sim c_k^{-1}.
\end{align*}
Consequently, $c_k\gtrsim1$. Meanwhile, \eqref{thm-critical-e2} shows that $|h_k|\lesssim1$ for every
$k\in\mathbb{N}$.
Then, by the convergence of $\{h_k\}_{k\in\mathbb{N}}$,
there are $c,C\in(0,\infty)$ such that, for every $\xi\in\mathbb{S}^{n-1}$, $c\le h_\infty(\xi)\le C$.

Define
\begin{align*}
E_\infty:=\left[\int_{\mathbb{S}^{n-1}}h_\infty(\xi)^{-n}\,d\xi\right]^{-\frac1n}
\end{align*}
and
\begin{align*}
H_\infty(\zeta):=
E_{\infty}^{1+\frac np}\left[\int_{\mathbb{S}^{n-1}}h_{\infty}(\xi)^{-n-p}
|\zeta\cdot\xi|^p\,d\xi\right]^{\frac1p}\ \forall\,\zeta\in\mathbb{R}^n.
\end{align*}
As proved in Lemma \ref{lem-solu},
there are positive constants $\lambda$ and $\Lambda$ such that, for all $\zeta,\eta\in\mathbb{R}^n$,
\begin{align}\label{thm-critical-e3}
\lambda|\eta|^2\le
\left\langle\nabla^2\left(\frac{1}{2}H_\infty^2\right)(\zeta) \,\eta,\eta\right\rangle
\le \Lambda|\eta|^2.
\end{align}
Indeed, applying the fact that $h_\infty$ has positive upper and lower bounds on $\mathbb{S}^{n-1}$ and
extending $h_\infty$ to $\mathbb{R}^n$ as a homogeneous function of degree zero, we find that
$K_\infty:=\{\zeta\in\mathbb{R}^n:h_\infty(\zeta)\le 1\}$ is an origin-symmetric convex body
containing the origin in its interior; therefore, by the proof of Lemma \ref{lem-solu}, we find that
$H_\infty$ satisfies estimate
\eqref{thm-critical-e3}.

Next, we show that every nonzero $V_j$ in the profile decomposition
satisfies the critical equation related to
the anisotropic Laplacian $\Delta_p^{H_\infty}u:=\operatorname{div}(\nabla(\frac{H_\infty^p}
{p})(\nabla u))$. We first prove that nonzero $V_j$ exists.
Indeed, from integration by parts and
\eqref{thm-critical-e1}, we deduce that, when $k\to\infty$,
\begin{align*}
\left|\mathcal{E}_p(w_k)^p-S_{\rm aff}^p\|w_k\|_{p^*}^{p^*}\right|
&=\left|\langle \Delta_p^{\rm aff}(w_k)+ S_{\rm aff}^p|w_k|^{p^*-2}w_k,w_k\rangle\right|\\
&\le \delta_{\rm aff}(w_k)\|\nabla w_k\|_p
\lesssim\delta_{\rm aff}(w_k)\to0.
\end{align*}
Consequently, by \eqref{thm-comp-e1} and \eqref{thm-critical-e1}, we have
\begin{align}\label{thm-critical-e6}
\sum_{j=1}^\infty \|V_j\|_{p^*}^{p^*}
=\lim_{k\to\infty} \|w_k\|_{p^*}^{p^*}\in\left[\frac12,\frac32\right].
\end{align}
Then there is at least one $j\in\mathbb{N}$ such that $V_j\ne 0$.
Now, we fix a $j\in\mathbb{N}$ satisfying this property.
By a change of variables and H\"older's inequality, we obtain,
for every $k\in\mathbb{N}$ and $\varphi\in\dot{W}^{1,p}$,
\begin{align}\label{thm-critical-e4}
&\left|\left\langle\Delta_p^{\rm aff}\left(\mathfrak{g}_{j,k}^{-1}(w_k;p^*)\right)
-\Delta_p^{H_\infty}\left(\mathfrak{g}_{j,k}^{-1}(w_k;p^*)\right),\varphi\right\rangle\right|\notag\\
&\quad=\left|\left\langle\Delta_p^{\rm aff}(w_k)-\Delta_p^{H_\infty}(w_k),\mathfrak{g}_{j,k}(\varphi;p^*)
\right\rangle\right|\notag\\
&\quad=\left|\int_{\mathbb{R}^n}\int_{\mathbb{S}^{n-1}}
\left[\mathcal{E}_p(w_k)^{n+p}\|\nabla_\xi w_k\|_p^{-n-p}
-E_\infty^{n+p} h_\infty(\xi)^{-n-p}\right]\right.\notag\\
&\qquad\times
|\nabla_\xi w_k|^{p-2}\nabla_\xi w_k\nabla_\xi(\mathfrak{g}_{j,k}(\varphi;p^*))\Bigg|\notag\\
&\quad\lesssim
\sup_{\xi\in\mathbb{S}^{n-1}}\left|\mathcal{E}_p(w_k)^{n+p}\|\nabla_\xi w_k\|_p^{-n-p}
-E_\infty^{n+p} h_\infty(\xi)^{-n-p}\right|
\|\nabla w_k\|_p^{p-1}\|\nabla\varphi\|_p.
\end{align}
Note that the supremum over $\xi$ in \eqref{thm-critical-e4} tends to zero. Indeed, $h_k(\xi)=\|\nabla_\xi w_k\|_{p}$ tends to $h_\infty(\xi)$ as $k\to\infty$, uniformly for $\xi\in\mathbb S^{n-1}$, and it is bounded away from $0$ uniformly with respect to $\xi$ and $k$. In particular, $\mathcal{E}_p(w_k)^{n+p}\to E_\infty^{n+p}$
as $k\to\infty$. The fact that the supremum in \eqref{thm-critical-e4} tends to zero, together with the boundedness of $\{w_k\}_{k\in\mathbb{N}}$ in $\dot W^{1,p}$, implies that, as $k\to\infty$,
\begin{align*}
    \Delta_p^{\rm aff}\left(\mathfrak{g}_{j,k}^{-1}(w_k;p^*)\right)
-\Delta_p^{H_\infty}\left(\mathfrak{g}_{j,k}^{-1}(w_k;p^*)\right) \to 0
\qquad\text{in}\ \dot W^{-1,p'} \,.
\end{align*}
Combined with \eqref{thm-critical-e1}, we infer that, as $k\to\infty$,
\begin{align}\label{thm-critical-e8}
-\Delta_p^{H_\infty}\left(\mathfrak{g}_{j,k}^{-1}(w_k;p^*)\right)-S_{\rm aff}^p
\left(\mathfrak{g}_{j,k}^{-1}(w_k;p^*)\right)^{p^*-1}\to 0
\qquad\text{in}\ \dot W^{-1,p'} \,.
\end{align}
By the orthogonality and the weak convergence conditions
in the profile decomposition [namely \eqref{it2-pdecom} and \eqref{it3-pdecom}
of Lemma \ref{lem-pdecom}, respectively], we
find that, when $k\to\infty$,
$\mathfrak g_{j,k}^{-1}(w_k;p^*)
\rightharpoonup V_j$ in $\dot W^{1,p}$.
Then we are able to apply the anisotropic version of the localized truncation
argument used in the classical global compactness theory for the
$p$-Laplacian, as in Mercuri and Willem \cite{mw10}, recently established
in \cite[Lemma 3.6]{acg26}. Its key input is the quantitative monotonicity of
$\xi\mapsto
H_\infty(\xi)^{p-1}\nabla H_\infty(\xi),$
which follows from \eqref{thm-critical-e3}. Hence, possibly after passing
to a further subsequence, we obtain
\begin{align}\label{thm-critical-e7}
\nabla\left(\mathfrak g_{j,k}^{-1}(w_k;p^*)\right)
\longrightarrow\nabla V_j\quad \text{and}\quad
\mathfrak g_{j,k}^{-1}(w_k;p^*)
\longrightarrow V_j
\quad
\text{a.\,e. in }\mathbb R^n
\end{align}
and
\begin{align}\label{thm-critical-e5}
-\Delta_p^{H_\infty}(V_j)
=
S_{\rm aff}^p|V_j|^{p^*-2}V_j.
\end{align}
This completes the proof that $V_j$ satisfies
the anisotropic critical equation.

We now show that there is only one such nonzero $V_j$ and that it has a constant sign.
Indeed, if we define
$V_{j,+}:=\max\{0,V_j\}$ and $V_{j,-}:=\max\{0,-V_j\}$, then it holds that
\begin{align*}
\nabla V_{j,+}={\bf 1}_{\{V_j>0\}} \nabla V_j
\quad\text{and}\quad
\nabla V_{j,-}=-{\bf 1}_{\{V_j<0\}}\nabla V_j.
\end{align*}
By this, \eqref{thm-critical-e5}, H\"older's inequality, and the
affine Sobolev inequality \eqref{eq-affSob},
we obtain, for every $\sigma\in\{+,-\}$,
\begin{align*}
S_{\rm aff}^p\|V_{j,\sigma}\|_{p^*}^{p*}
&=\left|\left\langle-\Delta_p^{H_\infty}(V_j),V_{j,\sigma}\right\rangle\right|\\
&=E_\infty^{n+p}\left|
\int_{\mathbb{S}^{n-1}}\int_{\mathbb{R}^n}h_{\infty}(\xi)^{-n-p}
|\nabla_\xi V_j(x)|^{p-2}\nabla_\xi V_j(x)\nabla_\xi V_{j,\sigma}(x)\,dx\,d\xi\right|\\
&=E_\infty^{n+p}
\int_{\mathbb{S}^{n-1}}\int_{\mathbb{R}^n}h_{\infty}(\xi)^{-n-p}
|\nabla_\xi V_{j,\sigma}(x)|^{p}\,dx\,d\xi\\
&\ge E_\infty^{n+p}\left[\int_{\mathbb{S}^{n-1}}h_\infty(\xi)^{-n}\,d\xi\right]^{\frac{n+p}{n}}
\left(\int_{\mathbb{S}^{n-1}}\|\nabla_\xi V_{j,\sigma}\|_p^{-n}\,d\xi\right)^{-\frac{p}{n}}\\
&=\mathcal{E}_p(V_{j,\sigma})^p\ge S_{\rm aff}^p\|V_{j,\sigma}\|_{p^*}^{p}.
\end{align*}
This, combined with $p^*>p$, further implies
$\|V_{j,\sigma}\|_{p^*}\ge 1$ for all nonzero $V_{j,\sigma}$ with $\sigma\in\{+,-\}$.
Hence, from \eqref{thm-critical-e6}, it follows that there is only one such nonzero
$V_{j}$ and it has a constant sign.
By replacing $\{w_k\}_{k\in\mathbb{N}}$ with $\{-w_k\}_{k\in\mathbb{N}}$ if necessary, we may assume that
$V_j\ge 0$. Thus, applying
\eqref{thm-critical-e5} and the strong maximum principle (see,
for instance, \cite[Theorem 2.5.1]{ps07}), we further conclude
$V_j>0$.

Meanwhile, from \eqref{thm-critical-e7} and
the anisotropic Brezis--Lieb lemma
(see \cite[(3.23)]{acg26}), it follows that, when $k\to\infty$,
\begin{align*}
\int_{\mathbb{R}^n}H_\infty\left(\nabla\left(\mathfrak g_{j,k}^{-1}(w_k;p^*)\right)\right)^p
=
\int_{\mathbb{R}^n}H_\infty(\nabla V_j)^p+\int_{\mathbb{R}^n}
H_\infty\left(\nabla\left(\mathfrak g_{j,k}^{-1}(w_k;p^*)-V_j\right)\right)^p+o(1).
\end{align*}
Moreover, by \eqref{thm-critical-e8}, we find that, when $k\to\infty$,
\begin{align*}
\int_{\mathbb{R}^n}H_\infty
\left(\nabla\left(\mathfrak{g}_{j,k}^{-1}(w_k;p^*)\right)\right)^p
&=\left\langle-\Delta_p^{H_\infty}\left(\mathfrak{g}_{j,k}^{-1}(w_k;p^*)\right),
\mathfrak{g}_{j,k}^{-1}(w_k;p^*)\right\rangle\\
&=S_{\rm aff}^p\left\|\mathfrak{g}_{j,k}^{-1}(w_k;p^*)\right\|_{p^*}^{p^*}+o(1)
=S_{\rm aff}^p\|w_k\|_{p^*}^{p^*}+o(1)\\
&=S_{\rm aff}^p\|V_j\|_{p^*}^{p^*}+o(1)\quad\text{by \eqref{thm-critical-e6}}\\
&=\left\langle-\Delta_p^{H_\infty}(V_j),
V_j\right\rangle+o(1)\quad\text{by \eqref{thm-critical-e5}}\\
&=\int_{\mathbb{R}^n}H_\infty(\nabla V_j)^p+o(1).
\end{align*}
This further implies that
\begin{align*}
\lim_{k\to\infty}\int_{\mathbb{R}^n}
H_\infty\left(\nabla\left(\mathfrak g_{j,k}^{-1}(w_k;p^*)-V_j\right)\right)^p=0
\end{align*}
and hence, by the equivalence of norms in Euclidean space, we further have
$\mathfrak g_{j,k}^{-1}(w_k;p^*)\to V_j$ in $\dot{W}^{1,p}$. Then,
using a change of variables, it holds that,
for every $\xi\in\mathbb{S}^{n-1}$,
\begin{align*}
\left|\|\nabla_\xi V_j\|_{p}-\|\nabla_\xi w_k\|_p\right|
=\left|\|\nabla_\xi V_j\|_{p}-\left\|\nabla_\xi \mathfrak g_{j,k}^{-1}(w_k;p^*)\right\|_p\right|
\le\left\|\nabla\left(V_j-\mathfrak g_{j,k}^{-1}(w_k;p^*)\right)\right\|_p\to0
\end{align*}
as $k\to\infty$. This shows that $\|\nabla_\xi V_j\|_{p}=H_\infty$ and, in view of \eqref{thm-critical-e5},
\begin{align*}
-\Delta_p^{\rm aff}(V_j)=-\Delta_p^{H_\infty}(V_j)=S_{\rm aff}^p |V_j| ^{p^*-2}V_j.
\end{align*}
Therefore, by the positivity of $V_j$ and Theorem \ref{thm-solu}, we find that
$V_j\in\mathcal{M}^{(1)}_{\rm aff}$.
Since $\mathfrak{g}_{j,k}^{-1}(w_k;p^*)$ takes the form
$\mathcal{T}^{(p^*)}_{\lambda_k,A_k,x_k}u_k$ for some $A_k\in\operatorname{SL}(n)$, we have completed the present proof.
\end{proof}


\section{Expansions for integral functionals}\label{sec-expan}

The goal of this section is to deal with the expansion
of negative-exponent Lebesgue spaces and then
obtain appropriate expansions for the affine Sobolev energy $\mathcal{E}_p(f)$
and the affine Euler--Lagrange residual $\delta_{\rm aff}$.
This technique was perfected in \cite{fz22}, but some changes are needed because of the negative exponent that appears in the affine Sobolev energy.


\subsection{Expansion for Lebesgue spaces with
negative exponents}

In this subsection, we assume that $(\Omega,\mu)$
is a probability space, and prove the following expansion.

\begin{proposition}\label{prop-neg-Lebe}
Let $\alpha\in(0,\infty)$.
Then there is a positive constant $C_{\alpha}$,
depending only on $\alpha$, such that
\begin{enumerate}[{\rm (i)}]
  \item\label{it0-neg-Lebe} for any $\delta\in(0,\frac12)$, $a\in(0,\infty)$,
and $U\in L^\infty(\Omega)$ such that
$\|U\|_{L^\infty(\Omega)}\le a\delta$,
\begin{align}\label{prop-neg-Lebe-e1}
\left[\int_\Omega(a+U)^{-\alpha}\right]^{-\frac1\alpha}
\ge a+\int_{\Omega}U-
\frac{1+\alpha}{2}a^{-1}
\left[\int_{\Omega}U^2-\left(\int_{\Omega}U\right)^2\right]
-C_\alpha a^{-1}\delta\int_{\Omega}U^2;
\end{align}
  \item\label{it1-neg-Lebe} for any $\delta\in(0,\frac12)$, $a\in(0,\infty)$,
$U\in L^\infty(\Omega)$ satisfying
$\|U\|_{L^\infty(\Omega)}\le a\delta$,
and $V\in L^1(\Omega)$,
\begin{align}\label{prop-neg-Lebe-polarized-e1}
&\left[\int_\Omega(a+U)^{-\alpha}\right]^{-1-\frac1\alpha}
\int_\Omega(a+U)^{-\alpha-1}V
\nonumber\\
&\qquad \ge
\int_\Omega V
-(1+\alpha)a^{-1}
\left[\int_{\Omega}
\left(U-\int_{\Omega}U\right)
\left(V-\int_{\Omega}V\right)
\right]
\nonumber\\
&\qquad\quad
-
C_\alpha a^{-1}\delta
\left[
\int_\Omega |UV|
+
\left(\int_\Omega |U|\right)
\left(\int_\Omega |V|\right)
\right];
\end{align}
  \item\label{it2-neg-Lebe} for any $a\in(0,\infty)$
  and $U\in L^\infty(\Omega)$ satisfying
$\|U\|_{L^\infty(\Omega)}\le \frac a2$,
  \begin{align*}
  \left|\left[\int_\Omega(a+U)^{-\alpha}\right]^{-\frac1\alpha}
-a-\int_{\Omega}U\right|\le C_\alpha a^{-1}\|U\|_{L^\infty(\Omega)}^2.
  \end{align*}
\end{enumerate}
\end{proposition}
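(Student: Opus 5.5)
By homogeneity we reduce everything to $a=1$. Replacing $U$ by $aU$ and $V$ by $aV$, both sides of \eqref{prop-neg-Lebe-e1} scale linearly in $a$, and both sides of \eqref{prop-neg-Lebe-polarized-e1} scale by $a^{-1}\cdot a$. So we assume $a=1$ and $\|U\|_{L^\infty(\Omega)}\le\delta<\tfrac12$. Throughout we write $m:=\int_\Omega U$ and $\sigma^2:=\int_\Omega U^2-m^2$. The whole argument is a second-order Taylor expansion with remainders controlled pointwise by $|U|^3\le\delta U^2$.

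\emph{Part (i).} Taylor's theorem for $t\mapsto(1+t)^{-\alpha}$ on $|t|\le\tfrac12$ gives
$$(1+U)^{-\alpha}=1-\alpha U+\tfrac{\alpha(\alpha+1)}{2}U^2+R,\qquad |R|\le C_\alpha|U|^3\le C_\alpha\delta U^2 .$$
Integrating, we get $X:=\int_\Omega(1+U)^{-\alpha}=1+\epsilon$ with
$$\epsilon=-\alpha m+\tfrac{\alpha(\alpha+1)}2\int_\Omega U^2+O\Big(\delta\int_\Omega U^2\Big),$$
and $|\epsilon|\lesssim\delta$. Next we expand $X^{-1/\alpha}=(1+\epsilon)^{-1/\alpha}=1-\epsilon/\alpha+\tfrac{1+\alpha}{2\alpha^2}\epsilon^2+O(|\epsilon|^3)$. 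Since $\epsilon^2=\alpha^2m^2+O(\delta\int U^2)$ (using $m^2\le\int U^2$ and $|m|\le\delta$) and $|\epsilon|^3\lesssim\delta\,\epsilon^2\lesssim\delta\int U^2$, this yields
$$X^{-1/\alpha}=1+m-\tfrac{\alpha+1}2\int_\Omega U^2+\tfrac{\alpha+1}2m^2+O\Big(\delta\int_\Omega U^2\Big),$$
which is exactly \eqref{prop-neg-Lebe-e1}, in fact as a two-sided estimate.

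\emph{Part (iii).} This follows from the same computation, keeping only first-order terms. We have $|\epsilon+\alpha m|\lesssim\|U\|_\infty^2$, hence $|X^{-1/\alpha}-1-m|\lesssim\|U\|_\infty^2$. No smallness of $\delta$ beyond $\tfrac12$ is needed, since $1+U\in[\tfrac12,\tfrac32]$ keeps all derivatives bounded.

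\emph{Part (ii).} This is the polarized version and is the only place needing care. Because $V$ is merely in $L^1$, we cannot expand in $V$; the expression is linear in $V$, though, so we only expand the $U$-dependent weights. Write
$$(1+U)^{-\alpha-1}=1-(\alpha+1)U+R_1,\qquad |R_1|\le C\delta|U| ,$$
and $X^{-1-1/\alpha}=1+(\alpha+1)m+R_2$ with $|R_2|\lesssim\delta\int|U|$. The latter uses $|m|\le\int|U|\le\delta$, together with the fact that $\epsilon=-\alpha m+O(\delta\int|U|)$ because $\int U^2\le\delta\int|U|$. Multiplying out, we obtain
$$\int_\Omega V-(1+\alpha)\Big[\int_\Omega UV-m\int_\Omega V\Big]+\text{error}.$$
The bracket equals the covariance $\int(U-m)(V-\int V)$. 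The error terms are of the form $(\alpha+1)^2 m\int U V$, $R_2\int V$, and $\int R_1 V$ and its products with $m$ and $R_2$. Each is bounded by $C\delta\big(\int|UV|+\int|U|\int|V|\big)$, using $|m|,\int|U|\le\delta$. This gives \eqref{prop-neg-Lebe-polarized-e1}, again two-sided.

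The main obstacle is bookkeeping in (ii). We must make sure that no error term involves $\int U^2\int|V|$ without a compensating factor; this is handled by measuring every remainder in $L^1$ in $U$, via $\int U^2\le\delta\int|U|$, rather than in $L^2$.
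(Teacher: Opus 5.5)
Your proof is correct and is essentially the paper's argument: a second-order Taylor expansion in which every cubic remainder is absorbed using $|U|^3\le a\delta U^2$, and in (ii) using $\int_\Omega U^2\le a\delta\int_\Omega |U|$ so that no $\int_\Omega U^2\int_\Omega|V|$ term survives. The difference is only bookkeeping. You expand the integrand pointwise and the outer power $s\mapsto s^{-1/\alpha}$ (resp.\ $s^{-1-1/\alpha}$) about $s=1$ separately and then collect cross terms, whereas the paper Taylor-expands the composite $t\mapsto\bigl[\int_\Omega(a+tU)^{-\alpha}\bigr]^{-1/\alpha}$ (resp.\ $t\mapsto\bigl[\int_\Omega(a+tU)^{-\alpha}\bigr]^{-1-1/\alpha}\int_\Omega(a+tU)^{-\alpha-1}V$) in $t\in[0,1]$ and bounds its third (resp.\ second) derivative uniformly. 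In your version the outer remainder constant is uniform because $\int_\Omega(1+U)^{-\alpha}\in[(2/3)^{\alpha},2^{\alpha}]$; you should state this, since $|\epsilon|\lesssim\delta$ alone does not keep $1+\epsilon$ away from $0$ when $\alpha$ is large.
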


\begin{proof}
We first show \eqref{it0-neg-Lebe}.
For any $t\in(0,\infty)$, define
$\Psi(t):=\int_{\Omega}(a+tU)^{-\alpha}$
and $\Phi:=\Psi^{-\frac{1}{\alpha}}$.
Note that $$\left[\int_{\Omega}(a+U)^{-\alpha}\right]^{-\frac{1}{\alpha}}=\Phi(1).$$
From this and Taylor's formula, we infer that there is a
$\xi\in[0,1]$ such that
\begin{align}\label{prop-neg-Lebe-e2}
\left[\int_{\Omega}(a+tU)^{-\alpha}\right]^{-\frac{1}{\alpha}}
=\Phi(1)=\Phi(0)
+\Phi'(0)+\frac12\Phi''(0)
+\frac16\Phi'''(\xi).
\end{align}
Now, we calculate derivatives of both $\Phi$ and $\Psi$.
Indeed, for any $k\in\mathbb{N}$ and $t\in(0,\infty)$,
\begin{align*}
\Psi^{(k)}(t)=
(-1)^k\prod_{j=0}^{k-1}
(\alpha+j-1)\int_{\Omega}(a+tU)^{-\alpha-k}U^k,
\end{align*}
\begin{align*}
\Phi'=-\frac1\alpha\Psi^{-1-\frac1\alpha}\Psi',\quad
\Phi''=\frac{1}{\alpha}\left(1+\frac1\alpha\right)
\Psi^{-2-\frac1\alpha}\Psi'^2
-\frac{1}{\alpha}\Psi^{-1-\frac1\alpha}\Psi'',
\end{align*}
and
\begin{align}\label{prop-neg-Lebe-e4}
\Phi'''=-\frac{1}{\alpha}\left(1+\frac1\alpha\right)
\left(2+\frac1\alpha\right)
\Psi^{-3-\frac1\alpha}\Psi'^3+\frac{2}{\alpha}
\left(1+\frac1\alpha\right)\Psi^{-2-\frac1\alpha}
\Psi'\Psi''
-\frac1\alpha\Psi^{-1-\frac1\alpha}\Psi'''.
\end{align}
Hence, we find that
the first three terms on the right-hand side
of \eqref{prop-neg-Lebe-e1} are exactly
$\Phi(0)$, $\Phi'(0)$, and $\frac12\Phi''(0)$, respectively.
By this and \eqref{prop-neg-Lebe-e2}, to prove \eqref{prop-neg-Lebe-e1},
it suffices to show
\begin{align}\label{prop-neg-Lebe-e3}
\left\|\Phi'''\right\|_{L^\infty[0,1]}\lesssim a^{-1}\delta\int_{\Omega}U^2.
\end{align}
For any $t\in[0,1]$, since $\delta\in(0,\frac12)$, from
$(1-\delta)a\le|a+tU|\le (1+\delta)a$ ,
it follows that $|a+tU|\in(\frac12a,\frac32a)$.
Therefore, for any $t\in[0,1]$, $|\Psi(t)|\sim a^{-\alpha}$
and, for any $k\in\mathbb{N}$,
\begin{align}\label{prop-neg-Lebe-e5}
\left|\Psi^{(k)}(t)\right|
&\lesssim a^{-\alpha-k}\int_{\Omega}|U|^k
\le
\begin{cases}
\displaystyle{a^{-\alpha-1}}\int_{\Omega}|U|&\text{if $k=1$},\\
\displaystyle{a^{-\alpha-2}\delta^{k-2}}\int_{\Omega}U^2
&\text{if $k\ge2$}.
\end{cases}
\end{align}
Using these, we obtain, for any $t\in[0,1]$,
\begin{align*}
\left|\Psi(t)^{-3-\frac1\alpha}\Psi'(t)^3
\right|
&\lesssim a^{1+3\alpha}\left(a^{-\alpha-1}\int_{\Omega}|U|\right)^3
=a^{-2}\left(\int_{\Omega}|U|\right)^3\\
&\le a^{-2}\int_{\Omega}|U|\int_{\Omega}|U|^2
\quad\text{by H\"older's inequality}\\
&\le a^{-1}\delta\int_{\Omega}|U|^2\quad\text{by assumption
$\|U\|_{L^\infty}\le a\delta$}.
\end{align*}
Similarly (and even more simply), we deduce that
the remaining two terms
on the right-hand side of \eqref{prop-neg-Lebe-e4} can also
be controlled by $a^{-1}\delta\int_{\Omega}|U|^2$.
Consequently, \eqref{prop-neg-Lebe-e3} holds.
We then complete the proof of \eqref{prop-neg-Lebe-e1} and hence \eqref{it0-neg-Lebe}.

We next prove \eqref{it1-neg-Lebe}.
For any $t\in(0,\infty)$, define
$\Theta(t):=\int_\Omega(a+tU)^{-\alpha-1}V$
and $\Upsilon:=\Psi^{-1-\frac1\alpha}\Theta$.
Then
\begin{align*}
\left[\int_\Omega(a+U)^{-\alpha}\right]^{-1-\frac1\alpha}
\int_\Omega(a+U)^{-\alpha-1}V
=
\Upsilon(1).
\end{align*}
By Taylor's formula, there is a $\xi_1\in[0,1]$ such that
\begin{align*}
\Upsilon(1)
=
\Upsilon(0)+\Upsilon'(0)+\frac12\Upsilon''(\xi_1).
\end{align*}
Now, we calculate derivatives of the above functions.
Indeed, for any $k\in\mathbb{N}$ and $t\in(0,\infty)$,
it holds that
\begin{align}\label{prop-neg-Lebe-polarized-e12}
\Theta^{(k)}(t)
=(-1)^{k}\prod_{j=1}^{k}(\alpha+j)
\int_{\Omega}(a+tU)^{-\alpha-k-1}U^kV,
\end{align}
\begin{align*}
\Upsilon'
=
-\left(1+\frac1\alpha\right)
\Psi^{-2-\frac1\alpha}\Psi'(t)\Theta
+
\Psi^{-1-\frac1\alpha}\Theta',
\end{align*}
and
\begin{align*}
\Upsilon''
&=
\left(1+\frac1\alpha\right)
\left(2+\frac1\alpha\right)
\Psi^{-3-\frac1\alpha}\Psi'^2\Theta-
\left(1+\frac1\alpha\right)
\Psi^{-2-\frac1\alpha}\Psi''\Theta
\nonumber\\
&\quad
-
2\left(1+\frac1\alpha\right)
\Psi(t)^{-2-\frac1\alpha}\Psi'\Theta'
+
\Psi(t)^{-1-\frac1\alpha}\Theta''.
\end{align*}
Therefore,
the first two terms on the right-hand side
of \eqref{prop-neg-Lebe-polarized-e1} are exactly
$\Upsilon(0)$ and $\Upsilon'(0)$, respectively.
Then it remains to show
\begin{align}\label{prop-neg-Lebe-polarized-e6}
\|\Upsilon''\|_{L^\infty[0,1]}
\lesssim a^{-1}\delta\left[
\int_\Omega |UV|
+\left(\int_\Omega |U|\right)
\left(\int_\Omega |V|\right)
\right].
\end{align}
Indeed, applying \eqref{prop-neg-Lebe-e5}, we find that
\begin{align}\label{prop-expan-lap-e11}
&\left|\Psi^{-3-\frac1\alpha}\Psi'^2\Theta
\right|+\left|\Psi^{-2-\frac1\alpha}\Psi''\Theta\right|\notag\\
&\quad\lesssim\left(a^{-\alpha}\right)^{-3-\frac1\alpha}
\left(a^{-\alpha-1}\int_{\Omega}|U|\right)^2
\int_{\Omega}a^{-\alpha-1}|V|
+\left(a^{-\alpha}\right)^{-2-\frac1\alpha}
a^{-\alpha-1}\delta\int_{\Omega}|U|
\int_{\Omega}a^{-\alpha-1}|V|\notag\\
&\quad\lesssim a^{-1}\delta\int_{\Omega}|U|
\int_{\Omega}|V|.
\end{align}
On the other hand, from \eqref{prop-neg-Lebe-polarized-e12},
we infer that, for every $k\in\mathbb{N}$ and $t\in[0,1]$,
\begin{align*}
|\Theta^{(k)}(t)|
&\lesssim a^{-\alpha-k-1}\int_{\Omega}|U^kV|
\quad\text{by $a+tU\sim a$}\\
&\le \begin{cases}
\displaystyle{a^{-\alpha-1}}\delta\int_{\Omega}|V|&\text{if $k=1$},\\
\displaystyle{a^{-\alpha-2}\delta^{k-1}}\int_{\Omega}|UV|
&\text{if $k\ge2$}
\end{cases}
\quad\text{by $\|U\|_{L^\infty(\Omega)}\le a\delta$}.
\end{align*}
Applying this, \eqref{prop-neg-Lebe-e5},
and a similar argument to that used in the estimation of \eqref{prop-expan-lap-e11},
we obtain
\begin{align*}
\left|\Psi^{-2-\frac1\alpha}\Psi'\Theta'\right|
+\left|\Psi^{-1-\frac1\alpha}\Theta''\right|
\lesssim\operatorname{RHS}\eqref{prop-neg-Lebe-polarized-e6}.
\end{align*}
This then finishes the proof of \eqref{prop-neg-Lebe-polarized-e6}
and hence \eqref{it1-neg-Lebe}.

Finally, we prove \eqref{it2-neg-Lebe}. The proof proceeds along very similar lines
to that of \eqref{it0-neg-Lebe}.
Indeed, by an argument similar to that used
in \eqref{prop-neg-Lebe-e3}, we conclude that
\begin{align*}
\left\|\Phi''\right\|_{L^\infty(\Omega)}
\lesssim \left(a^{-\alpha}\right)^{-2-\frac1\alpha}
\left(a^{-\alpha-1}\int_{\Omega}|U|\right)^2
+\left(a^{-\alpha}\right)^{-1-\frac1\alpha}
a^{-\alpha-2}\delta\int_{\Omega}|U|^2
\le a^{-1}\|U\|_{L^\infty(\Omega)}^2.
\end{align*}
This, together with
Taylor's formula for $\Phi$ with second order remainder,
gives the desired estimate in \eqref{it2-neg-Lebe}.
Then the present proof is complete.
\end{proof}


\subsection{Local expansion for affine Sobolev energy}\label{subsec-expan}

In this subsection, we establish the expansion of
the affine Sobolev energy
near a given function $f\in\dot{W}^{1,p}$,
which is important in the proof of Theorem \ref{thm-stab}.

It will suffice to consider the case where $f$ is \emph{radial}. Then $\|\nabla_\xi f\|_p$ is independent of $\xi$ and hence
$\mathcal{E}_p(f)=|\mathbb{S}^{n-1}|^{-\frac1n}
\|\nabla_\xi f\|_{p}$.

For $g\in\dot{W}^{1,p}$ and $\xi\in\mathbb{S}^{n-1}$, define
\begin{align}\label{def-L1}
L^{(1)}_\xi(f,g):=\int_{\mathbb{R}^n}|\nabla_\xi f|^{p-2}
\nabla_\xi f\nabla_\xi g,
\end{align}
\begin{align}\label{def-var}
\operatorname{Var}(f,g):=\int_{\mathbb{S}^{n-1}}\left[
L^{(1)}_\xi(f,g)-\fint_{\mathbb{S}^{n-1}}
L^{(1)}_u(f,g)\,du\right]^2 d\xi,
\end{align}
and
\begin{align}\label{def-L2}
L^{(2)}_\xi(f,g):=&\int_{\mathbb{R}^n}
|\nabla_\xi f|^{p-2}(\nabla_\xi g)^2\notag\\
&\qquad+(p-2)\frac{\min\{|\nabla_\xi (f+g)|,|\nabla_\xi f|\}^{p-1}}{|\nabla_\xi f|}
\left[|\nabla_\xi(f+g)|-|\nabla_\xi f|\right]^2,
\end{align}
where, if $|\nabla_\xi f|=0$, then define
$\frac{\min\{|\nabla_\xi (f+g)|,|\nabla_\xi f|\}^{p-1}}{|\nabla_\xi f|}:=0$.
It is clear that, for every $i\in\{1,2\}$ and $\xi\in\mathbb{S}^{n-1}$,
\begin{align}\label{eq-L-dila}
 L^{(i)}_\xi(cf,g)=c^p L^{(1)}_\xi(f,c^{-1}g).
\end{align}
Moreover,
by the triangle inequality, for any $\xi\in\mathbb{S}^{n-1}$,
\begin{align*}
\frac{\min\{|\nabla_\xi (f+g)|,|\nabla_\xi f|\}^{p-1}}{|\nabla_\xi f|}
\left[|\nabla_\xi(f+g)|-|\nabla_\xi f|\right]^2
\le|\nabla_\xi f|^{p-2}|\nabla_\xi g|^2;
\end{align*}
therefore, if $p\ge 2$, then using H\"older's inequality,
we obtain
\begin{align}\label{eq-L2}
L^{(2)}_\xi(f,g)\le
(p-1)\int_{\mathbb{R}^{n-1}}|\nabla_\xi f|^{p-2}
|\nabla_\xi g|^2
\le(p-1)\|\nabla_\xi f\|_p^{p-2}\|\nabla_\xi g\|_p^2.
\end{align}

Then we have the following local expansion of $\mathcal{E}_p(f+g)$.

\begin{proposition}\label{prop-expan-local}
Let $p\ge 2$. Then there are positive constants $c_{n,p}$,
$C_{n,p}$, and $\widetilde{C}_{n,p}$,
depending only on $n$ and $p$, such that, for any
$\varepsilon\in(0,c_{n,p})$ and $g\in\dot{W}^{1,p}$
satisfying
\begin{align}\label{prop-expan-local-e1}
\sup_{\xi\in\mathbb{S}^{n-1}}
\left\{
\left[
\frac{L^{(2)}_\xi(f,g)}
{\|\nabla_\xi f\|_p^p}
\right]^{\frac12}
+
\frac{\|\nabla_\xi g\|_{p}^p}
{\|\nabla_\xi f\|_p^p}
\right\}\le \varepsilon,
\end{align}
it holds that
\begin{align}\label{prop-expan-local-e2}
\mathcal{E}_p(f+g)^p
&\ge\mathcal{E}_p(f)^p+p|\mathbb{S}_{n-1}|^{-1-\frac pn}
\int_{\mathbb{S}^{n-1}}\left[L^{(1)}_\xi(f,g)
+\frac{1-C_{n,p}\varepsilon}{2}L^{(2)}_\xi(f,g)\right]
\,d\xi\notag\\
&\qquad-(n+p)|\mathbb{S}^{n-1}|^{-1-\frac{2p}{n}}
\mathcal E_p(f)^{-p}
\operatorname{Var}(f,g)
+\widetilde C_{n,p}\varepsilon\|\nabla g\|_{p}^p.
\end{align}
\end{proposition}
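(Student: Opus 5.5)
The plan is a two-level expansion: first expand each directional energy $\|\nabla_\xi(f+g)\|_p^p$ pointwise in $\xi$, then feed the result into the negative-exponent expansion of Proposition \ref{prop-neg-Lebe}\eqref{it0-neg-Lebe}, applied on the probability space $(\mathbb{S}^{n-1},d\xi/|\mathbb{S}^{n-1}|)$ with $\alpha=n/p$. Since $f$ is radial, $a:=\|\nabla_\xi f\|_p^p$ is independent of $\xi$, and
$$\mathcal{E}_p(f+g)^p=|\mathbb{S}^{n-1}|^{-\frac pn}\left[\fint_{\mathbb{S}^{n-1}}\left(a+U(\xi)\right)^{-\frac np}\,d\xi\right]^{-\frac pn},\qquad U(\xi):=\|\nabla_\xi(f+g)\|_p^p-a .$$

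\emph{Step 1 (pointwise expansion in $\xi$).} I would use the sharp one-dimensional inequality of Figalli--Zhang \cite[Lemma 2.1]{fz22} for $p\ge2$. For real $x,y$ it gives
$$|x+y|^p\ge|x|^p+p|x|^{p-2}xy+\frac{1-\kappa}{2}\,p\Big[|x|^{p-2}y^2+(p-2)\frac{\min\{|x+y|,|x|\}^{p-1}}{|x|}\big(|x+y|-|x|\big)^2\Big]+c(\kappa)|y|^p .$$
I would apply it with $x=\nabla_\xi f$, $y=\nabla_\xi g$ and integrate over $\mathbb{R}^n$. Writing $Q_\xi:=pL^{(1)}_\xi(f,g)+\frac{p(1-\kappa)}2L^{(2)}_\xi(f,g)$, this yields $U(\xi)\ge U_0(\xi):=Q_\xi+c(\kappa)\|\nabla_\xi g\|_p^p$. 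In the other direction, a crude upper bound gives $|U(\xi)|\lesssim a\varepsilon^{1/2}$: indeed $|L^{(1)}_\xi|\le a^{1-1/p}\|\nabla_\xi g\|_p\le a\varepsilon^{1/p}$, and the higher-order terms are controlled using \eqref{prop-expan-local-e1} and \eqref{eq-L2}. The map $t\mapsto[\fint(a+t)^{-n/p}]^{-p/n}$ is monotone increasing in its argument, so it suffices to bound the expansion for $U_0$ from below. For this I would use Proposition \ref{prop-neg-Lebe}\eqref{it0-neg-Lebe} with $\delta\sim\varepsilon^{1/2}$ (or $\varepsilon^{1/p}$), which is fine since $\varepsilon<c_{n,p}$ is small.

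\emph{Step 2 (applying the negative-exponent expansion).} Proposition \ref{prop-neg-Lebe}\eqref{it0-neg-Lebe} gives
$$\Big[\fint(a+U_0)^{-\alpha}\Big]^{-1/\alpha}\ge a+\fint U_0-\frac{1+\alpha}{2a}\Big[\fint U_0^2-\big(\fint U_0\big)^2\Big]-Ca^{-1}\delta\fint U_0^2 .$$
The variance of $U_0$ is dominated by $p^2$ times the variance of $L^{(1)}_\xi$, which is exactly $\operatorname{Var}(f,g)$ up to normalization. The cross terms and the squares of $L^{(2)}_\xi$ and of $\|\nabla_\xi g\|_p^p$ are handled with \eqref{prop-expan-local-e1}:
$$\big(L^{(2)}_\xi\big)^2\le\varepsilon^2a^2\cdot\frac{L^{(2)}_\xi}{a},\qquad \|\nabla_\xi g\|_p^{2p}\le\varepsilon a\,\|\nabla_\xi g\|_p^p ,$$
and
$$|L^{(1)}_\xi|\,L^{(2)}_\xi\le a^{1-\frac1p}\|\nabla_\xi g\|_p\,L^{(2)}_\xi\le\varepsilon^{1/p}a\,L^{(2)}_\xi .$$
So after dividing by $a$ these are $O(\varepsilon^{\theta})$ times $L^{(2)}_\xi$ or $\|\nabla_\xi g\|_p^p$, and can be absorbed into the $(1-C\varepsilon)$ factor and into the $c(\kappa)\|\nabla_\xi g\|_p^p$ term. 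The remaining error $a^{-1}\delta\fint(L^{(1)}_\xi)^2$ is the genuinely new one. Here $L^{(1)}$ is only first order, so $(L^{(1)})^2$ is bounded via Hölder by $L^{(1)}_\xi(f,g)^2\le a\,\int|\nabla_\xi f|^{p-2}(\nabla_\xi g)^2\le a\,L^{(2)}_\xi$, and that term is again absorbed with a factor $\delta$.

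\emph{Step 3 (recovering the constants).} Multiplying by $|\mathbb{S}^{n-1}|^{-p/n}$ and using $\mathcal E_p(f)^p=|\mathbb{S}^{n-1}|^{-p/n}a$, I would convert $\fint$ to $\int d\xi$; this produces the factors $|\mathbb{S}^{n-1}|^{-1-p/n}$ and $(1+\alpha)\frac{p^2}{2}=\frac{p(n+p)}{2}\cdot\frac{p}{p}$. Then $a^{-1}=|\mathbb{S}^{n-1}|^{-p/n}\mathcal E_p(f)^{-p}$ produces $|\mathbb{S}^{n-1}|^{-1-2p/n}$. I would choose $\kappa$ as a small fixed constant, or $\kappa\sim\varepsilon$ via the refined \cite[Lemma 2.1]{fz22}, so that the $\|\nabla g\|_p^p$ term carries a coefficient of order $\varepsilon$, using $\int_{\mathbb S^{n-1}}\|\nabla_\xi g\|_p^p\,d\xi=\alpha_{n,p}^p\|\nabla g\|_p^p$.

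The main obstacle is the bookkeeping in Step 2. The variance produced by Proposition \ref{prop-neg-Lebe} must have the exact coefficient $(n+p)$ and involve only $L^{(1)}$. Every other contribution must be absorbed either by the $(1-C\varepsilon)L^{(2)}$ factor or by the positive $\|\nabla g\|_p^p$ term. Matching the precise sign and size of the $\widetilde C_{n,p}\varepsilon\|\nabla g\|_p^p$ term to the Figalli--Zhang constant is where I expect to need care.
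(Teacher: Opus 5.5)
Your route is essentially the paper's: integrate the Figalli--Zhang inequality \eqref{eq:pointwise-FZ} in $x$, then feed the resulting lower bound into Proposition \ref{prop-neg-Lebe}\eqref{it0-neg-Lebe} on $(\mathbb{S}^{n-1},d\xi/|\mathbb{S}^{n-1}|)$ with $\alpha=n/p$, and absorb the cross terms. The one structural difference is that the paper runs this twice. With $\theta=0$ it gets \eqref{prop-expan-local-e5}, which keeps the full $\frac12L^{(2)}$ but has no $|y|^p$ term. With $\theta=1$ it gets \eqref{prop-expan-local-e8}, which has a fixed multiple of $\|\nabla g\|_p^p$. It then takes $(1-\varepsilon)\times$\eqref{prop-expan-local-e5}$+\varepsilon\times$\eqref{prop-expan-local-e8}. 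You instead run the expansion once with an intermediate $\kappa$. That is fine, but as written your bookkeeping does not give the linear dependence on $\varepsilon$ required in \eqref{prop-expan-local-e2}, for two reasons.

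\emph{The choice of $\kappa$.} A fixed $\kappa$ leaves the factor $\frac{1-\kappa}{2}$ in front of $L^{(2)}$, which is not of the form $\frac{1-C_{n,p}\varepsilon}{2}$. So you must take $\kappa\sim\varepsilon$, and then you need $c(\kappa)\gtrsim\kappa$ to keep a term $\widetilde C_{n,p}\varepsilon\|\nabla g\|_p^p$. The Figalli--Zhang lemma does not state this. It does follow by averaging the cases $\theta=0$ and $\theta=1$ of \eqref{eq:pointwise-FZ} pointwise with weights $1-\kappa$ and $\kappa$, which gives $c(\kappa)\ge\kappa\,\widetilde c_{p,1}$. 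This is exactly the paper's convex combination, moved inside the integral. With $\kappa=\varepsilon$, every error multiplying $\|\nabla_\xi g\|_p^p$ carries a factor $c(\kappa)\varepsilon$ and is absorbed.

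\emph{The size of $L^{(1)}$.} The bounds $|L^{(1)}_\xi|\le a\varepsilon^{1/p}$, $\delta\sim\varepsilon^{1/2}$ (or $\varepsilon^{1/p}$), and $|L^{(1)}_\xi|L^{(2)}_\xi\le\varepsilon^{1/p}aL^{(2)}_\xi$ only give a factor $1-C\varepsilon^{1/p}$. An error $\varepsilon^{s}L^{(2)}$ with $s<1$ cannot be absorbed into $1-C\varepsilon$. Use instead the Cauchy--Schwarz bound you already wrote, $|L^{(1)}_\xi(f,g)|\le(aL^{(2)}_\xi(f,g))^{1/2}\le a\varepsilon$, which is \eqref{prop-expan-local-e3}. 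It gives $\|U_0\|_{L^\infty(\mathbb{S}^{n-1})}\lesssim a\varepsilon$, hence $\delta\sim\varepsilon$. It also bounds the covariance of $L^{(1)}$ and $L^{(2)}$ on the sphere by $4a\varepsilon\fint L^{(2)}$.

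Finally, do not try to force the coefficient $(n+p)$ on the variance term. Your computation $(1+\frac np)\frac{p^2}{2}=\frac{p(n+p)}{2}$ is correct. It is what the paper's proof actually produces in \eqref{prop-expan-local-e4}, and it is the coefficient used later in \eqref{thm-stab-e1}. The $(n+p)$ printed in \eqref{prop-expan-local-e2} agrees with it only at $p=2$. For $p>2$, take $g=t\psi$ with $\operatorname{Var}(f,\psi)>0$, fix a small $\varepsilon$, and let $t\to0$. This shows the exact second-order coefficient of the variance is $\frac{p(n+p)}{2}$, so the inequality with the printed constant fails; it is a misprint. With these corrections, your single-expansion version proves the proposition with $\frac{p(n+p)}{2}$, and it is slightly more economical than the paper's two-step averaging.
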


\begin{remark}\label{rem-3.4}
Note that $f$ is radial and $\|\nabla_\xi g\|_p\le \|\nabla g\|_p|\xi|$ for all
$\xi\in\mathbb{S}^{n-1}$.
By these and \eqref{eq-L2}, we conclude that, if $\varepsilon$ is sufficiently
small, then, for all $g\in\dot{W}^{1,p}$ such that $\|\nabla g\|_p<\varepsilon\|\nabla f\|_p$,
it holds that
$\sup_{\xi\in\mathbb{S}^{n-1}}\frac{\|\nabla_\xi g\|_p}{\|\nabla_\xi f\|_p}\lesssim\varepsilon$ and
hence, for each $\xi\in\mathbb{S}^{n-1}$,
\begin{align*}
\left[
\frac{L^{(2)}_\xi(f,g)}
{\|\nabla_\xi f\|_p^p}
\right]^{\frac12}
+
\frac{\|\nabla_\xi g\|_{p}^p}
{\|\nabla_\xi f\|_p^p}
&\lesssim\frac{\|\nabla_\xi g\|_p}{\|\nabla_\xi f\|_p}+
\left(\frac{\|\nabla_\xi g\|_p}{\|\nabla_\xi f\|_p}\right)^p\lesssim\varepsilon
\end{align*}
with the implicit positive constants depending only on $n$ and $p$.
This shows that, if $f+g$ is sufficiently close to $f$ in $\dot{W}^{1,p}$,
then the expansion \eqref{prop-expan-local-e2} holds,
which is precisely what is required for the proof of Theorem \ref{thm-stab}.
Nevertheless, we choose to keep the present assumption \eqref{prop-expan-local-e1}
to better highlight the parameter dependencies.
\end{remark}

\begin{proof}[Proof of Proposition \ref{prop-expan-local}]
We need to use several pointwise inequalities from Figalli--Zhang \cite{fz22}, which are exactly
\cite[Lemma 2.1(ii) and (2.4)]{fz22}:
For any $\theta\in[0,\infty)$, there is a positive
constant $\widetilde{c}_{p,\theta}$ (if $\theta=0$, then
choose $\widetilde{c}_{p,\theta}:=0$), depending only on $p$ and $\theta$,
such that, for every $x,y\in\mathbb R$,
\begin{align}\label{eq:pointwise-FZ}
  |x+y|^p
  &\ge |x|^p+p|x|^{p-2}xy\notag\\
  &\quad+\frac{(1-\theta)p}{2}\left[|x|^{p-2}|y|^{2}
  +(p-2)\frac{\min\{|x+y|,|x|\}^{p-1}}{|x|}\left(|x+y|-|x|\right)^2\right]
  +\widetilde{c}_{p,\theta}|y|^p.
\end{align}
We first apply this inequality with $\theta=0$ and then obtain,
for any $\xi\in\mathbb{S}^{n-1}$,
\begin{align*}
\|\nabla_\xi(f+g)\|_{p}^p
\ge\|\nabla_\xi f\|_p^p
+p\left[L^{(1)}_{\xi}(f,g)+\frac12L^{(2)}_\xi(f,g)\right]
=:a+U.
\end{align*}
Observe that, by H\"older's inequality and \eqref{prop-expan-local-e1},
we conclude that
\begin{align}\label{prop-expan-local-e3}
\left[L^{(1)}_\xi(f,g)\right]^2
\le\int_{\mathbb{R}^n}|\nabla_\xi f|^p\int_{\mathbb{R}^n}
|\nabla_\xi f|^{p-2}|\nabla_\xi g|^2\le aL^{(2)}_\xi(f,g)
\le a^2\varepsilon^2.
\end{align}
This implies
$\|U\|_{L^\infty(\mathbb{S}^{n-1})}
\le ap\varepsilon(1+\frac12\varepsilon).$
Hence, by Proposition \ref{prop-neg-Lebe},
if $p\varepsilon(1+\frac12\varepsilon)<\frac12$, then there is a
$C_1$, depending only on $n$ and $p$, such that
\begin{align}\label{prop-expan-local-e4}
&\mathcal{E}_p(f+g)^p\notag\\
&\quad\ge\mathcal{E}_p(f)^p+p|\mathbb{S}_{n-1}|^{-1-\frac pn}
\int_{\mathbb{S}^{n-1}}\left[L^{(1)}_\xi(f,g)
+\frac{1}{2}L^{(2)}_\xi(f,g)\right]\,d\xi\notag\\
&\qquad-\frac{p(n+p)}{2}|\mathbb{S}^{n-1}|^{-\frac{2p}{n}}
\mathcal{E}_p(f)^{-p}\notag\\
&\quad\qquad\times
\left\{\fint_{\mathbb{S}^{n-1}}
\left[L^{(1)}_\xi(f,g)+\frac12 L^{(2)}_\xi(f,g)\right]^2
-\left[\fint_{\mathbb{S}^{n-1}}
L^{(1)}_\xi(f,g)+\frac12 L^{(2)}_\xi(f,g)\right]^2\right\}\notag\\
&\qquad-C_1p\varepsilon\left(1+\frac12\varepsilon\right)
|\mathbb{S}^{n-1}|^{-1-\frac{2p}{n}}\mathcal{E}_p(f)^{-p}
\int_{\mathbb{S}^n}\left[L^{(1)}_\xi(f,g)+\frac12 L^{(2)}_\xi(f,g)\right]^2.
\end{align}
Using \eqref{prop-expan-local-e3} and
the penultimate inequality in \eqref{prop-expan-local-e1}, we obtain
\begin{align*}
\left[L^{(1)}_\xi(f,g)+\frac12 L^{(2)}_\xi(f,g)\right]^2
\le \left[\sqrt{aL^{(2)}_\xi(f,g)}
+\frac12\varepsilon\sqrt{aL^{(2)}_\xi(f,g)}\right]^2
\le aL^{(2)}_{\xi}(f,g)\left(1+\frac12\varepsilon\right)^2.
\end{align*}
This further implies
\begin{align*}
&p\varepsilon\left(1+\frac12\varepsilon\right)
|\mathbb{S}^{n-1}|^{-1-\frac{2p}{n}}\mathcal{E}_p(f)^{-p}
\int_{\mathbb{S}^n}\left[L^{(1)}_\xi(f,g)+\frac12 L^{(2)}_\xi(f,g)\right]^2
\lesssim p|\mathbb{S}^{n-1}|^{-1-\frac pn}
\varepsilon\int_{\mathbb{S}^{n-1}}L^{(2)}_\xi(f,g),
\end{align*}
where the implicit positive constant depends only on $n$ and $p$.

On the other hand,
\begin{align*}
&\fint_{\mathbb{S}^{n-1}}
\left[L^{(1)}_\xi(f,g)+\frac12 L^{(2)}_\xi(f,g)\right]^2
-\left[\fint_{\mathbb{S}^{n-1}}
L^{(1)}_\xi(f,g)+\frac12 L^{(2)}_\xi(f,g)\right]^2\\
&\quad=\fint_{\mathbb{S}^{n-1}}
\left[L^{(1)}_\xi(f,g)+\frac12 L^{(2)}_\xi(f,g)
-\fint_{\mathbb{S}^{n-1}}L^{(1)}_\xi(f,g)+\frac12 L^{(2)}_\xi(f,g)\right]^2\\
&\quad=\fint_{\mathbb{S}^{n-1}}
\left[L^{(1)}_\xi(f,g)-\fint_{\mathbb{S}^{n-1}}L^{(1)}_\xi(f,g)\right]^2
+\frac14\fint_{\mathbb{S}^{n-1}}
\left[L^{(2)}_\xi(f,g)-\fint_{\mathbb{S}^{n-1}}L^{(2)}_\xi(f,g)\right]^2\\
&\qquad-\fint_{\mathbb{S}^{n-1}}
\left[L^{(1)}_\xi(f,g)-\fint_{\mathbb{S}^{n-1}}L^{(1)}_\xi(f,g)\right]
\left[L^{(2)}_\xi(f,g)-\fint_{\mathbb{S}^{n-1}}L^{(2)}_\xi(f,g)\right].
\end{align*}
From \eqref{prop-expan-local-e1}, we deduce that
\begin{align*}
\fint_{\mathbb{S}^{n-1}}
\left[L^{(2)}_\xi(f,g)-\fint_{\mathbb{S}^{n-1}}L^{(2)}_\xi(f,g)\right]^2
\le\fint_{\mathbb{S}^{n-1}}\left[L_\xi^{(2)}(f,g)\right]^2
\le |\mathbb{S}^{n-1}|^{\frac pn-1}\mathcal{E}_p(f)^p\varepsilon^2
\int_{\mathbb{S}^{n-1}}L^{(2)}_\xi(f,g).
\end{align*}
Moreover, by \eqref{prop-expan-local-e3}, we find that,
for any $\xi\in\mathbb{S}^{n-1}$,
\begin{align}\label{prop-expan-local-e9}
\left|L^{(1)}_{\xi}(f,g)-\fint_{\mathbb{S}^{n-1}}L^{(1)}_\xi(f,g)\right|
\le 2\|L^{(1)}_\xi(f,g)\|_{L^\infty(\mathbb{S}^{n-1})}
\le 2a\varepsilon.
\end{align}
Thus, it holds that
\begin{align*}
&\left|\fint_{\mathbb{S}^{n-1}}
\left[L^{(1)}_\xi(f,g)-\fint_{\mathbb{S}^{n-1}}L^{(1)}_\xi(f,g)\right]
\left[L^{(1)}_\xi(f,g)-\fint_{\mathbb{S}^{n-1}}L^{(1)}_\xi(f,g)\right]\right|\\
&\quad\le 4a\varepsilon\fint_{\mathbb{S}^{n-1}}L^{(1)}_\xi(f,g)
=4|\mathbb{S}^{n-1}|^{\frac pn-1}\mathcal{E}_p(f)^p\varepsilon
\int_{\mathbb{S}^{n-1}}L^{(1)}_\xi(f,g).
\end{align*}
Incorporating the above estimates into \eqref{prop-expan-local-e4}
yields that, for sufficiently small $\varepsilon$,
\begin{align}\label{prop-expan-local-e5}
&\mathcal{E}_p(f+g)^p\notag\\
&\quad\ge\mathcal{E}_p(f)^p+p|\mathbb{S}_{n-1}|^{-1-\frac pn}
\int_{\mathbb{S}^{n-1}}\left[L^{(1)}_\xi(f,g)
+\frac{1-C_2\varepsilon}{2}L^{(1)}_\xi(f,g)\right]\,d\xi\notag\\
&\qquad-\frac{p(n+p)}{2}|\mathbb{S}^{n-1}|^{-1-\frac{2p}{n}}
\mathcal{E}_p(f)^{-p}
\fint_{\mathbb{S}^{n-1}}
\left[L^{(1)}_\xi(f,g)-\fint_{\mathbb{S}^{n-1}}L^{(1)}_\xi(f,g)\right]^2,
\end{align}
where $C_2$ depends only on $n$ and $p$.

To complete the proof, we still need to add term $C\|\nabla g\|_{p}^p$
on the right-hand side of \eqref{prop-expan-local-e5}. To achieve this,
we use \eqref{eq:pointwise-FZ} with $\theta=1$ and then conclude that
there is a positive constant $C_3$, depending only on $p$, such that,
for any $\xi\in\mathbb{S}^{n-1}$,
\begin{align*}
\|\nabla_\xi(f+g)\|_{p}^p
\ge\|\nabla_\xi f\|_p^p+pL^{(1)}_{\xi}(g)+C_3\|\nabla_\xi g\|_p^p
=:a+\widetilde{U}.
\end{align*}
From \eqref{prop-expan-local-e3} again, we infer that
$\|\widetilde U\|_{L^\infty(\mathbb{S}^{n-1})}\le a\varepsilon(p+C_3)$.
Consequently, if $\varepsilon(p+C_3)<\frac12$, then, applying
Proposition \ref{prop-neg-Lebe} again,
we find that there is a $C_4$, depending only on $n$ and $p$, such that
\begin{align}\label{prop-expan-local-e6}
\mathcal{E}_p(f+g)^p
&\ge\mathcal{E}_p(f)^p
+|\mathbb{S}^{n-1}|^{-1-\frac pn}
\int_{\mathbb{S}^{n-1}}\left[pL^{(1)}_\xi(f,g)+C_3\|\nabla_\xi g\|_p^p\right]\notag\\
&\quad-\frac{1+\frac np}{2}|\mathbb{S}^{n-1}|^{-\frac{2p}n}
\mathcal{E}_p(f)^{-p}\notag\\
&\qquad\times\left\{\fint_{\mathbb{S}^{n-1}}
\left[pL^{(1)}_\xi(f,g)+C_3\|\nabla_\xi g\|_{p}^p\right]^2
-\left[\fint_{\mathbb{S}^{n-1}}
pL^{(1)}_\xi(f,g)+C_3\|\nabla_\xi g\|_{p}^p\right]^2\right\}\notag\\
&\quad-C_4\varepsilon(p+C_3)|\mathbb{S}^{n-1}|^{-1-\frac{2p}{n}}
\mathcal{E}_p(f)^{-p}\int_{\mathbb{S}^{n-1}}
\left[pL^{(1)}_\xi(f,g)+C_3\|\nabla_\xi g\|_{p}^p\right]^2\notag\\
&\quad=:\mathcal{E}_p(f)^p
+|\mathbb{S}^{n-1}|^{-1-\frac pn}
\int_{\mathbb{S}^{n-1}}\left[pL^{(1)}_\xi(f,g)+C_3\|\nabla_\xi g\|_p^p\right]
-I_1(g)-I_2(g).
\end{align}
By Tonelli's theorem, we conclude that
\begin{align}\label{eq-lp}
\int_{\mathbb{S}^{n-1}}\|\nabla_\xi g\|_p^p\,d\xi=
\alpha_{n,p}^p\|\nabla g\|_p^p,
\end{align}
where $\alpha_{n,p}$ is as in \eqref{eq-alpha}.
Therefore, we only need to show that, for sufficiently small $\varepsilon$,
\begin{align}\label{prop-expan-local-e7}
I_1(g)+I_2(g)&\le \frac{p(n+p)}{2}|\mathbb{S}^{n-1}|^{-1-\frac{2p}{n}}
\mathcal{E}_p(f)^{-p}
\fint_{\mathbb{S}^{n-1}}
\left[L^{(1)}_\xi(f,g)-\fint_{\mathbb{S}^{n-1}}L^{(1)}_\xi(f,g)\right]^2\notag\\
&\quad+C_5\varepsilon\int_{\mathbb{S}^{n-1}}
\left[L^{(1)}_\xi(f,g)+\|\nabla_\xi g\|_{p}^p\right],
\end{align}
where the positive constant $C_5$ depends only on $n$ and $p$.
Indeed, if \eqref{prop-expan-local-e7} holds, then
\eqref{prop-expan-local-e6} implies that there are
two positive constants $C_6$ and $C_7$, depending only on $n$ and $p$,
such that
\begin{align}\label{prop-expan-local-e8}
&\mathcal{E}_p(f+g)^p\notag\\
&\quad\ge\mathcal{E}_p(f)^p+p|\mathbb{S}_{n-1}|^{-1-\frac pn}
\int_{\mathbb{S}^{n-1}}\left[L^{(1)}_\xi(f,g)
-\frac{C_6\varepsilon}{2}L^{(1)}_\xi(f,g)\right]\,d\xi\notag\\
&\qquad-\frac{p(n+p)}{2}|\mathbb{S}^{n-1}|^{-1-\frac{2p}{n}}
\mathcal{E}_p(f)^{-p}
\fint_{\mathbb{S}^{n-1}}
\left[L^{(1)}_\xi(f,g)-\fint_{\mathbb{S}^{n-1}}L^{(1)}_\xi(f,g)\right]^2
+C_7\|\nabla g\|_p^p.
\end{align}
Letting $(1-\varepsilon)\times\eqref{prop-expan-local-e5}+\varepsilon\times\eqref{prop-expan-local-e8}$,
we obtain the desired estimate \eqref{prop-expan-local-e2}.

Hence, to complete the proof, we now prove \eqref{prop-expan-local-e7}.
We begin with the simpler term $I_2(g)$.
Indeed, using \eqref{prop-expan-local-e3} and
the penultimate inequality in \eqref{prop-expan-local-e1}, we obtain
\begin{align*}
\int_{\mathbb{S}^{n-1}}\left[pL^{(1)}_\xi(f,g)+C_3\|\nabla_\xi g\|_{p}^p\right]^2
&\le 2\int_{\mathbb{S}^{n-1}}\left\{p^2\left[L^{(1)}_\xi(f,g)\right]^2
+C_3^2\|\nabla_\xi g\|_{p}^{2p}\right\}\\
&\le 2\int_{\mathbb{S}^{n-1}}\left[p^2aL^{(1)}_\xi(f,g)
+C_3^2\varepsilon^2a\|\nabla_\xi g\|_p^p\right]\\
&=2(p^2+C_3^2\varepsilon^2)|\mathbb{S}^{n-1}|^{\frac pn}
\mathcal{E}_p(f)^p
\int_{\mathbb{S}^{n-1}}\left[L^{(1)}_\xi(f,g)+\|\nabla_\xi g\|_p^p\right].
\end{align*}
This further implies that, for sufficiently small $\varepsilon$,
\begin{align*}
I_2(g)\lesssim\varepsilon
\int_{\mathbb{S}^{n-1}}\left[L^{(1)}_\xi(f,g)+\|\nabla_\xi g\|_p^p\right].
\end{align*}
Next, we deal with $I_1(g)$.
Note that
\begin{align*}
&\fint_{\mathbb{S}^{n-1}}
\left[pL^{(1)}_\xi(f,g)+C_3\|\nabla_\xi g\|_{p}^p\right]^2
-\left[\fint_{\mathbb{S}^{n-1}}
pL^{(1)}_\xi(f,g)+C_3\|\nabla_\xi g\|_{p}^p\right]^2\\
&\quad=p^2\fint_{\mathbb{S}^{n-1}}
\left[L^{(1)}_\xi(f,g)-\fint_{\mathbb{S}^{n-1}}L^{(1)}_\xi(f,g)\right]^2
+C_3^2\fint_{\mathbb{S}^{n-1}}
\left[\|\nabla_\xi g\|_p^p-\fint_{\mathbb{S}^{n-1}}\|\nabla_\xi g\|_p^p\right]^2\\
&\qquad+2pC_3\fint_{\mathbb{S}^{n-1}}
\left[L^{(1)}_\xi(f,g)-\fint_{\mathbb{S}^{n-1}}L^{(1)}_\xi(f,g)\right]
\left[\|\nabla_\xi g\|_p^p-\fint_{\mathbb{S}^{n-1}}\|\nabla_\xi g\|_p^p\right].
\end{align*}
By \eqref{prop-expan-local-e1} and \eqref{prop-expan-local-e9}, we conclude that
\begin{align*}
\fint_{\mathbb{S}^{n-1}}
\left[\|\nabla_\xi g\|_p^p-\fint_{\mathbb{S}^{n-1}}\|\nabla_\xi g\|_p^p\right]^2
\le \fint_{\mathbb{S}^{n-1}}\|\nabla_\xi g\|_{p}^{2p}\le
|\mathbb{S}^{n-1}|^{\frac pn-1}\mathcal{E}_p(f)^p
\varepsilon^2\int_{\mathbb{S}^{n-1}}\|\nabla_\xi g\|_{p}^p
\end{align*}
and
\begin{align*}
&\left|\fint_{\mathbb{S}^{n-1}}
\left[L^{(1)}_\xi(f,g)-\fint_{\mathbb{S}^{n-1}}L^{(1)}_\xi(f,g)\right]
\left[\|\nabla_\xi g\|_p^p-\fint_{\mathbb{S}^{n-1}}\|\nabla_\xi g\|_p^p\right]\right|\\
&\quad\le 4a\varepsilon\fint_{\mathbb{S}^{n-1}}\|\nabla_\xi g\|_{p}^p
=4|\mathbb{S}^{n-1}|^{-\frac pn-1}\mathcal{E}_p(f)^{p}\varepsilon
\int_{\mathbb{S}^{n-1}}\|\nabla_\xi g\|_p^p.
\end{align*}
Combining these estimates, we obtain \eqref{prop-expan-local-e7}
for sufficiently small $\varepsilon$,
which completes the proof of the present proposition.
\end{proof}


\subsection{Local expansion for affine $p$-Laplacian}\label{subsec-expan2}

Next, we consider the expansion of the affine $p$ Laplacian, which is important in the proof of Theorem \ref{thm-bubble}.

Recall that, to study the stability of critical points of the classical Sobolev inequality,
Liu and Zhang \cite[Lemma 2.1(2)]{lz25} derived the following
expansion: If $p\ge 2$, then there is a
$c_p\in(0,1)$, depending only on $p$, such that,
for any $\theta\in [0,\infty)$, there is a positive
constant $\widetilde{c}_{p,\theta}$
(if $\theta=0$, then
choose $\widetilde{c}_{p,\theta}:=0$), depending only on $p$ and $\theta$,
such that, for every $x,y\in\mathbb R$,
\begin{align}\label{eq:pointwise-LZ}
  |x+y|^{p-2}(x+y)y
  &\ge |x|^{p-2}xy\notag\\
  &\quad+(1-\theta)
  \frac{\min\{|x|,\max\{c_{p}^{\frac1{p-1}}|x|,|x+y|\}\}^{p-1}}{|x|}|y|^2\notag\\
  &\quad+(1-\theta)(p-2)
  \frac{\min\{|x+y|,|x|\}^{p-1}}{|x|}\left(|x+y|-|x|\right)^2
  +\widetilde{c}_{p,\theta}|y|^p.
\end{align}

As in the previous subsection we assume that $f\in \dot W^{1,p}$ is \emph{radial}. For any $g\in\dot{W}^{1,p}$ and $\xi\in\mathbb{S}^{n-1}$, define
\begin{align}\label{eq-L2w}
\widetilde{L}^{(2)}_\xi(f,g)
:=&\int_{\mathbb{R}^n}
\frac{\min\{|\nabla_\xi f|,\max\{c_{p}^{\frac1{p-1}}|\nabla_\xi f|,|\nabla_\xi(f+g)|\}\}^{p-1}}
{|\nabla_\xi f|}|\nabla_\xi g|^2\notag\\
&\quad+(p-2)\frac{\min\{|\nabla_\xi (f+g)|,|\nabla_\xi f|\}^{p-1}}{|\nabla_\xi f|}
\left[|\nabla_\xi(f+g)|-|\nabla_\xi f|\right]^2,
\end{align}
where $c_{p}$ is as in \eqref{eq:pointwise-LZ}.
It is clear that $\widetilde{L}^{(2)}$ has the same
dilation property as in \eqref{eq-L-dila}
and $\widetilde{L}^{(2)}_\xi(f,g)\le L^{(2)}_\xi(f,g)$.

\begin{proposition}\label{prop-expan-lap}
Let $p\ge 2$. Then there are positive constants $c_{n,p}$,
$C_{n,p}$, and $\widetilde C_{n,p}$,
depending only on $n$ and $p$, such that, for any
$\varepsilon\in(0,c_{n,p})$ and $g\in\dot W^{1,p}$ satisfying
\begin{align}\label{prop-expan-lap-e1}
\sup_{\xi\in\mathbb{S}^{n-1}}
\left[
\left(
\frac{\widetilde{L}^{(2)}_\xi(f,g)}
{\|\nabla_\xi f\|_p^p}
\right)^{\frac12}
+
\frac{\|\nabla_\xi g\|_{p}^p}
{\|\nabla_\xi f\|_p^p}
\right]\le \varepsilon,
\end{align}
it holds that
\begin{align}\label{prop-expan-lap-e2}
-\int_{\mathbb{R}^n}
\Delta_p^{\rm aff}(f+g)g
&\ge
-\int_{\mathbb{R}^n}\Delta_p^{\rm aff}(f)g
+(1-C_{n,p}\varepsilon)|\mathbb{S}^{n-1}|^{-1-\frac pn}
\int_{\mathbb{S}^{n-1}}\widetilde{L}^{(2)}_\xi(f,g)\,d\xi\notag\\
&\qquad-(n+p)|\mathbb{S}^{n-1}|^{-1-\frac{2p}{n}}
\mathcal E_p(f)^{-p}
\operatorname{Var}(f,g)
+\widetilde C_{n,p}\varepsilon\|\nabla g\|_{p}^p.
\end{align}
\end{proposition}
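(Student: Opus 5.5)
We follow the proof of Proposition \ref{prop-expan-local}, replacing the pointwise inequality \eqref{eq:pointwise-FZ} by its derivative analogue \eqref{eq:pointwise-LZ}. The starting point is the structure of the affine $p$-Laplacian tested against $g$. By \eqref{eq:defaffinelapl},
\begin{align*}
-\int_{\mathbb{R}^n}\Delta_p^{\rm aff}(f+g)g
=\mathcal E_p(f+g)^{n+p}\int_{\mathbb{S}^{n-1}}\|\nabla_\xi(f+g)\|_p^{-n-p}\,M_\xi\,d\xi,
\end{align*}
where $M_\xi:=\int_{\mathbb{R}^n}|\nabla_\xi(f+g)|^{p-2}\nabla_\xi(f+g)\nabla_\xi g$. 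We therefore work with the probability measure $d\xi/|\mathbb{S}^{n-1}|$, set $\alpha=n/p$, and put $a=\|\nabla_\xi f\|_p^p$, which is independent of $\xi$ because $f$ is radial. The perturbation is $U(\xi):=\|\nabla_\xi(f+g)\|_p^p-a$. With this notation the left-hand side is exactly $|\mathbb{S}^{n-1}|^{-1-p/n}\,\Upsilon$, where
\[
\Upsilon=\Big[\fint(a+U)^{-\alpha}\Big]^{-1-\frac1\alpha}\fint (a+U)^{-\alpha-1}V,
\qquad V:=M_\xi .
\]
This is precisely the quantity treated in Proposition \ref{prop-neg-Lebe}\eqref{it1-neg-Lebe}.

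\emph{Smallness of $U$.} Applying \eqref{eq:pointwise-FZ} and its upper counterpart (Taylor's formula for $|x+y|^p$ with $p\ge2$), we get $U=pL^{(1)}_\xi(f,g)+O\big(\int|\nabla_\xi f|^{p-2}|\nabla_\xi g|^2+\|\nabla_\xi g\|_p^p\big)$. Hypothesis \eqref{prop-expan-lap-e1}, together with a Cauchy--Schwarz bound as in \eqref{prop-expan-local-e3}, should then give $\|U\|_{L^\infty}\lesssim a\varepsilon$. There is one subtlety: \eqref{prop-expan-lap-e1} controls $\widetilde L^{(2)}$, not $L^{(2)}$. The two are comparable up to the region where $|\nabla_\xi(f+g)|<c_p^{1/(p-1)}|\nabla_\xi f|$, and on that region $|\nabla_\xi g|\gtrsim|\nabla_\xi f|$, so the contribution is bounded by $\|\nabla_\xi g\|_p^p$. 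Consequently $L^{(2)}_\xi\lesssim \widetilde L^{(2)}_\xi+\|\nabla_\xi g\|_p^p\lesssim a\varepsilon$ there, which suffices.

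\emph{Lower bound on $V$ and the polarized expansion.} Integrating \eqref{eq:pointwise-LZ} over $\mathbb{R}^n$ with $x=\nabla_\xi f$, $y=\nabla_\xi g$ gives
\[
V\ge L^{(1)}_\xi(f,g)+(1-\theta)\widetilde L^{(2)}_\xi(f,g)+\widetilde c_{p,\theta}\|\nabla_\xi g\|_p^p .
\]
Since Proposition \ref{prop-neg-Lebe}\eqref{it1-neg-Lebe} is monotone in $V$ (the weights $(a+U)^{-\alpha-1}$ are positive), we may replace $V$ by this lower bound $\widetilde V$. That proposition then yields
\[
\Upsilon\ge\fint\widetilde V-(1+\alpha)a^{-1}\operatorname{Cov}(U,\widetilde V)-C a^{-1}\varepsilon\Big[\fint|U\widetilde V|+\fint|U|\fint|\widetilde V|\Big].
\]
The zeroth-order term $\fint L^{(1)}_\xi$ equals $|\mathbb{S}^{n-1}|^{1+p/n}\cdot\big(-\int\Delta_p^{\rm aff}(f)g\big)$ up to normalization, because $\mathcal E_p(f)^{n+p}a^{-1-n/p}=|\mathbb{S}^{n-1}|^{-1-p/n}$. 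In the covariance we substitute $U=pL^{(1)}+R$ and $\widetilde V=L^{(1)}+\widetilde R$. The main part is then $-(1+\tfrac np)p\,a^{-1}\fint(L^{(1)}-\fint L^{(1)})^2$, and after normalization this is exactly the $-(n+p)|\mathbb{S}^{n-1}|^{-1-2p/n}\mathcal E_p(f)^{-p}\operatorname{Var}(f,g)$ term. The remaining cross terms contain a factor $R$ or $\widetilde R$, each of size $\lesssim \widetilde L^{(2)}_\xi+\|\nabla_\xi g\|_p^p$, multiplied by $|L^{(1)}|\lesssim a\varepsilon$. They are therefore bounded by $C\varepsilon\fint(\widetilde L^{(2)}+\|\nabla_\xi g\|_p^p)$, which exactly as in Proposition \ref{prop-expan-local} is absorbed into $(1-C\varepsilon)\widetilde L^{(2)}$ and into part of $\|\nabla g\|_p^p$ via \eqref{eq-lp}. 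The same bound handles the error terms $a^{-1}\varepsilon\fint|U\widetilde V|$.

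\emph{Combining the two choices of $\theta$.} As in Proposition \ref{prop-expan-local}, we run this argument once with $\theta=0$, which keeps the full coefficient $1$ in front of $\widetilde L^{(2)}$ with no $\|\nabla g\|_p^p$ gain, and once with $\theta=1$, which gives a gain $\widetilde c_{p,1}\|\nabla_\xi g\|_p^p$. We then take the convex combination $(1-\varepsilon)\cdot(\text{first})+\varepsilon\cdot(\text{second})$ to produce $\widetilde C\varepsilon\|\nabla g\|_p^p$.

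The main obstacle is bounding the error terms by $\widetilde L^{(2)}$ rather than $L^{(2)}$. The quantity $U$ involves the second-order Taylor term $L^{(2)}$, while the hypothesis and the conclusion involve only $\widetilde L^{(2)}$. We will first try the region-splitting argument described above, namely $L^{(2)}\le C(\widetilde L^{(2)}+\|\nabla_\xi g\|_p^p)$ pointwise in $\xi$, and absorb the excess into the $\|\nabla_\xi g\|_p^p$ terms, which carry an $\varepsilon$ factor.
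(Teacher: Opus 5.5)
Your route is the paper's: bound $M_\xi$ from below by \eqref{eq:pointwise-LZ}, use positivity of $(a+U)^{-1-n/p}$ to replace $M_\xi$ by that bound, apply Proposition \ref{prop-neg-Lebe}\eqref{it1-neg-Lebe} with $\alpha=n/p$, split $U=pL^{(1)}_\xi(f,g)+R$ so that the covariance produces $p\operatorname{Var}(f,g)$ plus cross terms, and combine the runs $\theta=0$ and $\theta=1$. There is, however, a gap in the step you single out as the main obstacle. Your region-splitting only gives $\int_{\mathbb{R}^n}|\nabla_\xi f|^{p-2}|\nabla_\xi g|^2\lesssim\widetilde L^{(2)}_\xi(f,g)+\|\nabla_\xi g\|_p^p$. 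But \eqref{prop-expan-lap-e1} controls $\|\nabla_\xi g\|_p^p$ only by $a\varepsilon$, not $a\varepsilon^2$. Inserted into the Cauchy--Schwarz bound \eqref{prop-expan-local-e3}, this yields only $|L^{(1)}_\xi(f,g)|\lesssim a\varepsilon^{1/2}$. You use $|L^{(1)}_\xi(f,g)|\lesssim a\varepsilon$ twice: to get $\|U\|_{L^\infty(\mathbb{S}^{n-1})}\lesssim a\varepsilon$, which fixes the $\delta$ in Proposition \ref{prop-neg-Lebe}, and to make the cross terms $O(\varepsilon)\int_{\mathbb{S}^{n-1}}[\widetilde L^{(2)}_\xi+\|\nabla_\xi g\|_p^p]$. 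As written, your argument therefore yields only the coefficient $1-C\varepsilon^{1/2}$ in front of $\widetilde L^{(2)}$, not $1-C_{n,p}\varepsilon$.

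The missing observation is that there is no bad region. Since $c_p<1$, the inner maximum in \eqref{eq-L2w} is at least $c_p^{1/(p-1)}|\nabla_\xi f|$. So the weight $\min\{|\nabla_\xi f|,\max\{c_p^{1/(p-1)}|\nabla_\xi f|,|\nabla_\xi(f+g)|\}\}^{p-1}/|\nabla_\xi f|$ is at least $c_p|\nabla_\xi f|^{p-2}$ everywhere. On the set $\{|\nabla_\xi(f+g)|<c_p^{1/(p-1)}|\nabla_\xi f|\}$ it equals exactly $c_p|\nabla_\xi f|^{p-2}$; that is what the truncation is for. Hence $\int|\nabla_\xi f|^{p-2}|\nabla_\xi g|^2\le c_p^{-1}\widetilde L^{(2)}_\xi(f,g)$. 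This gives $[L^{(1)}_\xi(f,g)]^2\le c_p^{-1}a\widetilde L^{(2)}_\xi(f,g)\le c_p^{-1}a^2\varepsilon^2$ and $L^{(2)}_\xi\le (p-1)c_p^{-1}\widetilde L^{(2)}_\xi$. This is exactly what the paper uses; in its display the factor $c_p^{-1}$ should read $c_p$, as the next line shows. With this, the rest of your estimates go through as in the paper.

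Two smaller points. First, the $\theta=0$ run is not neutral in $\|\nabla g\|_p^p$: the remainder $R$ produces a loss $-C\varepsilon\|\nabla g\|_p^p$ that has nothing to be absorbed into within that run. So the weight on the $\theta=1$ estimate should be $K\varepsilon$, with $K=K(n,p)$ large enough that $K$ times the gain constant exceeds $C$, rather than exactly $\varepsilon$. The paper's choice has the same slip, and this still leaves a coefficient $1-C\varepsilon$ in front of $\widetilde L^{(2)}$. Second, with your normalization the left-hand side equals $|\mathbb{S}^{n-1}|^{-p/n}\Upsilon$, not $|\mathbb{S}^{n-1}|^{-1-p/n}\Upsilon$.
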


\begin{remark}\label{rem-3.3}
Similar to Remark \ref{rem-3.4},
if $\varepsilon$ is sufficiently
small, then, for all $g\in\dot{W}^{1,p}$ such that $\|\nabla g\|_p<\varepsilon\|\nabla f\|_p$,
it holds that the left-hand side of $\eqref{prop-expan-lap-e1}\lesssim\varepsilon$
with the implicit positive constants depending only on $n$ and $p$.
This shows that, if $f+g$ is sufficiently close to $f$ in $\dot{W}^{1,p}$,
then the expansion \eqref{prop-expan-lap-e2} holds.
\end{remark}

\begin{proof}[Proof of Proposition \ref{prop-expan-lap}]
We choose $\theta=0$ and $\theta=1$ in \eqref{eq:pointwise-LZ} to
derive two estimates, and then take a convex combination of them.

Indeed, first, letting $\theta=0$ in \eqref{eq:pointwise-LZ}, we find that,
for any $\xi\in\mathbb{S}^{n-1}$,
\begin{align*}
\int_{\mathbb{R}^n}|\nabla_\xi (f+g)|^{p-2}
\nabla_\xi(f+g)\nabla_\xi g
&\ge L^{(1)}_\xi(f,g)+\widetilde{L}^{(2)}_\xi(f,g)=:V_0(\xi).
\end{align*}
By this and the definition of the affine Laplacian, we obtain
\begin{align*}
-\int_{\mathbb{R}^n}\Delta_p^{\rm aff}(f+g)g
\ge \mathcal{E}_p(f+g)^{n+p}
\int_{\mathbb{S}^{n-1}}\|\nabla_\xi (f+g)\|_p^{-n-p}
V_0(\xi)\,d\xi.
\end{align*}
Define $a:=\|\nabla_\xi f\|_p^p$
and $U(\xi):=\|\nabla_\xi (f+g)\|_p^p-\|\nabla_\xi f\|_p^p$ for
all $\xi\in\mathbb{S}^{n-1}$;
since $f$ is radial, $a$ is independent of the choice of $\xi$.
Then
\begin{align}\label{prop-expan-lap-e3}
-\int_{\mathbb{R}^n}\Delta_p^{\rm aff}(f+g)g
\ge \left[\int_{\mathbb{S}^{n-1}}(a+U)^{-\frac np}
\right]^{-1-\frac pn}
\int_{\mathbb{S}^{n-1}}(a+U)^{-1-\frac np}V_0.
\end{align}
To apply Proposition \ref{prop-neg-Lebe}\eqref{it1-neg-Lebe},
we now estimate $\|U\|_{L^\infty(\mathbb{S}^{n-1})}.$
From the definition of $\widetilde{L}^{(2)}$ and $p\ge 2$,
we deduce that, for all $\xi\in\mathbb{S}^{n-1}$,
\begin{align*}
\widetilde{L}^{(2)}_\xi(f,g)\ge c_{p}^{-1}\int_{\mathbb{R}^n}
|\nabla_\xi f|^{p-2}(\nabla_\xi g)^2,
\end{align*}
where $c_p$ is as in \eqref{eq:pointwise-LZ}.
This, together with H\"older's inequality and
\eqref{prop-expan-lap-e1},
further implies
\begin{align}\label{prop-expan-lap-e4}
\left[L^{(1)}_\xi (f,g)\right]^2
\le \int_{\mathbb{R}^n}|\nabla_\xi f|^p\int_{\mathbb{R}^n}
|\nabla_\xi f|^{p-2}|\nabla_\xi g|^2\le
c_{p}^{-1}a \widetilde{L}^{(2)}_\xi(f,g)\le c_p^{-1} a^2\varepsilon^2.
\end{align}
Moreover, by Taylor's formula and $p\ge 2$ again, we obtain, for any $x,y\in\mathbb{R}$,
there is a $t\in[0,1]$ such that
\begin{align*}
\left||x+y|^p-|x|^p-p|x|^{p-2}xy\right|
=p(p-1)|x+ty|^{p-2}y^2
\lesssim
|x|^{p-2}y^2+|y|^p,
\end{align*}
where the implicit positive constant depends only on $p$.
Thus, if let $R(\xi):=U(\xi)-pL^{(1)}_\xi(f,g)$
for every $\xi\in\mathbb{S}^{n-1}$, then, from
\eqref{prop-expan-lap-e1} and \eqref{prop-expan-lap-e4},
it follows that, for $\varepsilon\in(0,1)$,
\begin{align}\label{prop-expan-lap-e5}
|R(\xi)|\lesssim
\left[
\widetilde{L}^{(2)}_\xi(f,g)+\|\nabla_\xi g\|_p^p
\right]\lesssim a\varepsilon
\quad\text{and}\quad
|U(\xi)|\lesssim a\varepsilon,
\end{align}
where the implicit positive constants depend only on $p$.
Therefore, for sufficiently small $\varepsilon$,
using \eqref{prop-expan-lap-e3} and Proposition \ref{prop-neg-Lebe} with $\alpha=\frac np$,
we obtain
\begin{align}\label{prop-expan-lap-e6}
-\int_{\mathbb{R}^n}\Delta_p^{\rm aff}(f+g)g
&\ge
|\mathbb{S}^{n-1}|^{-1-\frac pn}
\int_{\mathbb{S}^{n-1}}V_0(\xi)\,d\xi\notag\\
&\quad
-\left(1+\frac np\right)
|\mathbb{S}^{n-1}|^{-1-\frac pn}a^{-1}
\int_{\mathbb{S}^{n-1}}
\left(U-\fint_{\mathbb{S}^{n-1}}U\right)
\left(V_0-\fint_{\mathbb{S}^{n-1}}V_0\right)\notag\\
&\quad
-C_{1}\varepsilon|\mathbb{S}^{n-1}|^{-1-\frac pn}a^{-1}
\left[
\int_{\mathbb{S}^{n-1}}|UV_0|
+
\left(\int_{\mathbb{S}^{n-1}}|U|\right)
\left(\fint_{\mathbb{S}^{n-1}}|V_0|\right)
\right]\notag\\
&=:|\mathbb{S}^{n-1}|^{-1-\frac pn}
\int_{\mathbb{S}^{n-1}}\left[L^{(1)}_\xi(f,g)+\widetilde{L}^{(2)}_\xi(f,g)\right]\,d\xi
-I_1(g)-I_{2}(g),
\end{align}
where $C_{1}$ is a positive constant depending only on $n$ and $p$.

We first deal with $I_1$. Note that
\begin{align*}
&\int_{\mathbb{S}^{n-1}}
\left(U-\fint_{\mathbb{S}^{n-1}}U\right)
\left(V_0-\fint_{\mathbb{S}^{n-1}}V_0\right)d\xi \\
&\quad =
p\int_{\mathbb{S}^{n-1}}
\left[
L^{(1)}_\xi(f,g)
-\fint_{\mathbb{S}^{n-1}}L^{(1)}_u(f,g)\,du
\right]^2\,d\xi \\
&\qquad
+
\int_{\mathbb{S}^{n-1}}
\left[
\widetilde{L}^{(2)}_\xi(f,g)
-\fint_{\mathbb{S}^{n-1}}\widetilde{L}^{(2)}_u(f,g)\,du
\right]\\
&\qquad\quad\times\left\{
pL^{(1)}_\xi(f,g)+R(\xi)
-\fint_{\mathbb{S}^{n-1}}\left[pL^{(1)}_u(f,g)+R(u)\right]\,du\right\}\,d\xi \\
&\qquad
+
\int_{\mathbb{S}^{n-1}}
\left[
R(\xi)-\fint_{\mathbb{S}^{n-1}}R(u)\,du
\right]
\left[
L^{(1)}_\xi(f,g)
-\fint_{\mathbb{S}^{n-1}}L^{(1)}_u(f,g)\,du
\right]d\xi .
\end{align*}
Observe that the first term on the right-hand side
is exactly $p\operatorname{Var}(f,g)$.
We now estimate the last two terms. By \eqref{prop-expan-lap-e4}
and \eqref{prop-expan-lap-e5}, we find that
$|pL^{(1)}_\xi(f,g)+R(\xi)|\lesssim a\varepsilon$,
which, combined with $L^(2)(f,g)\ge 0$, further implies
\begin{align*}
&\left|\int_{\mathbb{S}^{n-1}}
\left[
\widetilde{L}^{(2)}_\xi(f,g)
-\fint_{\mathbb{S}^{n-1}}\widetilde{L}^{(2)}_u(f,g)\,du
\right]
\left\{
pL^{(1)}_\xi(f,g)+R(\xi)
-\fint_{\mathbb{S}^{n-1}}\left[pL^{(1)}_u(f,g)+R(u)\right]\,du\right\}\,d\xi\right| \\
&\quad\lesssim a\varepsilon
\int_{\mathbb{S}^{n-1}}
\left|
\widetilde{L}^{(2)}_\xi(f,g)
-\fint_{\mathbb{S}^{n-1}}\widetilde{L}^{(2)}_u(f,g)\,du
\right|\,d\xi\lesssim a\varepsilon
\int_{\mathbb{S}^{n-1}}\widetilde{L}^{(2)}_\xi(f,g)\,d\xi ,
\end{align*}
where the implicit positive constants depend only on $n$ and $p$.
Similarly, one has
\begin{align*}
&\left|
\int_{\mathbb{S}^{n-1}}
\biggl[
R(\xi)-\fint_{\mathbb{S}^{n-1}}R(u)\,du
\biggr]
\biggl[
L^{(1)}_\xi(f,g)
-\fint_{\mathbb{S}^{n-1}}L^{(1)}_u(f,g)\,du
\biggr]d\xi
\right|\\
&\quad\lesssim a\varepsilon
\int_{\mathbb{S}^{n-1}}
\left|
R(\xi)-\fint_{\mathbb{S}^{n-1}}R(u)\,du
\right|d\xi\lesssim a\varepsilon
\int_{\mathbb{S}^{n-1}}
\left[
\widetilde{L}^{(2)}_\xi(f,g)+\|\nabla_\xi g\|_p^p
\right]d\xi .
\end{align*}
Combining the above estimates and the fact
$\mathcal{E}_p(f)^p=|\mathbb{S}^{n-1}|^{-\frac pn}a$
(since $f$ is radial), we obtain,
for sufficiently small $\varepsilon$,
\begin{align}\label{prop-expan-lap-e7}
|I_1(g)|&\le
(n+p)|\mathbb{S}^{n-1}|^{-1-\frac {2p}n}\operatorname{Var}(f,g)
+C_{2}\varepsilon
\int_{\mathbb{S}^{n-1}}
\left[
\widetilde{L}^{(2)}_\xi(f,g)+\|\nabla_\xi g\|_p^p
\right]d\xi,
\end{align}
where $C_{2}$ is a positive constant depending only on $n$ and $p$.
This is the desired estimate of $I_1$.
On the other hand, using the similar argument and the
good pointwise estimates of $L^{(1)}_\xi$,
$\widetilde{L}^{(2)}_\xi$, and $R$ obtained above,
we can also find that, for sufficiently small $\varepsilon$,
\begin{align*}
|I_2(g)|\lesssim
\varepsilon
\int_{\mathbb{S}^{n-1}}
\left[\widetilde{L}^{(2)}_\xi(f,g)+\|\nabla_\xi g\|_p^p\right]\,d\xi
\end{align*}
with the implicit positive constant depending only on $n$ and $p$.
This, together with \eqref{prop-expan-lap-e6}, \eqref{prop-expan-lap-e7},
and \eqref{eq-lp},
further implies that, for sufficiently small $\varepsilon$,
\begin{align*}
-\int_{\mathbb{R}^n}\Delta_p^{\rm aff}(f+g)g
&\ge
|\mathbb{S}^{n-1}|^{-1-\frac pn}
\int_{\mathbb{S}^{n-1}}L^{(1)}_\xi(f,g)\,d\xi
+(1-C_{3}\varepsilon)|\mathbb{S}^{n-1}|^{-1-\frac pn}
\int_{\mathbb{S}^{n-1}}\widetilde{L}^{(2)}_\xi(f,g)\,d\xi\notag\\
&\quad-(n+p)|\mathbb{S}^{n-1}|^{-1-\frac {2p}n}
\mathcal{E}_p(f)^{-p}\operatorname{Var}(f,g)
-C_{4}\varepsilon\|\nabla g\|_p^p,
\end{align*}
where $C_{3}$ and $C_4$ are positive constants depending only on $n$ and $p$.
Furthermore, since
$f$ is radial, from the fact $\mathcal{E}_p(f)^p=|\mathbb{S}^{n-1}|^{-\frac pn}a$
and the definition of the affine Laplacian, we infer that
\begin{align}\label{prop-expan-lap-e12}
-\int_{\mathbb{R}^n}\Delta_p^{\rm aff}(f)g
&=\mathcal{E}_p(f)^{n+p}\int_{\mathbb{S}^{n-1}}
\|\nabla_\xi f\|_p^{n+p}\int_{\mathbb{R}^n}
|\nabla_\xi f|^{p-2}\nabla_\xi f\nabla_\xi g\notag\\
&=|\mathbb{S}^{n-1}|^{-1-\frac pn}
\int_{\mathbb{S}^{n-1}}L^{(1)}_\xi(f,g)\,d\xi.
\end{align}
Thus, it holds that, for sufficiently small $\varepsilon$,
\begin{align}\label{prop-expan-lap-e9}
-\int_{\mathbb{R}^n}\Delta_p^{\rm aff}(f+g)g
&\ge
-\int_{\mathbb{R}^n}\Delta_p^{\rm aff}(f)g
+(1-C_{3}\varepsilon)|\mathbb{S}^{n-1}|^{-1-\frac pn}
\int_{\mathbb{S}^{n-1}}\widetilde{L}^{(2)}_\xi(f,g)\,d\xi\notag\\
&\qquad-(n+p)|\mathbb{S}^{n-1}|^{-1-\frac {2p}n}
\mathcal{E}_p(f)^{-p}\operatorname{Var}(f,g)
-C_{4}\varepsilon\|\nabla g\|_p^p.
\end{align}

We next apply the same argument with $\theta=1$. In this case,
\eqref{eq:pointwise-LZ} shows that there is a
positive constant $\widetilde{c}_p$, depending only
on $p$, such that
\begin{align*}
\int_{\mathbb{R}^n}|\nabla_\xi (f+g)|^{p-2}
\nabla_\xi(f+g)\nabla_\xi g
\ge
L^{(1)}_\xi(f,g)+\widetilde{c}_p\|\nabla_\xi g\|_p^p.
\end{align*}
Repeating the argument used above with
$V_0$ replaced by $L^{(1)}_\xi(f,g)+\widetilde{c}_p\|\nabla_\xi g\|_p^p$,
one has, for sufficiently small $\varepsilon$,
\begin{align}\label{prop-expan-lap-e10}
-\int_{\mathbb{R}^n}\Delta_p^{\rm aff}(f+g)g
&\ge
-\int_{\mathbb{R}^n}\Delta_p^{\rm aff}(f)g
-C_5\varepsilon|\mathbb{S}^{n-1}|^{-1-\frac pn}
\int_{\mathbb{S}^{n-1}}\widetilde{L}^{(2)}_\xi(f,g)\notag\\
&\qquad-(n+p)|\mathbb{S}^{n-1}|^{-1-\frac {2p}n}\int_{\mathbb{S}^{n-1}}
\mathcal{E}_p(f)^{-p}\operatorname{Var}(f,g)+C_6\|\nabla g\|_p^p,
\end{align}
where $C_{5}$ and $C_6$ are positive constants depending only on $n$ and $p$.
Now, letting $(1-\varepsilon)\times\eqref{prop-expan-lap-e9}+
\varepsilon\times\eqref{prop-expan-lap-e10}$,
we obtain the desired estimate \eqref{prop-expan-lap-e2}.
Then the present proof is complete.
\end{proof}

Moreover, to show the stability of critical points,
we also need the following upper estimate.

\begin{proposition}\label{prop-expan-E}
Let $p\ge 2$. Then there are positive constants $c_{n,p}$ and
$C_{n,p}$,
depending only on $n$ and $p$, such that, for any
$\varepsilon\in(0,c_{n,p})$ and $g\in\dot W^{1,p}$ satisfying
\begin{align}\label{prop-expan-E-e1}
\sup_{\xi\in\mathbb{S}^{n-1}}
\frac{\|\nabla_\xi g\|_{p}}
{\|\nabla_\xi f\|_p}\le \varepsilon,
\end{align}
it holds that
\begin{align}\label{prop-expan-E-e2}
\left|\mathcal{E}_p(f+g)^p
-\mathcal{E}_p(f)
+p\int_{\mathbb{R}^n}\Delta_p^{\rm aff}(f)g\right|
\le C_{n,p}\left(\|\nabla g\|_{p}^p
+\int_{\mathbb{R}^n}
|\nabla f|^{p-2}|\nabla g|^2\right).
\end{align}
\end{proposition}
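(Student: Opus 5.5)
Here $f$ is radial, as throughout this subsection, and the term $\mathcal E_p(f)$ in \eqref{prop-expan-E-e2} is to be read as $\mathcal E_p(f)^p$. The plan is a two-level Taylor expansion, exactly parallel to Proposition \ref{prop-expan-local}, but with a two-sided bound and a cruder remainder.

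\emph{Inner (directional) level.} For each $\xi\in\mathbb S^{n-1}$ set $a:=\|\nabla_\xi f\|_p^p$, which is independent of $\xi$, and
\[
U(\xi):=\|\nabla_\xi(f+g)\|_p^p-a .
\]
Taylor's formula with $p\ge2$ gives the pointwise bound
\[
\bigl||x+y|^p-|x|^p-p|x|^{p-2}xy\bigr|\lesssim |x|^{p-2}y^2+|y|^p .
\]
Hence $U(\xi)=pL^{(1)}_\xi(f,g)+R(\xi)$ with
\[
|R(\xi)|\lesssim \int|\nabla_\xi f|^{p-2}|\nabla_\xi g|^2+\|\nabla_\xi g\|_p^p \le \int|\nabla f|^{p-2}|\nabla g|^2+\|\nabla g\|_p^p=:D(g),
\]
using $|\nabla_\xi h|\le|\nabla h|$. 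Moreover, by H\"older and \eqref{prop-expan-E-e1}, $|L^{(1)}_\xi(f,g)|\le a^{1-1/p}\|\nabla_\xi g\|_p\le a\varepsilon$ and $\|\nabla_\xi g\|_p^p\le a\varepsilon^p$. Therefore $\|U\|_{L^\infty(\mathbb S^{n-1})}\lesssim a\varepsilon\le a/2$ once $\varepsilon$ is small.

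\emph{Outer level.} Write $\mathcal E_p(f+g)^p=\bigl[\int_{\mathbb S^{n-1}}(a+U)^{-n/p}\bigr]^{-p/n}$ and normalize $d\xi$ to a probability measure, which only produces the factor $|\mathbb S^{n-1}|^{-p/n}$. Proposition \ref{prop-neg-Lebe}\eqref{it2-neg-Lebe} with $\alpha=n/p$ then gives
\[
\Bigl|\mathcal E_p(f+g)^p-\mathcal E_p(f)^p-|\mathbb S^{n-1}|^{-1-\frac pn}\int_{\mathbb S^{n-1}}U\Bigr|\lesssim a^{-1}\|U\|_{L^\infty}^2 .
\]
Inserting $U=pL^{(1)}+R$ and recalling from \eqref{prop-expan-lap-e12} that $|\mathbb S^{n-1}|^{-1-p/n}\int L^{(1)}_\xi\,d\xi=-\int\Delta_p^{\rm aff}(f)g$, the linear term matches exactly. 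The contribution of $\int R$ is bounded by $D(g)$.

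\emph{Main obstacle: the quadratic remainder.} It remains to show $a^{-1}\|U\|_\infty^2\lesssim D(g)$. Since $|U|\le p|L^{(1)}|+|R|$ and $|R|\lesssim a\varepsilon$, we get $a^{-1}R^2\lesssim \varepsilon|R|\lesssim D(g)$. For the linear part, Cauchy--Schwarz gives
\[
(L^{(1)}_\xi)^2\le \int|\nabla_\xi f|^p\int|\nabla_\xi f|^{p-2}|\nabla_\xi g|^2\le a\,D(g),
\]
so $a^{-1}(L^{(1)}_\xi)^2\le D(g)$ uniformly in $\xi$. The key point is that Proposition \ref{prop-neg-Lebe}\eqref{it2-neg-Lebe} costs the $L^\infty$ norm on the sphere, and this Cauchy--Schwarz bound is uniform in $\xi$, which is what makes it suffice. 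Combining the three pieces yields \eqref{prop-expan-E-e2}, with constants depending only on $n$ and $p$ (since $a$ cancels).
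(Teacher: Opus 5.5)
Your proposal is correct and follows essentially the same route as the paper: apply Proposition~\ref{prop-neg-Lebe}\eqref{it2-neg-Lebe} with $\alpha=n/p$ to $U(\xi)=\|\nabla_\xi(f+g)\|_p^p-a$, identify the linear term through \eqref{prop-expan-lap-e12}, and control $a^{-1}\|U\|_{L^\infty(\mathbb S^{n-1})}^2$ by a Cauchy--Schwarz bound that is uniform in $\xi$. You are also right that $\mathcal E_p(f)$ in \eqref{prop-expan-E-e2} should read $\mathcal E_p(f)^p$. The only cosmetic difference is that you split $U=pL^{(1)}_\xi(f,g)+R$ before squaring, whereas the paper squares the cruder bound $|U|\lesssim\int(|\nabla_\xi f|^{p-1}|\nabla_\xi g|+|\nabla_\xi g|^p)$ directly.
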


\begin{proof}
We aim to apply Proposition \ref{prop-neg-Lebe}\eqref{it2-neg-Lebe}.
As in the proof of Proposition \ref{prop-expan-lap},
define $a:=\|\nabla_\xi f\|_p^p$ (which is independent of $\xi$)
and $U(\xi):=\|\nabla_\xi (f+g)\|_p^p-\|\nabla_\xi f\|_p^p$ for
all $\xi\in\mathbb{S}^{n-1}$.
From Taylor's formula, we deduce that, for every $x,y\in\mathbb{R}$,
there is a $t\in[0,1]$ such that
\begin{align*}
\left||x+y|^p-|x|^p\right|
=p|x+ty|^{p-1}|y|\lesssim
(|x|^{p-1}+|y|^{p-1})|y|,
\end{align*}
where the implicit positive constant depends only on $n$ and $p$.
Therefore, by H\"older's inequality and \eqref{prop-expan-E-e1},
it holds that, for any $\xi\in\mathbb{S}^{n-1}$,
\begin{align}\label{prop-expan-E-e3}
|U(\xi)|&\lesssim\int_{\mathbb{R}^n}
\left[|\nabla_\xi f|^{p-1}|\nabla_\xi g|
+|\nabla_\xi g|^p\right]\notag\\
&\le \|\nabla_\xi f\|_p^{p-1}\|\nabla_\xi g\|_p
+\varepsilon^p\|\nabla_\xi f\|_p^{p}
\le(\varepsilon+\varepsilon^p)a
\end{align}
with the implicit positive constant depending only on $n$ and $p$.
Then, using this and Proposition \ref{prop-neg-Lebe}\eqref{it2-neg-Lebe},
we conclude that, if $\varepsilon$ is sufficiently small, then
\begin{align}\label{prop-expan-E-e4}
\left|\mathcal{E}_p(f+g)^p
-\mathcal{E}_p(f)-
|\mathbb{S}^{n-1}|^{-1-\frac pn}
\int_{\mathbb{S}^{n-1}}U(\xi)\,d\xi\right|
&\lesssim a^{-1}\|U\|_{L^\infty(\mathbb{S}^{n-1})}^2.
\end{align}
Since $f$ is radial, we infer that $a=\|\nabla_\xi f\|_p^p\sim\|\nabla f\|_p$.
Combining this, the first inequality in \eqref{prop-expan-E-e3},
and H\"older's inequality,
we obtain,
for sufficiently small $\varepsilon$,
\begin{align}\label{prop-expan-E-e5}
\operatorname{LHS}\eqref{prop-expan-E-e4}
&\lesssim \|\nabla f\|_p^{-p}
\left(\int_{\mathbb{R}^n}
\left[|\nabla_\xi f|^{p-1}|\nabla_\xi g|
+|\nabla_\xi g|^p\right]\right)^{2}\notag\\
&\lesssim \|\nabla f\|_p^{-p}
\left(\|\nabla f\|_p^{p}
\int_{\mathbb{R}^n}|\nabla_\xi f|^{p-2}|\nabla_\xi g|^2+\|\nabla_\xi g\|_p^{2p}\right)\notag\\
&\le\|\nabla g\|_p^p
+\int_{\mathbb{R}^n}|\nabla f|^{p-2}|\nabla g|^2.
\end{align}
On the other hand, using \eqref{prop-expan-lap-e12}, we find that,
for sufficiently small $\varepsilon$,
\begin{align*}
&\left|p\int_{\mathbb{R}^n}\Delta_p^{\rm aff}(f)g
+|\mathbb{S}^{n-1}|^{-1-\frac pn}
\int_{\mathbb{S}^{n-1}}U(\xi)\,d\xi\right|\\
&\quad=\left||\mathbb{S}^{n-1}|^{-1-\frac pn}
\int_{\mathbb{S}^{n-1}}\left[U(\xi)-p
L^{(1)}_\xi(f,g)\right]\,d\xi\right|\\
&\quad\lesssim\int_{\mathbb{S}^{n-1}}
\left[
\widetilde{L}^{(2)}_\xi(f,g)+\|\nabla_\xi g\|_p^p
\right]\,d\xi\quad\text{by \eqref{prop-expan-lap-e5}}\\
&\quad\lesssim\|\nabla g\|_p^p
+\int_{\mathbb{R}^n}|\nabla f|^{p-2}|\nabla g|^2
\quad\text{by \eqref{eq-L2}}.
\end{align*}
This, together with \eqref{prop-expan-E-e5},
gives \eqref{prop-expan-E-e2} and hence completes the present proof.
\end{proof}


\section{Affine spectral gap inequality}\label{sec-spectral}

The main objective of this section is to establish an affine spectral
gap inequality. We first introduce some notation.
Recall that $U(x):=c_0(1+|x|^{p'})^{\frac{p-n}{p}}$
for $x\in\mathbb{R}^n$ satisfying $\|U\|_{p^*}=1$.
Then $U$ satisfies
\begin{align}\label{eq-pU}
-\Delta_pU:=-\operatorname{div}\left(|\nabla U|^{p-2}\nabla U\right)
={S}_{\rm Sob}^{p}U^{p^*-1};
\end{align}
see \cite{a76,t76,cnv04}.

For any positive $V=c_0T_{\lambda_0,S,x_0}U\in\mathcal{M}_{\rm aff}$ with
affine transformation parameters $(c_0,\lambda_0,S,x_0)\in\mathscr{A}$,
define the \emph{weighted Lebesgue space} $L^{2}(V^{p^*-2})$ by setting
\begin{align*}
L^2(V^{p^*-2}):=
\left\{f:\|f\|_{L^{2}(V^{p^*-2})}:=
\left(\int_{\mathbb{R}^n}|f|^2V^{p^*-2}\right)^{\frac12}<\infty\right\}.
\end{align*}
For any
$\varphi,\psi\in L^2(V^{p^*-2})$, we say that
$\varphi\perp \psi$ in $L^2(V^{p^*-2})$ if
\begin{align*}
\langle\varphi,\psi\rangle:=
\int_{\mathbb{R}^n}V^{p^*-2}\varphi\psi=0;
\end{align*}
moreover, define the \emph{tangent space}
$T_V\mathcal{M}_{\rm aff}$ of $\mathcal{M}_{\rm aff}$ at $V$
by setting
\begin{align}\label{eq-tangent}
T_V\mathcal{M}_{\rm aff}:=
\left\{V,\ \partial_i V,\ \frac{n-p}{p}V+x\cdot\nabla V,\
\langle Bx,\nabla V\rangle:i\in\{1,\ldots,n\},\ B=B^{T},\ \operatorname{Tr}B=0\right\}.
\end{align}
Let the \emph{notation} $C_{{\rm c},V}^1$ denote
the set of compactly supported functions of class $C^1$
that are constant in a neighborhood of $x_0$.
Define the \emph{weighted Sobolev space} $\dot{W}^{1,2}(|\nabla V|^{p-2})$
to be the closure of $C_{{\rm c},{V}}^1$ with respect to
\begin{align*}
\|\varphi\|_{\dot{W}^{1,2}(|\nabla V|^{p-2})}
:=\left(\int_{\mathbb{R}^n}|\nabla \varphi|^2|\nabla V|^{p-2}\right)^{\frac12}.
\end{align*}

We now recall the classical spectral gap inequality,
which was established in \cite{fn19,fz22}.
For simplicity, in the remainder of this section, we assume $V=U$,
with the understanding that the orthogonality condition
$\perp$ holds in $L^2(U^{p^*-2})$ and will not be explicitly mentioned hereafter.
Define the \emph{linearized $p$-Laplacian} $\mathcal{L}_U$ by setting,
for any $\varphi\in\dot{W}^{1,2}(|\nabla U|^{p-2})$,
\begin{align*}
\mathcal{L}_U(\varphi):=-\operatorname{div}
\left(|\nabla U|^{p-2}\nabla\varphi+[p-2]
|\nabla U|^{p-4}[\nabla U\cdot\nabla\varphi]\nabla U\right).
\end{align*}
Then one has the following spectral gap inequality for $\mathcal{L}_U$
(see \cite[Proposition 3.6]{fz22}).

\begin{lemma}\label{fz-spectral}
There is a $\delta\in(0,\infty)$,
depending only on $n$ and $p$, such that, for any function $\varphi\in \dot{W}^{1,2}(|\nabla U|^{p-2})$
satisfying $\varphi\perp\{U,\ \partial_i U,\ \frac{n-p}{p}U+x\cdot\nabla U:i=1,\ldots,n\}$,
\begin{align*}
\langle \mathcal{L}_U\varphi,\varphi\rangle
\ge\left[(p^*-1){S}_{\rm Sob}^p+\delta\right]
\int_{\mathbb{R}^n}U^{p^*-2}\varphi^2\,dx.
\end{align*}
\end{lemma}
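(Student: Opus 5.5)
The plan is to reduce the inequality to a Sturm--Liouville problem by separation of variables, exploiting that $U$ is radial, and then to locate the eigenfunctions generated by the symmetries of \eqref{eq-pU}. Writing $x=r\theta$ and $U(x)=u(r)$ with $u'<0$, we have $\nabla U=u'(r)\theta$. Hence the quadratic form is
\begin{align*}
\langle \mathcal L_U\varphi,\varphi\rangle=\int_{\mathbb R^n}|u'|^{p-2}\left[(p-1)(\partial_r\varphi)^2+r^{-2}|\nabla_\theta\varphi|^2\right]dx .
\end{align*}
It is to be compared with the weight $U^{p^*-2}$, which is also radial.

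The first step is functional-analytic. I would show that the embedding $\dot W^{1,2}(|\nabla U|^{p-2})\hookrightarrow L^2(U^{p^*-2})$ is compact. The explicit asymptotics $|u'|\sim r^{1/(p-1)}$ near $0$ and $|u'|\sim r^{-(n-1)/(p-1)}$ at infinity should allow a weighted Hardy-type inequality with vanishing tails near $0$ and $\infty$; this is the argument of \cite{fn19}. It follows that the operator $U^{2-p^*}\mathcal L_U$ has discrete spectrum $\mu_1\le\mu_2\le\cdots$, with an orthonormal eigenbasis that is compatible with the decomposition into spherical harmonics. Expanding $\varphi=\sum_k f_k(r)Y_k(\theta)$, each degree $k$ gives a one-dimensional problem:
\begin{align*}
-r^{1-n}\bigl((p-1)r^{n-1}|u'|^{p-2}f'\bigr)'+k(k+n-2)r^{-2}|u'|^{p-2}f=\mu\, u^{p^*-2}f .
\end{align*}
Each such problem has simple eigenvalues that satisfy Sturm oscillation: the $j$-th eigenfunction has $j-1$ zeros.

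The second step identifies explicit eigenfunctions. By homogeneity, $\mathcal L_U U=(p-1)(-\Delta_pU)=(p-1)S_{\rm Sob}^pU^{p^*-1}$. So $U$ is a positive radial eigenfunction, and it gives the lowest eigenvalue $(p-1)S_{\rm Sob}^p$ of the $k=0$ problem. Differentiating \eqref{eq-pU} along translations gives $\mathcal L_U(\partial_iU)=(p^*-1)S_{\rm Sob}^pU^{p^*-2}\partial_iU$. Differentiating along the dilations $\lambda^{(n-p)/p}U(\lambda\cdot)$ gives the same identity for $\frac{n-p}pU+x\cdot\nabla U$. The radial profile $u'$ of $\partial_iU$ has constant sign, so $(p^*-1)S_{\rm Sob}^p$ is the lowest eigenvalue in degree $k=1$. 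The dilation generator changes sign exactly once, so $(p^*-1)S_{\rm Sob}^p$ is the second eigenvalue in degree $k=0$.

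The third step produces the gap. The $k=0$ eigenvalues are simple, so the third radial eigenvalue is strictly larger than $(p^*-1)S_{\rm Sob}^p$. The second eigenvalue in degree $1$ is strictly larger than the first. For $k\ge2$, the potential term $k(k+n-2)r^{-2}|u'|^{p-2}$ is strictly larger than for $k=1$. The Rayleigh quotient, tested against the $k=1$ ground state, then shows that the lowest eigenvalue in degree $k\ge 2$ strictly exceeds $(p^*-1)S_{\rm Sob}^p$. The orthogonality assumptions remove exactly the eigenspaces spanned by $U$, the dilation generator and $\partial_iU$. Discreteness of the spectrum then makes the infimum of the Rayleigh quotient over the orthogonal complement a minimum. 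By the above, this minimum equals $(p^*-1)S_{\rm Sob}^p+\delta$ for some $\delta>0$ depending only on $n$ and $p$.

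I expect the main obstacle to be the first step, because the weight $|\nabla U|^{p-2}$ degenerates at the origin and decays at infinity. One has to justify compactness, the self-adjoint realization on the closure of $C^1_{{\rm c},U}$, and the validity of Sturm oscillation for the singular endpoint problems. My first attempt would be to work on annuli $\{\rho<r<R\}$ and control the contributions from $r<\rho$ and $r>R$ uniformly by one-dimensional weighted Hardy inequalities built from the explicit power behaviour of $u$ and $u'$.
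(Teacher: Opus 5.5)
Your proposal is correct and is essentially the argument behind the paper's proof, which simply quotes \cite[Proposition 3.6]{fz22} (building on \cite{fn19}). That argument uses compactness of $\dot W^{1,2}(|\nabla U|^{p-2})\hookrightarrow L^2(U^{p^*-2})$, separation into spherical harmonics, and a Sturm--Liouville analysis in which $U$, $\partial_iU$ and $\frac{n-p}{p}U+x\cdot\nabla U$ are identified as the eigenfunctions for $(p-1)S_{\rm Sob}^p$ and $(p^*-1)S_{\rm Sob}^p$. To lighten the step you flag, note that nonnegativity of the second variation of $\|\nabla w\|_p^p/\|w\|_{p^*}^p$ at its minimizer $U$ already gives $\langle\mathcal{L}_U\varphi,\varphi\rangle\ge(p^*-1)S_{\rm Sob}^p\int_{\mathbb{R}^n}U^{p^*-2}\varphi^2$ whenever $\varphi\perp U$, so you only need simplicity of the one-dimensional eigenvalues (a Wronskian argument at the singular endpoints) rather than the full oscillation theorem.
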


In this section, we prove the following affine spectral gap inequality.

\begin{theorem}\label{thm-spectral}
There is a positive constant $\delta\in(0,\infty)$,
depending only on $n$ and $p$, such that, for any $\varphi\in \dot{W}^{1,2}(|\nabla U|^{p-2})$
satisfying $\varphi\perp T_U\mathcal{M}_{\rm aff}$,
\begin{align}\label{thm-spectral-e0}
&(p-1)|\mathbb{S}^{n-1}|^{-1-\frac pn}
\int_{\mathbb{S}^{n-1}}\int_{\mathbb{R}^n}
|\nabla_\xi U|^{p-2}(\nabla_\xi\varphi)^2\,dx\,d\xi
-(n+p)|\mathbb{S}^{n-1}|^{-1-\frac{2p}{n}}
\mathcal{E}_p(U)^{-p}\operatorname{Var}(U,\varphi)\notag\\
&\qquad\ge\left[(p^*-1)S_{\rm aff}^p+\delta\right]
\int_{\mathbb{R}^n}U^{p^*-2}\varphi^2\,dx,
\end{align}
where $L^{(1)}_\xi$ is as in \eqref{def-L1} with
$f$ replaced by $U$.
\end{theorem}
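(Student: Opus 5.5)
The plan is to reduce the affine quadratic form to the classical linearized operator $\mathcal L_U$ plus the variance correction, and then to control the variance sector by sector in spherical harmonics.

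\emph{Step 1 (the first term is the classical Hessian).} Since $U$ is radial, $\nabla U=U'(r)\hat x$. For fixed $v,e$ with $|e|=1$, rotation invariance gives
$$\int_{\mathbb S^{n-1}}|\xi\cdot e|^{p-2}(\xi\cdot v)^2\,d\xi=c_1|v|^2+c_2(e\cdot v)^2 .$$
Integrating by parts on the sphere (or differentiating $\alpha_{n,p}^p$ in the exponent) yields
$$\int_{\mathbb S^{n-1}}|\xi_1|^{p}\,d\xi=(p-1)\int_{\mathbb S^{n-1}}|\xi_1|^{p-2}\xi_2^2\,d\xi ,$$
so that
$$c_1=\frac{\alpha_{n,p}^p}{p-1},\qquad c_2=\frac{(p-2)\,\alpha_{n,p}^p}{p-1}.$$
Hence
$$(p-1)\int_{\mathbb S^{n-1}}\int_{\mathbb R^n}|\nabla_\xi U|^{p-2}(\nabla_\xi\varphi)^2=\alpha_{n,p}^p\langle\mathcal L_U\varphi,\varphi\rangle .$$
By \eqref{eq-relaSob}, $(p^*-1)S_{\rm aff}^p=|\mathbb S^{n-1}|^{-1-\frac pn}\alpha_{n,p}^p(p^*-1)S_{\rm Sob}^p$. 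Therefore, after dividing by $|\mathbb S^{n-1}|^{-1-\frac pn}\alpha_{n,p}^p$, \eqref{thm-spectral-e0} is equivalent to
$$\langle\mathcal L_U\varphi,\varphi\rangle-\kappa_{n,p}\operatorname{Var}(U,\varphi)\ge\big[(p^*-1)S_{\rm Sob}^p+\delta'\big]\int U^{p^*-2}\varphi^2$$
for an explicit $\kappa_{n,p}>0$.

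\emph{Step 2 (structure of the variance).} Integrating by parts, $L^{(1)}_\xi(U,\varphi)=-\int\varphi\,K(x,\xi)$ with
$$K(x,\xi)=(p-1)|\xi\cdot\nabla U|^{p-2}\,\xi^T\nabla^2U\,\xi .$$
This kernel is even in $\xi$ and $O(n)$-equivariant, i.e.\ $K(Qx,Q\xi)=K(x,\xi)$. Decompose $\varphi=\sum_k\varphi_k$ into spherical harmonics in $\hat x$. By Funk--Hecke, $\xi\mapsto L^{(1)}_\xi(U,\varphi_k)$ is a degree-$k$ harmonic in $\xi$, and it vanishes for odd $k$. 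The mean is the $k=0$ part, so $\operatorname{Var}(U,\varphi)=\sum_{k\ge2,\ k\ \mathrm{even}}\operatorname{Var}(U,\varphi_k)$, with the sectors orthogonal on $\mathbb S^{n-1}$. Since $\mathcal L_U$ and the weight $U^{p^*-2}$ are also rotation invariant, the whole quadratic form splits over $k$. For $k=0$ and odd $k$ the inequality is exactly Lemma \ref{fz-spectral}, since the orthogonality to $U$, $\partial_iU$ and the dilation generator lives in the sectors $k=0$ and $k=1$. Moreover, in each fixed sector $k$ the problem reduces to a one-dimensional radial quadratic form $\int(\cdots)\,dr$ in the profile $\varphi_k(r)$.

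\emph{Step 3 (nonnegativity from minimality).} Since $U$ minimizes $\mathcal E_p(f)^p-S_{\rm aff}^p\|f\|_{p^*}^p$, I would expand this functional to second order at $U$ along $U+t\varphi$ using Propositions \ref{prop-neg-Lebe} and \ref{prop-expan-local}. For $p>2$ the pointwise remainder $|x+y|^p$ is only $C^2$, so I would take $\varphi$ smooth and compactly supported first and then pass to the closure. With $\varphi\perp U$, the second variation of $\|U+t\varphi\|_{p^*}^p$ is $p(p^*-1)\int U^{p^*-2}\varphi^2$, and the expansion of $\mathcal E_p$ contributes exactly the left-hand side of \eqref{thm-spectral-e0}. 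This yields \eqref{thm-spectral-e0} with $\delta=0$ for all $\varphi\perp U$.

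\emph{Step 4 (strictness; the main obstacle).} The hard part is upgrading $\delta=0$ to $\delta>0$ uniformly, especially in the sectors $k=2$ and $k\ge4$, where the variance really subtracts. My plan is as follows.
\begin{enumerate}[(a)]
\item For $k\ge4$, show directly that the Funk--Hecke multiplier of $K$ decays. Cauchy--Schwarz with weight $|\nabla_\xi U|^{p-2}$ gives $\operatorname{Var}(U,\varphi_k)\le\theta_k\cdot(\text{first term})$ with $\theta_k<\theta_2$. Meanwhile the Figalli--Zhang radial eigenvalues grow with $k$, so the strict gap from Lemma \ref{fz-spectral} dominates.
\item For $k=2$, the sector is finite-dimensional in angle and reduces to a single radial Sturm--Liouville-type problem, with the weighted compact embedding $\dot W^{1,2}(|\nabla U|^{p-2})\hookrightarrow L^2(U^{p^*-2})$ available. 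Argue by contradiction. A minimizing sequence with gap tending to $0$ converges, by compactness, to a minimizer achieving equality in Step 3, i.e.\ a solution of the linearized affine Euler--Lagrange equation.
\item Identify these degree-$2$ solutions as exactly the trace-free affine generators $\langle Bx,\nabla U\rangle=rU'(r)\,\hat x^TB\hat x$. The most promising route is to show that the radial ODE in this sector has a one-dimensional decaying solution space, which contains $rU'$. The orthogonality $\varphi\perp T_U\mathcal M_{\rm aff}$ then rules this limit out.
\end{enumerate}
Combining the sectors gives a uniform $\delta>0$. I expect the ODE uniqueness step and the decay estimate on the multipliers for $k\ge4$ to be the delicate computations.
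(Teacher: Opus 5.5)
The genuine gap is Step 4(a), the even sectors $\ell\ge4$ (your $k$). The rest is sound. Step 1 is the paper's Lemma \ref{lem-L2}. Step 2 matches the structure of Lemma \ref{lem-L1}. Step 3 works if you Taylor-expand $t\mapsto\mathcal E_p(U+t\varphi)^p-S_{\rm aff}^p\|U+t\varphi\|_{p^*}^p$ directly for $\varphi\in C^\infty_{\rm c}$; Propositions \ref{prop-neg-Lebe} and \ref{prop-expan-local} only bound $\mathcal E_p$ from below, which is the wrong direction. Your treatment of $\ell=2$ is a valid alternative to the paper's computation. A null vector of the nonnegative form $\langle L^{(2)}_Uf,f\rangle_0-b_{2,p}\langle f,\psi_2\rangle_0^2$ solves $L^{(2)}_Uf=b_{2,p}\langle f,\psi_2\rangle_0\,\psi_2$. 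It is therefore a multiple of $(L^{(2)}_U)^{-1}\psi_2$, and affine invariance shows $rU'$ is such a vector, so no ODE uniqueness analysis is needed.

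In Step 4(a) your mechanism is of product type. You bound $\kappa\operatorname{Var}(U,\varphi_\ell)\le\theta_\ell\langle\mathcal L_U\varphi_\ell,\varphi_\ell\rangle$, use $\langle\mathcal L_U\varphi_\ell,\varphi_\ell\rangle\ge\Lambda_\ell\int U^{p^*-2}\varphi_\ell^2$, and want $(1-\theta_\ell)\Lambda_\ell>(p^*-1)S_{\rm Sob}^p$. Comparing with sector $2$ cannot give this. There the inequality is attained at $rU'Y_{2,m}$, so $(1-\theta_2)\Lambda_2\le(p^*-1)S_{\rm Sob}^p$. Hence $\theta_\ell<\theta_2$ and $\Lambda_\ell>\Lambda_2$ only yield $(1-\theta_\ell)\Lambda_\ell>(1-\theta_2)\Lambda_2$, which can lie below the threshold. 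Concretely, weighted Cauchy--Schwarz, even after the Funk--Hecke reduction to the radial profile, gives $\theta_\ell=\lambda_{\ell,p}\frac{p^2/(p-1)+\mu_\ell}{\mu_\ell+p-n}$ in the paper's notation, so $\theta_2=\frac{p^2/(p-1)+2n}{n+p}>1$. Checking $(1-\theta_\ell)\Lambda_\ell>(p^*-1)S_{\rm Sob}^p$ for small $\ell$ such as $\ell=4$ would need explicit lower bounds on the Figalli--Zhang sector eigenvalues for $p>2$, which are not available. Growth in $\ell$ only handles the tail.

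The missing idea is to bound the rank-one term against the \emph{shifted} form $\langle L^{(\ell)}_Uf,f\rangle_0$, from which $(p^*-1)S_{\rm aff}^p\langle f,f\rangle_0$ is already subtracted, with the \emph{exact} constant. The paper uses the identity $L^{(\ell)}_U(rU')=p(S_{\rm aff}/S_{\rm Sob})^p\psi_\ell$ (Lemma \ref{lem-psiz}), which makes $(L^{(\ell)}_U)^{-1}\psi_\ell$ explicit. Cauchy--Schwarz in $\langle L^{(\ell)}_U\cdot,\cdot\rangle_0$ and Lemma \ref{lem-psizin} then give $b_{\ell,p}\langle f,\psi_\ell\rangle_0^2\le\lambda_{\ell,p}\langle L^{(\ell)}_Uf,f\rangle_0$, with $\lambda_{2k,p}=\frac{n+p}{p^2}(\mu_{2k}+p-n)\prod_{j=0}^{k-1}\bigl(\frac{2j-p}{2j+n+p}\bigr)^2$. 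This equals $1$ at $k=1$ and is strictly decreasing, so $\lambda_{\ell,p}\le\lambda_{4,p}<1$ for all $\ell\ge4$.

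Within your framework you can reach the same constant without Lemma \ref{lem-psiz}. Your Step 3 gives $b_{2,p}\langle f,\psi_2\rangle_0^2\le\langle L^{(2)}_Uf,f\rangle_0$. Write $\langle f,\psi_\ell\rangle_0=\langle f,\psi_2\rangle_0+\frac{\mu_\ell-\mu_2}{p}\int_0^\infty|U'|^{p-2}U'r^{n-2}f\,dr$ and $\langle L^{(\ell)}_Uf,f\rangle_0=\langle L^{(2)}_Uf,f\rangle_0+(\mu_\ell-\mu_2)(S_{\rm aff}/S_{\rm Sob})^p\int_0^\infty|U'|^{p-2}r^{n-3}f^2\,dr$. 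Then apply $(a+b)^2\le(1+\epsilon)a^2+(1+\epsilon^{-1})b^2$ with $\epsilon=\frac{\mu_\ell-\mu_2}{n+p}$, and Cauchy--Schwarz against $\int_0^\infty|U'|^pr^{n-1}\,dr=|\mathbb S^{n-1}|^{-1}S_{\rm Sob}^p$; this reproduces exactly $\lambda_{\ell,p}$. Either way you still need the explicit Funk--Hecke multipliers $\beta_{2k,p}$ of $|\omega\cdot\xi|^p$ (e.g.\ from \cite{r26}) and the monotonicity computation for $\lambda_{2k,p}$. Neither appears in your proposal.
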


To show this, we need the spherical harmonic functions, which
are homogeneous harmonic polynomials restricted to the
sphere. To be exact, for any $\ell\in\mathbb{Z}_+$,
$Y_{\ell}$ is called an \emph{$\ell$ order spherical harmonic function}
if it is $\ell$ homogeneous [that is, for any $\lambda\in(0,\infty)$,
$Y_\ell(\lambda\cdot)=\lambda^\ell Y_\ell(\cdot)$]
polynomial such that
\begin{align}\label{eq-sphere}
-\Delta_{\mathbb S^{n-1}}Y_{\ell,m}
  =\mu_\ell Y_{\ell,m},\quad \mu_\ell:=\ell(\ell+n-2),
\end{align}
where $\Delta_{\mathbb{S}^{n-1}}$ denotes the Laplace--Beltrami operator
on $\mathbb{S}^{n-1}$.
This family of functions forms an orthonormal basis for $L^2(\mathbb{S}^{n-1})$.
Moreover, we denote by the \emph{notation} $\mathcal{H}_\ell(\mathbb{S}^{n-1})
:=\{Y_{\ell,m}\}_m$ the set of all $\ell$ order spherical harmonic functions.
For more properties about spherical harmonic functions,
we refer to \cite{dx13}.
We first establish the following identities.

\begin{lemma}\label{lem-L2}
Let $\alpha_{n,p}$ be as in \eqref{eq-alpha}.
\begin{enumerate}[{\rm(i)}]
  \item\label{lem-L2-it1} For any $\varphi\in\dot{W}^{1,2}(|\nabla U|^{p-2})$,
\begin{align}\label{lem-L2-e0}
\int_{\mathbb{S}^{n-1}}\int_{\mathbb{R}^n}
|\nabla_\xi U|^{p-2}(\nabla_\xi\varphi)^2\,dx\,d\xi
=\frac{\alpha_{n,p}^p}{p-1}\langle\mathcal{L}_U\varphi,\varphi\rangle.
\end{align}
  \item\label{lem-L2-it2} For any $Y_{\ell,m}\in\mathcal{H}_\ell(\mathbb{S}^{n-1})$
and any differentiable function $f$ on $(0,\infty)$,
\begin{align*}
\int_{\mathbb{S}^{n-1}}\int_{\mathbb{R}^n}
|\nabla_\xi U|^{p-2}(\nabla_\xi [fY_{\ell,m}])^2\,dx\,d\xi
=\frac{\alpha_{n,p}^p}{p-1}
\int_{0}^{\infty}|\partial_r U|^{p-2}
r^{n-1}\left[(p-1)f'^2+\frac{\mu_\ell}{r^2}f^2\right]\,dr.
\end{align*}
\end{enumerate}
\end{lemma}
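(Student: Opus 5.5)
The plan is to reduce both identities to pointwise computations in $x$ followed by a spherical integral in $\xi$. Since $U$ is radial, we write $\nabla U(x)=U'(r)\omega$ with $r=|x|$ and $\omega=x/r$, so that $\nabla_\xi U=U'(r)(\xi\cdot\omega)$ and $|\nabla_\xi U|^{p-2}=|U'(r)|^{p-2}|\xi\cdot\omega|^{p-2}$. Hence
\[
\int_{\mathbb{S}^{n-1}}|\nabla_\xi U|^{p-2}(\nabla_\xi\varphi)^2\,d\xi=|U'(r)|^{p-2}\,\langle M(\omega)\nabla\varphi,\nabla\varphi\rangle,
\qquad M(\omega):=\int_{\mathbb{S}^{n-1}}|\xi\cdot\omega|^{p-2}\,\xi\otimes\xi\,d\xi .
\]
The key step is to compute $M(\omega)$. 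By rotation invariance, $M(\omega)=A\,\omega\otimes\omega+B(I-\omega\otimes\omega)$, where
\[
A=\int_{\mathbb{S}^{n-1}}|\xi\cdot\omega|^{p}\,d\xi=\alpha_{n,p}^p,
\qquad
\operatorname{Tr}M=\int_{\mathbb{S}^{n-1}}|\xi\cdot\omega|^{p-2}\,d\xi=\alpha_{n,p-2}^{p-2}=A+(n-1)B .
\]
Using the Gamma-function formula in \eqref{eq-alpha}, namely $\alpha_{n,p-2}^{p-2}/\alpha_{n,p}^{p}=\Gamma(\frac{p-1}{2})\Gamma(\frac{p+n}{2})/[\Gamma(\frac{p+1}{2})\Gamma(\frac{p+n-2}{2})]=(p+n-2)/(p-1)$, we get
\[
(n-1)B=\alpha_{n,p}^p\,\frac{n-1}{p-1},\qquad\text{so}\qquad B=\frac{\alpha_{n,p}^p}{p-1}.
\]
Therefore
\[
M(\omega)=\frac{\alpha_{n,p}^p}{p-1}\bigl[I+(p-2)\,\omega\otimes\omega\bigr].
\]
(For $p<3$ the weight $|\xi\cdot\omega|^{p-2}$ is integrable because $p-2>-1$, so $M$ is well defined; the paper assumes $p\ge2$ in any case.)

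For (i), plug this formula for $M$ in and integrate over $x$ using Tonelli's theorem. This gives
\[
\frac{\alpha_{n,p}^p}{p-1}\int_{\mathbb{R}^n}\Bigl(|\nabla U|^{p-2}|\nabla\varphi|^2+(p-2)|\nabla U|^{p-4}(\nabla U\cdot\nabla\varphi)^2\Bigr),
\]
which is exactly $\frac{\alpha_{n,p}^p}{p-1}\langle\mathcal L_U\varphi,\varphi\rangle$ by the definition of $\mathcal L_U$. This holds first for $\varphi\in C^1_{{\rm c},U}$ by integration by parts, and then for all $\varphi\in\dot W^{1,2}(|\nabla U|^{p-2})$ by density, since both sides are continuous in that norm.

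For (ii), take $\varphi=f(r)Y_{\ell,m}(\omega)$. Then $\nabla\varphi=f'Y\omega+\frac{f}{r}\nabla_{\mathbb S}Y$, with the tangential gradient orthogonal to $\omega$. Hence $(\omega\cdot\nabla\varphi)^2=f'^2Y^2$ and $|\nabla\varphi|^2=f'^2Y^2+\frac{f^2}{r^2}|\nabla_{\mathbb S}Y|^2$, so the integrand becomes $\frac{\alpha_{n,p}^p}{p-1}|U'|^{p-2}\bigl[(p-1)f'^2Y^2+\frac{f^2}{r^2}|\nabla_{\mathbb S}Y|^2\bigr]$. Integrating in polar coordinates, using $\int_{\mathbb S^{n-1}}Y^2=1$ (orthonormality) and $\int_{\mathbb S^{n-1}}|\nabla_{\mathbb S}Y|^2=\mu_\ell$ (from \eqref{eq-sphere} by integration by parts on the sphere), yields the stated radial formula.

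The only real obstacle is the constant $B$, that is, the Gamma-function identity; everything else is bookkeeping.
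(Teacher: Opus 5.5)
Your proposal is correct and follows essentially the same route as the paper. The paper's Lemma~\ref{lem-wu} is exactly your computation of $\langle M(\omega)u,u\rangle$: it rotates $\omega$ to $e_n$, uses symmetry and the trace identity, and invokes $\alpha_{n,p-2}^{p-2}=\frac{n+p-2}{p-1}\alpha_{n,p}^p$; both (i) and (ii) then follow from $\nabla\varphi=\partial_r\varphi\,\omega+\frac1r\nabla_{\mathbb{S}^{n-1}}\varphi$ and $\int_{\mathbb{S}^{n-1}}|\nabla_{\mathbb{S}^{n-1}}Y_{\ell,m}|^2=\mu_\ell$, with the only cosmetic difference being that you match the integrand of $\langle\mathcal{L}_U\varphi,\varphi\rangle$ directly rather than first rewriting both sides in polar coordinates.
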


To prove this lemma, we first calculate
a spherical integral explicitly.

\begin{lemma}\label{lem-wu}
If $p\in(1,\infty)$, then,
for any $u\in\mathbb{R}^n$ and $\omega\in\mathbb{S}^{n-1}$,
\begin{align}\label{lem-wu-e0}
\int_{\mathbb{S}^{n-1}}|\omega\cdot\xi|^{p-2}
|u\cdot\xi|^2\,d\xi=\alpha_{n,p}^p
\left[|u\cdot\omega|^2+\frac{|u|^2-|u\cdot\omega|^2}{p-1}\right],
\end{align}
where $\alpha_{n,p}$ is as in \eqref{eq-alpha}.
\end{lemma}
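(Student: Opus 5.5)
Both sides of \eqref{lem-wu-e0} are rotation invariant in the pair $(u,\omega)$ and quadratic in $u$, so I will compute the left-hand side directly. The plan is to fix $\omega$ and define the symmetric matrix
\begin{align*}
M_\omega:=\int_{\mathbb{S}^{n-1}}|\omega\cdot\xi|^{p-2}\,\xi\otimes\xi\,d\xi .
\end{align*}
Then the left-hand side equals $\langle M_\omega u,u\rangle$. The integral converges because $p-2>-1$, so the singularity $|\omega\cdot\xi|^{p-2}$ is integrable on the sphere even when $p<2$.

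The first step is to determine the structure of $M_\omega$. It commutes with every $Q\in O(n)$ fixing $\omega$: substituting $\xi\mapsto Q\xi$ shows $QM_\omega Q^T=M_\omega$. Hence $M_\omega=A\,\omega\otimes\omega+B\,(I-\omega\otimes\omega)$ for some scalars $A,B$. For $n\ge3$ this follows from the irreducibility of the action of the stabilizer on $\omega^\perp$; for $n=2$ it follows from the reflection symmetries. This gives
\begin{align*}
\langle M_\omega u,u\rangle=A|u\cdot\omega|^2+B\bigl(|u|^2-|u\cdot\omega|^2\bigr).
\end{align*}

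The second step identifies the constants. Taking $u=\omega$ gives $A=\int_{\mathbb{S}^{n-1}}|\omega\cdot\xi|^p\,d\xi=\alpha_{n,p}^p$ by \eqref{eq-alpha}. Taking the trace gives $A+(n-1)B=\int_{\mathbb{S}^{n-1}}|\omega\cdot\xi|^{p-2}\,d\xi=\alpha_{n,p-2}^{p-2}$; the formula in \eqref{eq-alpha} remains valid for $q=p-2>-1$. So it suffices to prove $\alpha_{n,p-2}^{p-2}=\alpha_{n,p}^p\bigl(1+\frac{n-1}{p-1}\bigr)=\alpha_{n,p}^p\frac{n+p-2}{p-1}$. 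Using the Gamma formula, the ratio is
\begin{align*}
\frac{\alpha_{n,p-2}^{p-2}}{\alpha_{n,p}^{p}}
=\frac{\Gamma(\frac{p-1}{2})\,\Gamma(\frac{p+n}{2})}{\Gamma(\frac{p+1}{2})\,\Gamma(\frac{p+n-2}{2})}
=\frac{(p+n-2)/2}{(p-1)/2},
\end{align*}
which is exactly what is needed, by $\Gamma(s+1)=s\Gamma(s)$. Therefore $B=\alpha_{n,p}^p/(p-1)$, which yields \eqref{lem-wu-e0}.

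An alternative route to $B$ avoids the Gamma formula: take $u=e\perp\omega$ and integrate by parts on the sphere, or use the identity $\Delta_{\mathbb{S}^{n-1}}|\omega\cdot\xi|^p$ computed from the extension $|\omega\cdot x|^p$, whose Euclidean Laplacian is $p(p-1)|\omega\cdot x|^{p-2}$. The main (mild) obstacle is justifying the trace computation for $1<p<2$, where the weight is singular. I would handle it by the integrability remark above, or by analytic continuation in $p$ of the Gamma identity.
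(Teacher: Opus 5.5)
Your proposal is correct and follows essentially the same route as the paper. The paper rotates to $\omega=e_n$, observes by symmetry that $\int_{\mathbb{S}^{n-1}}|\xi_n|^{p-2}\xi_i^2\,d\xi$ is independent of $i<n$, evaluates it through the trace identity $\int_{\mathbb{S}^{n-1}}|\xi_n|^{p-2}(1-\xi_n^2)\,d\xi=\alpha_{n,p-2}^{p-2}-\alpha_{n,p}^p$, and concludes with $\Gamma(s+1)=s\Gamma(s)$ exactly as you do. Your matrix $M_\omega$ with the stabilizer-invariance argument is the coordinate-free version of this, and it has the small advantage of explicitly accounting for the vanishing cross terms $\int_{\mathbb{S}^{n-1}}|\xi_n|^{p-2}\xi_i\xi_j\,d\xi$ ($i\neq j$), which the paper drops without comment.
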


\begin{proof}
For $u:=(u_1,\ldots,u_n)\in\mathbb{R}^n$ and $\omega\in\mathbb{S}^{n-1}$, define
\begin{align*}
F(\omega,u):=\int_{\mathbb{S}^{n-1}}|\omega\cdot\xi|^{p-2}
|u\cdot\xi|^2\,d\xi.
\end{align*}
Choose $A\in O(n)$ such that $A\omega=e_n:=(0,\ldots,0,1)$.
Then, by a change of variables, we conclude that
\begin{align}\label{lem-wu-e1}
F(\omega,u)
&=\int_{\mathbb{S}^{n-1}}
\left|e_n\cdot(A\xi)\right|^{p-2}
\left|(Au)\cdot(A\xi)\right|^2\,d\xi\notag\\
&=\int_{\mathbb{S}^{n-1}}|\xi_n|^{p-2}
|(Au)\cdot\xi|^2\,d\xi=F(e_n,Au).
\end{align}
Therefore, we first consider the case $\omega=e_n$. In this case,
\begin{align}\label{lem-wu-e2}
F(e_n,u)
&=\int_{\mathbb{S}^{n-1}}
|\xi_n|^{p-2}\left(\sum_{i=1}^n u_i\xi_i\right)^2\,d\xi
=\sum_{i=1}^{n-1}u_i^2\int_{\mathbb{S}^{n-1}}
|\xi_n|^{p-2}\xi_i^2\,d\xi
+u_n^2\int_{\mathbb{S}^{n-1}}|\xi_n|^p\,d\xi\notag\\
&=\sum_{i=1}^{n-1}u_i^2\int_{\mathbb{S}^{n-1}}
|\xi_n|^{p-2}\xi_i^2\,d\xi+u_n^2\alpha_{n,p}^p.
\end{align}
Observe that, for any $i\in\{1,\ldots,n-1\}$,
$$\int_{\mathbb{S}^{n-1}}|\xi_n|^{p-2}\xi_i^2\,d\xi=
\int_{\mathbb{S}^{n-1}}|\xi_n|^{p-2}\xi_1^2\,d\xi.$$
Indeed, choose $B\in O(n)$ such that
$Be_i=e_1$ and $Be_n=e_n$; then using a change of variables,
we immediately find that this claim holds.
Hence,
\begin{align*}
(n-1)\int_{\mathbb{S}^{n-1}}|\xi_n|^{p-2}\xi_1^2\,d\xi
&=\sum_{i=1}^{n-1}\int_{\mathbb{S}^{n-1}}|\xi_n|^{p-2}\xi_i^2\,d\xi\\
&=\int_{\mathbb{S}^{n-1}}|\xi_n|^{p-2}(1-\xi_n^2)\,d\xi
=\alpha_{n,p-2}^{p-2}-\alpha_{n,p}^p.
\end{align*}
Since we assume that $p\in(1,\infty)$,
from \eqref{eq-alpha} and the basic property $\Gamma(\alpha+1)=\alpha\Gamma(\alpha)$
of the gamma function, we infer that
$\alpha_{n,p-2}^{p-2}=\frac{n+p-2}{p-1}\alpha_{n,p}^{p}$.
This then implies, for any $i\in\{1,\ldots,n-1\}$,
\begin{align*}
\int_{\mathbb{S}^{n-1}}|\xi_n|^{p-2}\xi_i^2\,d\xi
=\frac{1}{n-1}\left(\alpha_{n,p-2}^{p-2}-\alpha_{n,p}^p\right)
=\frac{1}{p-1}\alpha_{n,p}^p.
\end{align*}
Combining this and \eqref{lem-wu-e2}, we further obtain
\begin{align*}
F(e_n,u)=\alpha_{n,p}^p
\left[u_n^2+\frac{1}{p-1}(|u|^2-u_n^2)\right],
\end{align*}
which shows \eqref{lem-wu-e0} for $u=e_n$.
Furthermore, using this, the fact
$(Au)_n=(Au)\cdot e_n=u\cdot(A^Te_n)=u\cdot\omega$, and
\eqref{lem-wu-e1}, we conclude \eqref{lem-wu-e0} for general $u$.
This completes the proof.
\end{proof}

\begin{proof}[Proof of Lemma \ref{lem-L2}]
We first show \eqref{lem-L2-it1}. Using polar coordinates, we obtain
\begin{align}\label{eq-nabla}
\nabla=\partial_r \omega+\frac1r\nabla_{\mathbb{S}^{n-1}},
\quad\omega=\frac{x}{|x|}\in\mathbb{S}^{n-1}.
\end{align}
Since $U$ is radial, it follows that
$\nabla U=\partial_r U\omega$, $\nabla\varphi=\partial_r\varphi\omega
+\frac1r\nabla_{\mathbb{S}^{n-1}}\varphi$, and
$\omega\perp\nabla_{\mathbb{S}^{n-1}}\varphi$.
This further implies
\begin{align}\label{lem-L2-e1}
\langle\mathcal{L}_U\varphi,\varphi\rangle
&=\int_{\mathbb{R}^n}|\nabla U|^{p-2}
(\nabla\varphi)^2+(p-2)|\nabla U|^{p-4}
(\nabla U\cdot\nabla\varphi)^2\notag\\
&=\int_{0}^{\infty}\int_{\mathbb{S}^{n-1}}
\left\{|\partial_rU|^{p-2}
\left[|\partial_r\varphi|^2+\frac{1}{r^2}
|\nabla_{\mathbb{S}^{n-1}}\varphi|^2\right]
+(p-2)|\partial_r U|^{p-4}(\partial_r U\partial_r\varphi)^2\right\}
r^{n-1}\,d\omega\,dr\notag\\
&=\int_{0}^{\infty}|\partial_r U|^{p-2}r^{n-1}
\int_{\mathbb{S}^{n-1}}\left[(p-1)|\partial_r\varphi|^2+\frac{1}{r^2}
|\nabla_{\mathbb{S}^{n-1}}\varphi|^2\right]\,d\omega\,dr.
\end{align}
Moreover, note that $\nabla_\xi U=\partial_r U(\omega\cdot\xi)$.
Applying polar coordinates for the inner $\int_{\mathbb{R}^n}$ and
applying Lemma \ref{lem-wu}, we obtain
\begin{align*}
\operatorname{LHS}\eqref{lem-L2-e0}
&=\int_{0}^{\infty}|\partial_r U|^{p-2}r^{n-1}
\int_{\mathbb{S}^{n-1}}\int_{\mathbb{S}^{n-1}}
|\omega\cdot\xi|^{p-2}|\nabla\varphi\cdot\xi|^2\,d\omega\,d\xi\,dr\\
&=\alpha_{n,p}^p\int_{0}^{\infty}|\partial_rU|^{p-2}r^{n-1}
\int_{\mathbb{S}^{n-1}}\left(|\partial_r\varphi|^2+\frac{1}{p-1}
\left[|\nabla\varphi|^2-
\left|\nabla\varphi\cdot(\partial_r\varphi\omega)\right|^2\right]\right)\,d\omega\,dr\\
&=\alpha_{n,p}^p\int_{0}^{\infty}|\partial_rU|^{p-2}r^{n-1}
\int_{\mathbb{S}^{n-1}}\left(|\partial_r\varphi|^2+\frac{1}{(p-1)r^2}
|\nabla_{\mathbb{S}^{n-1}}\varphi|^2\right)\,d\omega\,dr\\
&\qquad\qquad\text{by \eqref{eq-nabla} and $\omega\perp\nabla_{\mathbb{S}^{n-1}}\varphi$}\\
&=\frac{\alpha_{n,p}^p}{p-1}\langle\mathcal{L}_U\varphi,\varphi\rangle
\quad\text{by \eqref{lem-L2-e1}}\\
&=\operatorname{RHS}\eqref{lem-L2-e0}.
\end{align*}
This completes the proof of \eqref{lem-L2-it1}.

Once we prove \eqref{lem-L2-it1}, \eqref{lem-L2-it2} is a
direct consequence of the representation of $\mathcal{L}_U$ in polar coordinates
(see \cite[(3.10)]{fn19}).
For the convenience of the reader, we also give some details.
Indeed, if $\varphi=fY_{\ell,m}$, then
$\partial_r\varphi=f'Y_{\ell,m}\omega$ and
$\nabla_{\mathbb{S}_{n-1}}\varphi=f\nabla_{\mathbb{S}^{n-1}}Y_{\ell,m}$.
Combining this, the facts that $\int_{\mathbb{S}^{n-1}}Y_{\ell,m}^2=1$ and
\begin{align*}
\int_{\mathbb{S}^{n-1}}|\nabla_{\mathbb{S}^{n-1}}Y_{\ell,m}|^2
&=-\int_{\mathbb{S}^{n-1}}(\Delta_{\mathbb{S}^{n-1}}Y_{\ell,m})Y_{\ell,m}\\
&=\mu_\ell\int_{\mathbb{S}^{n-1}}Y_{\ell,m}^2\quad\text{by \eqref{eq-sphere}}\\
&=\mu_\ell,
\end{align*}
and \eqref{lem-L2-e1}, we conclude that
\begin{align*}
\langle\mathcal{L}_U(fY_{\ell,m}),fY_{\ell,m}\rangle
=\int_{0}^{\infty}|\partial_r U|^{p-2}
r^{n-1}\left[(p-1)f'^2+\frac{\mu_\ell}{r^2}f^2\right]\,dr.
\end{align*}
This, together with \eqref{lem-L2-it1},
proves \eqref{lem-L2-it2},
which completes the proof of Lemma \ref{lem-L2}.
\end{proof}

Recall that $L^{(1)}_\xi$ is defined as in \eqref{def-L1}
with $f$ replaced by $U$.
We have the following conclusion.

\begin{lemma}\label{lem-L1}
Let $\ell\in\mathbb{Z}_+$.
Then there is a $\beta_{\ell,p}\in\mathbb{R}$
such that
\begin{align}\label{lem-L1-e0}
|\beta_{\ell,p}|=
\begin{cases}
\alpha_{n,p}^p&\text{if $\ell=0$},\\
{\displaystyle\alpha_{n,p}^p
\prod_{j=0}^{k-1}\left|\frac{2j-p}{2j+n+p}\right|}&
\text{if $\ell=2k$ for $k\in\mathbb{N}$},\\
0&\text{otherwise}
\end{cases}
\end{align}
and, for any $Y_{\ell,m}\in\mathcal{H}_\ell(\mathbb{S}^{n-1})$,
any $\xi\in\mathbb{S}^{n-1}$, and any differentiable function $f$ on $(0,\infty)$,
\begin{align}\label{lem-L1-ee}
L^{(1)}_{\xi}(fY_{\ell,m})
=\beta_{\ell,p}\int_{0}^{\infty}|\partial_rU|^{p-2}
\partial_rUr^{n-1}
\left[f'(r)+\frac{\mu_\ell}{pr}f(r)\right]\,dr
Y_{\ell,m}(\xi).
\end{align}
\end{lemma}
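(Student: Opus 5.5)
We reduce the computation of $L^{(1)}_\xi(U,fY_{\ell,m})=\int_{\mathbb R^n}|\nabla_\xi U|^{p-2}\nabla_\xi U\,\nabla_\xi(fY_{\ell,m})$ to a radial integral times a spherical integral. We write $\varphi=f(r)Y_{\ell,m}(\omega)$ with $x=r\omega$. Since $U$ is radial, $\nabla_\xi U=\partial_rU\,(\omega\cdot\xi)$, and by \eqref{eq-nabla}
\begin{align*}
\nabla_\xi\varphi=f'(r)Y_{\ell,m}(\omega)(\omega\cdot\xi)+\frac{f(r)}{r}\,\nabla_{\mathbb S^{n-1}}Y_{\ell,m}(\omega)\cdot\xi .
\end{align*}
Hence, in polar coordinates,
\begin{align*}
L^{(1)}_\xi(U,\varphi)=\int_0^\infty|\partial_rU|^{p-2}\partial_rU\,r^{n-1}\left[f'(r)A(\xi)+\frac{f(r)}{r}B(\xi)\right]dr,
\end{align*}
where
\begin{align*}
A(\xi)&:=\int_{\mathbb S^{n-1}}|\omega\cdot\xi|^{p}\,Y_{\ell,m}(\omega)\,d\omega,\\
B(\xi)&:=\int_{\mathbb S^{n-1}}|\omega\cdot\xi|^{p-2}(\omega\cdot\xi)\,\nabla_{\mathbb S^{n-1}}Y_{\ell,m}(\omega)\cdot\xi\,d\omega .
\end{align*}
The claim then reduces to two facts: $A(\xi)=\beta_{\ell,p}Y_{\ell,m}(\xi)$ and $B(\xi)=\frac{\mu_\ell}{p}A(\xi)$.

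For the second fact, note that the tangential projection of $\xi$ at $\omega$ is $\nabla_{\mathbb S^{n-1}}(\omega\cdot\xi)$. Therefore
\begin{align*}
|\omega\cdot\xi|^{p-2}(\omega\cdot\xi)\,\xi\cdot\nabla_{\mathbb S^{n-1}}Y_{\ell,m}=\frac1p\nabla_{\mathbb S^{n-1}}|\omega\cdot\xi|^p\cdot\nabla_{\mathbb S^{n-1}}Y_{\ell,m}.
\end{align*}
Integrating by parts on the sphere and using \eqref{eq-sphere} gives
\begin{align*}
B(\xi)=-\frac1p\int_{\mathbb S^{n-1}}|\omega\cdot\xi|^p\,\Delta_{\mathbb S^{n-1}}Y_{\ell,m}\,d\omega=\frac{\mu_\ell}{p}A(\xi).
\end{align*}
The integration by parts is justified for $p>1$, since $|\omega\cdot\xi|^p\in C^1$.

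For the first fact we use the Funk--Hecke formula. The kernel $|\omega\cdot\xi|^p$ is a zonal function $\phi(t)=|t|^p$ with $t=\omega\cdot\xi$, so $A(\xi)=\beta_{\ell,p}Y_{\ell,m}(\xi)$. Here
\begin{align*}
\beta_{\ell,p}=|\mathbb S^{n-2}|\int_{-1}^1|t|^pP_\ell(t)(1-t^2)^{\frac{n-3}{2}}\,dt,
\end{align*}
with $P_\ell$ the normalized Gegenbauer (Legendre) polynomial satisfying $P_\ell(1)=1$. Since $\phi$ is even, $\beta_{\ell,p}=0$ for odd $\ell$. For $\ell=0$ we get $\beta_{0,p}=\int_{\mathbb S^{n-1}}|e\cdot\xi|^p\,d\xi=\alpha_{n,p}^p$ by \eqref{eq-alpha}.

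The main computation is the ratio $\beta_{2k+2,p}/\beta_{2k,p}$. I would obtain it without explicit Gegenbauer integrals. Apply $\Delta_{\mathbb R^n}$ to the homogeneous function $|x\cdot\xi|^p|x|^{s}$, or equivalently use the eigenfunction expansion of $|\omega\cdot\xi|^p$ together with the identity
\begin{align*}
\Delta_{\mathbb S^{n-1}}|\omega\cdot\xi|^p=p(p-1)|\omega\cdot\xi|^{p-2}-p(p+n-2)|\omega\cdot\xi|^p .
\end{align*}
Writing $c_\ell(q)$ for the Funk--Hecke coefficient of $|t|^q$, this identity gives $-\mu_\ell c_\ell(p)=p(p-1)c_\ell(p-2)-p(p+n-2)c_\ell(p)$. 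Hence
\begin{align*}
c_\ell(p)\,[p(p+n-2)-\mu_\ell]=p(p-1)\,c_\ell(p-2).
\end{align*}
Factoring $p(p+n-2)-\ell(\ell+n-2)=(p-\ell)(p+\ell+n-2)$ yields a recursion in $p$. Combined with the Rodrigues formula or a standard Beta-integral evaluation, it produces the product
\begin{align*}
|\beta_{2k,p}|=\alpha_{n,p}^p\prod_{j=0}^{k-1}\left|\frac{2j-p}{2j+n+p}\right| .
\end{align*}
Only the absolute value is claimed, so sign bookkeeping can be ignored. The obstacle is purely computational: obtaining the exact product, with care at negative exponents $p-2<0$ where integrability must be checked. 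If the recursion becomes messy, I would instead evaluate the integral $\int_{-1}^1|t|^pP_{2k}(t)(1-t^2)^{(n-3)/2}dt$ directly via Rodrigues' formula and $k$-fold integration by parts, giving ratios of Gamma functions that telescope into the stated product.
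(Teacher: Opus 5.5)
Your reduction is the same as the paper's proof. This covers the polar-coordinate splitting of $L^{(1)}_\xi(U,fY_{\ell,m})$ into two spherical integrals and the rewriting of $|\omega\cdot\xi|^{p-2}(\omega\cdot\xi)\,\xi\cdot\nabla_{\mathbb S^{n-1}}Y_{\ell,m}$ as $\frac1p\nabla_{\mathbb S^{n-1}}|\omega\cdot\xi|^p\cdot\nabla_{\mathbb S^{n-1}}Y_{\ell,m}$, followed by integration by parts and \eqref{eq-sphere}. It also covers Funk--Hecke with parity for odd $\ell$, and the direct evaluation for $\ell=0$. For $\ell=2k$ the paper computes nothing; it quotes the value of $\int_{\mathbb S^{n-1}}|\omega\cdot\xi|^pY_{2k,m}\,d\omega$ from \cite{r26}. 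Quoting that classical Funk--Hecke coefficient of $|t|^p$ would complete your proof. The derivation you offer in its place, however, does not close as written.

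Your identity for $\Delta_{\mathbb S^{n-1}}|\omega\cdot\xi|^p$ is correct and yields $c_\ell(p)(p-\ell)(p+\ell+n-2)=p(p-1)c_\ell(p-2)$. This is a recursion in $p$ at fixed $\ell$, not the ratio $\beta_{2k+2,p}/\beta_{2k,p}$ you announce as the main computation. On its own it determines $c_\ell$ only up to a $2$-periodic factor in $p$. It also cannot be run down to a base case, because $|t|^{p-2j}$ stops being integrable once $p-2j\le-1$.

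It can be closed as follows. The quotient $r(p):=c_{2k}(p)/c_0(p)$ satisfies $r(p+2)=\frac{(p+2)(p+n)}{(p+2-2k)(p+2k+n)}\,r(p)$. Iterate this upward and use $r(q)\to P_{2k}(1)=1$ as $q\to\infty$, since the probability measures proportional to $|t|^q(1-t^2)^{\frac{n-3}{2}}\,dt$ concentrate at $\pm1$. Evaluating the resulting infinite product with Gauss's product formula for $\Gamma$ gives $r(p)=\frac{\Gamma(1+\frac p2)\,\Gamma(\frac{p+n}{2})}{\Gamma(1+\frac p2-k)\,\Gamma(\frac{p+n}{2}+k)}=\prod_{j=0}^{k-1}\frac{p-2j}{p+n+2j}$. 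This is the claimed product up to the sign $(-1)^k$; the even integers $p<2k$, where both sides vanish, are trivial.

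Your Rodrigues fallback also works if you substitute $s=t^2$. Then $P_{2k}(t)$ becomes a degree-$k$ Jacobi polynomial in $2s-1$ with parameters $(\frac{n-3}{2},-\frac12)$, and the $k$ integrations by parts against $s^{p/2}$ produce no boundary terms for any $p>-1$. With Gegenbauer's Rodrigues formula in $t$ you would instead need $2k$ integrations by parts. Those are legitimate only for $p>2k-1$ and must be followed by analytic continuation in $p$.
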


\begin{proof}
Let $\varphi:=fY_{\ell,m}$.
Then, by \eqref{eq-nabla}, we find that
$\nabla\varphi=f'Y_{\ell,m}\omega+\frac{f}r\nabla_{\mathbb{S}^{n-1}}Y_{\ell,m}$.
This further implies that, for any $\xi\in\mathbb{S}^{n-1}$,
\begin{align}\label{lem-L1-e1}
L^{(1)}_\xi(\varphi)
&=\int_{0}^{\infty}|\partial_rU|^{p-2}
\partial_rUr^{n-1}f'\int_{\mathbb{S}^{n-1}}
|\omega\cdot\xi|^pY_{\ell,m}\,d\omega\,dr\notag\\
&\quad+\int_{0}^{\infty}|\partial_rU|^{p-2}
\partial_rUr^{n-1}\frac{f}{r}\int_{\mathbb{S}^{n-1}}
|\omega\cdot\xi|^{p-2}(\omega\cdot\xi)\nabla_{\mathbb{S}^{n-1}}
Y_{\ell,m}\cdot\xi\,d\omega\,dr.
\end{align}
Note that, if we consider $|\omega\cdot\xi|^p$ as a function of
$\omega\in\mathbb{R}^n$, then $\nabla|\omega\cdot\xi|^p
=p|\omega\cdot\xi|^{p-2}(\omega\cdot\xi)\xi$.
This, combined with \eqref{eq-nabla} and the fact
$\nabla_{\mathbb{S}^{n-1}}Y_{\ell,m}\perp\omega$, further implies, for any $\xi\in\mathbb{S}^{n-1}$,
\begin{align*}
(\nabla_{\mathbb{S}^{n-1}}|\omega\cdot\xi|^p)
\cdot\nabla_{\mathbb{S}^{n-1}}Y_{\ell,m}
=(\nabla|\omega\cdot\xi|^p)\cdot\nabla_{\mathbb{S}^{n-1}}Y_{\ell,m}
=p|\omega\cdot\xi|^{p-2}(\omega\cdot\xi)\nabla_{\mathbb{S}^{n-1}}Y_{\ell,m}\cdot\xi.
\end{align*}
Hence, from this and integration by parts, we deduce that,
 for any $\xi\in\mathbb{S}^{n-1}$,
\begin{align*}
&\int_{\mathbb{S}^{n-1}}|\omega\cdot\xi|^{p-2}
(\omega\cdot\xi)\nabla_{\mathbb{S}^{n-1}}Y_{\ell,m}\cdot\xi\,d\omega\\
&\quad=\frac{1}{p}\int_{\mathbb{S}^{n-1}}
(\nabla_{\mathbb{S}^{n-1}}|\omega\cdot\xi|^p)
\cdot\nabla_{\mathbb{S}^{n-1}}Y_{\ell,m}\,d\omega
=-\frac1p\int_{\mathbb{S}^{n-1}}|\omega\cdot\xi|^p
\Delta_{\mathbb{S}^{n-1}}Y_{\ell,m}\\
&\quad=\frac{\mu_\ell}{p}\int_{\mathbb{S}^{n-1}}|\omega\cdot\xi|^p
Y_{\ell,m}\,d\omega\quad\text{by \eqref{eq-sphere}}.
\end{align*}
Using this and \eqref{lem-L1-e1}, we obtain
\begin{align*}
L^{(1)}_\xi(\varphi)=\int_{0}^{\infty}|\partial_rU|^{p-2}
\partial_rUr^{n-1}
\left[f'(r)+\frac{\mu_\ell}{pr}f(r)\right]\,dr
\int_{\mathbb{S}^{n-1}}|\omega\cdot\xi|^p
Y_{\ell,m}\,d\omega.
\end{align*}

Therefore, to complete the proof, we only need to
calculate $\int_{\mathbb{S}^{n-1}}|\omega\cdot\xi|^p
Y_{\ell,m}\,d\omega$.
Indeed, for $\ell=0$, since $Y_{0,m}$ is a constant, then
$$\int_{\mathbb{S}^{n-1}}|\omega\cdot\xi|^p
Y_{0,m}\,d\omega=\alpha_{n,p}^pY_{0,m}.$$
Moreover, if $\ell=2k$ for some $k\in\mathbb{N}$,
then, from \cite[p.\,3054]{r26}, we infer that
there is a $\beta_{\ell,p}$ satisfying \eqref{lem-L1-e0}
such that $$\int_{\mathbb{S}^{n-1}}|\omega\cdot\xi|^p
Y_{\ell,m}\,d\omega=\beta_{\ell,p}Y_{\ell,m}.$$
Finally, for an odd number $\ell\in\mathbb{N}$,
applying the Funk--Hecke formula (see \cite[Theorem 1.2.9]{dx13}),
we obtain $\int_{\mathbb{S}^{n-1}}|\omega\cdot\xi|^p
Y_{\ell,m}\,d\omega=\beta_{\ell,p}Y_{\ell,m}$ and
\begin{align*}
\beta_{\ell,p}=|\mathbb{S}^{n-2}|\int_{-1}^{1}
|t|^p\frac{C_\ell^{\frac{n-2}{2}}(t)}{C_\ell^{\frac{n-2}{2}}(1)}
(1-t^2)^{\frac{n-3}{2}}\,dt,
\end{align*}
where $C^{\alpha}_\ell$ is the Gegenbauer polynomial.
Noting that Gegenbauer polynomials have the property:
$C_\ell^\alpha(-t)=(-1)^\ell C_\alpha^\lambda(t)$
(see, for instance, Szeg\"o \cite[p.\,80, (4.7.4)]{s75}),
it then follows that
$\beta_{\ell,p}=0$ when $\ell\ge1$ is odd.
Hence, we complete the proof.
\end{proof}

Therefore, if we expand
$\varphi\in \dot{W}^{1,2}(|\nabla U|^{p-2})$ in terms of spherical harmonics as
\begin{align*}
\varphi=\sum_{\ell=0}^{\infty}\sum_{m}
\left(\int_{\mathbb{S}^{n-1}}\varphi Y_{\ell,m}\right)
Y_{\ell,m}=:\sum_{\ell=0}^{\infty}\sum_{m}\varphi_{\ell,m},
\end{align*}
then, from the orthogonality of $\{Y_{\ell,m}\}$, it follows that
$$\langle \mathcal{L}_U\varphi,\varphi\rangle=\sum_{\ell=0}^{\infty}\sum_{m}
\langle \mathcal{L}_U\varphi_{\ell,m},\varphi_{\ell,m}\rangle.$$
On the other hand, by \eqref{lem-L1-ee} and the
fact that $Y_{\ell,m}$ has zero mean for all $\ell\ge1$, we find that
\begin{align*}
\int_{\mathbb{S}^{n-1}}L^{(1)}_u(\varphi)\,du
=\int_{\mathbb{S}^{n-1}}L^{(1)}_u(\varphi_{0,m})\,du.
\end{align*}
Moreover, $Y_{0,m}$ is a constant. Consequently,
by \eqref{lem-L1-ee} and the orthogonality of $\{Y_{\ell,m}\}$ again,
we have
\begin{align}\label{eq-L1}
\operatorname{Var}(U,\varphi)
&=\sum_{\ell=1}^\infty
\sum_{m}\int_{\mathbb{S}^{n-1}}\left[L^{(1)}_\xi(\varphi_{\ell,m})\right]^2\,d\xi
=\sum_{\ell=0}^{\infty}\sum_{m}\operatorname{Var}(U,
\varphi_{\ell,m}).
\end{align}
This, combined with \eqref{lem-L2-e0}, further implies
\begin{align*}
\operatorname{LHS}\eqref{thm-spectral-e0}
=\sum_{\ell=0}^{\infty}\sum_{m}R_{U,\ell,m}(\varphi),
\end{align*}
where
\begin{align*}
R_{U,\ell,m}(\varphi):=&\,
(p-1)|\mathbb{S}^{n-1}|^{-1-\frac pn}
\int_{\mathbb{S}^{n-1}}\int_{\mathbb{R}^n}
|\nabla_\xi U|^{p-2}(\nabla_\xi\varphi_{\ell,m})^2\,dx\,d\xi\notag\\
&\qquad-(n+p)|\mathbb{S}^{n-1}|^{-1-\frac{2p}{n}}
\mathcal{E}_p(U)^{-p}\operatorname{Var}(U,\varphi_{\ell,m}).
\end{align*}
Thus, to show Theorem \ref{thm-spectral},
we only need to estimate $R_{U,\ell,m}$ for all $\ell$ and $m$.

\begin{proposition}\label{prop-spectral}
Let $\ell\in\mathbb{Z}_+$ and $\varphi\in\dot{W}^{1,2}(|\nabla U|^{p-2})$.
Then there is a positive constant $\delta\in(0,\infty)$,
depending only on $n$ and $p$, such that the following statements hold.
\begin{enumerate}[{\rm(i)}]
  \item\label{it0-spectral} If $\ell=0$ and
  $\varphi\perp\{U,\ \frac{n-p}{p}U+x\cdot\nabla U\}$,
 then
 \begin{align}\label{eq-spectral}
 R_{U,\ell,m}(\varphi)\ge \left[(p^*-1)S_{\rm aff}^p+\delta\right]
 \int_{\mathbb{R}^{n}}U^{p^*-2}\varphi_{\ell,m}^2\,dx.
 \end{align}
 \item\label{it1-spectral} If $\ell=1$ and
 $\varphi\perp\{\partial_i U:i=1,\ldots,n\}$, then \eqref{eq-spectral} holds.
 \item\label{it2-spectral} If $\ell=2$ and
 $\varphi\perp\{\langle Bx,\nabla U\rangle:B=B^{T},\ \operatorname{Tr}B=0\}$,
 then \eqref{eq-spectral} holds.
 \item\label{it3-spectral} If $\ell\ge3$, then
 \eqref{eq-spectral} holds for all $\varphi$.
\end{enumerate}
\end{proposition}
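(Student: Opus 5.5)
The plan is to reduce each mode to the one‑dimensional radial quadratic forms from Lemmas \ref{lem-L2} and \ref{lem-L1}, and then compare with the classical spectral gap, Lemma \ref{fz-spectral}.

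\emph{Step 1: reduction to the classical form.} Write $\varphi_{\ell,m}=fY_{\ell,m}$. By Lemma \ref{lem-L2}\eqref{lem-L2-it1}, the first term of $R_{U,\ell,m}(\varphi)$ equals $|\mathbb S^{n-1}|^{-1-\frac pn}\alpha_{n,p}^p\langle\mathcal L_U\varphi_{\ell,m},\varphi_{\ell,m}\rangle$. By \eqref{eq-relaSob}, $S_{\rm aff}^p=|\mathbb S^{n-1}|^{-1-\frac pn}\alpha_{n,p}^pS_{\rm Sob}^p$. Hence, up to the positive factor $|\mathbb S^{n-1}|^{-1-\frac pn}\alpha_{n,p}^p$, inequality \eqref{eq-spectral} reads
\[
\langle\mathcal L_U\varphi_{\ell,m},\varphi_{\ell,m}\rangle-\kappa\operatorname{Var}(U,\varphi_{\ell,m})
\ge\left[(p^*-1)S_{\rm Sob}^p+\delta'\right]\int_{\mathbb R^n}U^{p^*-2}\varphi_{\ell,m}^2 ,
\]
with an explicit $\kappa>0$. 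By \eqref{lem-L1-ee}, $\xi\mapsto L^{(1)}_\xi(\varphi_{\ell,m})$ is a multiple of $Y_{\ell,m}$. Thus $\operatorname{Var}(U,\varphi_{\ell,m})=0$ for $\ell=0$ (constant in $\xi$) and for odd $\ell$ (since $\beta_{\ell,p}=0$).

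\emph{Step 2: the cases without variance term.} In cases \eqref{it0-spectral}, \eqref{it1-spectral}, and odd $\ell\ge3$ in \eqref{it3-spectral}, the claim is exactly Lemma \ref{fz-spectral} applied to $\varphi_{\ell,m}$. The required orthogonality holds for $\ell=0,1$ by hypothesis, since $U$ and $\frac{n-p}pU+x\cdot\nabla U$ are radial and the $\partial_iU$ are $\ell=1$ modes. For $\ell\ge2$ it holds automatically by orthogonality of spherical harmonics.

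\emph{Step 3: even $\ell\ge2$, rank‑one structure.} Here $\operatorname{Var}(U,\varphi_{\ell,m})=\beta_{\ell,p}^2B_\ell(f)^2$, where
\[
B_\ell(f)=\int_0^\infty|U'|^{p-2}U'r^{n-1}\left(f'+\tfrac{\mu_\ell}{pr}f\right)dr .
\]
So on the $\ell$-sector the form is $Q_\ell=A_\ell-\kappa\beta_{\ell,p}^2B_\ell^2$, where $A_\ell$ is the radial form from Lemma \ref{lem-L2}\eqref{lem-L2-it2}, considered relative to the weight $U^{p^*-2}r^{n-1}$. This is a rank‑one negative perturbation. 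I would then use min–max interlacing: the second eigenvalue of $Q_\ell$ is at least the second eigenvalue of $A_\ell$. The latter is strictly above $(p^*-1)S_{\rm Sob}^p$, with a margin, because already the first eigenvalue of $A_\ell$ exceeds that threshold by Lemma \ref{fz-spectral}.

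\emph{Step 4: the case $\ell=2$.} It remains to identify the bottom of $Q_2$. I would show that $f_0=rU'$, which corresponds to $\langle Bx,\nabla U\rangle$, is the minimizer of $Q_2$ with eigenvalue exactly $(p^*-1)S_{\rm Sob}^p$. This should follow by differentiating the Euler–Lagrange equation \eqref{eq:el} along the family $T_{1,tB,0}U$, which consists of critical points. Indeed, the second variation of $\frac1p\mathcal E_p^p-\frac{S_{\rm aff}^p}{p^*}\|\cdot\|_{p^*}^{p^*}$ at $U$ is, by the expansion of Section \ref{sec-expan}, exactly $R_{U,\ell,m}-(p^*-1)S^p_{\rm aff}\int U^{p^*-2}(\cdot)^2$. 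Tangent vectors to the manifold of critical points lie in its kernel, so $rU'Y_{2,m}$ is an eigenfunction with the threshold eigenvalue. Since the lowest eigenvalue of $Q_2$ is simple (interlacing), orthogonality to $rU'$ in $L^2(U^{p^*-2})$ leaves only eigenvalues $\ge\lambda_2(Q_2)\ge\lambda_2(A_2)>$ threshold. This proves \eqref{it2-spectral}.

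\emph{Step 5: even $\ell\ge4$ (main obstacle).} Here there is no orthogonality condition, so I must show $\lambda_1(Q_\ell)$ exceeds the threshold by a uniform $\delta$. I would first apply Cauchy–Schwarz:
\[
B_\ell(f)^2\le\int_0^\infty|U'|^pr^{n-1}\,dr\cdot\int_0^\infty|U'|^{p-2}r^{n-1}\left(f'+\tfrac{\mu_\ell}{pr}f\right)^2dr .
\]
Combined with $|\beta_{\ell,p}|\le\alpha_{n,p}^p\prod_{j<\ell/2}\frac{|2j-p|}{2j+n+p}$ from \eqref{lem-L1-e0}, this bounds the variance by a small multiple of $\int|U'|^{p-2}r^{n-1}\big(f'^2+\mu_\ell f^2/r^2\big)$. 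The smallness comes from the decay of $\beta_{\ell,p}^2$ against the linear‑in‑$\mu_\ell$ weight. The required inequality then reduces to
\[
(1-\eta_\ell)A_\ell(f)\ge\left[(p^*-1)S_{\rm Sob}^p+\delta\right]\|f\|^2 .
\]
This follows if $A_\ell$ has ground energy well above the threshold, which I would get by comparing with $A_2$ via the extra term $(\mu_\ell-\mu_2)f^2/r^2$. That extra term dominates the weight $U^{p^*-2}$ by a Hardy‑type bound, since $|U'|^{p-2}r^{-2}\gtrsim U^{p^*-2}$ up to constants.

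For large $\ell$ this is clearly fine. For the first few even $\ell$, say $\ell=4$, the constants must actually be checked, and I expect this to be the delicate point. If the crude Cauchy–Schwarz bound is insufficient, I would fall back on the exact rank‑one analysis. That is, $\lambda_1(Q_\ell)>$ threshold if and only if $\kappa\beta_{\ell,p}^2\langle(A_\ell-\text{threshold})^{-1}b_\ell,b_\ell\rangle<1$, where $b_\ell$ is the Riesz representer of $B_\ell$. I would then use $|\beta_{\ell,p}|\le|\beta_{2,p}|\,\frac{|2-p|}{n+p+2}$ together with the already established $\ell=2$ borderline, for which the analogous quantity equals $1$ exactly, and the monotonicity in $\mu_\ell$ of the resolvent.
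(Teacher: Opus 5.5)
Steps 1--4 are essentially sound. For $\ell=0,1$ and odd $\ell$ you argue as the paper does. For $\ell=2$, showing that $rU'Y_{2,m}$ lies in the kernel of the second variation, by differentiating the Euler--Lagrange equation along $T_{1,tB,\mathbf{0}}U$, is a legitimate and more conceptual replacement for the paper's explicit computation via Lemmas~\ref{lem-psiz}, \ref{lem-psizin} and~\ref{lem-rankone}. One slip: for a nonnegative rank-one perturbation, interlacing gives $\lambda_2(Q_\ell)\ge\lambda_1(A_\ell)$, not $\lambda_2(Q_\ell)\ge\lambda_2(A_\ell)$. You only use that $\lambda_1(A_\ell)$ exceeds the threshold by a fixed margin (Lemma~\ref{fz-spectral}), so the $\ell=2$ conclusion survives.

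The genuine gap is Step~5, the even modes $\ell=2k\ge4$, which is the real content of~(iv) and which you leave open. There is little room for lossy constants. In the paper's notation (your $B_\ell(f)$ is $\langle f,\psi_\ell\rangle_0$),
\[
\sup_f \frac{b_{\ell,p}\langle f,\psi_\ell\rangle_0^2}{\langle L^{(\ell)}_Uf,f\rangle_0}=\lambda_{2k,p}=\frac{n+p}{p^2}(\mu_{2k}+p-n)\prod_{j=0}^{k-1}\Bigl(\frac{2j-p}{2j+n+p}\Bigr)^2 ,
\]
and $\lambda_{4,p}\to\frac12$ as $p\to n$, $n\to\infty$. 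So the variance can absorb about half of the energy above the threshold.

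\emph{Why your Cauchy--Schwarz/Hardy route fails.} The optimal constant in $|U'|^{p-2}r^{-2}\ge C\,U^{p^*-2}$ is attained at $r=1$ for the explicit $U$. With it, the extra $\mu$-term lifts the bottom of the $\ell=4$ form above $\tau:=(p^*-1)S_{\rm Sob}^p$ only by a relative amount of order $1/n$. Lemma~\ref{fz-spectral} gives no explicit margin either. So nothing in your sketch beats a variance term bounded by a fixed fraction of the full energy.

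\emph{Why your fallback fails as stated.} Monotonicity of $(A_\ell-\tau)^{-1}$ in $\mu_\ell$ does not control $\langle (A_\ell-\tau)^{-1}b_\ell,b_\ell\rangle$. The functional $B_\ell$ itself grows with $\mu_\ell$ through the term $\frac{\mu_\ell}{pr}f$, and this quantity in fact grows linearly, like $\mu_\ell+p-n$.

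\emph{The missing idea.} The identity $L^{(\ell)}_U(rU')=p(S_{\rm aff}/S_{\rm Sob})^p\psi_\ell$ holds for \emph{every} $\ell\ge1$; it follows from Lemma~\ref{lem-fzspe} because $\mu_\ell$ enters linearly. Hence $rU'$ is the extremal in \eqref{it2-spectral-e3} for every $\ell$, and Lemma~\ref{lem-psizin} gives the sharp constant $\frac{\mu_\ell+p-n}{p^2}(S_{\rm Sob}/S_{\rm aff})^p|\mathbb S^{n-1}|^{-1}S_{\rm Sob}^p$.

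Alternatively, your $\ell=2$ borderline yields the same constant. Write $\langle f,\psi_\ell\rangle_0=\langle f,\psi_2\rangle_0+\frac{\mu_\ell-\mu_2}{p}\int_0^\infty|U'|^{p-2}U'r^{n-2}f\,dr$. Split $\langle L^{(\ell)}_Uf,f\rangle_0=\langle L^{(2)}_Uf,f\rangle_0+(S_{\rm aff}/S_{\rm Sob})^p(\mu_\ell-\mu_2)\int_0^\infty|U'|^{p-2}r^{n-3}f^2\,dr$. Then use $\frac{(x+y)^2}{X+Y}\le\frac{x^2}{X}+\frac{y^2}{Y}$ together with Cauchy--Schwarz.

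Even with the sharp constant you still need $\sup_{k\ge2}\lambda_{2k,p}<1$. The paper obtains this by showing $\lambda_{2k,p}$ is strictly decreasing in $k$ with $\lambda_{2,p}=1$. That argument uses the full product decay of $\beta_{2k,p}$ against the linear growth of $\mu_{2k}$; the single bound $|\beta_{2k,p}|\le|\beta_{2,p}|\frac{|2-p|}{n+p+2}$ is not enough.
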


\begin{proof}[Proofs of \eqref{it0-spectral} and \eqref{it1-spectral} of
Proposition \ref{prop-spectral}]
These assertions are
the direct consequence of the classical spectral gap
of $\mathcal{L}_U$ (namely Lemma \ref{fz-spectral}).
Indeed, from \eqref{eq-L1} and Lemma \ref{lem-L1}, we infer that,
for $\ell=0$ and $\ell=1$,
$\operatorname{Var}(U,\varphi_{\ell,m})=0$.
Therefore, by \eqref{lem-L2-e0} and \eqref{eq-relaSob}, we have
\begin{align}\label{it0-spectral-e1}
R_{U,\ell,m}(\varphi)
=|\mathbb{S}_{n-1}|^{-1-\frac pn}\alpha_{n,p}
\langle \mathcal{L}_U(\varphi_{\ell,m}),\varphi_{\ell,m}\rangle
=\left(\frac{S_{\rm aff}}{S_{\rm Sob}}\right)^p
\langle \mathcal{L}_U(\varphi_{\ell,m}),\varphi_{\ell,m}\rangle.
\end{align}

We first consider $\ell=0$.
Note that both $U$ and $\frac{n-p}{p}U+x\cdot\nabla U$ are radial and hence
are orthogonal to $Y_{\ell,m}$ for all $\ell\ge1$. From this and
the orthogonality assumption on $\varphi$ in \eqref{it0-spectral},
it follows that $\varphi_{0,m}\perp\{U,\ \frac{n-p}{p}U+x\cdot\nabla U\}$;
moreover, since $\varphi_{0,m}$ is radial,
we deduce that $\varphi_{0,m}\perp\{\partial_i U:i=1,\ldots,n\}$ automatically.
Combining these, \eqref{it0-spectral-e1}, and Lemma \ref{fz-spectral},
we then obtain \eqref{eq-spectral} for $\ell=0$.

Next, we deal with $\ell=1$. Observe that,
for any $i\in\{1,\ldots,n\}$, $\partial_i U=\nabla U\cdot e_i=\partial_rU(\omega\cdot e_i)$,
which is exactly a constant multiple of some first order spherical harmonic function
on $\mathbb{S}^{n-1}$. Thus, $\partial_i U$ is orthogonal to
$\varphi_{\ell,m}$ for all $\ell\ne 1$.
Combining this and the orthogonality assumption on $\varphi$ in \eqref{it1-spectral},
we find that
$\varphi_{1,m}\perp\{\partial_i U:i=1,\ldots,n\}$;
on the other hand, as mentioned above,
$\varphi_{1,m}\perp\{U,\frac{n-p}{p}U+x\cdot\nabla U\}$ automatically.
Hence, applying \eqref{it0-spectral-e1} and Lemma \ref{fz-spectral} again,
we conclude that \eqref{eq-spectral} with $\ell=1$ holds.
This then completes the proofs of \eqref{it0-spectral}
and \eqref{it1-spectral} of Proposition \ref{prop-spectral}.
\end{proof}

To prove the remaining part of Proposition \ref{prop-spectral},
we still need some additional lemmas. Indeed, from polar coordinates
(see Lemmas \ref{lem-L2} and \ref{lem-L1}), we can
deduce the following radial representation: for all $\ell\in\mathbb{N}$
and $\varphi=fY_{\ell,m}$ with a differentiable $f$ on $(0,\infty)$,
\begin{align}\label{eq-radialR}
R_{U,\ell,m}(\varphi)
&=\left(\frac{S_{\rm aff}}{S_{\rm Sob}}\right)^p
\int_{0}^{\infty}|\partial_r U|^{p-2}r^{n-1}
\left[(p-1)f'^2+\frac{\mu_\ell}{r^2}f^2\right]\,dr\notag\\
&\quad-b_{\ell,p}\left(\int_{0}^{\infty}
|\partial_r U|^{p-2}\partial_r U
r^{n-1}\left[f'+\frac{\mu_\ell}{pr}f\right]\,dr\right)^2,
\end{align}
where $b_{\ell,p}:=(p+n)|\mathbb{S}^{n-1}|^{-1-\frac{2p}{n}}\mathcal{E}_p(U)^{-p}
\beta_{\ell,p}^2$ and $\beta_{\ell,p}$ is as in Lemma \ref{lem-L1}.
Moreover, as $U$ is radial, we can regard $U$ as
a function on $(0,\infty)$ and write $\partial_r U$ simply as $U'$ in what follows for convenience.
Define the \emph{operator} $L^{(\ell)}_U$ on $L^2(U^{p^*-2}r^{n-1},(0,\infty))$
by setting, for any differentiable $f$ on $(0,\infty)$,
\begin{align*}
L_{U}^{(\ell)}(f):=
\left(\frac{S_{\rm aff}}{S_{\rm Sob}}\right)^p
U^{2-p^*}\left\{-(p-1)r^{1-n}\left(f'|U'|^{p-2}r^{n-1}\right)'
+\frac{\mu_\ell}{r^2}|U'|^{p-2}f\right\}
-(p^*-1)S_{\rm aff}^p f.
\end{align*}
This is exactly a multiple of the Sturm--Liouville type operator
appearing in the classical spectral analysis of $\mathcal{L}_U$
(see \cite{fn19,fz22}).
Indeed, we need the following properties of $L^{(0)}_U$;
see \cite[Proposition 3.1]{fn19} for $p\ge 2$ and see
\cite[Appendix B]{fz22} for other cases.

\begin{lemma}\label{lem-fzspe}
For any $p\in(1,n)$,
it holds that
\begin{align*}
L^{(0)}_U(U)=-(p^*-p)S_{\rm aff}^pU\quad\text{and}\quad
L^{(0)}_U\left(\frac{n-p}{p}U+rU'\right)=0.
\end{align*}
\end{lemma}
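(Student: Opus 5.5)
The plan is to reduce both identities to the radial form of the Euler--Lagrange equation \eqref{eq-pU} for $U$. Since $U$ is radial and strictly decreasing, we have $U'(r)<0$ for all $r>0$. Hence on $(0,\infty)$ equation \eqref{eq-pU} reads
\begin{align*}
-r^{1-n}\left(|U'|^{p-2}U'r^{n-1}\right)'=S_{\rm Sob}^pU^{p^*-1}.
\end{align*}
Throughout I will work pointwise for $r>0$, where everything is smooth; this follows from the explicit formula \eqref{eq:standardminimizer}, since $U$ is real-analytic in $r>0$ with $U'\neq0$ there. In particular $s\mapsto|s|^{p-2}s$ is smooth near $s=U'(r)$ for every $p\in(1,n)$, including $p<2$.

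\emph{First identity.} Take $f=U$. Then $f'|U'|^{p-2}=|U'|^{p-2}U'$, so, using the radial equation,
\begin{align*}
-(p-1)r^{1-n}\left(f'|U'|^{p-2}r^{n-1}\right)'=(p-1)S_{\rm Sob}^pU^{p^*-1}.
\end{align*}
Multiplying by $(S_{\rm aff}/S_{\rm Sob})^pU^{2-p^*}$ gives $(p-1)S_{\rm aff}^pU$. Since $\mu_0=0$, there is no angular term, and subtracting $(p^*-1)S_{\rm aff}^pU$ yields $-(p^*-p)S_{\rm aff}^pU$.

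\emph{Second identity.} I use the scaling invariance of the equation. For $\lambda>0$, set $U_\lambda(r):=\lambda^{-\frac{n-p}{p}}U(r/\lambda)$. A direct check, using $p^*-1=\frac{n(p-1)+p}{n-p}$ so that both sides of the radial equation scale like $\lambda^{-\frac{n-p}{p}(p^*-1)}$, shows that every $U_\lambda$ satisfies the same radial equation. Differentiating in $\lambda$ at $\lambda=1$ gives
\begin{align*}
\partial_\lambda U_\lambda\big|_{\lambda=1}=-\left(\frac{n-p}{p}U+rU'\right)=:-w.
\end{align*}
Now I differentiate the equation itself with respect to $\lambda$. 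The map $(\lambda,r)\mapsto U_\lambda(r)$ is smooth on $(0,\infty)^2$, and $\partial_s(|s|^{p-2}s)=(p-1)|s|^{p-2}$ at $s=U'\ne0$, so derivatives in $\lambda$ and $r$ may be interchanged. This yields the linearized equation
\begin{align*}
-(p-1)r^{1-n}\left(|U'|^{p-2}w'r^{n-1}\right)'=(p^*-1)S_{\rm Sob}^pU^{p^*-2}w.
\end{align*}
Multiplying by $(S_{\rm aff}/S_{\rm Sob})^pU^{2-p^*}$ and recalling $\mu_0=0$ gives exactly $L^{(0)}_U(w)=0$.

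The only point needing care, and what I expect to be the main (though mild) obstacle, is the justification of differentiating under the equation in $\lambda$. It is handled by the explicit, real-analytic formula for $U$ for $r>0$, together with the nonvanishing of $U'$ there. Nothing about behavior at $r=0$ is needed, since the identities are pointwise on $(0,\infty)$.
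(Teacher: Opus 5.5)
Your proof is correct. The first identity is the homogeneity of $-\Delta_p$ applied to \eqref{eq-pU}, with the prefactor $(S_{\rm aff}/S_{\rm Sob})^p$ turning $S_{\rm Sob}^p$ into $S_{\rm aff}^p$. The second follows by differentiating the radial equation along the dilation family $U_\lambda$, which is legitimate for every $p\in(1,n)$ because $U'<0$ on $(0,\infty)$.

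The paper does not prove this lemma itself; it cites \cite[Proposition 3.1]{fn19} and \cite[Appendix B]{fz22}. Your computation is the standard argument behind those results, so it is essentially the same approach, just made self-contained.
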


In addition, if we define
\begin{align*}
\psi_\ell:=U^{2-p^*}
\left[-r^{1-n}\left(|U'|^{p-2}U'r^{n-1}\right)'
+\frac{\mu_\ell}{pr}|U'|^{p-2}U'\right],
\end{align*}
then, by integration by parts, we find that
\begin{align*}
\left(\int_{0}^{\infty}
|\partial_r U|^{p-2}\partial_r U
r^{n-1}\left[f'+\frac{\mu_\ell}{pr}f\right]\,dr\right)^2
=\langle f,\psi_\ell\rangle_0^2,
\end{align*}
here and thereafter, for any $f,g\in L^2(U^{p^*-2}r^{n-1},(0,\infty))$,
define
\begin{align*}
\langle f,g\rangle_0:=\int_{0}^{\infty}fgU^{p^*-2}r^{n-1}.
\end{align*}
Applying Lemma \ref{lem-fzspe}, we
immediately obtain the following property of $L^{(\ell)}_U$
for $\ell\ge 1$, which is important to show
the remaining part of Proposition \ref{prop-spectral}.

\begin{lemma}\label{lem-psiz}
For any $\ell\in\mathbb{N}$,
\begin{align*}
L^{(\ell)}_U(rU')
=p\left(\frac{S_{\rm aff}}{S_{\rm Sob}}\right)^p
\psi_\ell.
\end{align*}
\end{lemma}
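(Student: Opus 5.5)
The identity follows from linearity of $L^{(\ell)}_U$ in $f$, Lemma \ref{lem-fzspe}, and the Euler--Lagrange equation \eqref{eq-pU}; no new estimate is needed. The approach is to split $L^{(\ell)}_U$ into the radial operator $L^{(0)}_U$ plus the angular term, evaluate $L^{(0)}_U(rU')$ using Lemma \ref{lem-fzspe}, and compare term by term with $p(S_{\rm aff}/S_{\rm Sob})^p\psi_\ell$.

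\emph{Step 1: split off the angular term.} Directly from the definition, for every differentiable $f$ and every $\ell\in\mathbb N$,
\begin{align*}
L^{(\ell)}_U(f)=L^{(0)}_U(f)+\left(\frac{S_{\rm aff}}{S_{\rm Sob}}\right)^p U^{2-p^*}\frac{\mu_\ell}{r^2}|U'|^{p-2}f .
\end{align*}
Taking $f=rU'$, the second summand equals
\begin{align*}
\left(\frac{S_{\rm aff}}{S_{\rm Sob}}\right)^p U^{2-p^*}\frac{\mu_\ell}{r}|U'|^{p-2}U'.
\end{align*}
This is exactly the $\mu_\ell$-part of $p(S_{\rm aff}/S_{\rm Sob})^p\psi_\ell$, because the factor $p$ cancels the $\frac1p$ in $\frac{\mu_\ell}{pr}$.

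\emph{Step 2: evaluate $L^{(0)}_U(rU')$.} Since $L^{(0)}_U$ is linear, Lemma \ref{lem-fzspe} gives
\begin{align*}
L^{(0)}_U(rU')=-\frac{n-p}{p}L^{(0)}_U(U)=\frac{n-p}{p}(p^*-p)S_{\rm aff}^pU .
\end{align*}
Since $p^*-p=\frac{p^2}{n-p}$, the coefficient is $\frac{n-p}{p}(p^*-p)=p$, so $L^{(0)}_U(rU')=pS_{\rm aff}^pU$.

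\emph{Step 3: match the remaining part of $\psi_\ell$.} The remaining part of $p(S_{\rm aff}/S_{\rm Sob})^p\psi_\ell$ is
\begin{align*}
-p\left(\frac{S_{\rm aff}}{S_{\rm Sob}}\right)^pU^{2-p^*}r^{1-n}\left(|U'|^{p-2}U'r^{n-1}\right)'.
\end{align*}
For radial $U$, the quantity $r^{1-n}(|U'|^{p-2}U'r^{n-1})'$ is $\Delta_pU$. By \eqref{eq-pU}, $-\Delta_pU=S_{\rm Sob}^pU^{p^*-1}$, so this term equals
\begin{align*}
p\left(\frac{S_{\rm aff}}{S_{\rm Sob}}\right)^pU^{2-p^*}S_{\rm Sob}^pU^{p^*-1}=pS_{\rm aff}^pU ,
\end{align*}
which coincides with Step 2. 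Adding Steps 1 and 2 therefore gives the claim.

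The only point needing care is bookkeeping: the normalizing constants $(S_{\rm aff}/S_{\rm Sob})^p$ and the exponents of $U$ must cancel exactly. This relies on the numerical identity $\frac{n-p}{p}(p^*-p)=p$ and on using the Euler--Lagrange equation \eqref{eq-pU} with the constant $S_{\rm Sob}$, not $S_{\rm aff}$.
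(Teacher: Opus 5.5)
Your proof is correct and follows essentially the same route as the paper: split off the $\mu_\ell$-term, get $L^{(0)}_U(rU')=pS_{\rm aff}^pU$ from Lemma~\ref{lem-fzspe} using $\frac{n-p}{p}(p^*-p)=p$, and identify the remaining part of $\psi_\ell$ through \eqref{eq-pU}. Your sign in Step 3, $r^{1-n}(|U'|^{p-2}U'r^{n-1})'=\Delta_pU=-S_{\rm Sob}^pU^{p^*-1}$, is the correct one; the paper's intermediate line $S_{\rm Sob}^pU^{p^*-1}=r^{1-n}(|U'|^{p-2}U'r^{n-1})'$ drops a minus sign, although its final identity is right.
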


\begin{proof}
Using Lemma \ref{lem-fzspe} and a direct calculation, we find that,
for any $\ell\in\mathbb{N}$,
\begin{align}\label{lem-psiz-e1}
L^{(\ell)}_{U}(rU')&=
L^{(0)}_U(rU')+\left(\frac{S_{\rm aff}}{S_{\rm Sob}}\right)^p
\left(U^{2-p^*}\frac{\mu_\ell}{r}|U'|^{p-2}U'\right)\notag\\
&=p\left(\frac{S_{\rm aff}}{S_{\rm Sob}}\right)^p
U^{2-p^*}\left({S}_{\rm Sob}^pU^{p^*-1}+\frac{\mu_\ell}{pr}|U'|^{p-2}
U'\right).
\end{align}
On the other hand, \eqref{eq-pU} and \eqref{eq-nabla} prove that
\begin{align*}
{S}_{\rm Sob}^pU^{p^*-1}&=-\operatorname{div}
\left(|\nabla U|^{p-2}\nabla U\right)\\
&=-\left(\partial_r\omega+\frac{1}{r}\nabla_{\mathbb{S}^{n-1}}\right)
\cdot\left(|U'|^{p-2}U'\right)\\
&=\left(|U'|^{p-2}U'\right)'+\frac{n-1}{r}|U'|^{p-2}U'
\quad\text{by $-\operatorname{div}_{\mathbb{S}^{n-1}}\omega
:=\nabla_{\mathbb{S}^{n-1}}\cdot\omega=n-1$}\\
&=r^{1-n}\left(|U'|^{p-2}U'r^{n-1}\right)'.
\end{align*}
Combining this and \eqref{lem-psiz-e1},
we complete the present proof.
\end{proof}

We also need the following identity.

\begin{lemma}\label{lem-psizin}
For any $\ell\in\mathbb{N}$,
\begin{align*}
\langle rU',\psi_\ell\rangle_0
=\left(1+\frac{\mu_\ell-n}{p}\right)
|\mathbb{S}^{n-1}|^{-1} S^{p}_{\rm Sob}.
\end{align*}
\end{lemma}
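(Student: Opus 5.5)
The plan is a direct computation from the definitions. Write out
\[
\langle rU',\psi_\ell\rangle_0=\int_0^\infty rU'\Big[-r^{1-n}\big(|U'|^{p-2}U'r^{n-1}\big)'+\frac{\mu_\ell}{pr}|U'|^{p-2}U'\Big]r^{n-1}\,dr .
\]
The weight $U^{p^*-2}$ in $\langle\cdot,\cdot\rangle_0$ cancels against the factor $U^{2-p^*}$ in $\psi_\ell$. I then treat the two resulting integrals separately. Each will be identified, via the equation \eqref{eq-pU} and the normalization \eqref{eq:standardminimizernorm}, with a multiple of $|\mathbb S^{n-1}|^{-1}S_{\rm Sob}^p$.

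\emph{First term.} For radial $U$, \eqref{eq-pU} reads $-r^{1-n}\big(r^{n-1}|U'|^{p-2}U'\big)'=S_{\rm Sob}^pU^{p^*-1}$. I will use the equation directly in this form and not through the intermediate display in the proof of Lemma \ref{lem-psiz}, whose sign should be double-checked. The first integral then becomes
\[
S_{\rm Sob}^p\int_0^\infty rU'U^{p^*-1}r^{n-1}\,dr=\frac{S_{\rm Sob}^p}{p^*}\int_0^\infty (U^{p^*})'r^{n}\,dr=-\frac{n}{p^*}S_{\rm Sob}^p\int_0^\infty U^{p^*}r^{n-1}\,dr ,
\]
after one integration by parts. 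The boundary terms vanish: at $0$ because of the factor $r^n$, and at infinity because $U^{p^*}r^n\sim r^{-n}\to0$, using the explicit form \eqref{eq:standardminimizer}. Since $\|U\|_{p^*}=1$, polar coordinates give $\int_0^\infty U^{p^*}r^{n-1}dr=|\mathbb S^{n-1}|^{-1}$. With $n/p^*=(n-p)/p=n/p-1$, the first term therefore equals $\big(1-\frac np\big)|\mathbb S^{n-1}|^{-1}S_{\rm Sob}^p$.

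\emph{Second term.} This integral is $\frac{\mu_\ell}{p}\int_0^\infty|U'|^pr^{n-1}dr=\frac{\mu_\ell}{p}|\mathbb S^{n-1}|^{-1}\|\nabla U\|_p^p$. Testing \eqref{eq-pU} against $U$ gives $\|\nabla U\|_p^p=S_{\rm Sob}^p\|U\|_{p^*}^{p^*}=S_{\rm Sob}^p$. The integration by parts is justified by $U\in\dot W^{1,p}$ and the decay of $U$ and $|\nabla U|$. So the second term equals $\frac{\mu_\ell}{p}|\mathbb S^{n-1}|^{-1}S_{\rm Sob}^p$.

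Adding the two terms gives $\big(1+\frac{\mu_\ell-n}{p}\big)|\mathbb S^{n-1}|^{-1}S_{\rm Sob}^p$, as claimed. The only delicate point is bookkeeping the sign in the radial form of the $p$-Laplace equation, since a sign slip there changes $1-\frac np$ into $\frac np-1$. The remaining ingredients are the routine decay checks $U\sim r^{-(n-p)/(p-1)}$ and $U'\sim r^{-(n-1)/(p-1)}$ at infinity, which make all boundary terms vanish.
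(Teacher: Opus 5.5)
Your computation is correct. It differs from the paper only in how you evaluate the divergence term.

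The paper does not use \eqref{eq-pU} for that term. It integrates by parts to move the derivative onto $rU'$, obtaining $\int_0^\infty (U'+rU'')|U'|^{p-2}U'r^{n-1}\,dr$. It then writes $|U'|^{p-2}U'U''=\frac1p(|U'|^p)'$ and integrates by parts once more, arriving at $(1-\frac np)\int_0^\infty|U'|^pr^{n-1}\,dr$. So the paper in effect proves $\langle rU',\psi_\ell\rangle_0=(1+\frac{\mu_\ell-n}{p})\int_0^\infty|U'|^pr^{n-1}\,dr$ from the radial structure and decay alone, and invokes $\|\nabla U\|_p=S_{\rm Sob}\|U\|_{p^*}$ only at the end.

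You instead substitute the Euler--Lagrange equation. This never uses $U''$, which is unbounded near $r=0$ when $p>2$ (harmlessly so in the paper's integrals). It also evaluates the first term through $\|U\|_{p^*}=1$ rather than through the Dirichlet energy. The agreement of the two evaluations, $(1-\frac np)\|\nabla U\|_p^p=-\frac{n}{p^*}S_{\rm Sob}^p\|U\|_{p^*}^{p^*}$, is exactly the Pohozaev identity for \eqref{eq-pU}.

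Your caution about the sign is warranted. The intermediate display in the proof of Lemma \ref{lem-psiz} drops a minus sign, and the correct radial form is the one you use, $-r^{1-n}(r^{n-1}|U'|^{p-2}U')'=S_{\rm Sob}^pU^{p^*-1}$. The conclusion of Lemma \ref{lem-psiz} is unaffected.

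One minor slip: $U^{p^*}r^n$ decays like $r^{-n/(p-1)}$, not $r^{-n}$; the two coincide only for $p=2$. This still tends to zero, so your boundary term at infinity vanishes as claimed.
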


\begin{proof}
Noting that $(rU')'=U'+rU''$, we have
\begin{align*}
\langle rU',\psi_\ell\rangle_0
&=\int_{0}^{\infty}
|U'|^{p-2}U'r^{n-1}
\left[\left(1+\frac{\mu_\ell}{p}U'+rU''\right)\right]\,dr\\
&=\left(1+\frac{\mu_\ell}{p}\right)
\int_{0}^{\infty}|U'|^{p}r^{n-1}\,dr+\int_{0}^{\infty}
|U'|^{p-2}U'U''r^n\,dr.
\end{align*}
Since $|U'|^{p-2}U'U''=\frac1p(|U'|^p)'$, from
$|U'|^p(0)=|U'|^p(\infty)=0$ and integration by parts,
we infer that
\begin{align*}
\int_{0}^{\infty}|U'|^{p-2}U'U''r^{n}\,dr
=-\frac np\int_{0}^{\infty}
|U'|^{p}r^{n-1}\,dr.
\end{align*}
Hence, by the facts that $U$ is radial and $U$ is an extremal function of
the classical Sobolev inequality (that is, $\|\nabla U\|_p={S}_{\rm Sob}\|U\|_{p^*}$),
we further obtain
\begin{align*}
\langle rU',\psi_\ell\rangle_0
=\left(1+\frac{\mu_\ell-n}{p}\right)
\int_{0}^{\infty}|U'|^{p}r^{n-1}\,dr
=\left(1+\frac{\mu_\ell-n}{p}\right)
|\mathbb{S}^{n-1}|^{-1}{S}^{p}_{\rm Sob}.
\end{align*}
This completes the present proof.
\end{proof}

Also, we need the following lemma to deal with
the rank-one operator in \eqref{eq-radialR}.

\begin{lemma}\label{lem-rankone}
Let $\mathcal{H}$ be a real Hilbert space, $Q$ be a symmetric
bilinear form on $\mathcal{H}$, and $L\in\mathcal{H}^*$.
If, for any $f\in\mathcal{H}$,
\begin{align*}
Q(f,f)\ge L(f)^2,
\end{align*}
where the equality holds if $f=f_0$,
and there is a $\delta\in(0,\infty)$ such that, for any $f\in \mathcal{H}$,
$Q(f,f)\ge\delta\|f\|_{\mathcal{H}}^2$,
then, for any $f$ satisfying $f\perp f_0$,
\begin{align*}
Q(f,f)\ge L(f)^2+\delta\|f\|_{\mathcal{H}}^2.
\end{align*}
\end{lemma}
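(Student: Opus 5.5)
The plan is to reduce the statement to a one-dimensional inequality along the line spanned by $f_0$, and then use orthogonality to split off that line. Since $Q$ is a symmetric bilinear form with $Q(f,f)\ge\delta\|f\|_{\mathcal H}^2$, it defines an inner product on $\mathcal H$ equivalent to (or at least dominating) the original one. I will not need completeness in $Q$; only the algebra of quadratic forms is required.

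First, I extract the structure at $f_0$. The function $\Phi(f):=Q(f,f)-L(f)^2$ is a quadratic form, and it is nonnegative on $\mathcal H$. It attains its minimum value $0$ at $f_0$. Hence for every $g\in\mathcal H$ and $t\in\mathbb R$ we have $\Phi(f_0+tg)\ge0=\Phi(f_0)$. Differentiating at $t=0$ gives the polarization identity
\begin{align*}
Q(f_0,g)=L(f_0)L(g)\qquad\text{for all } g\in\mathcal H.
\end{align*}
The same conclusion follows from Cauchy--Schwarz for the positive semidefinite form $Q-L\otimes L$: since $\Phi(f_0)=0$, $f_0$ lies in its kernel.

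Next, I write $f\perp f_0$ in $\mathcal H$ and expand. The key question is how to connect $\mathcal H$-orthogonality with the forms $Q$ and $L$. The intended reading, matching the use in \eqref{eq-radialR}, is that $\mathcal H$ is $L^2(U^{p^*-2}r^{n-1})$ and that $Q(f,f)-\delta\|f\|^2$ is itself a form dominating $L^2$. So I argue as follows.

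Set $Q_\delta(f,g):=Q(f,g)-\delta\langle f,g\rangle_{\mathcal H}$. I would like $Q_\delta(f,f)\ge L(f)^2$ for $f\perp f_0$. Decompose an arbitrary $h=f+sf_0$ with $f\perp f_0$. Then
\begin{align*}
Q_\delta(h,h)=Q_\delta(f,f)+2sQ(f,f_0)+s^2Q_\delta(f_0,f_0).
\end{align*}
Here the cross term uses $\langle f,f_0\rangle=0$, and by the polarization identity $Q(f,f_0)=L(f)L(f_0)$. Comparing with $L(h)^2=L(f)^2+2sL(f)L(f_0)+s^2L(f_0)^2$, the cross terms cancel. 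This yields the exact identity
\begin{align*}
\Phi(h)-\delta\|h\|^2=\bigl[Q(f,f)-L(f)^2-\delta\|f\|^2\bigr]-s^2\delta\|f_0\|^2.
\end{align*}

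This identity alone does not yield the claim. The main obstacle is therefore to show $Q(f,f)-L(f)^2\ge\delta\|f\|^2$ on $f_0^\perp$, and the coercivity of $Q$ must be used differently. My first attempt is the following. For $f\perp f_0$, the Cauchy--Schwarz inequality in the $Q$-inner product gives $L(f)^2=\bigl(Q(f,f_0)/L(f_0)\bigr)^2$, assuming $L(f_0)\neq0$; if $L(f_0)=0$ then $Q(f_0,\cdot)=0$, which contradicts coercivity unless $f_0=0$. Substituting yields
\begin{align*}
L(f)^2=\frac{Q(f,f_0)^2}{Q(f_0,f_0)},
\end{align*}
which is the squared $Q$-projection of $f$ onto $f_0$. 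Hence $Q(f,f)-L(f)^2=Q(\tilde f,\tilde f)$, where $\tilde f:=f-\frac{Q(f,f_0)}{Q(f_0,f_0)}f_0$ is the $Q$-orthogonal part.

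To finish, I apply coercivity: $Q(\tilde f,\tilde f)\ge\delta\|\tilde f\|^2$. Since $f\perp f_0$ in $\mathcal H$, we have $\|\tilde f\|^2=\|f\|^2+c^2\|f_0\|^2\ge\|f\|^2$ for the appropriate scalar $c$. This is Pythagoras in $\mathcal H$, and it gives the claim $Q(f,f)\ge L(f)^2+\delta\|f\|^2$.
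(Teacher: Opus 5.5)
Your proof is correct. The first step, $Q(f_0,g)=L(f_0)L(g)$ obtained from nonnegativity of $Q(f_0+tg,f_0+tg)-L(f_0+tg)^2$ in $t$, is exactly the paper's. The second step is packaged differently.

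\textbf{The paper's second step.} It sets $G:=Q-\delta\langle\cdot,\cdot\rangle_{\mathcal H}\ge0$ and applies Cauchy--Schwarz, $G(f,f)G(f_0,f_0)\ge G(f,f_0)^2$. It then combines $G(f,f_0)=Q(f,f_0)=L(f)L(f_0)$ (for $f\perp f_0$) with $G(f_0,f_0)\le L(f_0)^2$.

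\textbf{Your second step.} You remove the $Q$-projection of $f$ onto $f_0$, which gives the exact identity $Q(f,f)-L(f)^2=Q(\tilde f,\tilde f)$. You then use coercivity at $\tilde f$ and Pythagoras in $\mathcal H$.

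\textbf{Comparison.} Both arguments amount to positivity of $G$ along the line $\{f-tf_0\}$: Cauchy--Schwarz optimizes in $t$, while you take $t=L(f)/L(f_0)$. Your version shows transparently that subtracting $L\otimes L$ removes exactly the $Q$-direction of $f_0$. It also yields the slightly stronger bound
\[
Q(f,f)\ge L(f)^2+\delta\Bigl(\|f\|_{\mathcal H}^2+\tfrac{L(f)^2}{L(f_0)^2}\|f_0\|_{\mathcal H}^2\Bigr).
\]
The paper's version is shorter.

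\textbf{Minor points.}
\begin{itemize}
\item The relation $L(f)=Q(f,f_0)/L(f_0)$ is just the polarization identity, not Cauchy--Schwarz.
\item The exploratory decomposition $h=f+sf_0$ can be dropped.
\item As you noticed, $L(f_0)\neq0$ requires $f_0\neq0$. Both your argument and the paper's assume this tacitly. The statement is false for $f_0=0$: take $\mathcal H=\mathbb R$, $Q(f,f)=2f^2$, $L(f)=f$, $\delta=2$. This is harmless in the application, where $f_0=rU'$.
\end{itemize}
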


\begin{proof}
We first claim that, for any $f\in\mathcal{H}$,
$Q(f,f_0)=L(f)L(f_0)$.
Indeed, using the assumption in the present lemma, we obtain,
for any $t\in\mathbb{R}$ and $f\in\mathcal{H}$,
\begin{align*}
0\le Q(tf+f_0,tf+f_0)-|L(tf+f_0)|^2
=t^{2}\left[Q(f,f)-|L(f)|^2\right]
+2t\left[Q(f,f_0)-L(f)L(f_0)\right].
\end{align*}
Hence, from the arbitrariness of $t\in\mathbb{R}$,
we infer the above claim.
Now, for any $f,g\in\mathcal{H}$, define
$G(f,g):=Q(f,g)-\delta\langle f,g\rangle_{\mathcal{H}}$.
Then, using the assumption in the present lemma and
Cauchy--Schwarz inequality for bilinear forms, we conclude that,
for any $f\in\mathcal{H}$,
\begin{align}\label{lem-rankone-e1}
G(f,f)G(f_0,f_0)\ge G(f,f_0)^2.
\end{align}
Note that $G(f_0,f_0)=Q(f_0,f_0)-\delta\|f_0\|_{\mathcal{H}}^2
\le Q(f_0,f_0)=L(f_0)^2$
and, by the above claim, it holds that, for any $f\perp f_0$,
\begin{align*}
G(f,f_0)^2=Q(f,f_0)^2=L(f)^2L(f_0)^2.
\end{align*}
Combining these and \eqref{lem-rankone-e1},
we then complete the proof.
\end{proof}

Now, we are able to show \eqref{it2-spectral} and \eqref{it3-spectral}
of Proposition \ref{prop-spectral}.

\begin{proof}[Proof of Proposition \ref{prop-spectral}\eqref{it2-spectral}]
We first analyse the operator $L^{(2)}_U$.
Since many of the arguments can be generalized to arbitrary $\ell \ge 2$
and will be needed later, we choose to present them in a general form here.
Indeed, from the spectral analysis of $L^{(\ell)}_U$
in \cite{fn19,fz22}, we infer that the first eigenvalue
of $L^{(1)}_U$ is $0$ and hence the first eigenvalue of $L^{(\ell)}_U$,
$\ell\ge 2$, is positive.
Therefore, using the Rayleigh quotient characterization,
we conclude that there is a
$\delta\in(0,\infty)$, depending only on $n$ and $p$, such that,
for all $\ell\ge2$ and $f\in L^2(U^{p^*-2}r^{n-1},(0,\infty))$,
\begin{align}\label{it2-spectral-e1}
\langle L^{(\ell)}_Uf,f\rangle_0\ge \delta\langle f,f\rangle_0.
\end{align}
Hence, $L^{(\ell)}_U$ is invertible and,
if we apply the Cauchy--Schwarz inequality on
the inner product $\langle\cdot,\cdot\rangle^\sim
:=\langle L^{(\ell)}_U\cdot,\cdot\rangle_0$, then we find that,
for all $f,g\in L^2(U^{p^*-2}r^{n-1},(0,\infty))$,
\begin{align}\label{it2-spectral-e2}
\langle L^{(\ell)}_Uf,g\rangle_0^2\le
\langle L^{(\ell)}_Uf,f\rangle_0\langle L^{(\ell)}_Ug,g\rangle_0.
\end{align}
Letting $g:=(L^{(\ell)}_U)^{-1}\psi_\ell$ and
using Lemmas \ref{lem-psiz} and \ref{lem-psizin}, we obtain, for all
$f\in L^2(U^{p^*-2}r^{n-1},(0,\infty))$,
\begin{align}\label{it2-spectral-e3}
\langle f,\psi_\ell\rangle_0^2&\le
\langle \psi_\ell,(L^{(\ell)}_U)^{-1}\psi_\ell\rangle_0
\langle L^{(\ell)}_Uf,f\rangle_0\notag\\
&=\frac{\mu_\ell+p-n}{p^2}\left(\frac{{S}_{\rm Sob}}{S_{\rm aff}}\right)^p
|\mathbb{S}^{n-1}|^{-1}{S}_{\rm Sob}^p
\langle L^{(\ell)}_Uf,f\rangle_0.
\end{align}
In particular, if $\ell=2$, then, from Lemma \ref{lem-L1} and
\eqref{eq-relaSob}, we infer that
$$\frac{\mu_\ell+p-n}{p^2}\left(\frac{{S}_{\rm Sob}}{S_{\rm aff}}\right)^p
|\mathbb{S}^{n-1}|^{-1}{S}_{\rm Sob}^p=b_{2,p}^{-1}.$$
This shows that, for all $f\in L^2(U^{p^*-2}r^{n-1},(0,\infty))$,
\begin{align}\label{it2-spectral-e4}
\langle L^{(2)}_Uf,f\rangle_0\ge b_{2,p}
\langle f,\psi_2\rangle_0^2.
\end{align}
Moreover, by Lemma \ref{lem-psiz}, we find that
$g:=(L^{(\ell)}_U)^{-1}\psi_\ell$ and $rU'$ are linearly dependent.
Thus, from the equality condition for the Cauchy--Schwarz inequality
\eqref{it2-spectral-e2}, it follows that
the `$\ge$' in \eqref{it2-spectral-e4} becomes an `$=$'
when $f=rU'$.
Using this, \eqref{it2-spectral-e1}, \eqref{it2-spectral-e4},
and Lemma \ref{lem-rankone}, we further conclude that,
if $f\perp rU'$ in  $L^2(U^{p^*-2}r^{n-1},(0,\infty))$,
then
\begin{align}\label{it2-spectral-e5}
\langle L^{(2)}_Uf,f\rangle_0\ge \delta\langle f,f\rangle_0
+b_{2,p}\langle f,\psi_2\rangle_0^2.
\end{align}

Now, we turn to $R_{U,2,m}$. Observe that,
for any symmetric matrix $B$ satisfying $\operatorname{Tr}B=0$,
$\langle Bx,\nabla U\rangle=rU'\langle B\omega,\omega\rangle$,
while $\langle B\omega,\omega\rangle$ is precisely the
form of the second order spherical harmonic function.
Consequently, for any $\ell\ne 2$, $\varphi_{\ell,m}$
is orthogonal to $\{\langle Bx,\nabla U\rangle:B=B^{T},\ \operatorname{Tr}B=0\}$
automatically. This, combined with the
orthogonality assumption on $\varphi$ in \eqref{it2-spectral},
further implies that $\int_{\mathbb{S}^{n-1}}\varphi Y_{\ell,m}$
is orthogonal to $rU'$.
Applying this and \eqref{it2-spectral-e5},
we find that \eqref{eq-spectral} with $\ell=2$ holds.
This completes the present proof.
\end{proof}

\begin{proof}[Proof of Proposition \ref{prop-spectral}\eqref{it3-spectral}]
By Lemma \ref{lem-L1}, when $\ell$ is odd, $b_{\ell,p}=0$.
Hence, if $\ell\ge 3$ is odd, then \eqref{it2-spectral-e1}
is equivalent to \eqref{eq-spectral},
which completes the proof when $\ell$ is odd.
Next, we consider the case where $\ell\ge 4$ is even.
In this case, from \eqref{it2-spectral-e3}, it follows that,
for all $f\in L^2(U^{p^*-2}r^{n-1},(0,\infty))$,
\begin{align*}
\langle L^{(\ell)}_Uf,f\rangle_0
-b_{\ell,p}
\langle f,\psi_\ell\rangle_0^2
\ge
\left[1-\frac{\mu_\ell+p-n}{p^2}
\left(\frac{{S}_{\rm Sob}}{S_{\rm aff}}\right)^p
|\mathbb{S}^{n-1}|^{-1}{S}_{\rm Sob}^p
b_{\ell,p}\right]\langle L^{(\ell)}_Uf,f\rangle_0.
\end{align*}
Therefore, if we can prove that, for all even $\ell\ge 4$,
\begin{align*}
\lambda_{\ell,p}:=
\frac{\mu_\ell+p-n}{p^2}
\left(\frac{{S}_{\rm Sob}}{S_{\rm aff}}\right)^p
|\mathbb{S}^{n-1}|^{-1}{S}_{\rm Sob}^p
b_{\ell,p}
\end{align*}
has a upper bound strictly smaller than $1$,
then, applying \eqref{it2-spectral-e1}, we find that there is a
$\widetilde{\delta}\in(0,\infty)$, depending only on $n$ and $p$,
such that, for all $f\in L^2(U^{p^*-2}r^{n-1},(0,\infty))$,
\begin{align*}
\langle L^{(\ell)}_Uf,f\rangle_0-b_{\ell,p}\langle f,\psi_\ell\rangle_0^2
\ge \widetilde{\delta}\langle f,f\rangle_0,
\end{align*}
which is equivalent to \eqref{eq-spectral} and hence
completes the proof.

Now, we estimate $\lambda_{\ell,p}$. Indeed,
by \eqref{eq-relaSob} and Lemma \ref{lem-L1}, we have,
for any $k\in\mathbb{N}$,
\begin{align*}
\lambda_{2k,p}=\frac{n+p}{p^2}(\mu_{2k}+p-n)
\prod_{j=0}^{k-1}\left(\frac{2j-p}{2j+n+p}\right)^2.
\end{align*}
Then
\begin{align*}
\frac{\lambda_{2k+2,p}}{\lambda_{2k,p}}-1
&=\frac{\mu_{2k+2}+p-n}{\mu_{2k}+p-n}
\left(\frac{2k-p}{2k+n+p}\right)^2-1\\
&=\frac{(4k+n)[n(n+p) - 2k(2k+n)(n+2p-2)]}
{(\mu_{2k}+p-n)(2k+n+p)^2}<0.
\end{align*}
Therefore, $\{\lambda_{2k,p}\}_{k\ge1}$ is strictly decreasing and hence,
for any $k\ge 2$,
$\lambda_{2k,p}< \lambda_{2,p}=1.$
This is the desired upper bound of $\lambda_{\ell,p}$ and
hence completes the present proof.
\end{proof}

Finally, applying Theorem \ref{thm-spectral}, we obtain
the following weighted spectral gap inequality, which
matches the expansions given in Subsection \ref{subsec-expan} well
and plays a key role in the proofs of Theorems \ref{thm-stab}
and \ref{thm-bubble} later.

\begin{theorem}\label{thm-spectral-wei}
Let $p\in[2,n)$ and $\delta$ be as in Theorem \ref{thm-spectral}.
If $C_0\in(0,\infty)$ and $\gamma\in(0,\delta)$, then there is a
$\rho_0\in(0,\infty)$ such that, for any $\rho\in(0,\rho_0)$ and
$\varphi\in\dot{W}^{1,p}$
satisfying $\|\varphi\|_{\dot{W}^{1,p}}<\rho$ and
$\varphi\perp T_U\mathcal{M}_{\rm aff}$,
\begin{align*}
&(1-C_0\rho)|\mathbb{S}^{n-1}|^{-1-\frac pn}
\int_{\mathbb{S}^{n-1}}\widetilde{L}^{(2)}_\xi(U,\varphi)
\,d\xi-(n+p)|\mathbb{S}^{n-1}|^{-1-\frac{2p}{n}}
\mathcal{E}_p(U)^{-p}\operatorname{Var}(U,\varphi)\notag\\
&\qquad\ge\left[(p^*-1)S_{\rm aff}^p+\gamma\right]
\int_{\mathbb{R}^n}U^{p^*-2}\varphi^2\,dx
+\rho\int_{\mathbb{R}^n}
|\nabla U|^{p-2}(\nabla\varphi)^2,
\end{align*}
where $\operatorname{Var}$ and $\widetilde{L}^{(2)}$ are as,
respectively, in \eqref{def-var} and \eqref{eq-L2w}.
\end{theorem}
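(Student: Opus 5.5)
We argue by contradiction and compactness, in the style of Figalli--Zhang \cite[Proposition 3.8]{fz22}. Suppose the inequality fails for some $C_0$ and $\gamma<\delta$. Then there are $\rho_k\to0$ and $\varphi_k\perp T_U\mathcal M_{\rm aff}$ with $\|\nabla\varphi_k\|_p<\rho_k$ for which the inequality is violated. The statement is quadratic in $\varphi$ except through $\widetilde L^{(2)}_\xi(U,\varphi)$, which depends on $\varphi$ nonlinearly via the truncations $\min\{|\nabla_\xi U|,\max\{c_p^{1/(p-1)}|\nabla_\xi U|,|\nabla_\xi(U+\varphi)|\}\}$ and $\min\{|\nabla_\xi(U+\varphi)|,|\nabla_\xi U|\}$. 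We therefore normalize $\psi_k:=\varphi_k/\|\varphi_k\|_{\ast}$, where
\[
\|\varphi\|_\ast^2:=\int_{\mathbb R^n}|\nabla U|^{p-2}|\nabla\varphi|^2 .
\]
The weighted quantities are all bounded by this norm: Lemma \ref{lem-L2} and the bound $\widetilde L^{(2)}\le L^{(2)}\le(p-1)\int|\nabla_\xi U|^{p-2}|\nabla_\xi\varphi|^2$ from \eqref{eq-L2} handle the $\widetilde L^{(2)}$ term. For the variance, \eqref{prop-expan-local-e3} together with a weighted Sobolev (Hardy-type) inequality for $U^{p^*-2}$ handles it. Passing to a subsequence, $\psi_k\rightharpoonup\psi$ weakly in $\dot W^{1,2}(|\nabla U|^{p-2})$, and by compact embedding $\psi_k\to\psi$ strongly in $L^2(U^{p^*-2})$. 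Orthogonality to $T_U\mathcal M_{\rm aff}$ passes to the limit.

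The key step is the lower semicontinuity of the nonlinear term. Write
\[
\int_{\mathbb S^{n-1}}\widetilde L^{(2)}_\xi(U,\varphi_k)\,d\xi
= \int_{\mathbb S^{n-1}}\int |\nabla_\xi U|^{p-2}\bigl[w_k|\nabla_\xi\psi_k|^2+(p-2)\tilde w_k(\ldots)^2\bigr]\|\varphi_k\|_\ast^2 ,
\]
with weights $w_k,\tilde w_k\in[c,1]$ relative to the linearized weights. Since $\|\nabla\varphi_k\|_p\to0$, we have $\nabla\varphi_k\to0$ in measure. Hence $w_k\to1$ outside sets of small measure, while $(|\nabla_\xi(U+\varphi_k)|-|\nabla_\xi U|)^2$ differs from $(\nabla_\xi\varphi_k\cdot\operatorname{sgn}\nabla_\xi U)^2$ only where $|\nabla_\xi\varphi_k|\gtrsim|\nabla_\xi U|$. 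On the set $\{|\nabla\varphi_k|\le\eta|\nabla U|\}$ the weights are $1+O(\eta)$. On its complement we use the trivial lower bound $\widetilde L^{(2)}\ge c_p\int|\nabla_\xi U|^{p-2}|\nabla_\xi\varphi_k|^2$, with $c_p>0$. Following \cite{fz22}, we split
\[
\nabla\psi_k=\nabla\psi_k\mathbf 1_{\{|\nabla\varphi_k|\le\eta|\nabla U|\}}+\nabla\psi_k\mathbf 1_{\{|\nabla\varphi_k|>\eta|\nabla U|\}}=:a_k+b_k .
\]
The quadratic form \eqref{thm-spectral-e0} applied to $a_k$ behaves like the linear one. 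The piece $b_k$ is not controlled in the limit, but it contributes positively (with weight $\ge c_p$) and has no effect on the $L^2(U^{p^*-2})$ part, because on the support of $b_k$ the weight $U^{p^*-2}$ satisfies Hardy-type smallness estimates. I expect this splitting, and in particular showing that the mass of $b_k$ only helps while the variance term (which is negative) stays continuous, to be the main obstacle. For the variance, I would use that $L^{(1)}_\xi(U,\cdot)$ is a bounded linear functional on $\dot W^{1,2}(|\nabla U|^{p-2})$ which is compact. It is a weak limit against the fixed profile $|\nabla_\xi U|^{p-1}$, and by Lemma \ref{lem-L1} its spherical harmonic decomposition is controlled by $\langle f,\psi_\ell\rangle_0$ with $\psi_\ell\in L^2(U^{p^*-2}r^{n-1})$. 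So $\operatorname{Var}(U,\psi_k)\to\operatorname{Var}(U,\psi)$.

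Combining these facts, we obtain a limiting inequality of the form
\[
\text{LHS of \eqref{thm-spectral-e0} at } \psi+\kappa \le \bigl[(p^*-1)S_{\rm aff}^p+\gamma\bigr]\int U^{p^*-2}\psi^2 ,
\]
where $\kappa\ge0$ is the weak-limit defect of $\int|\nabla U|^{p-2}|\nabla\psi_k|^2$ (which has value $1$) times a positive constant. If $\psi\ne0$, this contradicts Theorem \ref{thm-spectral}, since $\gamma<\delta$. If $\psi=0$, then the right-hand side tends to $0$ while the left-hand side is at least $c\,\kappa>0$, because the variance vanishes in the limit; this is again a contradiction. The $\rho\int|\nabla U|^{p-2}|\nabla\varphi|^2$ term and the factor $(1-C_0\rho)$ are absorbed, since $\rho_k\to0$ and every term is bounded by $\|\varphi_k\|_\ast^2$.
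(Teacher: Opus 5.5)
Your overall scheme matches the paper's: contradiction, normalization, a weak limit in $\dot W^{1,2}(|\nabla U|^{p-2})$, a strong limit in $L^2(U^{p^*-2})$ via the compact embedding, and weak continuity of $\operatorname{Var}(U,\cdot)$. The closing dichotomy is also the same: $\psi\neq0$ is excluded by Theorem \ref{thm-spectral} since $\gamma<\delta$, and $\psi=0$ is handled separately. Normalizing by $\|\varphi\|_\ast$ instead of by the $\widetilde L^{(2)}$-term itself (the paper's $\varepsilon_j$) is harmless. The case $\psi=0$ then follows directly from $\widetilde L^{(2)}_\xi(U,\varphi)\ge c_p\int|\nabla_\xi U|^{p-2}|\nabla_\xi\varphi|^2$ and Lemma \ref{lem-L2}, without any defect $\kappa$.

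\textbf{The gap.} It lies in the step you yourself flag, the lower semicontinuity of the nonlinear term; the splitting you propose does not work as described.
\begin{enumerate}
\item Your good set $\{|\nabla\varphi_k|\le\eta|\nabla U|\}$ does not depend on $\xi$. The weights in \eqref{eq-L2w}, however, compare $|\nabla_\xi(U+\varphi_k)|$ with $|\nabla_\xi U(x)|=|U'(|x|)|\,|x\cdot\xi|/|x|$, which vanishes on the hyperplane $\{x\cdot\xi=0\}$. Near that hyperplane, $|\nabla_\xi\varphi_k|$ can be as large as $\eta|U'|$, which is not small relative to $|\nabla_\xi U|$. So the weights are not $1+O(\eta)$ there.
\item $a_k$ is not a gradient field. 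Neither the quadratic form in \eqref{thm-spectral-e0} nor Theorem \ref{thm-spectral} can be applied to it.
\item The appeal to Hardy-type smallness of $U^{p^*-2}$ on the support of $b_k$ is beside the point. The term $\int U^{p^*-2}\psi_k^2$ involves $\psi_k$, not its gradient, and it simply converges by compactness.
\end{enumerate}

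\textbf{The fix.} For $p\ge2$ no splitting is needed; the paper's direction-by-direction argument closes the gap. Pass to a subsequence with $\nabla\varphi_k\to0$ a.e. and fix $\xi$. Write $|\nabla_\xi(U+\varphi_k)|-|\nabla_\xi U|=s_k\nabla_\xi\varphi_k$ with $s_k:=\int_0^1\operatorname{sgn}\nabla_\xi(U+t\varphi_k)\,dt$, so that
\begin{align*}
\frac{\widetilde L^{(2)}_\xi(U,\varphi_k)}{\|\varphi_k\|_\ast^2}=\int_{\mathbb R^n}|\nabla_\xi U|^{p-2}\left[m_k^{(1)}+(p-2)\,m_k^{(2)}s_k^2\right]|\nabla_\xi\psi_k|^2 .
\end{align*}
Here $m_k^{(1)},m_k^{(2)}\in[0,1]$ are the two weights of \eqref{eq-L2w} divided by $|\nabla_\xi U|^{p-2}$.
\begin{itemize}
\item Both $m_k^{(1)}$ and $m_k^{(2)}$ tend to $1$ a.e., and $s_k\to\operatorname{sgn}\nabla_\xi U$ a.e.
\item By dominated convergence, $(m_k^{(1)})^{1/2}\nabla_\xi\psi$ and $(m_k^{(2)})^{1/2}s_k\operatorname{sgn}(\nabla_\xi U)\nabla_\xi\psi$ converge to $\nabla_\xi\psi$ strongly in $L^2(|\nabla_\xi U|^{p-2})$.
\item Meanwhile $\nabla_\xi\psi_k\rightharpoonup\nabla_\xi\psi$ weakly in that space, because $|\nabla_\xi U|\le|\nabla U|$.
\item The pointwise inequality $a^2\ge2ab-b^2$ then bounds the $\liminf$ of each of $\int|\nabla_\xi U|^{p-2}m_k^{(1)}|\nabla_\xi\psi_k|^2$ and $\int|\nabla_\xi U|^{p-2}m_k^{(2)}s_k^2|\nabla_\xi\psi_k|^2$ below by $\int|\nabla_\xi U|^{p-2}|\nabla_\xi\psi|^2$.
\end{itemize}
Fatou's lemma in $\xi$ then yields
\begin{align*}
\liminf_{k\to\infty}\frac{1}{\|\varphi_k\|_\ast^2}\int_{\mathbb S^{n-1}}\widetilde L^{(2)}_\xi(U,\varphi_k)\,d\xi\ge(p-1)\int_{\mathbb S^{n-1}}\int_{\mathbb R^n}|\nabla_\xi U|^{p-2}|\nabla_\xi\psi|^2\,dx\,d\xi ,
\end{align*}
which is all your case $\psi\neq0$ requires. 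The paper runs the same argument on annuli $K_R$, where weighted and unweighted $L^2$ norms are comparable, and then lets $R\to\infty$.
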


\begin{remark}\label{rem-spectral}
If $L^{(2)}$ is as in \eqref{def-L2}, then, from the obvious pointwise inequality
$L^{(2)}_\xi\ge \widetilde{L}^{(2)}_\xi$ for all
$\xi\in\mathbb{S}^{n-1}$, we infer that the conclusion of
Theorem \ref{thm-spectral-wei} also holds with
$\widetilde{L}^{(2)}$ replaced by $L^{(2)}$.
\end{remark}

\begin{proof}[Proof of Theorem \ref{thm-spectral-wei}]
We show this theorem by contradiction.
If this theorem does not hold, then there are a sequence
$\{\rho_j\}_{j\in\mathbb{N}}$ in $(0,\frac1{2C_0})$
such that $\lim_{j\to\infty}\rho_j=0$ and a sequence
$\{\varphi_j\}_{j\in\mathbb{N}}$ in
$\dot{W}^{1,p}$ such that, for any $j\in\mathbb{N}$,
$\|\varphi_j\|_{\dot{W}^{1,p}}<\rho_j$,
$\varphi_j\perp T_U\mathcal{M}_{\rm aff}$,
but
\begin{align}\label{eq-spectral-1}
&(1-C_0\rho_j)|\mathbb{S}^{n-1}|^{-1-\frac pn}
\int_{\mathbb{S}^{n-1}}\widetilde{L}^{(2)}_\xi(U,\varphi_j)
\,d\xi-(n+p)|\mathbb{S}^{n-1}|^{-1-\frac{2p}{n}}
\mathcal{E}_p(U)^{-p}\operatorname{Var}(U,\varphi_j)\notag\\
&\qquad<\left[(p^*-1)S_{\rm aff}^p+\gamma\right]
\int_{\mathbb{R}^n}U^{p^*-2}\varphi_j^2\,dx
+\rho_j\int_{\mathbb{R}^n}
|\nabla U|^{p-2}(\nabla\varphi_j)^2.
\end{align}
For any $j\in\mathbb{N}$, let
\begin{align*}
\varepsilon_j:=
\sqrt{(1-C_0\rho_j)|\mathbb{S}^{n-1}|^{-1-\frac pn}
\int_{\mathbb{S}^{n-1}}\widetilde{L}^{(2)}_\xi(U,\varphi_j)}.
\end{align*}
Then $\varepsilon_j\in(0,\infty)$. Indeed, $\varepsilon_j<\infty$
is easily concluded by \eqref{eq-L2}.
On the other hand, since $p\ge 2$,
we deduce that, for any
$\xi\in\mathbb{S}^{n-1}$,
\begin{align}\label{eq-spectral-2}
\widetilde{L}^{(2)}_{\xi}(U,\varphi_j)
\gtrsim\int_{\mathbb{R}^n}|\nabla_\xi U|^{p-2}
|\nabla_\xi\varphi_j|^2;
\end{align}
combining this and Lemmas \ref{lem-L2} and
\ref{fz-spectral}, we further obtain
\begin{align*}
\varepsilon_j^2
\gtrsim\sqrt{1-C_0\rho_j}
\langle\mathcal{L}_U\varphi_j,\varphi_j\rangle
\gtrsim\sqrt{1-C_0\rho_j}\int_{\mathbb{R}^n}U^{p^*-2}\varphi_j^2.
\end{align*}
Hence, if $\varepsilon_j=0$, then $\varphi_j=0$, which contradicts
\eqref{eq-spectral-1}.

Since $\varepsilon_j\in(0,\infty)$, for any $j\in\mathbb{N}$, we can define
$\widehat{\varphi}_j:=\frac{\varphi_j}{\varepsilon_j}$.
Then, for every $j\in\mathbb{N}$,
by Lemma \ref{lem-L2}\eqref{lem-L2-it1}, the assumptions $p\ge 2$
and $\rho_j<\frac{1}{2C_0}$, and \eqref{eq-spectral-2}, we find that
\begin{align*}
\|\widehat{\varphi}_j\|_{\dot{W}^{1,2}(|\nabla U|^{p-2})}^2
&\lesssim\int_{\mathbb{S}^{n-1}}\int_{\mathbb{R}^{n-1}}
|\nabla_\xi U|^{p-2}(\nabla_\xi \widehat{\varphi}_j)^2
\lesssim\frac{1}{1-C_0\rho_j}\lesssim1 .
\end{align*}
This, together with the compact embedding between
$\dot{W}^{1,2}(|\nabla U|^{p-2})$ and $L^2(U^{p^*-2})$
(see \cite[Proposition 3.2]{fz22}), implies that
there are a
subsequence, still denoted by $\{\widehat{\varphi}_{j}\}_{j\in\mathbb{N}}$,
and a function $\varphi_0\in\dot{W}^{1,2}(|\nabla U|^{p-2})$
such that, when $j\to\infty,$
$\widehat{\varphi}_{j}\rightharpoonup\varphi_0$
in $\dot{W}^{1,2}(|\nabla U|^{p-2})$ and
$\widehat{\varphi}_{j}\to\varphi_0$ in $L^2(U^{p^*-2})$.
Moreover, we have the following two observations
on $\{\varphi_j\}_{j\in\mathbb{N}}$.

First, since $\|\nabla\varphi_j\|_p\to0$ as $j\to\infty$,
we can find a subsequence $\{\varphi_{j_k}\}_{k\in\mathbb{N}}$
such that $|\nabla\varphi_{j_k}|\to0$ almost everywhere
as $k\to\infty$.
Second, for a fixed $R\in(1,\infty)$, if we define
$K_R:=B({\bf0},R)\setminus B({\bf0},\frac1R)$,
then, from the structure of $U$, it follows that, for any
$j\in\mathbb{N}$,
\begin{align*}
\int_{K_R}|\nabla\widehat{\varphi}_j|^2
\sim\int_{K_R}|\nabla U|^{p-2}|\nabla\widehat{\varphi}_j|^2
\le \|\widehat{\varphi}_j\|_{\dot{W}^{1,2}(|\nabla U|^{p-2})}
\lesssim 1,
\end{align*}
where the implicit positive constants
may depend on $R$ but are independent of $j$.
Consequently, by the reflexive of $L^2(K_R;\mathbb{R}^n)$,
there are a sequence $\{\widehat{\varphi}_{j_k}\}_{k\in\mathbb{N}}$
and a function $u\in L^2(K_R;\mathbb{R}^n)$ such that
$\nabla\widehat{\varphi}_{j_k}\rightharpoonup
u$ in $L^2(K_R;\mathbb{R}^n)$ as $k\to\infty$.
This, combined with the fact
$\widehat{\varphi}_{j}\to\varphi_0$ in $L^2(U^{p^*-2})$,
further implies that, for any $\Phi\in C_{\rm c}^1(K_R;\mathbb{R}^n)$,
\begin{align*}
\int_{K_R}u\cdot\Phi=\lim_{k\to\infty}\int_{K_R}\nabla\widehat{\varphi}_{j_k}\cdot\Phi
=-\lim_{k\to\infty}\int_{K_R}\widehat{\varphi}_{j_k}\operatorname{div}
\Phi=-\int_{K_R}\varphi_0\operatorname{div}\Phi.
\end{align*}
This shows $u=\nabla\varphi_0$ and
hence $\nabla\widehat{\varphi}_{j_k}\rightharpoonup
\nabla\varphi_0$ in $L^2(K_R;\mathbb{R}^n)$ as $k\to\infty$.

In summary, for a fixed $R\in(1,\infty)$, we can choose a subsequence, still denoted by
$\{\varphi_j\}_{j\in\mathbb{N}}$, such that, when
$j\to\infty$,
\begin{enumerate}[{\rm(i)}]
  \item\label{it1-varphi} $\widehat{\varphi}_{j}\rightharpoonup\varphi_0$
in $\dot{W}^{1,2}(|\nabla U|^{p-2})$ and
$\widehat{\varphi}_{j}\to\varphi_0$ in $L^2(U^{p^*-2})$;
  \item\label{it2-varphi} $|\nabla\varphi_j|\to0$ almost everywhere in $\mathbb{R}^n$;
  \item\label{it3-varphi} $\nabla\widehat{\varphi}_{j}\rightharpoonup
\nabla\varphi_0$ in $L^2(K_R;\mathbb{R}^n)$.
\end{enumerate}
Note that, for any $j\in\mathbb{N}$,
\begin{align}\label{eq-spectral-3}
&\frac{(1-C_0\rho_j)\int_{\mathbb{S}^{n-1}}\widetilde{L}^{(2)}_{\xi}(U,\varphi_j)\,d\xi}{\varepsilon_j^2}\notag\\
&\quad\ge(1-C_0\rho_j)\int_{\mathbb{S}^{n-1}}\int_{\mathbb{R}^n}
\omega_{j,\xi}^{(1)}|\nabla_\xi\widehat{\varphi}_j|^2\notag\\
&\quad\quad+(p-2)(1-C_0\rho_j)\int_{\mathbb{S}^{n-1}}
\int_{\mathbb{R}^n}\omega_{j,\xi}^{(2)}\left[
\frac{|\nabla_\xi(U+\varphi_j)|-|\nabla_\xi U|}{\varepsilon_j}\right]^2\notag\\
&\quad=:{\rm I}_j+(p-2){\rm II}_j,
\end{align}
where $$\omega_{j,\xi}^{(1)}:=\frac{\min\{|\nabla_\xi U|,\max\{c_p^{\frac{1}{p-1}}
|\nabla_\xi (U+\varphi_j)|\}\}^{p-1}}
{|\nabla_\xi U|}$$
and
$$
\omega_{j,\xi}^{(2)}:=\frac{\min\{|\nabla_\xi (U+\varphi_j)|,|\nabla_\xi U|\}^{p-1}}
{|\nabla_\xi U|}$$
with $c_p$ as in \eqref{eq:pointwise-LZ}. We now deal with the terms
${\rm I}_j$ and ${\rm II}_j$ respectively. Since their underlying arguments are largely analogous but ${\rm II}_j$ is structurally more complicated, we shall present the detailed estimates
for ${\rm II}_j$ first, and then briefly point out the necessary modifications for ${\rm I}_j$.
To this end, define $\Psi_j:=\widehat{\varphi}_j-\varphi_0$.
Then, from the Newton--Leibniz formula, we infer that,
for any $j\in\mathbb{N}$,
\begin{align*}
{\rm II}_{j}
&=(1-C_0\rho_j)\int_{\mathbb{S}^{n-1}}
\int_{K_R}\omega_{j,\xi}^{(2)}
 \left[\int_{0}^{1}\frac{\nabla_\xi(U+t\varphi_j)}
{|\nabla_\xi(U+t\varphi_j)|}\nabla_\xi\widehat{\varphi}_j\,dt\right]^2\\
&\ge (1-C_0\rho_j)\int_{\mathbb{S}^{n-1}}
\int_{K_R}\omega_{j,\xi}^{(2)} \left[\int_{0}^{1}\frac{\nabla_\xi(U+t\varphi_j)}
{|\nabla_\xi(U+t\varphi_j)|}\,dt\right]^2
\left[(\nabla_\xi\varphi_0)^2
+2\nabla_\xi\varphi_0\nabla_\xi\Psi_j\right]\\
&=:{\rm II}_{j,1}+2{\rm II}_{j,2}.
\end{align*}
Applying \eqref{it2-varphi} and
the Lebesgue domination convergence theorem, we conclude that,
when $j\to\infty$, $\omega_{j,\xi}^{(2)}\to|\nabla_\xi U|^{p-2}$
and  $\int_{0}^{1}\frac{\nabla_\xi(U+t\varphi_j)}{|\nabla_\xi(U+t\varphi_j)|}\,dt\to
\frac{\nabla_\xi U}{|\nabla_\xi U|}$
almost everywhere in $\mathbb{R}^n$.
Hence, by this, $|\omega_{j,\xi}^{(2)}|\le|\nabla_\xi U|^{p-2}$,
the Lebesgue domination convergence theorem, and
$\lim_{j\to\infty}\rho_j=0$, we find that
\begin{align}\label{eq-spectral-5}
\lim_{j\to\infty}{\rm II}_{j,1}=
\int_{\mathbb{S}^{n-1}}\int_{K_R}
|\nabla_\xi U|^{p-2}(\nabla_\xi \varphi_0)^2.
\end{align}
On the other hand, for any $j\in\mathbb{N}$ and $\xi\in\mathbb{S}^{n-1}$,
it holds that
\begin{align*}
|A_{j,\xi}|:=
\left|\omega_{j,\xi}^{(2)}\int_{0}^{1}
\frac{\nabla_\xi(U+t\varphi_j)}{|\nabla_\xi(U+t\varphi_j)|}
(\nabla_\xi\varphi_0)\xi\right|
\le|\nabla_\xi U|^{p-2}|\nabla_\xi\varphi_0|
\lesssim|\nabla\varphi_0|,
\end{align*}
where the implicit positive constant is independent of $j$ and
$\xi$. From this and the structure of $U$,
we deduce that $A_{j,\xi}\in L^2(K_R;\mathbb{R}^n)$
and $\|A_{j,\xi}\|_{L^2(K_R;\mathbb{R}^n)}\lesssim1$
with the implicit positive constant independent of $j$ and $\xi$.
Therefore, applying \eqref{it3-varphi}, we find that, for any $\xi\in\mathbb{S}^{n-1}$,
$\int_{K_R}\nabla\Psi_j\cdot A_{j,\xi}\to0$ as $j\to\infty$.
Furthermore, since
\begin{align*}
\left|\int_{K_R}\nabla\Psi_j\cdot A_{j,\xi}\right|
\le\left(\int_{K_R}|\nabla\Psi_j|^2\right)^{\frac12}
\left(\int_{K_R}|A_{j,\xi}|^2\right)^{\frac12}\lesssim1
\end{align*}
uniformly in terms of both $j$ and $\xi$, from the Lebesgue
domination convergence theorem and the assumption
$\lim_{j\to\infty}\rho_j=0$, we infer that
\begin{align*}
\lim_{j\to\infty}
{\rm II}_{j,2}=\lim_{j\to\infty}(1-C_0\rho_j)\int_{\mathbb{S}^{n-1}}\int_{K_R}
\nabla\Psi_j\cdot A_{j,\xi}=0.
\end{align*}
This, together with
\eqref{eq-spectral-5}, further implies that
\begin{align}\label{eq-spectral-9}
\liminf_{j\to\infty}{\rm II}_{j}\ge
\int_{\mathbb{S}^{n-1}}\int_{K_R}
|\nabla_\xi U|^{p-2}(\nabla_\xi \varphi_0)^2,
\end{align}
which is the desired estimate of ${\rm II}_j$.
For ${\rm I}_j$, repeating the same argument:
expanding $\widehat{\varphi}_j=\varphi_0+\Psi_j$
and noting that $\omega_{j,\xi}^{(1)}\to |\nabla_\xi U|^{p-2}$
as $j\to\infty$, we can also obtain
\begin{align*}
\liminf_{j\to\infty}{\rm I}_j
\ge\int_{\mathbb{S}^{n-1}}\int_{K_R}
|\nabla_\xi U|^{p-2}|\nabla_\xi \varphi_0|^2.
\end{align*}
This, combined with \eqref{eq-spectral-3} and
\eqref{eq-spectral-9}, further implies that
\begin{align}\label{eq-spectral-6}
\liminf_{j\to\infty}
\frac{(1-C_0\rho_j)\int_{\mathbb{S}^{n-1}}\widetilde{L}^{(2)}_{\xi}(U,\varphi_j)\,d\xi}{\varepsilon_j^2}
\ge(p-1)\int_{\mathbb{S}^{n-1}}\int_{K_R}
|\nabla_\xi U|^{p-2}(\nabla_\xi\varphi_0)^2.
\end{align}

Next, we prove that, when $j\to\infty$,
\begin{align}\label{eq-spectral-7}
\operatorname{Var}(U,\widehat{\varphi}_j)
\to\operatorname{Var}(U,\varphi_0).
\end{align}
Indeed, by Minkowski's inequality, we obtain
\begin{align*}
\left|\operatorname{Var}(U,\widehat{\varphi}_j)^{\frac12}
-\operatorname{Var}(U,{\varphi}_0)^{\frac12}\right|
&\le\operatorname{Var}(U,\widehat{\varphi}_j-\varphi_0)^{\frac12}\\
&\le\left[\int_{\mathbb{S}^{n-1}} L^{(1)}_{\xi}(U,\widehat{\varphi}_j
-\varphi_0)^2\,d\xi\right]^{\frac12}
=\left[\int_{\mathbb{S}^{n-1}}L^{(1)}_\xi(U,\Psi_j)^2\,d\xi\right]^{\frac12}.
\end{align*}
Note that $L^{(1)}_\xi(U,\cdot)$ is a bounded linear functional
on $\dot{W}^{1,2}(|\nabla U|^{p-2})$. Indeed, from
H\"older's inequality, it follows that, for any
$f\in\dot{W}^{1,2}(|\nabla U|^{p-2})$,
\begin{align*}
|L^{(1)}_\xi(U,f)|
\le\int_{\mathbb{R}^n}|\nabla U|^{p-1}|\nabla f|
\le\left(\int_{\mathbb{R}^n}|\nabla U|^p\right)^{\frac12}
\left(\int_{\mathbb{R}^n}|\nabla U|^{p-2}|\nabla f|^2\right)^{\frac12}
\sim\|f\|_{\dot{W}^{1,2}(|\nabla U|^{p-2})},
\end{align*}
where the positive equivalence constants are independent of $\xi$.
Hence, using this, \eqref{it1-varphi}, and the Lebesgue
domination theorem, we obtain
$\int_{\mathbb{S}^{n-1}}L^{(1)}_\xi(U,\Psi_j)^2\,d\xi\to0$ as $j\to\infty$,
which shows \eqref{eq-spectral-7}. Then,
combining \eqref{eq-spectral-1}, \eqref{eq-spectral-6},
\eqref{eq-spectral-7}, \eqref{it1-varphi},
and the boundedness of $\{\widehat{\varphi}_j\}_{j\in\mathbb{N}}$
in $\dot{W}^{1,2}(|\nabla U|^{p-2})$,
we conclude that
\begin{align}\label{eq-spectral-8}
&(p-1)|\mathbb{S}^{n-1}|^{-1-\frac pn}
\int_{\mathbb{S}^{n-1}}\int_{K_R}
|\nabla_\xi U|^{p-2}(\nabla_\xi \varphi_0)^2
-(n+p)|\mathbb{S}^{n-1}|^{-1-\frac{2p}{n}}
\mathcal{E}_p(U)^{-p}\operatorname{Var}(U,\varphi_0)\notag\\
&\qquad\le\left[(p^*-1)S_{\rm aff}^p+\gamma\right]
\int_{\mathbb{R}^n} U^{p^*-2}\varphi_0^2.
\end{align}
On the other hand, by \eqref{it1-varphi} and
the orthogonality assumptions on $\{\varphi_j\}_{j\in\mathbb{N}}$,
we find that $\varphi_0\perp T_U\mathcal{M}_{\rm aff}$; hence,
applying Theorem \ref{thm-spectral} and \eqref{eq-spectral-8}
together with the arbitrariness of $R$, we further obtain
\begin{align*}
\left[(p^*-1)S_{\rm aff}^p+\gamma\right]
\int_{\mathbb{R}^n} U^{p^*-2}\varphi_0^2
\ge \left[(p^*-1)S_{\rm aff}^p+\delta\right]
\int_{\mathbb{R}^n} U^{p^*-2}\varphi_0^2.
\end{align*}
Since $\gamma<\delta$, it holds that $\varphi_0=0$ almost everywhere.
However, \eqref{eq-spectral-1} also shows that,
for any $j\in\mathbb{N}$,
\begin{align*}
&1-(n+p)|\mathbb{S}^{n-1}|^{-1-\frac{2p}{n}}
\mathcal{E}_p(U)^{-p}\operatorname{Var}(U,\widehat{\varphi}_j)\\
&\quad<\left[(p^*-1)S_{\rm aff}^p+\gamma\right]
\int_{\mathbb{R}^n} U^{p^*-2}\widehat\varphi_j^2
+\rho_j\int_{\mathbb{R}^n}
|\nabla U|^{p-2}(\nabla\widehat\varphi_j)^2,
\end{align*}
which, combined with \eqref{eq-spectral-7}, \eqref{it1-varphi},
and the boundedness of $\{\widehat{\varphi}_j\}_{j\in\mathbb{N}}$
in $\dot{W}^{1,2}(|\nabla U|^{p-2})$ again,
further implies that, if $\varphi_0=0$, then
$1\le 0$. It is impossible!
This then proves the present theorem.
\end{proof}


\section{Proofs of Theorems \ref{thm-stab} and \ref{thm-bubble}}\label{sec-proof}

In this section, we show Theorems \ref{thm-stab} and \ref{thm-bubble} following
the proofs of \cite[Theorem 1.1]{fz22} and \cite[Theorem 1.2]{lz25}, respectively.


\subsection{Proof of Theorem \ref{thm-stab}}

We need the following two lemmas
about the compactness and the orthogonality, respectively.

\begin{lemma}\label{lem-min}
If $p\in[2,n)$, then, for any $\varepsilon\in(0,\infty)$,
there is a positive constant $\delta$ such that,
for any $f\in\dot{W}^{1,p}$ satisfying
\begin{align*}
{\mathcal{E}_p(f)}^p
-S_{\rm aff}^p\|f\|_{p^*}^p<\delta \, \mathcal{E}_p(f)^p,
\end{align*}
it holds that
\begin{align*}
\inf_{(c,\lambda,S,x)\in\mathscr{A}}\|\nabla(T_{\lambda,S,x}f-cU)\|_{p}^p
<\varepsilon \, \mathcal{E}_p(f)^p.
\end{align*}
\end{lemma}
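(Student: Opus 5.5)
We argue by contradiction, using the compactness result of Theorem \ref{thm-comp}. Suppose the statement fails for some $\varepsilon>0$. Then there is a sequence $\{f_k\}_{k\in\mathbb N}$ in $\dot W^{1,p}$ with
\begin{align*}
\mathcal{E}_p(f_k)^p-S_{\rm aff}^p\|f_k\|_{p^*}^p<\tfrac1k\,\mathcal{E}_p(f_k)^p
\qquad\text{but}\qquad
\inf_{(c,\lambda,S,x)\in\mathscr{A}}\|\nabla(T_{\lambda,S,x}f_k-cU)\|_{p}^p\ge\varepsilon\,\mathcal{E}_p(f_k)^p .
\end{align*}
Necessarily $f_k\neq 0$, so $\mathcal E_p(f_k)>0$ by \eqref{eq-affSob}. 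Both conditions are invariant under $f\mapsto tf$ with $t>0$: the left side scales like $t^p$, and so does the infimum, since $c$ ranges over $\mathbb R\setminus\{0\}$. We may therefore normalize $\|f_k\|_{p^*}=1$. The first condition combined with \eqref{eq-affSob} then gives $S_{\rm aff}^p\le\mathcal E_p(f_k)^p<(1-\frac1k)^{-1}S_{\rm aff}^p$, hence $\mathcal E_p(f_k)\to S_{\rm aff}$.

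Theorem \ref{thm-comp} now applies. It yields $u\in\mathcal M_{\rm aff}$, $\lambda_k>0$, $x_k\in\mathbb R^n$ and $A_k\in\operatorname{SL}(n)$ such that $\mathcal T^{(p^*)}_{\lambda_k,A_k,x_k}f_k\to u$ in $\dot W^{1,p}$. By \eqref{def-min-equiv} we can write $u=c_0T_{\lambda_0,S_0,x_0}U$.

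The main remaining point is to turn this convergence into smallness of the infimum, which is expressed only through the parametrized transformations $T_{\lambda,S,x}$. We do this in two steps.

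\emph{Step 1: move the transformation onto $U$.} The operator $\mathcal T^{(p^*)}_{\lambda,A,x}$ is an isometry of $\dot W^{1,p}$ only up to the linear part: for $h$ and $A\in\operatorname{SL}(n)$,
\[
\|\nabla(h\circ A^{-1})\|_p\le |A^{-1}|\,\|\nabla h\|_p ,
\]
with the operator norm of $A^{-1}$. Dilations and translations are exact isometries. Undo first the translation/dilation and then the fixed affine map belonging to $u$. Concretely, apply the inverse of the map $g\mapsto c_0^{-1}$-normalized $T_{\lambda_0,S_0,x_0}$, composed with $\mathcal T_{\lambda_k,A_k,x_k}$. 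This gives
\[
\big\|\nabla\big(\mathcal T^{(p^*)}_{\lambda_k',A_k',x_k'}f_k - c_0U\big)\big\|_p\to0
\]
for modified parameters $\lambda_k'$, $A_k'$, $x_k'$ (the constant from $e^{-S_0}$ is fixed, so the error still tends to $0$).

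\emph{Step 2: replace $A_k'$ by a symmetric matrix.} Using the polar decomposition, write $(A_k')^{-1}=Q_ke^{S_k}$ with $Q_k\in O(n)$ and $S_k$ symmetric and trace-free; the order of the factors is chosen so that the rotation acts last. Since $U$ is radial, rotating by $Q_k^{-1}$ preserves both $\|\nabla\cdot\|_p$ and $U$. Thus $f_k\circ(\cdots)$ is a rotation of $T_{\lambda,S,x}f_k$ for suitable parameters, and rotating back yields
\[
\|\nabla(T_{\lambda_k'',S_k'',x_k''}f_k-c_0U)\|_p\to0 .
\]
This step requires checking that the composition of $\mathcal T$ maps has exactly the form $T_{\lambda,S,x}$ up to an orthogonal factor on the outside. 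That bookkeeping is the only delicate point, and I expect it to be the main obstacle.

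\emph{Conclusion.} Step 2 shows that the infimum tends to $0$. On the other hand $\varepsilon\,\mathcal E_p(f_k)^p\to\varepsilon S_{\rm aff}^p>0$. This contradicts the assumption on the sequence, which proves the lemma.
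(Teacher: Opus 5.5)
Your proof is correct and follows the paper's argument essentially step for step: contradiction, normalization, Theorem \ref{thm-comp}, the polar decomposition combined with the $O(n)$-invariance of $\|\nabla\cdot\|_p$ and the radiality of $U$, and an operator-norm bound for the fixed matrix coming from $S_0$. The paper normalizes $\mathcal E_p(f_k)=1$ instead of $\|f_k\|_{p^*}=1$ and does the two reductions in the opposite order, which is immaterial. The bookkeeping you flag as delicate is harmless: $Qe^{S}=e^{QSQ^{T}}Q$, and $QSQ^{T}$ is again symmetric and trace-free, so the orthogonal factor can always be moved to whichever side puts it as an outer composition.
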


\begin{proof}
Assume that the desired conclusion does not true.
Then, by homogeneity, there is a positive constant $\varepsilon_0$
and a sequence $\{u_k\}_{k\in\mathbb{N}}$ in $\dot{W}^{1,p}$
such that $\mathcal{E}_p(u_k)=1$ for every $k\in\mathbb{N}$
and
\begin{align}\label{lem-min-e3}
\lim_{k\to\infty}\left[{\mathcal{E}_p(u_k)}^p
-S_{\rm aff}^p\|u_k\|_{p^*}^p\right]=0,
\end{align}
but, for every $k\in\mathbb{N}$,
\begin{align}\label{lem-min-e4}
\inf_{(c,\lambda,S,x)\in\mathscr{A}}\|\nabla(T_{\lambda,S,x}u_k-cU)\|_{p}
\ge\varepsilon_0.
\end{align}
By \eqref{lem-min-e3} and
Theorem \ref{thm-comp}, we find that,
for each $k\in\mathbb{N}$, there are $\lambda_k\in(0,\infty)$,
$A_k\in \operatorname{SL}(n)$,
and $x_k\in\mathbb{R}^n$ such that, when $k\to\infty$,
$v_k:=\mathcal{T}^{(p^*)}_{\lambda_k,A_k,x_k}u_k\to u$ in $\dot{W}^{1,p}$ for some
$u\in\mathcal{M}_{\rm aff}$.
Hence, from \eqref{def-min-equiv}, it follows that $u=c_0 T^{-1}_{\lambda_0,S_0,x_0}U$
for some $(c_0,\lambda_0,S_0,x_0)\in\mathscr{A}$.
Then, applying the polar decomposition and the fact that
$\|\cdot\|_{\dot{W}^{1,p}}$ is invariant under
$O(n)$ transformations, we conclude that
\eqref{lem-min-e4} also holds with $u_k$ replaced by
$v_k$.
Therefore, using a change of variables and the
operator norm inequality, we further obtain, for all $k\in\mathbb{N}$,
\begin{align}\label{lem-min-e2}
\varepsilon_0
\le \inf_{(c,\lambda,S,x)\in\mathscr{A}}\|\nabla(T_{\lambda,S,x}v_k-cU)\|_{p}
\le\|\nabla(T_{\lambda_0,S_0,x_0}v_k-u)\|_p
\le
e^{-\|S_0\|_{\mathcal{L}(\mathbb{R}^n)}}\|\nabla(v_k-u)\|_{p},
\end{align}
where $\|S_0\|_{\mathcal{L}(\mathbb{R}^n)}$ denotes the operator norm
of $S_0$ mapping $\mathbb{R}^n$ to itself.
Letting $k\to\infty$, we conclude $\varepsilon_0\le0$, a contradiction.
This then implies that the desired conclusion holds and then completes the proof.
\end{proof}

\begin{lemma}\label{lem-orth}
Let $p\in(1,n)$. Assume that $v_0=c_0T_{\lambda_0,{S_0},x_0}U$ is a positive function
with parameters $(c_0,\lambda_0,S_0,x_0)\in\mathscr{A}$,
and $\mathcal{U}$ is a compact neighborhood of $(c_0,\lambda_0,S_0,x_0)$ in
$\mathscr{A}$. Then there are a positive constant
  $\varepsilon_0$ and a positive function $\omega$ on
  $(0,\infty)$ satisfying $\lim_{t\to0^+}\omega(t)=0$ such that,
  if $\varepsilon\in(0,\varepsilon_0)$ and
  $f\in\dot{W}^{1,p}$ satisfy $\|\nabla f-\nabla v_0\|_p<\varepsilon$,
  then there is a tuple of parameters $(c_1,\lambda_1,S_1,x_1)$
  in the interior of
  $\mathcal{U}$ such that $v:=c_1T_{\lambda_1,{S_1},x_1}U$ is positive,
  $\|\nabla f-\nabla v\|_p<\omega(\varepsilon)$,
  and, for any $\eta\in T_v\mathcal{M}_{\rm aff}$,
  $f-v\perp\eta$ in $L^2(v^{p^*-2})$.
\end{lemma}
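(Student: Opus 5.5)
The plan is to run the standard finite-dimensional minimization argument, as in Bianchi--Egnell and Figalli--Zhang, over the parameter space $\mathscr{A}$. For $f$ close to $v_0$, consider on the compact neighborhood $\mathcal{U}$ the functional
\[
\Phi_f(c,\lambda,S,x):=\int_{\mathbb{R}^n}\bigl|f-cT_{\lambda,S,x}U\bigr|^{2}\,\bigl(cT_{\lambda,S,x}U\bigr)^{p^*-2}\,dx ,
\]
or, more conveniently, the functional obtained by changing variables so that the weight is fixed. Concretely, I would pull back by the inverse affine map, $\widetilde f:=T^{-1}_{\lambda,S,x}f$, and minimize $\int |\widetilde f - cU|^2U^{p^*-2}$, keeping track of the Jacobian factors. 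By Sobolev embedding, $\|f-v_0\|_{p^*}\lesssim\|\nabla(f-v_0)\|_p<\varepsilon$. Since $U^{p^*-2}\in L^{p^*/(p^*-2)}$, Hölder's inequality gives that $\Phi_f$ is well defined and depends continuously on $f\in L^{p^*}$, uniformly on $\mathcal{U}$. Moreover, $\Phi_{v_0}$ vanishes at $(c_0,\lambda_0,S_0,x_0)$ and is strictly positive elsewhere on $\mathcal{U}$, because the parametrization \eqref{def-min-equiv} is injective. By compactness, $\Phi_{v_0}\ge\kappa>0$ on $\partial\mathcal{U}$. Hence, for $\varepsilon$ small, the minimum of $\Phi_f$ over $\mathcal{U}$ is smaller than $\kappa/2$ and is therefore attained at an interior point $(c_1,\lambda_1,S_1,x_1)$.

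Next I would derive the orthogonality from the vanishing of the first variation at this interior minimizer. Differentiating in $c$, $\lambda$, $x_i$, and in $S$ along trace-free symmetric directions $B$ produces the functions $v$, $\frac{n-p}{p}v+(y-x_1)\cdot\nabla v$, $\partial_i v$, and $\langle B e^{-S_1}(y-x_1),\nabla v\rangle$, up to invertible linear changes coming from the group structure; together these span $T_v\mathcal{M}_{\rm aff}$. The main obstacle is that the weight $v^{p^*-2}$ also depends on the parameters. With the pulled-back formulation this is avoided: the weight becomes $U^{p^*-2}$, and the Jacobian factor is a power of $\det$, which equals $1$ because $\operatorname{Tr}S=0$, together with a power of $\lambda$ that is absorbed by the $L^{p^*}$-isometry $\mathcal{T}^{(p^*)}$. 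I expect the exponents to match precisely because $\int |g|^2 U^{p^*-2}$ has the same scaling as $\|g\|_{p^*}^{2}\|U\|_{p^*}^{p^*-2}$, so the functional is invariant under $T_{\lambda,S,x}$ applied simultaneously to $f$ and the bubble. If the exponents do not match exactly, the alternative is to keep the weight fixed at the minimizer and use the Lagrange condition $\partial_\theta\Phi=2\int (f-v)(-\partial_\theta v)v^{p^*-2}+\int|f-v|^2\partial_\theta(v^{p^*-2})=0$. The second term is not zero in general, so one would instead minimize the invariant form directly, which is why the pulled-back formulation is preferable. Differentiating under the integral sign is justified by dominated convergence, using the explicit decay of $U$ and $\nabla U$ on a compact parameter set.

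Finally, positivity of $v$ follows from $c_1$ lying near $c_0>0$, once $\mathcal{U}$ is shrunk if necessary to lie within $c>0$. For the estimate $\|\nabla f-\nabla v\|_p<\omega(\varepsilon)$, I would use a contradiction argument. Suppose $f_k\to v_0$ in $\dot W^{1,p}$ while the corresponding minimizers stay at distance at least $\eta$. Then $\Phi_{f_k}(\text{min})\le\Phi_{f_k}(c_0,\dots)\to0$, and by uniform continuity $\Phi_{v_0}(\theta_k)\to0$. Hence $\theta_k\to(c_0,\lambda_0,S_0,x_0)$ by uniqueness of the zero, so $v_k\to v_0$ in $\dot W^{1,p}$ by continuity of the parametrization. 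This gives $\|\nabla(f_k-v_k)\|_p\to0$, a contradiction, and defining $\omega(\varepsilon)$ as the corresponding supremum over all such $f$ yields a function with $\omega(t)\to0$ as $t\to0^+$.
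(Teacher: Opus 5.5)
There is a genuine gap in the orthogonality step, and the pulled-back formulation does not close it. By the invariance you invoke yourself,
\[
\int_{\mathbb{R}^n}\bigl|f-cT_{\lambda,S,x}U\bigr|^2\bigl(cT_{\lambda,S,x}U\bigr)^{p^*-2}=c^{p^*-2}\int_{\mathbb{R}^n}\bigl|T^{-1}_{\lambda,S,x}f-cU\bigr|^2U^{p^*-2}.
\]
So you are minimizing the same function of the parameters (up to the factor $c^{p^*-2}$), and it has the same critical points. The parameter dependence has only moved from the weight into $\widetilde f=T^{-1}_{\lambda,S,x}f$. Differentiating $\widetilde f$ in the parameters produces derivatives of $\widetilde f$ itself, not only of $U$.

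Take the translation direction $x_i$ as an example. There $\partial_{x_i}\widetilde f=a\cdot\nabla\widetilde f$ with $a=\lambda_1^{-1}e^{-S_1}\mathbf{e}_i$. Write $\widetilde f=c_1U+\psi$ at the minimizer. The first-variation condition then reads
\[
c_1\int_{\mathbb{R}^n}\psi\,(a\cdot\nabla U)\,U^{p^*-2}=-\int_{\mathbb{R}^n}\psi\,(a\cdot\nabla\psi)\,U^{p^*-2}=\frac12\int_{\mathbb{R}^n}\psi^2\,a\cdot\nabla\bigl(U^{p^*-2}\bigr),
\]
and the right-hand side is not zero in general. The dilation and $S$ directions behave the same way. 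This is exactly the term $\int|f-v|^2\partial_\theta(v^{p^*-2})$ you identified, rewritten. A critical point of $\Phi_f$ therefore gives orthogonality only up to an error quadratic in $f-v$. The lemma asserts exact orthogonality in $L^2(v^{p^*-2})$, and Theorem \ref{thm-spectral-wei} needs it later.

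The missing idea is to minimize a functional whose derivative along $\mathcal M_{\rm aff}$ is exactly the weighted inner product. The paper uses
\[
F_f(w):=\frac{1}{p^*}\int_{\mathbb{R}^n}|w|^{p^*}-\frac{1}{p^*-1}\int_{\mathbb{R}^n}|w|^{p^*-2}wf,\qquad w\in\mathcal M_{\rm aff}.
\]
Since $\frac{d}{dt}\frac{1}{p^*-1}|w_t|^{p^*-2}w_t=|w_t|^{p^*-2}\dot w_t$, along any curve $w_t$ in $\mathcal M_{\rm aff}$ you get $\frac{d}{dt}F_f(w_t)=\int|w_t|^{p^*-2}(w_t-f)\dot w_t$ with no extra term. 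An interior minimizer over $\mathcal U$ then gives $f-v\perp T_v\mathcal M_{\rm aff}$ exactly.

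You also need a uniqueness property to replace your ``unique zero of $\Phi_{v_0}$''. Hölder's inequality gives $F_{v_0}(w)\ge\frac1{p^*}s^{p^*}-\frac{1}{p^*-1}s^{p^*-1}\|v_0\|_{p^*}$ with $s=\|w\|_{p^*}$. This scalar function is uniquely minimized at $s=\|v_0\|_{p^*}$, and the equality cases in Hölder then force $w=v_0$. So $v_0$ is the unique minimizer of $F_{v_0}$ on $\mathcal M_{\rm aff}$. With this in place, the rest of your argument goes through essentially as written: the interior minimizer for $f$ near $v_0$, the positivity of $v$, and the construction of $\omega$ by contradiction.

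Alternatively, you could use your functional only to locate the parameters near $(c_0,\lambda_0,S_0,x_0)$. Exact orthogonality would then come from the implicit function theorem applied to $\theta\mapsto\bigl(\int(f-v_\theta)\,\partial_{\theta_i}v_\theta\,v_\theta^{p^*-2}\bigr)_i$, whose Jacobian at $(\theta_0,v_0)$ is minus the invertible Gram matrix of the tangent vectors. That is a different argument from the one you proposed.
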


\begin{proof}
This proof is essentially the same as the proof of
\cite[Lemma 4.1]{fz22}; nevertheless, we outline the necessary details below.
For $f\in\dot{W}^{1,p}$, define the functional $F_f$ by setting,
for any
$w\in\mathcal{M}_{\rm aff}$,
\begin{align*}
F_f(w):=\frac{1}{p^*}\int_{\mathbb{R}^n}|w|^{p^*}
-\frac{1}{p^*-1}\int_{\mathbb{R}^n}|w|^{p^*-2}wf.
\end{align*}
If $f=v_0$, then, by H\"older's inequality and the property of
the function
\begin{align*}
(0,\infty)\ni s\mapsto\frac1{p^*}s^{p^*}-
\frac{1}{p^*-1}As^{p^*-1}
\end{align*}
for $A\in(0,\infty)$, we find that
$F_{v_0}(w)$ is uniquely minimized at $w=v_0$.
For general $f$, since the mapping
$\mathscr{A}
\ni (c,\lambda,S,x)\mapsto cT_{\lambda,S,x}(U)$
is continuous from $\mathscr{A}$ to $\dot{W}^{1,p}$, we infer that
\begin{align*}
\mathcal{F}_f:\ \mathscr{A}\ni (c,\lambda,S,x)\mapsto F_f(cT_{\lambda,S,x}U)
\end{align*}
is also continuous.
By this continuity and the compactness of $\mathcal{U}$,
we conclude that
there is a $(c_1,\lambda_1,S_1,x_1)\in\mathcal{U}$ such that
\begin{align}\label{lem-min-e1}
F_f(c_1T_{\lambda_1,{S_1},x_1}U)=
\mathcal{F}_f(c_1,\lambda_1,S_1,x_1)=
\inf_{(c,\lambda,S,x)\in\mathcal{U}}\mathcal{F}_f(c,\lambda,S,x)=
\inf_{(c,\lambda,S,x)\in\mathcal{U}}F_f(cT_{\lambda,S,x}U)
\end{align}
is attained. Moreover, if $f$ is close to $v_0$,
then, from a standard compactness argument and the observations that
$F_{v_0}(w)$ is uniquely minimized at $w=v_0$,
$f\mapsto F_{f}(w)$ is continuous in $\dot{W}^{1,p}$,
and the tuple of parameters
$(c,\lambda,S,x)\in\mathscr{A}$ is uniquely determined by the corresponding extremal
function,
we deduce that the tuple of parameters $(c_1,\lambda_1,S_1,x_1)\in\mathscr{A}$
at which the infimum in \eqref{lem-min-e1} is attained is close
to the tuple of parameters $(c_0,\lambda_0,S_0,x_0)$ in $\mathscr{A}$.
Hence, if $\varepsilon$ is sufficiently small, then,
for every $f\in\dot{W}^{1,p}$ such that $\|\nabla f-\nabla v_0\|_p<\varepsilon$,
$v:=c_1T_{\lambda_1,S_1,x_1}U$ is also positive and is close to $f$ in $\dot{W}^{1,p}$.
This shows the existence of the desired $\varepsilon_0$ and
$\omega$. Furthermore, if $f$ is close to $v_0$,
then the tuple of parameters $(c_1,\lambda_1,S_1,x_1)$ is in the interior of $\mathcal{U}$;
thus, for every direction vector $\mathbf{e}$ in $\mathscr{A}$,
\begin{align*}
\left.\frac{d}{dt}\right|_{t=0}
\mathcal{F}_f\big((c_1,\lambda_1,S_1,x_1)+t\mathbf{e}\big)=0.
\end{align*}
This, together with the definition of
the tangent space given in \eqref{eq-tangent},
further implies that,
for any $\eta\in T_v\mathcal{M}_{\rm aff}$,
$\int_{\mathbb{R}^n}v^{p^*-2}(v-f)\eta=0$,
which gives the desired orthogonality,
and hence completes the present proof.
\end{proof}

\begin{proof}[Proof of Theorem \ref{thm-stab}]
We actually prove the improved version \eqref{eq-stab-e}.
In the present proof,
unless otherwise specified, the implicit positive constants
depend only on $n$ and $p$.
By \cite[Theorem 1.2]{hl16} [see also \eqref{eq-p}] and
the affine invariance of \eqref{eq-stab}, we
can assume $\mathcal{E}_p(f)=1$ and
$\|\nabla f\|_p\lesssim1$.
Then, applying H\"older's inequality, we obtain
\begin{align}\label{thm-stab-e5}
&\inf_{(c,\lambda,S,x)\in\mathscr{A}}\left\{\|\nabla(T_{\lambda,S,x}f-cU)\|_{p}^p
+\int_{\mathbb{R}^n}
|c\nabla U|^{p-2}|\nabla(T_{\lambda,S,x}f-cU)|^2\right\}\notag\\
&\quad\le\|\nabla(f-cU)\|_{p}^p
+\int_{\mathbb{R}^n}
|\nabla U|^{p-2}|\nabla(f-cU)|^2\notag\\
&\quad\le \|\nabla(f-cU)\|_{p}^p+\|\nabla U\|_p^{p-2}
\|\nabla(f-cU)\|_{p}^2\lesssim1.
\end{align}
Therefore, to show \eqref{eq-stab-e}, we only need to
consider the case where ${\mathcal{E}_p(f)}^p
-S_{\rm aff}^p\|f\|_{p^*}^p$ is sufficiently small.
Then, from Lemma \ref{lem-min},
we deduce that there are a sufficiently small $\varepsilon$
and a tuple of affine transformation parameters $(c_0,\lambda_0,S_0,x_0)\in\mathscr{A}$ such that
$\|\nabla(T_{\lambda_0,S_0,x_0}f-c_0U)\|_p<\varepsilon$.
By replacing $f$ with $-f$ if necessary, we may assume that $c_0 > 0$.
Moreover, from the classical Sobolev inequality
\eqref{eq-Sob} and $\|T_{\lambda_{0},S_0,x_0}f\|_{p^*}=\|U\|_{p^*}=1$,
we deduce $c_0\sim 1$ and hence
$$\left\|\nabla\left(c^{-1}_0T_{\lambda_0,S_0,x_0}f-U\right)\right\|_p\lesssim\varepsilon.$$
By this and Lemma \ref{lem-orth} for the tuple of
parameters $(1,1,{\bf 0},{\bf 0})\in\mathscr{A}$ and its fixed
small compact neighborhood $\mathcal{U}\subset\mathscr{A}$,
there are a positive function $\omega$ on $(0,\infty)$ satisfying
$\lim_{t\to0^+}\omega(t)=0$ and a positive function $v=c_1T_{\lambda_1,S_1,x_1}U$ with
$(c_1,\lambda_1,S_1,x_1)\in\mathcal{U}$
such that
\begin{align*}
\left\|\nabla\left(c_0^{-1}T_{\lambda_{0},S_0,x_0}f-v\right)
\right\|_p\lesssim\omega(\varepsilon)\quad\text{and}
\quad \left(c_{0}^{-1}T_{\lambda_{0},S_0,x_0}f-v\right)\perp T_v\mathcal{M}_{\rm aff}
\quad \text{in}\quad L^2(v^{p^*-2}).
\end{align*}
Since $(c_1,\lambda_1,S_1,x_1)$ is close to
$(1,1,{\bf 0},{\bf 0})$ in $\mathscr{A}$,
similar to the last inequality
of \eqref{lem-min-e2}, we have
\begin{align*}
\left\|\nabla\left(c_0^{-1}c_{1}^{-1}T_{\lambda_1, S_1,x_1}^{-1}
T_{\lambda_{0},S_0,x_0}f-U\right)\right\|_p
&\lesssim\left\|\nabla\left(c_{0}^{-1}
T_{\lambda_{0},S_0,x_0}f-c_1T_{\lambda_1,S_1,x_1}U\right)\right\|_p
\lesssim\omega(\varepsilon).
\end{align*}
Moreover, applying a change of variables, one can easily find that
the orthogonality is invariant under the
affine transformation; that is,
\begin{align*}
\left(c_0^{-1}c_{1}^{-1}T_{\lambda_1, S_1,x_1}^{-1}
T_{\lambda_{0},S_0,x_0}f-U\right)\perp T_U\mathcal{M}_{\rm aff}
\quad \text{in}\quad L^2(U^{p^*-2}).
\end{align*}

Consequently, by the affine invariance of
\eqref{eq-stab-e} and the behavior of $\omega$ near $0$, we only need to consider the
case $f=U+\varphi$,
where $\varphi\perp T_{U}\mathcal{M}$ and
$\|\nabla\varphi\|_p<\varepsilon<\varepsilon_0$ for fixed and sufficiently
small $\varepsilon_0,\varepsilon$. Then, by Proposition \ref{prop-expan-local} and
Remark \ref{rem-3.4}, there are two positive constants
$C_{n,p}$ and $\widetilde{C}_{n,p}$, depending only on $n$ and $p$,
such that
\begin{align}\label{thm-stab-e1}
\mathcal{E}_p(f)^p
&\ge \mathcal{E}_p(U)^p
+p|\mathbb{S}_{n-1}|^{-1-\frac pn}
\int_{\mathbb{S}^{n-1}}\left[L^{(1)}_\xi(U,\varphi)
+\frac{1-C_{n,p}\varepsilon_0}{2}L^{(2)}_\xi(U,\varphi)\right]
\,d\xi\notag\\
&\qquad-\frac{p(n+p)}{2}|\mathbb{S}^{n-1}|^{-1-\frac{2p}{n}}
\mathcal{E}_p(U)^{-p}\operatorname{Var}(U,\varphi)
+\widetilde{C}_{n,p}\varepsilon_0\|\nabla \varphi\|_{p}^p,
\end{align}
where $L^{(1)}$ and $L^{(2)}$ are as in
\eqref{def-L1} and \eqref{def-L2}, respectively,
with $f$ replaced by $U$.
On the other hand, from the expansion for
$|a+b|^{p^*}$ obtained in \cite[Lemma 3.2]{fn19} (see also
\cite[Lemma 2.4]{fz22}):
for any $\varepsilon_0\in(0,\infty)$, there is a positive constant
$C_{p,\varepsilon_0}$, depending only on $p$ and $\varepsilon_0$,
such that, for all $a,b\in\mathbb{R}$ with $a\ne 0$,
\begin{align*}
    |a + b|^{p^*} \leq & |a|^{p^*} + p^*|a|^{p^*-2}ab
     + \left[ \frac{p^*(p^* - 1)}{2} + \varepsilon_0 \right]
     |a|^{p^*-2}|b|^2 + C_{p,\varepsilon_0}|b|^{p^*};
\end{align*}
therefore, we have
\begin{align*}
\|f\|_{p^*}^{p^*}
&\le\int_{\mathbb{R}^n}\left\{U^{p^*}
+p^{*}U^{p^*-1}\varphi
+\left[\frac{p^*(p^*-1)}{2}+\varepsilon_0\right]U^{p^*-2}\varphi^2
+C_{p,\varepsilon_0}|\varphi|^{p^*}\right\}.
\end{align*}
Combining this, the concavity of $t\mapsto t^{\frac{p}{p^*}}$,
and $\|U\|_{p^*}=1$, we conclude that
\begin{align*}
\|f\|_{p^*}^p
\le 1+p
\int_{\mathbb{R}^n}\left\{U^{p^*-1}\varphi
+\left[\frac{(p^*-1)}{2}+\frac1{p^*}\varepsilon_0\right]U^{p^*-2}\varphi^2
+\widetilde{C}_{p,\varepsilon_0}|\varphi|^{p^*}\right\},
\end{align*}
where the positive constant $\widetilde{C}_{p,\varepsilon_0}$ depends only on $p$ and $\varepsilon_0$.
This, together with \eqref{thm-stab-e1} and $\mathcal{E}_p(U)=S_{\rm aff}$,
further implies
\begin{align}\label{thm-stab-e3}
&{\mathcal{E}_p(f)}^p
-S_{\rm aff}^p\|f\|_{p^*}^p\notag\\
&\quad\gtrsim p\left[|\mathbb{S}^{n-1}|^{-1-\frac pn}
\int_{\mathbb{S}^{n-1}}L^{(1)}_\xi(U,\varphi)-
\int_{\mathbb{R}^n}U^{p^*-1}\varphi\right]\notag\\
&\qquad+\frac p2
\left\{|\mathbb{S}^{n-1}|^{-1-\frac pn}(1-C_{n,p}\varepsilon_0)
\int_{\mathbb{S}^{n-1}}L^{(2)}_\xi(U,\varphi)-(n+p)|\mathbb{S}^{n-1}|^{-1-\frac{2p}{n}}
\mathcal{E}_p(U)^{-p}
\operatorname{Var}(U,\varphi)\right.\notag\\
&\left.\qquad-\left[(p^*-1)S_{\rm aff}^p+\frac{S_{\rm aff}^p}{p^*}
{\varepsilon_0}\right]\int_{\mathbb{R}^n}
U^{p^*-2}\varphi^2
\right\}\notag\\
&\qquad+\widetilde{C}_{n,p}\varepsilon_0\|\nabla \varphi\|_p^p
-S_{\rm aff}^p\widetilde{C}_{p,\varepsilon_0}\|\varphi\|_{p^*}^{p^*}\notag\\
&\quad=:p{\rm I}_1+\frac p2{\rm I}_2(\varepsilon_0)+
\widetilde{C}_{n,p}\varepsilon_0\|\nabla \varphi\|_p^p
-S_{\rm aff}^p\widetilde{C}_{p,\varepsilon_0}\|\varphi\|_{p^*}^{p^*}.
\end{align}
By the fact that $U$ is an extremal function, it holds that
\begin{align*}
\left.\frac{d}{dt}\right|_{t=0}
\left[\mathcal{E}_p(U+t\varphi)^p
-S_{\rm aff}^p\|U+t\varphi\|_{p^*}^p\right]=0.
\end{align*}
Applying this, the facts that
$\mathcal{E}_p(U)=S_{\rm aff}$, $\|U\|_{p^*}=1$,
and $\mathcal{E}_p(U)=|\mathbb{S}^{n-1}|^{-\frac1n}\|\nabla_\xi U\|_p$
for all $\xi\in\mathbb{S}^{n-1}$ (which is a direct consequence of
$U$ being radial), we can find that
the term ${\rm I}_1$ defined in \eqref{thm-stab-e3} vanishes.
Moreover, from the affine spectral gap inequality
Theorem \ref{thm-spectral-wei} and Remark \ref{rem-spectral}, it follows that there is a
$\gamma\in(0,\infty)$, depending only on $n$ and $p$, such that, for
sufficiently small $\varepsilon_0$,
\begin{align*}
{\rm I}_2(\varepsilon_0)
\ge \left(\gamma-\frac{S_{\rm aff}^p}{p^*}\varepsilon_0\right)
\int_{\mathbb{R}^n}U^{p^*-2}\varphi^2
+\varepsilon_0\int_{\mathbb{R}^n}|\nabla U|^{p-2}
|\varphi|^2.
\end{align*}
Hence, when $\varepsilon_0$ is sufficiently small, it also holds
$
{\rm I}_2(\varepsilon_0)\ge \varepsilon_0\int_{\mathbb{R}^n}|\nabla U|^{p-2}
|\varphi|^2$.
On the other hand,
when $\varepsilon$ is sufficiently small, applying
\eqref{eq-Sob} and the fact the exponent $p^*>p$, we find that
\begin{align}\label{thm-stab-e4}
\widetilde{C}_{n,p}\varepsilon_0\|\nabla \varphi\|_p^p
-S_{\rm aff}^pC_{p,\varepsilon_0}\|\varphi\|_{p^*}^{p^*}
\ge \widetilde{C}_{n,p}\varepsilon_0\|\nabla \varphi\|_p^p
-S_{\rm aff}^{p}S_{\rm Sob}^{-p^*}C_{p,\varepsilon_0}
\|\nabla\varphi\|_{p^*}^{p^*}\gtrsim\|\nabla\varphi\|_p^p.
\end{align}
These further imply
\begin{align*}
{\mathcal{E}_p(f)}^p
-S_{\rm aff}^p\|f\|_{p^*}^p&\gtrsim\|\nabla\varphi\|_p^p
+\int_{\mathbb{R}^n}|\nabla U|^{p-2}
|\varphi|^2\\
&\ge \inf_{(c,\lambda,S,x)\in\mathscr{A}}\left\{\|\nabla(T_{\lambda,S,x}f-cU)\|_{p}^p
+\int_{\mathbb{R}^n}
|c\nabla U|^{p-2}|\nabla(T_{\lambda,S,x}f-cU)|^2\right\}.
\end{align*}
This shows \eqref{eq-stab-e} and hence completes the proof of Theorem
\ref{thm-stab}.
\end{proof}


\subsection{Proof of Theorem \ref{thm-bubble}}

In this subsection, we prove Theorem \ref{thm-bubble}.
First, applying the relative compactness for critical points (Theorem \ref{thm-critical})
and an argument similar to
Lemma \ref{lem-min}, we immediately obtain the following lemma.

\begin{lemma}\label{lem-critical}
If $p\in[2,n)$, then, for any $\varepsilon\in(0,\infty)$,
there is a positive constant $\delta$ such that,
for any $f\in\dot{W}^{1,p}$ satisfying \eqref{thm-bubble-e8} and
$\delta_{\rm aff}(f)<\delta$,
it holds that
\begin{align*}
\inf_{{\genfrac{}{}{0pt}{}{(c,\lambda,S,x)\in\mathscr{A}}{c\in\{-1,1\}}}}\|\nabla (T_{\lambda,S,x}f-cU)\|_p<\varepsilon.
\end{align*}
\end{lemma}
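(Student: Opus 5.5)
We argue by contradiction, mirroring the proof of Lemma \ref{lem-min} but with Theorem \ref{thm-critical} in place of Theorem \ref{thm-comp}. Suppose the claim fails. Then there are $\varepsilon_0>0$ and a sequence $\{u_k\}_{k\in\mathbb N}$ in $\dot W^{1,p}$ with the following properties: each $u_k$ satisfies \eqref{thm-bubble-e8}, $\delta_{\rm aff}(u_k)\to0$, and yet
$$\inf_{(c,\lambda,S,x)\in\mathscr A,\ c\in\{-1,1\}}\|\nabla(T_{\lambda,S,x}u_k-cU)\|_p\ge\varepsilon_0$$
for every $k$. No homogeneity normalization is available here, since both the residual and the target set $\mathcal M^{(1)}_{\rm aff}$ fix the amplitude. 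None is needed, however, because \eqref{thm-bubble-e8} is exactly hypothesis \eqref{thm-critical-e1}.

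We apply Theorem \ref{thm-critical}. After passing to a subsequence, we obtain $\lambda_k>0$, $A_k\in\operatorname{SL}(n)$, $x_k\in\mathbb R^n$, and $u\in\mathcal M^{(1)}_{\rm aff}$ such that $v_k:=\mathcal T^{(p^*)}_{\lambda_k,A_k,x_k}u_k\to u$ in $\dot W^{1,p}$. Since $u\in\mathcal M^{(1)}_{\rm aff}$, the parametrization \eqref{def-min-equiv} gives $u=c_0T^{-1}_{\lambda_0,S_0,x_0}U$ with $c_0\in\mathbb R\setminus\{0\}$. The normalization $\|u\|_{p^*}=1=\|U\|_{p^*}$, together with the isometry of $T$ on $L^{p^*}$, forces $|c_0|=1$, i.e.\ $c_0\in\{-1,1\}$.

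Next we transfer the lower bound from $u_k$ to $v_k$. The map $\mathcal T^{(p^*)}_{\lambda_k,A_k,x_k}$ is a composition of a dilation, a translation, and the action of $A_k$. Writing $A_k=Q_ke^{S'}$ by the polar decomposition, the dilation and translation parts, as well as $e^{S'}$, can be absorbed into the infimum over $(\lambda,S,x)$. The remaining rotation $Q_k$ leaves $U$ invariant (since $U$ is radial) and leaves $\|\nabla\cdot\|_p$ invariant. A subtlety is that $\mathcal T_{\lambda,S,x}\circ\mathcal T_{Q}$ need not be of the form $\mathcal T_{Q'}\circ\mathcal T_{\lambda',S',x'}$. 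This is handled by conjugation: $Q^{-1}e^{S}Q=e^{Q^{-1}SQ}$, and $Q^{-1}SQ$ is again symmetric and trace-free. Hence the infimum is invariant under $u_k\mapsto v_k$, with the restriction $c\in\{\pm1\}$ preserved.

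Finally we reach the contradiction. Taking $(\lambda,S,x)=(\lambda_0,S_0,x_0)$ and $c=c_0$ in the infimum, we get
$$\varepsilon_0\le\|\nabla(T_{\lambda_0,S_0,x_0}v_k-c_0U)\|_p=\|\nabla T_{\lambda_0,S_0,x_0}(v_k-u)\|_p\le e^{\|S_0\|}\|\nabla(v_k-u)\|_p\to0,$$
where the last bound follows by the change of variables used in \eqref{lem-min-e2}. This is the desired contradiction.

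The only step requiring care is the invariance of the $c\in\{\pm1\}$-restricted infimum under the full $\operatorname{SL}(n)$ action, which is the rotation-absorption argument above. Everything else is inherited directly from Theorem \ref{thm-critical}.
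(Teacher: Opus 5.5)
Your proof is correct and follows the paper's route: argue by contradiction, invoke Theorem~\ref{thm-critical} in place of Theorem~\ref{thm-comp}, and transfer the lower bound to $v_k$ via polar decomposition and rotation invariance, exactly as in the proof of Lemma~\ref{lem-min}. One small bookkeeping fix is needed in that transfer step: $e^{S}e^{S'}$ is generally not of the form $e^{S''}$, so $e^{S'}$ cannot literally be absorbed into $T_{\lambda,S,x}$. Instead, polar-decompose the linear part of the full composite map as $\lambda\lambda_k e^{S}A_k=\lambda\lambda_k\,Q\,e^{S''}$ with $Q\in O(n)$ and $S''$ symmetric and trace-free, and then discard $Q$ using the radiality of $U$ and the rotation invariance of $\|\nabla\cdot\|_p$.
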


Based on this, we are able to show Theorem \ref{thm-bubble}.

\begin{proof}[Proof of Theorem \ref{thm-bubble}]
We actually prove the improved version \eqref{thm-bubble-e6}.
In the present proof,
unless otherwise specified, the implicit positive constants
depend only on $n$ and $p$.
Since $f$ satisfies \eqref{thm-bubble-e8}, from
an argument similar to \eqref{thm-stab-e5}, we infer that
$d(f,\mathcal{M}_{\rm aff}^{(1)})\lesssim 1$.
Hence, to show \eqref{thm-bubble-e6}, we can assume that $\delta_{\rm aff}$
is small. Fix a sufficiently small $\varepsilon\in(0,\infty)$.
By Lemma \ref{lem-critical}, there are $c_0\in\{-1,+1\}$,
$\lambda_0\in(0,\infty)$,
symmetric matrix $S_0$
satisfying $\operatorname{Tr}S_0=0$,
and $x_0\in\mathbb{R}^n$ such that
$\|\nabla(T_{\lambda_0,S_0,x_0}f-c_0U)\|_p<\varepsilon$.
By replacing $f$ with $-f$ if necessary, we may assume that $c_0 =1$.
Then, applying Lemma \ref{lem-orth} for the tuple of
parameters $(1,1,{\bf 0},{\bf 0})\in\mathscr{A}$ and its fixed
small compact neighborhood $\mathcal{U}\subset\mathscr{A}$,
there is a positive function $\omega$ on $(0,\infty)$ satisfying
$\lim_{t\to0^+}\omega(t)=0$ and a positive function $v=c_1T_{\lambda_1,{S_1},x_1}U$ with
$(c_1,\lambda_1,S_1,x_1)\in\mathcal{U}$
such that
\begin{align*}
\left\|\nabla\left(T_{\lambda_{0},S_0,x_0}f-v\right)
\right\|_p\le\omega(\varepsilon)\quad\text{and}
\quad T_{\lambda_{0},S_0,x_0}f-v\perp T_v\mathcal{M}_{\rm aff}
\quad \text{in}\quad L^2(v^{p^*-2}).
\end{align*}
Let $g:=T_{\lambda_1,{S_1},x_1}^{-1}T_{\lambda_{0},S_0,x_0}f$
and $\varphi:=g-c_1U$.
Since $(c_1,\lambda_1,S_1,x_1)$ is close to
$(1,1,{\bf 0},{\bf 0})$ in $\mathscr{A}$, it holds that
\begin{align*}
\|\nabla\varphi\|_p
=\left\|\nabla\left(T_{\lambda_1,{S_1},x_1}^{-1}
T_{\lambda_{0},S_0,x_0}f-c_1U\right)\right\|_p
\le |\lambda_1|\|e^{-S_1}\|_{\mathcal{L}(\mathbb{R}^n)}
\left\|\nabla\left(T_{\lambda_{0},S_0,x_0}f-v\right)
\right\|_p\le C_{n}\omega(\varepsilon),
\end{align*}
where $C_{n}$ is a positive constant
depending only on $n$, and in the second inequality we used the operator norm inequality for $e^{-S_1}$.
Moreover, by the behavior of $\omega$ near $0$,
we find that, for any sufficiently small $\varepsilon_0\in(0,\infty)$,
there is an $\varepsilon_1\in(0,1)$ such that, for every
$\varepsilon\in(0,\varepsilon_1)$,
$C_n\omega(\varepsilon)<\varepsilon_0$. In what follows,
we fix sufficiently small $\varepsilon,\varepsilon_0,\varepsilon_1$ that satisfies this property
at the moment.

Then, by Proposition \ref{prop-expan-lap} and
Remark \ref{rem-3.3}, there are two positive constants
$C_{n,p}$ and $\widetilde{C}_{n,p}$, depending only on $n$ and $p$,
such that
\begin{align*}
-\int_{\mathbb{R}^n}
\Delta_p^{\rm aff}(g)\varphi
&\ge
-\int_{\mathbb{R}^n}\Delta_p^{\rm aff}(c_1U)\varphi
+(1-C_{n,p}\varepsilon_0)|\mathbb{S}^{n-1}|^{-1-\frac pn}
\int_{\mathbb{S}^{n-1}}\widetilde{L}^{(2)}_\xi(c_1U,\varphi)\,d\xi\notag\\
&\qquad-(n+p)|\mathbb{S}^{n-1}|^{-1-\frac{2p}{n}}
\mathcal E_p(c_1U)^{-p}
\operatorname{Var}(c_1U,\varphi)
+\widetilde C_{n,p}\varepsilon_0\|\nabla \varphi\|_{p}^p.
\end{align*}
where $\operatorname{Var}$ and $\widetilde{L}^{(2)}$ are as,
respectively, in \eqref{def-var} and \eqref{eq-L2w}.
On the other hand, from the expansion for
$|a+b|^{p^*-2}(a+b)b$ obtained in \cite[Lemma 2.2]{lz25}:
for any $\varepsilon_0\in(0,\infty)$, there is a positive constant
$C_{p,\varepsilon_0}$, depending only on $p$ and $\varepsilon_0$,
such that, for all $a,b\in\mathbb{R}$ with $a\ne 0$,
\begin{align*}
    |a + b|^{p^*-2}(a+b)b \leq & |a|^{p^*-2}ab
    +(p^*-1+\varepsilon_0)|a|^{p^*-2}b^2
    + C_{p,\varepsilon_0}|b|^{p^*}.
\end{align*}
Abbreviating
\begin{align}\label{def-P}
P(g):=-\Delta_{p}^{\rm aff}(g)-S_{\rm aff}^p|g|^{p^*-2}g,
\end{align}
we infer that
\begin{align*}
\int_{\mathbb{R}^n}
P(g)\varphi
&=-\int_{\mathbb{R}^n}
\Delta_p^{\rm aff}(g)\varphi
-S_{\rm aff}^p\int_{\mathbb{R}^n}|g|^{p^*-2}g\varphi\\
&\ge \int_{\mathbb{R}^n}
P(c_1U)\varphi
+\left\{(1-C_{n,p}\varepsilon_0)|\mathbb{S}^{n-1}|^{-1-\frac pn}
\int_{\mathbb{S}^{n-1}}\widetilde{L}^{(2)}_\xi(c_1U,\varphi)\,d\xi\right.\notag\\
&\quad-(n+p)|\mathbb{S}^{n-1}|^{-1-\frac{2p}{n}}
\mathcal{E}_p(c_1U)^{-p}
\operatorname{Var}(c_1U,\varphi)\notag\\
&\left.\quad-\left[(p^*-1)S_{\rm aff}^p-{S_{\rm aff}^p}{\varepsilon_0}\right]\int_{\mathbb{R}^n}
|c_1U|^{p^*-2}\varphi^2
\right\}\notag\\
&\quad+\widetilde{C}_{n,p}\varepsilon_0\|\nabla \varphi\|_p^p
-S_{\rm aff}^pC_{p,\varepsilon_0}\|\varphi\|_{p^*}^{p^*}\notag\\
&=:{\rm I}_1+{\rm I}_2(\varepsilon_0)+
\widetilde{C}_{n,p}\varepsilon_0\|\nabla \varphi\|_p^p
-S_{\rm aff}^pC_{p,\varepsilon_0}\|\varphi\|_{p^*}^{p^*}.
\end{align*}
By the fact $-\Delta_p^{\rm aff}U=S_{\rm aff}^pU^{p^*-1}$,
we obtain
\begin{align*}
P(c_1U)=-c_1^{p-1}\Delta_p^{\rm aff}(U)-S_{\rm aff}c_1^{p^*-1}U^{p^*-1}
=S_{\rm aff}^p\left(c_1^{p-1}-c_1^{p^*-1}\right)U^{p^*-1}.
\end{align*}
Applying a change of variables, one can easily find that
the orthogonality is invariant under the
affine transformation; that is,
$\varphi\perp T_U\mathcal{M}_{\rm aff}$
in $L^2(U^{p^*-2})$.
This further implies that $${\rm I}_1=\int_{\mathbb{R}^n}P(c_1U)\varphi=0.$$
Moreover, from the affine spectral gap inequality
Theorem \ref{thm-spectral-wei} and the dilation properties
of $\widetilde{L}^{(2)}$ [see \eqref{eq-L-dila}], it follows that there is a
$\gamma\in(0,\infty)$, depending only on $n$ and $p$, such that
\begin{align*}
{\rm I}_2(\varepsilon_0)
&\ge \left[c_1^{p-2}\gamma+(c_1^{p-2}-c_1^{p^*-2})(p^*-1)
S_{\rm aff}^p-c_1^{p^*-2}S_{\rm aff}^p
\varepsilon_0\right]
\int_{\mathbb{R}^n}U^{p^*-2}\varphi^2\\
&\qquad+c_1^{p-2}\varepsilon_0\int_{\mathbb{R}^n}
|\nabla U|^{p-2}|\nabla\varphi|^2.
\end{align*}
Hence, when $\varepsilon_0>0$ and
the compact neighborhood $\mathcal{U}\subset\mathscr{A}$ are both sufficiently small,
it also holds
$
{\rm I}_2(\varepsilon_0)\gtrsim\int_{\mathbb{R}^n}
|\nabla U|^{p-2}|\nabla\varphi|^2$.
Furthermore,
when $\varepsilon_1>0$ is sufficiently small, applying
\eqref{eq-Sob} and \eqref{thm-stab-e4}, we find that
$$\widetilde{C}_{n,p}\varepsilon_0\|\nabla \varphi\|_p^p
-S_{\rm aff}^pC_{p,\varepsilon_0}\|\varphi\|_{p^*}^{p^*}
\gtrsim\|\nabla\varphi\|_p^p.$$
This, combined with the fact that, for each $\lambda\in(0,\infty)$,
symmetric matrix $S$ satisfying $\operatorname{Tr}S=0$,
and $x\in\mathbb{R}^n$,
\begin{align*}
\int_{\mathbb{R}^n}P(T_{\lambda,S,x}f)g
=\int_{\mathbb{R}^n}P(f)T_{\lambda,S,x}^{-1}g,
\end{align*}
further implies
\begin{align}\label{thm-bubble-e4a}
\|\nabla\varphi\|_p^p
+\int_{\mathbb{R}^n}
|\nabla U|^{p-2}|\nabla\varphi|^2
&\lesssim
\int_{\mathbb{R}^n}
P(g)\varphi
=\int_{\mathbb{R}^n}P\left(f\circ e^{S_1-S_0}\right)
T_{\lambda_0,{\bf 0},x_1}T^{-1}_{\lambda_1,{\bf 0},x_1}\varphi\notag\\
&\le \left\|P\left(f\circ e^{S_1-S_0}\right)\right\|_{\dot{W}^{-1,p'}}
\|\nabla\varphi\|_{p}\le
\delta_{\rm aff}(f)\|\nabla\varphi\|_{p},
\end{align}
where we used the fact that if the trace-zero symmetric matrix
in the affine transformation is zero, then
this transformation preserves the $\dot{W}^{1,p}$ norm.
This yields that
\begin{align*}
\|\nabla\varphi\|_p^{p-1}
+\frac{\int_{\mathbb{R}^n}
|\nabla U|^{p-2}|\nabla\varphi|^2}{\|\nabla\varphi\|_p}
\lesssim\delta_{\rm aff}(f).
\end{align*}
From this and the element inequality that, for all $a,b\in(0,\infty)$,
$a^{p-1}+\frac ba\ge p(\frac{b}{p-1})^{\frac{p-1}{p}}$, we deduce that
\begin{align}\label{thm-bubble-e7}
\|\nabla\varphi\|_p^{p-1}
+\left(\int_{\mathbb{R}^n}
|\nabla U|^{p-2}|\nabla\varphi|^2\right)^{\frac{p-1}{p}}
\lesssim\delta_{\rm aff}(f).
\end{align}
Furthermore, by this and $p\ge2$, we have $\frac{2(p-1)}{p}\le p-1$ and hence
\begin{align}\label{thm-bubble-e5}
d(f,\mathcal{M}_{\rm aff}^{(1)})
&\le \|\nabla (g-U)\|_p^{p-1}
+\left[\int_{\mathbb{R}^n}
|\nabla U|^{p-2}|\nabla(g-U)|^2\right]^{\frac{p-1}{p}}\notag\\
&\lesssim\|\nabla\varphi\|_p^{p-1}
+\left(\int_{\mathbb{R}^n}|\nabla U|^{p-2}|\nabla\varphi|^2\right)^{\frac{p-1}{p}}
+|c_1-1|^{p-1}+|c_1-1|^{\frac{2(p-1)}{p}}\notag\\
&\lesssim
\delta_{\rm aff}(f)+|c_1-1|^{\frac{2(p-1)}{p}}.
\end{align}

Therefore, we still need to estimate $|c_1-1|$.
Note that
\begin{align*}
\int_{\mathbb{R}^{n}}
P(c_1U)(c_1U)
&=\int_{\mathbb{R}^{n}}
P(g)g
-\left[\int_{\mathbb{R}^{n}}
P(g)g
-\int_{\mathbb{R}^{n}}
P(c_1U)(c_1U)\right]\\
&=:{\rm I}_3-{\rm I}_4.
\end{align*}
By $\|\nabla(T_{\lambda_0,S_0,x_0}f-U)\|_p<\varepsilon$,
the fact that $(c_1,\lambda_1,S_1,x_1)$ is close to
$(1,1,{\bf 0},{\bf 0})$, and a similar change of variables
argument used in \eqref{thm-bubble-e4a}, we conclude that
\begin{align*}
|{\rm I}_3|\le \left\|P\left(f\circ e^{S_1-S_0}\right)\right\|_{\dot{W}^{-1,p'}}
\|\nabla g\|_{p}\lesssim\delta_{\rm aff}(f).
\end{align*}
On the other hand,
\begin{align*}
{\rm I}_4
=\mathcal{E}_{p}(g)^p-\mathcal{E}_p(c_1U)^p
-S_{\rm aff}^p
\left(\|g\|_{p^*}^{p^*}-\|c_1U\|_{p^*}^{p^*}\right).
\end{align*}
From the fact that $\|\nabla\varphi\|_p$ is sufficiently small and
Proposition \ref{prop-expan-E}, we deduce that
\begin{align*}
\left|\mathcal{E}_p(g)^p
-\mathcal{E}_p(c_1U)^p
+p\int_{\mathbb{R}^n}\Delta_p^{\rm aff}(c_1U)\varphi\right|
&\lesssim\|\nabla \varphi\|_{p}^p
+\int_{\mathbb{R}^n}|\nabla(c_1U)|^{p-2}|\nabla\varphi|^2\\
&\lesssim\|\nabla \varphi\|_{p}^p
+\int_{\mathbb{R}^n}|\nabla U|^{p-2}|\nabla\varphi|^2.
\end{align*}
Moreover, by  $-\Delta_p^{\rm aff}U=S_{\rm aff}^pU^{p^*-1}$ and
$\varphi\perp T_U\mathcal{M}_{\rm aff}$
in $L^2(U^{p^*-2})$,
we have
\begin{align*}
\int_{\mathbb{R}^n}\Delta_p^{\rm aff}U\varphi
=\int_{\mathbb{R}^n}U^{p^*-1}\varphi=0.
\end{align*}
Then
\begin{align}\label{thm-bubble-e2}
\left|\mathcal{E}_p(g)^p
-\mathcal{E}_p(c_1U)^p\right|
\lesssim\|\nabla \varphi\|_{p}^p
+\int_{\mathbb{R}^n}|\nabla U|^{p-2}|\nabla\varphi|^2.
\end{align}
Similarly, we can obtain
\begin{align*}
\left|\|g\|_{p^*}^{p^*}-\|c_1U\|_{p^*}^{p^*}\right|
&=\left|\int_{\mathbb{R}^n}
|g|^{p^*}-|c_1U|^{p^*}-p^*|c_1U|^{p^*-2}(c_1U)\varphi\right|\\
&\lesssim\|\varphi\|_{p^*}^{p^*}
+\int_{\mathbb{R}^n}U^{p^*-2}\varphi^2.
\end{align*}
By this, the Sobolev inequality \eqref{eq-Sob}, the embedding
$\dot{W}^{1,2}(|\nabla U|^{p-2})\hookrightarrow L^2(U^{p^*-2})$
(see \cite[Proposition 3.2]{fz22}), and $p^*>p$ together with
the fact that $\|\nabla\varphi\|_p$ is sufficiently small,
we conclude that
\begin{align*}
\left|\|g\|_{p^*}^{p^*}-\|c_1U\|_{p^*}^{p^*}\right|
&\lesssim\|\nabla\varphi\|_{p}^{p^*}
+\int_{\mathbb{R}^n}|\nabla U|^{p-2}|\nabla\varphi|^2
\le \|\nabla\varphi\|_{p}^p
+\int_{\mathbb{R}^n}|\nabla U|^{p-2}|\nabla\varphi|^2.
\end{align*}
Combining this, \eqref{thm-bubble-e2}, and the estimate of ${\rm I}_3$,
we obtain
\begin{align}\label{thm-bubble-e3}
\left|\int_{\mathbb{R}^{n}}
P(c_1U)(c_1U)\right|
\lesssim\delta_{\rm aff}(f)
+\|\nabla\varphi\|_p^p+\int_{\mathbb{R}^n}
|\nabla U|^{p-2}|\nabla\varphi|^2.
\end{align}
On the other hand,
note that $\mathcal{E}_p(U)=S_{\rm aff}$
and $\|U\|_{p^*}=1$.
Consequently,
\begin{align*}
\int_{\mathbb{R}^{n}}
P(c_1U)(c_1U)
=\mathcal{E}_{p}(c_1U)^p-S_{\rm aff}^p\|c_1U\|_{p^*}^{p^*}
=(c_1^{p}-c_1^{p^*})
S_{\rm aff}^p.
\end{align*}
From $\lim_{c\to 1}\frac{c^p-c^{p^*}}{1-c}
=p^*-p>0$
and $c_1$ is close $1$,
it follows that $|c_1^{p}-c_1^{p^*}|\gtrsim|c_1-1|$.
Hence, by \eqref{thm-bubble-e3},
we find that
\begin{align}\label{thm-bubble-e4}
|c_1-1|\lesssim\delta_{\rm aff}(f)
+\|\nabla\varphi\|_p^p+\int_{\mathbb{R}^n}
|\nabla U|^{p-2}|\nabla\varphi|^2,
\end{align}
which, combined with \eqref{thm-bubble-e7},
implies that, if $\delta_{\rm aff}(f)$
is sufficiently small, then
\begin{align*}
|c_1-1|^{\frac{2(p-1)}{p}}&\lesssim\delta_{\rm aff}(f)^{\frac{2(p-1)}{p}}
+\|\nabla\varphi\|_p^{2(p-1)}
+\left(\int_{\mathbb{R}^n}|\nabla U|^{p-2}|\nabla\varphi|^2\right)^{\frac{{2(p-1)}}{p}}\\
&\lesssim\delta_{\rm aff}(f)
+\left[\|\nabla\varphi\|_p^{p-1}
+\left(\int_{\mathbb{R}^n}|\nabla U|^{p-2}|\nabla\varphi|^2\right)^{\frac{{(p-1)}}{p}}\right]^2
\lesssim \delta_{\rm aff}(f).
\end{align*}
Using this and \eqref{thm-bubble-e5},
we obtain \eqref{thm-bubble-e6}.
This completes the proof of Theorem
\ref{thm-bubble}.
\end{proof}


\section{Sharpness of the exponents in Theorems \ref{thm-stab} and \ref{thm-bubble}}\label{Sec-sharp}

In this section, we are concerned with proving
the sharpness of the exponents in Theorems
\ref{thm-stab} and \ref{thm-bubble}.
This follows from similar
constructions to those in \cite[Remark 1.2]{fz22} and
\cite[Section 5.1]{fp24}.
Recall that $U:=c_0(1+|x|^{p'})^{\frac{p-n}{p}}$
for $x\in\mathbb{R}^n$
satisfies $\|U\|_{p^*}=1$ and the operator $P$ is defined in \eqref{def-P},
and fix $\varphi\in C_{\rm c}^\infty(B({\bf0},1))$ be nonzero.

\subsection{Sharpness of the exponents for the $\dot{W}^{1,p}$ norm}\label{subsec-61}

We first consider the exponents for the $\dot{W}^{1,p}$ norm.
Let $\mathbf{e}_1:=(1,0,\ldots,0)\in\mathbb{R}^n$.
Fix $\{R_\varepsilon\}_{\varepsilon\in(1,\infty)}$
in $(0,\infty)$.
For every $\varepsilon\in(0,1)$,
define $\varphi_\varepsilon(\cdot):=\varphi(\cdot-R_\varepsilon \mathbf{e}_1)$
and $f_\varepsilon:=U+\varepsilon\varphi_\varepsilon$.
We first provide the following upper estimate for the deficits.

\begin{lemma}\label{lem-tail-1}
For every $\varepsilon\in(0,1)$,
choose $R_\varepsilon\in(1,\infty)$ such that
\begin{align}\label{eq:small-tail}
\sup_{|x-R_\varepsilon \mathbf{e}_1|<1}
\left\{
|U(x)|+|\nabla U(x)|+|\nabla^2U(x)|
\right\}
\le \varepsilon.
\end{align}
Then there are positive constants $c$ and $C$, depending only on $n$, $p$, $U$, and
$\varphi$, such that, for every $\varepsilon\in(0,c)$,
$${\mathcal{E}_p(f_\varepsilon)}^p
-S_{\rm aff}^p\|f_\varepsilon\|_{p^*}^p\le C\varepsilon^p\quad
\text{and}\quad
\delta_{\rm aff}(f_\varepsilon)
\le C\varepsilon^{p-1}.$$
\end{lemma}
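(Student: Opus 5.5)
The plan is to exploit the fact that, on the support of $\varphi_\varepsilon$ (the ball $B(R_\varepsilon\mathbf e_1,1)$), the bubble $U$ and its derivatives are at most $\varepsilon$ by \eqref{eq:small-tail}. The perturbation $\varepsilon\varphi_\varepsilon$ is therefore essentially decoupled from $U$, and every interaction term is small.

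\emph{Directional energies.} Fix $\xi\in\mathbb S^{n-1}$. Write $B_\varepsilon:=B(R_\varepsilon\mathbf e_1,1)$. Outside $B_\varepsilon$ the integrand of $\|\nabla_\xi f_\varepsilon\|_p^p$ is $|\nabla_\xi U|^p$. Inside $B_\varepsilon$ we have $|\nabla_\xi U|\le\varepsilon$ and $|\nabla_\xi(\varepsilon\varphi_\varepsilon)|\lesssim\varepsilon$, so
\[
\|\nabla_\xi f_\varepsilon\|_p^p\le \|\nabla_\xi U\|_p^p+C\varepsilon^p=a+O(\varepsilon^p),
\]
uniformly in $\xi$. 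Here $a=\|\nabla_\xi U\|_p^p$ is independent of $\xi$. Applying Proposition \ref{prop-neg-Lebe}\eqref{it2-neg-Lebe} with $U(\xi)=O(\varepsilon^p)$ in $L^\infty(\mathbb S^{n-1})$, or simply the monotonicity of $\mathcal E_p$ in the directional norms, gives $\mathcal E_p(f_\varepsilon)^p\le S_{\rm aff}^p+C\varepsilon^p$.

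\emph{$L^{p^*}$ norm.} We have $\|f_\varepsilon\|_{p^*}^{p^*}\ge\int_{\mathbb R^n\setminus B_\varepsilon}U^{p^*}\ge 1-C\varepsilon^{p^*}$, since $U\le\varepsilon$ on $B_\varepsilon$. Hence $\|f_\varepsilon\|_{p^*}^p\ge 1-C\varepsilon^{p^*}\ge1-C\varepsilon^p$, and subtracting the two estimates yields the deficit bound $C\varepsilon^p$.

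\emph{Residual.} For each $T\in\operatorname{SL}(n)$, the identity \eqref{eq-T} and the change of variables used in Remark (iv) after Theorem \ref{thm-bubble} reduce the task to bounding $\langle P(f_\varepsilon),\psi\rangle$ by $C\varepsilon^{p-1}\inf_{T'}\|\nabla(\psi\circ T')\|_p\lesssim \varepsilon^{p-1}\mathcal E_p(\psi)$ via \eqref{eq-p}. This is the step where uniformity in $T$ must be handled; the cleaner route is to work with the dual quantity $\sup_\psi |\langle P(f_\varepsilon),\psi\rangle|/\mathcal E_p(\psi)$.

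Split $P(f_\varepsilon)-P(U)$, using $P(U)=0$, into three pieces.

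\begin{enumerate}
\item \emph{Coefficient change.} The weights $\mathcal E_p(f_\varepsilon)^{n+p}\|\nabla_\xi f_\varepsilon\|_p^{-n-p}$ differ from those of $U$ by $O(\varepsilon^p)$ uniformly in $\xi$, by the energy estimate above. Tested against $|\nabla_\xi U|^{p-2}\nabla_\xi U\,\nabla_\xi\psi$, this contributes $O(\varepsilon^p)\|\nabla\psi\|_p$.
\item \emph{Flux change.} The difference of the fluxes $|\nabla_\xi f_\varepsilon|^{p-2}\nabla_\xi f_\varepsilon-|\nabla_\xi U|^{p-2}\nabla_\xi U$ is supported in $B_\varepsilon$ and has size $\lesssim\varepsilon^{p-1}$ there. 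Its $L^{p'}$ norm is therefore $\lesssim\varepsilon^{p-1}$.
\item \emph{Nonlinearity.} The term $S_{\rm aff}^p(|f_\varepsilon|^{p^*-2}f_\varepsilon-U^{p^*-1})$ is supported in $B_\varepsilon$ and bounded by $C\varepsilon^{p^*-1}$ there. By duality with the Sobolev inequality it contributes $\lesssim\varepsilon^{p^*-1}\le\varepsilon^{p-1}$.
\end{enumerate}

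The main obstacle is making the $\dot W^{-1,p'}$ bound uniform over $T\in\operatorname{SL}(n)$, since the supremum in $\delta_{\rm aff}$ ranges over a noncompact group. I would handle this as described above: use the affine covariance \eqref{eq-T} to move $T$ onto the test function, and then bound the pairing by $\|\nabla(\psi\circ T)\|_p$ before taking the infimum over $T$.
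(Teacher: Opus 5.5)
Your deficit estimate is correct. It is also more self-contained than the paper's version, which simply combines \eqref{eq-holder} with \cite[Remark 1.2]{fz22}.

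The residual estimate, however, has a genuine gap, exactly at the step you flag as the main obstacle. Pieces 1 and 2 are controlled through $L^{p'}$ norms of fluxes. What you actually obtain is $|\langle\cdot,\psi\rangle|\lesssim\varepsilon^{p-1}\int_{\mathbb S^{n-1}}\|\nabla_\xi\psi\|_p\,d\xi\lesssim\varepsilon^{p-1}\|\nabla\psi\|_p$, which only handles $T=\mathrm{id}$ in the supremum defining $\delta_{\rm aff}$.

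Passing to $\mathcal E_p(\psi)\sim\inf_T\|\nabla(\psi\circ T)\|_p$ would require the reverse of \eqref{eq-holder}, and that is false. Take $\psi=g\circ A$ with $g$ radial and $A\in\operatorname{SL}(n)$ very anisotropic: then $\mathcal E_p(\psi)=\mathcal E_p(g)$ stays fixed while $\|\nabla\psi\|_p\to\infty$.

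Moving $T$ onto $f_\varepsilon$ does not rescue the $L^{p'}$ approach either. If $\Delta_p^{\rm aff}f=\operatorname{div}F$, then \eqref{eq-T} gives $\Delta_p^{\rm aff}(f\circ T)=\operatorname{div}(T^{-1}F\circ T)$. So your method only bounds the flux difference for $f_\varepsilon\circ T$ in $L^{p'}$ by $\|T^{-1}\|\,\varepsilon^{p-1}$, which is not uniform in $T$. Your proposed fix, bounding the pairing by $C\varepsilon^{p-1}\|\nabla(\psi\circ T)\|_p$ for every $T$, is therefore a restatement of $\sup_T\|P(f_\varepsilon\circ T)\|_{\dot W^{-1,p'}}\lesssim\varepsilon^{p-1}$, not a proof of it. Piece 3 is fine, since $\|\psi\|_{p^*}\le S_{\rm aff}^{-1}\mathcal E_p(\psi)$.

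The missing idea is to estimate $P(f_\varepsilon)$ in a norm that is invariant under $\operatorname{SL}(n)$, namely $L^q$ with $q=(p^*)'$. Three facts combine:
\begin{itemize}
\item by \eqref{eq-T}, $P(f\circ T)=P(f)\circ T$ when $\det T=1$;
\item Sobolev duality gives $\|R\|_{\dot W^{-1,p'}}\lesssim\|R\|_q$;
\item $\|R\circ T\|_q=\|R\|_q$.
\end{itemize}
Hence $\delta_{\rm aff}(f_\varepsilon)\lesssim\|P(f_\varepsilon)\|_q$.

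Bounding $\|P(f_\varepsilon)\|_q$ requires differentiating the fluxes rather than estimating them in $L^{p'}$:
\begin{itemize}
\item On $B(R_\varepsilon\mathbf e_1,1)$ one has $|\nabla(|\nabla_\xi f_\varepsilon|^{p-2}\nabla_\xi f_\varepsilon-|\nabla_\xi U|^{p-2}\nabla_\xi U)|\lesssim\varepsilon^{p-1}$. This uses $p\ge 2$ and the bound $|\nabla^2U|\le\varepsilon$ from \eqref{eq:small-tail}, a hypothesis your argument never uses.
\item The coefficient change needs $\|\nabla(|\nabla_\xi f_\varepsilon|^{p-2}\nabla_\xi f_\varepsilon)\|_q\lesssim 1$ uniformly in $\xi$. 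This follows from $|\nabla U|^{p-2}|\nabla^2U|=O(|x|^{-n})$ at infinity.
\item The nonlinearity is pointwise $\lesssim\varepsilon^{p^*-1}$ on $B(R_\varepsilon\mathbf e_1,1)$.
\end{itemize}
This is how the paper closes the argument.
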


\begin{proof}
In the present proof,
unless otherwise specified, the implicit positive constants
depend only on $n$, $p$, $U$, and $\varphi$.
Due to \eqref{eq-holder},
the first assertion is a direct consequence of
\cite[Remark 1.2]{fz22}. Hence, we only prove the estimate on
$\delta_{\rm aff}$.
Let $q:=(p^*)'$.
Then, by the Sobolev inequality \eqref{eq-Sob}
and duality, we obtain, for any $R\in L^q$,
$\|R\|_{\dot W^{-1,p'}}
\lesssim\|R\|_q.$
For every symmetric matrix $S$ satisfying
$\operatorname{Tr}S=0$,
by this and a change of variables combined with
$\det e^S=1$, we conclude that, for every $\varepsilon\in(0,1)$,
\begin{align*}
\left\|P(f_\varepsilon\circ e^S)\right\|_{\dot W^{-1,p'}}
&\lesssim\left\|P(f_\varepsilon\circ e^S)\right\|_q
=\left\|P(f_\varepsilon)\circ e^S\right\|_q
=\left\|P(f_\varepsilon)\right\|_q.
\end{align*}
Taking the supremum over all such $S$ and
applying a polar decomposition argument
similar to that used in \eqref{def-min-equiv}, we obtain
\begin{align}
\delta_{\rm aff}(f_\varepsilon)
\lesssim\left\|P(f_\varepsilon)\right\|_q.
\label{eq:delta-Lq}
\end{align}
Therefore, to complete the present proof, we only need to show
$\|P(f_\varepsilon)\|_q\lesssim\varepsilon^{p-1}.$

Now, fix $\varepsilon\in(0,1)$.
Since $U$ is radial, $\|\nabla_\xi U\|_p$ is independent of $\xi\in\mathbb{S}^{n-1}$.
Define $a:=\|\nabla_\xi U\|_p^p$ for any
$\xi\in\mathbb{S}^{n-1}$.
Then, from the argument similar to that used in \eqref{prop-expan-E-e3} and
the fact that $\operatorname{supp}(\varphi_\varepsilon)\subset
B(R_\varepsilon\mathbf{e}_1,1)$, we deduce that, for every
$\xi\in\mathbb{S}^{n-1}$,
\begin{align}\label{eq:directional-p-bound}
\left|\|\nabla_\xi f_\varepsilon\|_p^p-a\right|
&\lesssim
\int_{B(R_\varepsilon \mathbf{e}_1,1)}
\left[\varepsilon
|\nabla_\xi U|^{p-1}|\nabla_\xi\varphi_\varepsilon|
+
\varepsilon^p|\nabla_\xi\varphi_\varepsilon|^p
\right]
\notag\\
&\lesssim\varepsilon^p\quad\text{by \eqref{eq:small-tail}}.
\end{align}
 Hence, if $\varepsilon$ is sufficiently small, then,
 for all $\xi\in\mathbb{S}^{n-1}$,
$\frac{a}{2}
\le \|\nabla_\xi f_\varepsilon\|_p^p
\le 2a.$
Applying this, the mean value theorem, respectively,
for $$t\mapsto (a+t\rho)^{-1-\frac np}\quad\text{and}\quad
t\mapsto\left(\int_{\mathbb{S}^{n-1}}\left[a+tV(\xi)\,d\xi\right]^{-\frac np}\right)^{1-\frac pn},$$
and \eqref{eq:directional-p-bound},
we find that, for all $\xi\in\mathbb{S}^{n-1}$,
\begin{align*}
\left|\|\nabla_\xi f_\varepsilon\|_p^{-n-p}-
\|\nabla_\xi U\|_p^{-n-p}\right|
\lesssim a^{-2-\frac np}
\sup_{\xi\in\mathbb{S}^{n-1}}
\left|\|\nabla_\xi f_\varepsilon\|_p^p-a\right|
\lesssim\varepsilon^p
\end{align*}
and
\begin{align*}
\left|{\mathcal E}_p(f_\varepsilon)-{\mathcal E}_p(U)\right|
\lesssim a^{-\frac np}
\sup_{\xi\in\mathbb{S}^{n-1}}
\left|\|\nabla_\xi f_\varepsilon\|_p^p-a\right|
\lesssim\varepsilon^p.
\end{align*}
These further imply
\begin{align}\label{eq:coefficient-bound}
\left|
{\mathcal E}_p(f_\varepsilon)^{n+p}
\|\nabla_\xi f_\varepsilon\|_p^{-n-p}
-
{\mathcal E}_p(U)^{n+p}\|\nabla_\xi U\|_p^{-n-p}
\right|
\lesssim\varepsilon^p
\end{align}
for all $\xi\in{\mathbb S}^{n-1}$.

Now, we turn to estimate $P(f_\varepsilon)$.
Since $P(U)=0$, we infer that
\begin{align}\label{eq:split-P}
P(f_\varepsilon)
=-
\left(\Delta_p^{\rm aff}f_\varepsilon-\Delta_p^{\rm aff}U\right)
-S_{\rm aff}^p
\left[
|f_\varepsilon|^{p^*-2}f_\varepsilon-U^{p^*-1}
\right].
\end{align}
We first estimate the affine Laplacian term. By the definition of $\Delta_p^{\rm aff}$,
we obtain
\begin{align*}
&\Delta_p^{\rm aff}f_\varepsilon-\Delta_p^{\rm aff}U
\notag\\
&\quad=\operatorname{div}
\left(
\int_{{\mathbb S}^{n-1}}
{\mathcal E}_p(U)^{n+p}\|\nabla_\xi U\|_p^{-n-p}
\left[
|\nabla_\xi f_\varepsilon|^{p-2}\nabla_\xi f_\varepsilon
-
|\nabla_\xi U|^{p-2}\nabla_\xi U
\right]\xi\,d\xi
\right)
\notag\\
&\qquad+\operatorname{div}
\left(
\int_{{\mathbb S}^{n-1}}
\left[
{\mathcal E}_p(f_\varepsilon)^{n+p}
\|\nabla_\xi f_\varepsilon\|_p^{-n-p}
-
{\mathcal E}_p(U)^{n+p}\|\nabla_\xi U\|_p^{-n-p}
\right]
|\nabla_\xi f_\varepsilon|^{p-2}\nabla_\xi f_\varepsilon\,\xi\,d\xi
\right)\notag\\
&\quad=:I_\varepsilon+II_\varepsilon.
\end{align*}
We first deal with $I_\varepsilon$.
By the support assumption,
$I_\varepsilon$ vanishes outside $B(R_\varepsilon \mathbf{e}_1,1)$.
On the other hand, using
\eqref{eq:small-tail} and the smoothness of $\varphi$,
we obtain, for any $x\in B(R_\varepsilon \mathbf{e}_1,1)$,
\begin{align*}
&\left|
\nabla
\left[
|\nabla_\xi f_\varepsilon(x)|^{p-2}\nabla_\xi f_\varepsilon(x)
-
|\nabla_\xi U(x)|^{p-2}\nabla_\xi U(x)
\right]
\right|
\\
&\quad=(p-1)
\left|
|\nabla_\xi f_\varepsilon(x)|^{p-2}\nabla\nabla_\xi f_\varepsilon(x)
-
|\nabla_\xi U(x)|^{p-2}\nabla\nabla_\xi U(x)
\right|
\\
&\quad
\lesssim|\nabla_\xi f_\varepsilon(x)|^{p-2}
|\varepsilon\nabla\nabla_\xi\varphi_\varepsilon|
+
\left||\nabla_\xi f_\varepsilon(x)|^{p-2}-|\nabla_\xi U(x)|^{p-2}\right|
|\nabla\nabla_\xi U(x)|
\lesssim\varepsilon^{p-1}.
\end{align*}
Thus, it holds that
\begin{align*}
|I_\varepsilon|
&\le
\int_{{\mathbb S}^{n-1}}
{\mathcal E}_p(U)^{n+p}\|\nabla_\xi U\|_p^{-n-p}
\left|\nabla\left[
|\nabla_\xi f_\varepsilon|^{p-2}\nabla_\xi f_\varepsilon
-
|\nabla_\xi U|^{p-2}\nabla_\xi U
\right]\right|\,d\xi
\lesssim\varepsilon^{p-1}{\bf 1}_{B(R_\varepsilon \mathbf{e}_1,1)}
\end{align*}
and hence $\|I_\varepsilon\|_q\lesssim\varepsilon^{p-1}$.
In addition, for $II_\varepsilon$, from the choice of $\varphi$
and \eqref{eq:small-tail},
it follows that
\begin{align}\label{eq-secon}
\left|\nabla
\left(|\nabla_\xi f_\varepsilon(x)|^{p-2}\nabla_\xi f_\varepsilon\right)\right|
&=(p-1)\left||\nabla_\xi f_\varepsilon|^{p-2}\nabla\nabla_\xi f_\varepsilon\right|
\lesssim\left(|\nabla_\xi U|+|\nabla_\xi\varphi_\varepsilon|\right)^{p-2}
\left(|\nabla\nabla_\xi U|+|\nabla\nabla_\xi \varphi_\varepsilon|\right)\notag\\
&\lesssim|\nabla_\xi U|^{p-2}|\nabla\nabla_\xi U|
+{\bf 1}_{B(R_\varepsilon\mathbb{e}_1,1)}.
\end{align}
By the construction of $U$, we find that, when $|x|\to\infty$,
\begin{align*}
|\nabla_\xi U(x)|^{p-2}|\nabla\nabla_\xi U(x)|
=O\left(|x|^{\frac{p-n}{p-1}-1}\right)^{p-2}
O\left(|x|^{\frac{p-n}{p-1}-2}\right)
=O\left(|x|^{-n}\right).
\end{align*}
Hence, combining this and \eqref{eq-secon},
we conclude that $\|\nabla
(|\nabla_\xi f_\varepsilon(x)|^{p-2}\nabla_\xi f_\varepsilon)\|_q\lesssim1.$
Then, by \eqref{eq:coefficient-bound}, we obtain
$\|II_\varepsilon\|_q\lesssim\varepsilon^p$.
Consequently, we obtain
\begin{align}\label{eq:laplacian-bound}
\left\|
\Delta_p^{\rm aff}f_\varepsilon-\Delta_p^{\rm aff}U
\right\|_q
\lesssim\varepsilon^{p-1}.
\end{align}

Next, we consider the Lebesgue term in \eqref{eq:split-P}.
Following the strategy used in the estimation of $I_\varepsilon$,
we treat the domain $B(R_\varepsilon \mathbf{e}_1,1)$ and its complement separately;
utilizing the inequality
\begin{align*}
\left||a+b|^{p^*-2}(a+b)-|a|^{p^*-2}a\right| \lesssim |a|^{p^*-2}|b| +|b|^{p^*-1}
\end{align*}
(which is a direct consequence of Taylor's formula) in $B(R_\varepsilon \mathbf{e}_1,1)$,
we conclude that
\begin{align}\label{eq:zero-order-bound-e1}
\left||f_\varepsilon|^{p^*-2}f_\varepsilon-U^{p^*-1} \right| \lesssim
\varepsilon^{p^*-1}{\bf 1}_{B(R_\varepsilon\mathbf{e}_1,1)}
\end{align}
and hence
\begin{align}\label{eq:zero-order-bound}
\left\| |f_\varepsilon|^{p^*-2}f_\varepsilon-U^{p^*-1} \right\|_q \lesssim\varepsilon^{p^*-1}.
\end{align}
Since $p^*>p$, from \eqref{eq:split-P},
\eqref{eq:laplacian-bound}, and \eqref{eq:zero-order-bound},
we infer that, for sufficiently small $\varepsilon$,
$\|P(f_\varepsilon)\|_q
\lesssim\varepsilon^{p-1}.$
This, together with \eqref{eq:delta-Lq}, further implies
$\delta_{\rm aff}(f_\varepsilon)
\lesssim\varepsilon^{p-1}$
and hence completes the present proof.
\end{proof}

On the other hand, we now estimate the
distance from below.

\begin{lemma}\label{lem-pterm}
For every $\varepsilon\in(0,1)$, choose
$R_\varepsilon$ such that
\begin{align}\label{lem-d-e1}
\sup_{|y|\ge \frac{R_\varepsilon}{4}} |U(y)|\le \varepsilon^2.
\end{align}
Then there are positive constants $c$ and $C$, depending only on $n$, $p$, $U$, and
$\varphi$, such that, for every $\varepsilon\in(0,c)$,
$$
\inf_{v\in \mathcal M_{\rm aff}^{(1)},\, B\in \operatorname{SL}(n)}\|\nabla(f\circ B -v)\|_{p}
\ge\inf_{v\in \mathcal M_{\rm aff},\, B\in \operatorname{SL}(n)}\|\nabla(f\circ B -v)\|_{p}
\ge C\varepsilon.$$
\end{lemma}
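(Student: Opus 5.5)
The first inequality is immediate, since $\mathcal M_{\rm aff}^{(1)}\subset\mathcal M_{\rm aff}$. For the second, I will argue by a localization and mass-splitting argument in the spirit of \cite[Remark 1.2]{fz22}: $f_\varepsilon=U+\varepsilon\varphi_\varepsilon$ consists of a bubble centered near the origin plus a tiny bump of size $\varepsilon$ far away at $R_\varepsilon\mathbf e_1$. No single (affinely transformed) bubble can fit both pieces, so any competitor pays at least the gradient mass of one of them.

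The plan has four steps.

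\emph{Step 1 (reduction and a priori bounds).} Fix $B\in\operatorname{SL}(n)$ and $v\in\mathcal M_{\rm aff}$, and set $D:=\|\nabla(f_\varepsilon\circ B-v)\|_p$. If $D\ge\varepsilon\|\nabla\varphi\|_p/4$ there is nothing to prove, so assume $D$ is small. Changing variables $x\mapsto B^{-1}x$ turns the problem into the distance between $f_\varepsilon$ and $w:=v\circ B^{-1}$. Here $w$ is again an affine bubble, of the form $cT_{\lambda,S,x_0}U$ up to a rotation, but $\|\nabla(\cdot\circ B^{-1})\|_p$ is not $\|\nabla\cdot\|_p$. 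To avoid this I will instead work in $L^{p^*}$, which is invariant under $\operatorname{SL}(n)$. By the Sobolev inequality \eqref{eq-Sob},
\[
\|f_\varepsilon\circ B-v\|_{p^*}\lesssim D,
\qquad\text{hence}\qquad
\|f_\varepsilon-w\|_{p^*}\lesssim D .
\]

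\emph{Step 2 (the bubble must sit near the origin).} Since $\|f_\varepsilon\|_{p^*}\approx1$, the function $w$ has $L^{p^*}$ norm about $1$. Its mass must lie mostly where $U$ lives: on $\{|y|\ge R_\varepsilon/4\}$ we have $|f_\varepsilon|\le\varepsilon^2+\varepsilon|\varphi_\varepsilon|$ by \eqref{lem-d-e1}. I then need a lower bound near the bump, $\|w\|_{L^{p^*}(B(R_\varepsilon\mathbf e_1,1))}\ll\varepsilon$. Affine bubbles are smooth, unimodal and ellipsoidally symmetric, and the mass of $w$ is concentrated near the origin region. So on the unit ball around $R_\varepsilon\mathbf e_1$, the function $w$ is either nearly constant at a tiny level or, were the ellipsoid very elongated toward $\mathbf e_1$, its $L^{p^*}$ mismatch with $U$ on $B(0,R_\varepsilon/4)$ would already be of order $1$.

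\emph{Step 3 (local lower bound).} With $w$ controlled as above, I estimate on $B(R_\varepsilon\mathbf e_1,1)$:
\[
\|f_\varepsilon-w\|_{L^{p^*}(B(R_\varepsilon\mathbf e_1,1))}\ge\varepsilon\|\varphi\|_{p^*}-\|U\|_{L^{p^*}(B)}-\|w\|_{L^{p^*}(B)}\ge\tfrac{\varepsilon}{2}\|\varphi\|_{p^*},
\]
which gives $D\gtrsim\varepsilon$.

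\emph{Step 4 (the obstacle).} The main obstacle is making Step 2 rigorous uniformly over the noncompact family of affine parameters, in particular highly anisotropic $e^{S}$. My first attempt is a contradiction/compactness argument: suppose $\|f_\varepsilon-w_\varepsilon\|_{p^*}=o(\varepsilon)$. Then $w_\varepsilon\to U$ in $L^{p^*}$. Since $L^{p^*}$ convergence of bubbles forces convergence of the parameters (they are uniquely determined, as in the proof of Lemma \ref{lem-orth}), the parameters converge to $(1,1,\mathbf 0,\mathbf 0)$. Consequently $w_\varepsilon\lesssim U^{1/2}$ or similar pointwise decay holds on $B(R_\varepsilon\mathbf e_1,1)$, giving $\|w_\varepsilon\|_{L^{p^*}(B)}=O(\varepsilon^2)$ via \eqref{lem-d-e1}. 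That contradicts Step 3.
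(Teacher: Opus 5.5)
Your route is the paper's own proof. You argue by contradiction along $\varepsilon_j\to0$, pass from $\dot W^{1,p}$ to $L^{p^*}$ using \eqref{eq-Sob} and the $\operatorname{SL}(n)$-invariance of $\|\cdot\|_{p^*}$, and conclude that the competing affine bubble converges to $U$ in $L^{p^*}$, so its parameters tend to $(1,1,\mathbf 0,\mathbf 0)$. You then localize on the unit ball around the bump, where $\varepsilon\varphi_\varepsilon$ contributes $\varepsilon\|\varphi\|_{p^*}$ while both bubbles are $O(\varepsilon^2)$. The paper localizes on $E_j=x_j+\lambda_je^{S_j}B(R_j\mathbf e_1,1)$ after applying $T_{\lambda_j,S_j,x_j}$ to $f_j$; localizing on $B(R_\varepsilon\mathbf e_1,1)$ before transforming, as you do, is equivalent.

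However, your justification does not resolve the step you yourself flag as the main obstacle. Uniqueness of the parameters only says that $(c,\lambda,S,x)\mapsto cT_{\lambda,S,x}U$ is injective on $\mathscr A$. It gives no continuity of the inverse on this noncompact set. So it does not rule out $\lambda_j\to0$ or $\infty$, $|x_j|\to\infty$, or $\|e^{S_j}\|\to\infty$ while the bubbles converge to $U$ in $L^{p^*}$. In Lemma \ref{lem-orth} this issue never arises, because the parameters are confined to the compact set $\mathcal U$.

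What you need is a properness statement: along any escaping sequence of parameters, $T_{\lambda_j,S_j,x_j}U\to0$ locally in measure. In the anisotropic case this holds because $\det e^{S_j}=1$ and $\|e^{S_j}\|\to\infty$ force the smallest singular value to $0$, squeezing the image of a ball into a thin slab. This contradicts convergence to $U\neq0$. Only once the parameters are known to be bounded do compactness and uniqueness identify the limit. This is Lemma \ref{lem-escape} together with Corollary \ref{cor-escape}, which the paper invokes at exactly this point.

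Finally, a bound of the form $w_\varepsilon\lesssim U^{1/2}$ would not suffice. On $B(R_\varepsilon\mathbf e_1,1)$ it gives only $\|w_\varepsilon\|_{L^{p^*}(B(R_\varepsilon\mathbf e_1,1))}=O(\varepsilon)$ with an uncontrolled constant, which need not beat $\varepsilon\|\varphi\|_{p^*}$. What parameter convergence actually gives is that $\lambda^{-1}e^{-S}(\cdot-x)$ maps $B(R_\varepsilon\mathbf e_1,1)$ into $\{|z|\ge R_\varepsilon/4\}$ for small $\varepsilon$. Then $|w_\varepsilon|\lesssim\varepsilon^2$ there, directly by \eqref{lem-d-e1}.
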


\begin{proof}
We only need to prove the second estimate.
We show this by contradiction.  Suppose that it is false.
Then there are
$\{\varepsilon_j\}_{j\in\mathbb{N}}$ in $(0,1)$,
$\{R_j\}_{j\in\mathbb{N}}:=\{R_{\varepsilon_j}\}_{j\in\mathbb{N}}$
in $(1,\infty)$,
$\{B_j\}_{j\in\mathbb{N}}$ in $\operatorname{SL}(n)$, and $\{v_j\}_{j\in\mathbb{N}}$
in $\mathcal{M}_{\rm aff}$
such that, when $j\to\infty$,
$\varepsilon_j\to0$ and $f_j:=f_{\varepsilon_j}$ satisfies
\begin{align}\label{lem-d-e3}
\frac1{\varepsilon_j}
\left\|\nabla(f_j\circ B_j-v_j)\right\|_p
\to0.
\end{align}
Applying this, the Sobolev inequality [\eqref{eq-Sob}], and a change of variables,
we find that,
for every $j\in\mathbb{N}$, there is a tuple of affine transformation parameters
$(c_j,\lambda_j,S_j,x_j)\in\mathscr{A}$ such that
\begin{align}\label{lem-d-e4}
\left\|f_j\circ B_j-v_j\right\|_{p^*}
=\left\|T_{\lambda_j,S_j,x_j}f_j-c_j U\right\|_{p^*}\to0
\end{align}
as $j\to\infty$.
Since $T_{\lambda_j,S_j,x_j}$ is an isometry on $L^{p^*}$, we deduce that,
when $j\to\infty$,
\begin{align*}
\left\|T_{\lambda_j,S_j,x_j}U-c_j U\right\|_{p^*}
&\le
\left\|T_{\lambda_j,S_j,x_j}f_j-c_j U\right\|_{p^*}
+
\varepsilon_j
\left\|T_{\lambda_j,S_j,x_j}\varphi(\cdot-R_j \mathbf{e}_1)\right\|_{p^*}
\\
&=
\left\|T_{\lambda_j,S_j,x_j}f_j-c_jU\right\|_{p^*}
+
\varepsilon_j\|\varphi\|_{p^*}
\to0.
\end{align*}
By this and Corollary \ref{cor-escape},
we find that, when $j\to\infty$,
\begin{align}\label{lem-d-e2}
c_j\to 1,\quad\lambda_j\to1,\quad
S_j\to{\bf 0}, \quad\text{and}\quad x_j\to{\bf 0}.
\end{align}

For each $j\in\mathbb{N}$, define
$
E_j:=x_j+\lambda_je^{S_j}B(R_j{\bf e}_1,1).$
Applying \eqref{lem-d-e2}, we conclude that,
for sufficiently large $j\in\mathbb{N}$,
$E_j\subset\{y\in\mathbb{R}^n: |y|\ge \frac{R_j}4\}$.
Therefore, by \eqref{lem-d-e1} and a change of variables, we obtain, for
sufficiently large $j\in\mathbb{N}$, $\|U\|_{L^{p^*}(E_j)}
\lesssim\varepsilon_j^2$ and
\begin{align*}
\left\|T_{\lambda_j,S_j,x_j}U\right\|_{L^{p^*}(E_j)}
=\|U\|_{L^{p^*}(B(R_j\mathbf{e}_j,1))}
\lesssim\varepsilon_j^2;
\end{align*}
from these, the Sobolev inequality, and \eqref{lem-d-e4},
it follows that
\begin{align*}
\left\|\nabla\left(f_j\circ B_j-v_j\right)\right\|_p
&\gtrsim\left\|f_j\circ B_j-v_j\right\|_{p^*}\ge
\left\|T_{\lambda_j,S_j,x_j}f_j-c_jU\right\|_{L^{p^{*}}(E_j)}
\\
&\ge
\varepsilon_j
\left\|T_{\lambda_j,S_j,x_j}\varphi_{\varepsilon_j}
\right\|_{L^{p^*}(E_j)}
-
\left\|T_{\lambda_j,S_j,x_j}U-c_jU\right\|_{L^{p^*}(E_j)}
\\
&\ge\varepsilon_j\|\varphi\|_{p^*}
-\|U\|_{L^{p^*}(E_j)}-|c_j|\|U\|_{L^{p^*}(B(R_j\mathbf{e}_j,1))}
\gtrsim\varepsilon_j.
\end{align*}
This contradicts \eqref{lem-d-e3} and hence
completes the present proof.
\end{proof}

Based on these two lemmas, we can immediately conclude
the sharpness of the exponents $p$ and $p-1$
for the $\dot{W}^{1,p}$ norm, respectively,
in Theorems \ref{thm-stab} and \ref{thm-bubble}.

\subsection{Sharpness of the exponents for the weighted Sobolev norm}

In this subsection, we focus on the weighted quadratic term in
Theorem \ref{thm-stab}. To this end, further assume that, for every
$\psi\in T_U\mathcal{M}_{\rm aff}$,
\begin{align}\label{eq-second}
\int_{B({\bf0},1)}
U^{p^*-2}\varphi\psi=0.
\end{align}
Indeed, we can choose $\varphi$ in the orthocomplement
of $T_U\mathcal{M}_{\rm aff}$ in $L^2(B({\bf0},1),U^{p^*-2})$.
For any $\varepsilon\in(0,1)$,
define
$g_\varepsilon:=U+\varepsilon \varphi$
and
\begin{align*}
d_\varepsilon:=
\inf_{v\in \mathcal M_{\rm aff},\, B\in \operatorname{SL}(n)}\left\{\|\nabla(f_\varepsilon\circ B -v)\|_{p}^p
+\int_{\mathbb{R}^n}
|\nabla v|^{p-2}|\nabla(f_\varepsilon\circ B- v)|^2
\right\}.
\end{align*}

\begin{lemma}\label{lem-second}
Let $p\in(2,n)$.
Then there is a positive constant $c$, depending only on $n$, $p$, $U$, and
$\varphi$, such that, for every $\varepsilon\in(0,c)$,
$d_\varepsilon\sim\varepsilon^2$
with the positive equivalence constants
depending only on $n$, $p$, $U$, and $\varphi$,
and the main part in $d_\varepsilon$ is
the quadratic term.
\end{lemma}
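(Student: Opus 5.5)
We need to show $d_\varepsilon\sim\varepsilon^2$ for $g_\varepsilon=U+\varepsilon\varphi$, where $\varphi\in C_{\rm c}^\infty(B(\mathbf 0,1))$ satisfies \eqref{eq-second}; here $d_\varepsilon$ should be read with $g_\varepsilon$ in place of $f_\varepsilon$. Our plan is to prove the two bounds separately.

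\emph{Upper bound.} We take $B=\mathrm{Id}$ and $v=U$. This gives
\[
d_\varepsilon\le \varepsilon^p\|\nabla\varphi\|_p^p+\varepsilon^2\int_{\mathbb R^n}|\nabla U|^{p-2}|\nabla\varphi|^2 .
\]
Since $p>2$, the right-hand side is $\lesssim\varepsilon^2$, and the quadratic term dominates.

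\emph{Lower bound.} We argue by contradiction. Suppose there are $\varepsilon_j\to0$, $B_j\in\operatorname{SL}(n)$ and $v_j\in\mathcal M_{\rm aff}$ with
\[
\|\nabla(g_j\circ B_j-v_j)\|_p^p+\int_{\mathbb R^n}|\nabla v_j|^{p-2}|\nabla(g_j\circ B_j-v_j)|^2=o(\varepsilon_j^2).
\]
As in Lemma \ref{lem-pterm}, we rewrite this in terms of parameters $(c_j,\lambda_j,S_j,x_j)\in\mathscr A$: after an orthogonal change of variables, which is harmless because $U$ is radial, we replace the pair by $T_{\lambda_j,S_j,x_j}g_j$ and $c_jU$. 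The assumption gives $\|\nabla(T_{\lambda_j,S_j,x_j}g_j-c_jU)\|_p=o(\varepsilon_j^{2/p})\to0$. Since $g_j\to U$ in $\dot W^{1,p}$, Sobolev's inequality together with Corollary \ref{cor-escape} yields $(c_j,\lambda_j,S_j,x_j)\to(1,1,\mathbf 0,\mathbf 0)$. Because the parameters stay in a compact set, the change-of-variables weights are uniformly comparable, so the weighted quadratic quantity is still $o(\varepsilon_j^2)$ after transformation. Writing $\Phi_j(y):=T_{\lambda_j,S_j,x_j}^{-1}(c_jU)(y)$, we obtain
\[
\int_{\mathbb R^n}|\nabla U|^{p-2}|\nabla(U+\varepsilon_j\varphi-\Phi_j)|^2=o(\varepsilon_j^2).
\]
Here $\Phi_j\in\mathcal M_{\rm aff}$ has parameters $\theta_j:=(c_j^{-1},\dots)$ converging to the identity parameters.

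The key step is to split $U-\Phi_j$ into a tangent part and a remainder. Since $\theta\mapsto\Phi_\theta$ is smooth near the identity with values in $\dot W^{1,2}(|\nabla U|^{p-2})$ (this follows from the explicit form of $U$), we can write $U-\Phi_j=\tau_j+r_j$. Here $\tau_j\in\operatorname{span}T_U\mathcal M_{\rm aff}$ has norm $\sim|\theta_j-\theta_0|$, and $\|r_j\|\lesssim|\theta_j-\theta_0|^2$. The map from parameters to tangent vectors is injective: the tangent generators in \eqref{eq-tangent} are linearly independent. Hence $\|\tau_j\|\gtrsim|\theta_j-\theta_0|$.

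Set $t_j:=|\theta_j-\theta_0|$. We then have $\|\varepsilon_j\varphi+\tau_j+r_j\|=o(\varepsilon_j)$ in $\dot W^{1,2}(|\nabla U|^{p-2})$. By the compact embedding into $L^2(U^{p^*-2})$ (\cite[Proposition 3.2]{fz22}), this also gives smallness in $L^2(U^{p^*-2})$. Pairing with $\tau_j$ in $L^2(U^{p^*-2})$ and using \eqref{eq-second}, which says $\varphi\perp\tau_j$, we get
\[
\|\tau_j\|_{L^2(U^{p^*-2})}^2\lesssim \|\tau_j\|\bigl(o(\varepsilon_j)+t_j^2\bigr).
\]
On the finite-dimensional tangent space all norms are equivalent, so this gives $t_j\lesssim o(\varepsilon_j)+t_j^2$, and therefore $t_j=o(\varepsilon_j)$. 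It follows that $\varepsilon_j\|\varphi\|\le\|\varepsilon_j\varphi+\tau_j+r_j\|+\|\tau_j\|+\|r_j\|=o(\varepsilon_j)$. This is a contradiction since $\varphi\ne0$, so $d_\varepsilon\gtrsim\varepsilon^2$.

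The main obstacle is the first step of the lower bound. We must justify the smooth expansion of $\Phi_\theta$ in the weighted norm, including the decay of $U$ at infinity, and track how the weight $|\nabla v_j|^{p-2}$ and the $\operatorname{SL}(n)$ change of variables transform into the fixed weight $|\nabla U|^{p-2}$ uniformly near the identity. We would handle this using the explicit form of $U$ and dominated convergence.
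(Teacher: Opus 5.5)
The gap is in your reduction step. You assert that, after an orthogonal change of variables, the pair $(g_j\circ B_j,v_j)$ may be replaced by $(T_{\lambda_j,S_j,x_j}g_j,c_jU)$ with $\|\nabla(T_{\lambda_j,S_j,x_j}g_j-c_jU)\|_p=o(\varepsilon_j^{2/p})$. Later you say the weighted quantity survives the transformation because ``the parameters stay in a compact set''. In Lemma \ref{lem-pterm} this replacement is made only for $\|\cdot\|_{p^*}$, which is invariant under every $h\mapsto h\circ B$ with $B\in\operatorname{SL}(n)$; gradient quantities are not. Set $\Phi_j:=v_j\circ B_j^{-1}$ (this is your $T_{\lambda_j,S_j,x_j}^{-1}(c_jU)$) and $h_j:=g_j-\Phi_j$. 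Substituting $x=B_jy$, your hypothesis reads
\[
\int_{\mathbb R^n}|B_j^{T}\nabla h_j|^p+\int_{\mathbb R^n}|B_j^{T}\nabla\Phi_j|^{p-2}\,|B_j^{T}\nabla h_j|^2=o(\varepsilon_j^2).
\]
Sobolev's inequality and Corollary \ref{cor-escape} (through $\|g_j\circ B_j-v_j\|_{p^*}=\|h_j\|_{p^*}$) pin down the parameters of $\Phi_j$, i.e.\ of $T_{\lambda_j,S_j,x_j}$. They say nothing about $B_j$, which ranges over all of $\operatorname{SL}(n)$ in the definition of $d_\varepsilon$ and may be unbounded. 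When $\|B_j\|\to\infty$, $B_j^T$ contracts some directions by factors tending to $0$. Nothing in your argument then takes you from this display to $\int|\nabla U|^{p-2}|\nabla h_j|^2=o(\varepsilon_j^2)$, which is where your tangent-space argument begins.

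The paper never moves gradient quantities across $B_j$. It trades the weighted Dirichlet term for $\int|v_j|^{p^*-2}|g_j\circ B_j-v_j|^2$ by a weighted embedding claimed to hold uniformly in $j$. From then on it uses only $\|\cdot\|_{p^*}$ and $L^2(|v|^{p^*-2})$, which are exactly invariant under $\operatorname{SL}(n)$ and under $T_{\lambda,S,x}$. It then localizes to $B(\mathbf 0,1)$, Taylor-expands the bubbles pointwise (an $o(|\mathbf a_j|)$ error suffices), and uses \eqref{eq-second}. Your core mechanism is a reasonable alternative to that localization: you expand $U-\Phi_j$ in $\dot W^{1,2}(|\nabla U|^{p-2})$ and pair with the tangent part in $L^2(U^{p^*-2})$. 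In it $\|r_j\|=o(t_j)$ already suffices, so only $C^1$ dependence on the parameters is needed. Be aware, though, that the uniformity of the paper's embedding is evident only while the shapes of the $v_j$ stay bounded (\cite[Lemma 3.1]{fz22} concerns round bubbles). So the regime $\|B_j\|\to\infty$ needs an argument in either approach.

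One way to supply it within your framework:
\begin{enumerate}
\item Since $\|B_j\|\ge1$, there is a unit vector $u_j$ with $|B_j^{T}w|\ge|u_j\cdot w|$ for all $w\in\mathbb R^n$. The display then gives $\int|u_j\cdot\nabla\Phi_j|^{p-2}|u_j\cdot\nabla h_j|^2=o(\varepsilon_j^2)$.
\item The directional forms $F\mapsto\int|u\cdot\nabla U|^{p-2}|u\cdot\nabla F|^2$ are positive definite on the finite-dimensional space $\operatorname{span}\{\varphi\}+\operatorname{span}T_U\mathcal M_{\rm aff}$, uniformly in $u\in\mathbb S^{n-1}$, because a function constant along one direction and vanishing at infinity is zero. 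By monotone convergence and compactness this already holds on a large ball $B(\mathbf 0,R)$.
\item Your decomposition argument then goes through on that ball, where $\nabla\Phi_j\to\nabla U$ uniformly. This additive comparison of weights also avoids a second problem: $|\nabla\Phi_j|^{p-2}$ and $|\nabla U|^{p-2}$ are not pointwise comparable near their distinct zeros.
\end{enumerate}
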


\begin{proof}
In the present proof,
unless otherwise specified, the implicit positive constants
depend only on $n$, $p$, $U$, and $\varphi$.
By the definition and H\"older's inequality, we find that, for every $\varepsilon\in(0,1)$,
\begin{align}\label{eq-second-e1}
d_\varepsilon
&\le\left\|\nabla(g_\varepsilon-U)\right\|_p^p
+\int_{\mathbb{R}^n}
|\nabla U|^{p-2}|\nabla (g_\varepsilon-U)|^2\notag\\
&\le \left\|\nabla(g_\varepsilon-U)\right\|_p^p
+\|\nabla U\|_p^{p-2}
\left\|\nabla(g_\varepsilon-U)\right\|_p^2
\lesssim\varepsilon^p+\varepsilon^2\lesssim\varepsilon^2.
\end{align}

Now, we consider the lower estimate by contradiction.
Assume that the lower bound is not true.
Then there are $\{\varepsilon_j\}_{j\in\mathbb{N}}$
in $(0,1)$, $\{B_j\}_{j\in\mathbb{N}}$ in $\operatorname{SL}(n)$,
and $\{\varphi_j\}_{j\in\mathbb{N}}$ in $\mathcal{M}_{\rm aff}$ such that,
when $j\to\infty$,
$\varepsilon_j\to0$ and, for every $j\in\mathbb{N}$,
$g_j:=g_{\varepsilon_j}$ satisfies
\begin{align}\label{eq-second-e2}
\|\nabla(g_j\circ B_j-v_j)\|_{p}^p
+\int_{\mathbb{R}^n}
|\nabla v_j|^{p-2}|\nabla(g_j\circ B_j-v_j)|^2
<\frac{1}{j}\varepsilon_j.
\end{align}
By this and the Sobolev inequality \eqref{eq-Sob},
we have, for any sufficiently large $j\in\mathbb{N}$,
$\|v_j\|_{p^*}\sim\|g_j\circ B_j\|_{p^*}=\|g_j\|_{p^*}\sim1$;
hence, from \cite[Lemma 3.1]{fz22}, it follows that
the embedding $\dot{W}^{1,2}(|\nabla v_j|^{p-2})\hookrightarrow
L^2(|v_j|^{p^*-2})$ holds with the implicit positive
constant depending only on $n$ and $p$.
Using this, \eqref{eq-second-e2}, the Sobolev inequality \eqref{eq-Sob},
and a change of variables, we obtain,
for every sufficiently large $j\in\mathbb{N}$, there is a tuple
of affine transformation parameters
$(c_j,\lambda_j,S_j,x_j)\in\mathscr{A}$ such that
\begin{align}\label{eq-second-e4}
&\|T_{\lambda_j,S_j,x_j}g_j-c_jU\|_{p^*}^p
+\int_{\mathbb{R}^n}
|c_j U|^{p^*-2}|T_{\lambda_j,S_j,x_j}g_j-c_jU|^2\notag\\
&\quad=\|g_j\circ B_j-v_j\|_{p^*}^p
+\int_{\mathbb{R}^n}
|v_j|^{p-2}|g_j\circ B_j-v_j|^2
\lesssim\frac{1}{j}\varepsilon_j.
\end{align}
Applying this and an argument similar to that used in the proof
of Lemma \ref{lem-pterm},
we find that, when $j\to\infty$,
$(c_j,\lambda_j,S_j,x_j)\to(1,1,{\bf0},{\bf 0})$ in $\mathscr{A}$.
This implies that $U_j:=c_jT_{\lambda_j,S_j,x_j}^{-1}U$
converge to $U$ uniformly in $B({\bf0},1)$. Thus,
for sufficiently large $j$,
$U_j\ge\frac12U$ holds in $B({\bf0},1)$.
Moreover, using Taylor's formula for the parameter space, we obtain,
when $j\to\infty$,
\begin{align*}
U_j=U+\sum_\ell {\bf a}_{j,\ell}\psi_\ell
+o(|\mathbf{a}_j|),
\end{align*}
where ${\bf a}_{j,\ell}$ is one component of
${\bf a}_j:=(c_j,\lambda_j,S_j,x_j)$ and $\psi_\ell$ is the corresponding
element in the tangent space $T_U\mathcal{M}_{\rm aff}$.
From this, the above uniform embedding
$\dot{W}^{1,2}(|\nabla v_j|^{p-2})\hookrightarrow
L^2(|v_j|^{p^*-2})$, \eqref{eq-second-e4},
and a change of variables,
it follows that, for sufficiently large $j$,
\begin{align*}
\int_{\mathbb{R}^n}
|\nabla v_j|^{p-2}|\nabla(g_j\circ B_j-v_j)|^2
&\gtrsim\int_{\mathbb{R}^n}|v_j|^{p-2}|g_j\circ B_j-v_j|^2\\
&=\int_{\mathbb{R}^n}
(c_j U)^{p^*-2}|T_{\lambda_j,S_j,x_j}g_j-c_jU|^2\\
&\ge \int_{B({\bf0},1)}U_j^{p^*-2}|g_j-U_j|^2\\
&\gtrsim\int_{B({\bf0},1)}
U^{p^*-2}\left(\varepsilon_j\varphi-\sum_\ell
{\bf a}_{j,\ell}\psi_\ell\right)^2+o(|{\bf a}_j|^2).
\end{align*}
This, combined with the orthogonality condition \eqref{eq-second},
further implies, for sufficiently large $j\in\mathbb{N}$,
\begin{align*}
\int_{\mathbb{R}^n}
|\nabla v_j|^{p-2}|\nabla(g_j\circ B_j-v_j)|^2
\gtrsim\varepsilon_j^2
\int_{B({\bf0},1)}U^{p^*-2}\varphi^2
+\int_{B({\bf0},1)}U^{p^*-2}\left(\sum_\ell
{\bf a}_{j,\ell}\psi_\ell\right)^2
\gtrsim\varepsilon_j^2,
\end{align*}
which contradicts \eqref{eq-second-e2}.
Therefore, we obtain the lower estimate $d_\varepsilon\gtrsim
\varepsilon^2$.
Furthermore,
by the estimation of \eqref{eq-second-e1}, we find that
the first term is controlled by $\varepsilon^p$.
Hence, since $p>2$,
the second term (namely the quadratic term) is the main part.
This completes the present proof.
\end{proof}

Now, we have constructed a family of functions such that the distance
$d_\varepsilon$ is dominated by the quadratic term and is of order $\varepsilon^2$. Therefore,
to prove the sharpness of the quadratic term, we only need to
verify ${\mathcal{E}_p(g_\varepsilon)}^p
-S_{\rm aff}^p\|g_\varepsilon\|_{p^*}^p\lesssim\varepsilon^2$.
By \eqref{eq-holder}, it suffices to show
\begin{align}\label{eq-second-e3}
\delta(\varepsilon):=
\|\nabla g_\varepsilon\|_p^p
-S_{\rm Sob}^p\|g_\varepsilon\|_{p^*}^p\lesssim\varepsilon^2.
\end{align}
Indeed, since $U$ is the extremal function of \eqref{eq-Sob}
and $g_\varepsilon=U+\varepsilon\varphi$, we infer that
$\delta(0)=0=\delta'(0)$. This shows \eqref{eq-second-e3}
and hence the sharpness.

Finally, we consider the sharpness of the exponent
for the weighted Sobolev norm in \eqref{thm-bubble-e6}.
For every $\varepsilon\in(0,1)$, let $f_\varepsilon$ be the same as
in Subsection \ref{subsec-61};
define
\begin{align*}
\widetilde{d}_{\varepsilon}:=\inf&\left\{\left[\int_{\mathbb{R}^n}
|\nabla U|^{p-2}|\nabla (T_{\lambda,S,x}f_\varepsilon-U)|^2\right]^{
\frac{p-1}{p}}:\lambda\in(0,\infty),
\ S=S^T,\ \operatorname{tr}S=0,\ x\in\mathbb{R}^n\right\}.
\end{align*}
Then the following lemma gives the desired sharpness.

\begin{lemma}\label{lem-bubble-second}
Let $p\in(2,n)$.
For every $\varepsilon\in(0,1)$,
let $R_\varepsilon:=\varepsilon^{-\frac{p-1}{n-1}}$.
Then there is a positive constant $c$, depending only on $n$, $p$, $U$, and
$\varphi$, such that, for every $\varepsilon\in(0,c)$,
$$\delta_{\rm aff}(f_\varepsilon)\lesssim\varepsilon^{p-1}\quad
\text{and}\quad\widetilde{d}_\varepsilon\sim\varepsilon^{p-1}$$
with the implicit positive constants
depending only on $n$, $p$, $U$, and $\varphi$.
\end{lemma}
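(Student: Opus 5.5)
The plan is to treat the two assertions separately: the upper bound on $\delta_{\rm aff}(f_\varepsilon)$ by re-running the proof of Lemma \ref{lem-tail-1}, and the two-sided bound on $\widetilde d_\varepsilon$ by a direct computation (upper) and a localization plus contradiction argument (lower).

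\emph{Upper bound on $\delta_{\rm aff}(f_\varepsilon)$.} I will rerun the proof of Lemma \ref{lem-tail-1}, checking which parts of \eqref{eq:small-tail} were actually used. With $R_\varepsilon=\varepsilon^{-\frac{p-1}{n-1}}$, the decay $|\nabla U(x)|\sim|x|^{-\frac{n-1}{p-1}}$ gives, on $B(R_\varepsilon\mathbf e_1,1)$,
\begin{align*}
|\nabla U|\sim\varepsilon,\qquad |\nabla^2U|\lesssim\varepsilon R_\varepsilon^{-1}\le\varepsilon,\qquad U\sim\varepsilon^{\frac{n-p}{n-1}}.
\end{align*}
So the gradient and Hessian parts of \eqref{eq:small-tail} hold, but the bound $U\le\varepsilon$ fails. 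The steps \eqref{eq:directional-p-bound}, \eqref{eq:coefficient-bound}, and the estimates of $I_\varepsilon$ and $II_\varepsilon$ use only $|\nabla U|,|\nabla^2U|\lesssim\varepsilon$ on the ball. Hence they go through verbatim and give $\|\Delta_p^{\rm aff}f_\varepsilon-\Delta_p^{\rm aff}U\|_q\lesssim\varepsilon^{p-1}$. For the zero-order term I will use the pointwise bound $|f_\varepsilon|^{p^*-2}f_\varepsilon-U^{p^*-1}=O(U^{p^*-2}\varepsilon+\varepsilon^{p^*-1})$ on the ball, in place of \eqref{eq:zero-order-bound-e1}. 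A direct exponent count shows $U^{p^*-2}\varepsilon\lesssim\varepsilon^{\frac{np-n+2p-1}{n-1}}\le\varepsilon^{p-1}$, since $p-1=\frac{np-n-p+1}{n-1}$. Combining with \eqref{eq:delta-Lq} gives $\delta_{\rm aff}(f_\varepsilon)\lesssim\varepsilon^{p-1}$.

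\emph{Upper bound on $\widetilde d_\varepsilon$.} Take the identity parameters $(\lambda,S,x)=(1,\mathbf 0,\mathbf 0)$. Then
\begin{align*}
\int|\nabla U|^{p-2}|\nabla(f_\varepsilon-U)|^2=\varepsilon^2\int_{B(R_\varepsilon\mathbf e_1,1)}|\nabla U|^{p-2}|\nabla\varphi_\varepsilon|^2\sim\varepsilon^{2}\varepsilon^{p-2}=\varepsilon^p,
\end{align*}
and raising to the power $\frac{p-1}{p}$ gives $\varepsilon^{p-1}$.

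\emph{Lower bound on $\widetilde d_\varepsilon$.} I argue by contradiction. Suppose there are parameters $(\lambda_j,S_j,x_j)$ with $\int|\nabla U|^{p-2}|\nabla(T_jf_j-U)|^2=o(\varepsilon_j^p)$. First I use the weighted embedding $\dot W^{1,2}(|\nabla U|^{p-2})\hookrightarrow L^2(U^{p^*-2})$ from \cite[Proposition 3.2]{fz22}. This gives $T_jf_j\to U$ in $L^2(U^{p^*-2})$, hence $T_jU\to U$ there, since the bump term contributes negligibly. Then, as in the proof of Lemma \ref{lem-pterm} (via Corollary \ref{cor-escape} or a local analogue on a fixed ball), I conclude that $(\lambda_j,S_j,x_j)\to(1,\mathbf 0,\mathbf 0)$.

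Next I localize to $E_j:=x_j+\lambda_je^{S_j}B(R_j\mathbf e_1,1)$. There $|\nabla U|\sim\varepsilon_j$, so the weighted integral is $\gtrsim\varepsilon_j^{p-2}\int_{E_j}|\varepsilon_j\nabla(T_j\varphi_j)+h_j|^2$, where $h_j:=\nabla(T_jU-U)$. It then suffices to show $\int_{E_j}|\varepsilon_j\nabla(T_j\varphi_j)+h_j|^2\gtrsim\varepsilon_j^2$. The idea is that on a unit-size ball, $h_j$ is extremely well approximated by the gradient of a polynomial of degree $\le2$, because $\nabla^3U$ is tiny there. Meanwhile $\nabla\varphi$ is a compactly supported nonzero field, so it has a positive $L^2$ distance $\kappa(\varphi)$ from the finite-dimensional space of gradients of quadratic polynomials (after the bounded linear change of variables). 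Projecting onto the orthocomplement of that space then yields $\ge\varepsilon_j^2\kappa^2-{}$(error from the non-polynomial part of $h_j$).

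This last step is the main obstacle: bounding the non-polynomial remainder of $h_j$ on $E_j$ by $o(\varepsilon_j)$ uniformly. I would first attempt it via Taylor expansion of $T_jU-U$ in the parameters together with the decay $|\nabla^3U|\lesssim\varepsilon R^{-2}$. I expect the resulting remainder to be of order $|\mathbf a_j|\varepsilon_jR_j^{-1}=o(\varepsilon_j)$.
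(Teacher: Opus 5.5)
Your treatment of $\delta_{\rm aff}(f_\varepsilon)$ matches the paper's. You rerun Lemma~\ref{lem-tail-1} using only the gradient and Hessian parts of \eqref{eq:small-tail}, together with the sharper zero-order bound $O(\varepsilon U^{p^*-2}+\varepsilon^{p^*-1})$ on the ball. Your upper bound on $\widetilde d_\varepsilon$ is also the paper's.

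For the lower bound your skeleton coincides with the paper's as well:
\begin{enumerate}[{\rm(i)}]
\item argue by contradiction;
\item use the weighted embedding to get $T_{\lambda_j,S_j,x_j}U\to U$ in $L^2(U^{p^*-2})$;
\item obtain convergence of the parameters to the identity by the argument of Corollary~\ref{cor-escape};
\item localize to $E_j$, where $|\nabla U|\sim\varepsilon_j$.
\end{enumerate}
You diverge only in the last step, and there the obstacle you anticipate does not arise. The paper uses the exact scaling form
\[
\varepsilon_j^{-1}\nabla U(x)=-c_{n,p}\Bigl(R_j^{-p'}+\Bigl|\frac{x}{R_j}\Bigr|^{p'}\Bigr)^{-\frac np}\Bigl|\frac{x}{R_j}\Bigr|^{p'-2}\frac{x}{R_j}.
\]
Write $T_j(y):=x_j+\lambda_je^{S_j}(R_j\mathbf e_1+y)$. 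Then $\varepsilon_j^{-1}\nabla U(y+R_j\mathbf e_1)$ and $\varepsilon_j^{-1}\nabla U(T_j(y))$ both converge uniformly on $B(\mathbf 0,1)$ to the same vector $-c_{n,p}\mathbf e_1$. Since $h_j(T_j(y))=\lambda_j^{-n/p}e^{-S_j}\nabla U(y+R_j\mathbf e_1)-\nabla U(T_j(y))$ and $\lambda_j^{-n/p}e^{-S_j}\to I$, the \emph{whole} field $h_j$ is $o(\varepsilon_j)$ uniformly on $E_j$, not merely its non-polynomial part. The plain triangle inequality against $\varepsilon_j\|\nabla(T_{\lambda_j,S_j,x_j}\varphi_{\varepsilon_j})\|_{L^2(E_j)}\sim\varepsilon_j$ then gives the contradiction. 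Your own Taylor expansion would show this directly. On $E_j$ the tangent fields $\nabla\psi_\ell$ are all $O(\varepsilon_j)$, and so are the second parameter-derivatives of $\nabla T_{\lambda,S,x}U$ near the identity. Hence $h_j=O(|\mathbf a_j|\varepsilon_j)=o(\varepsilon_j)$, with no polynomial to split off.

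Your projection argument is still valid, and what it buys is robustness: it needs the parameters only to stay bounded. In that case $\varepsilon_j^{-1}h_j\circ T_j$ is a constant vector up to $O(R_j^{-1})$ on $B(\mathbf 0,1)$. Constants lie in the pulled-back space of gradients of quadratics that you project out. Meanwhile $\nabla\varphi$ has positive $L^2(B(\mathbf 0,1))$-distance from all affine vector fields, since $\varphi\in C_{\rm c}^\infty(B(\mathbf 0,1))$ is nonzero, and this holds uniformly under the bounded linear changes of variables. The paper's route is shorter, but it uses convergence of the parameters to the identity in an essential way, to match the two constant vectors.

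As a side remark, your phrasing of the localization is more accurate than the paper's assertion $E_j\subset B(R_j\mathbf e_1,2)$. That inclusion would require $(\lambda_je^{S_j}-I)R_j\mathbf e_1$ to stay bounded, which convergence without a rate does not give. All that is needed is $|x|/R_j\to1$ uniformly on $E_j$.
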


\begin{proof}
We first prove the estimate on $\delta_{\rm aff}(f_\varepsilon)$.
Note that we can not apply Lemma \ref{lem-tail-1} directly.
Indeed, by the construction of $U$, we have, when $|x|$ is sufficiently large,
\begin{align*}
|U(x)|\sim|x|^{-\frac{n-p}{p-1}},
\quad|\nabla U(x)|\sim|x|^{-\frac{n-1}{p-1}},
\quad|\nabla^2U(x)|\sim|x|^{-\frac{n-1}{p-1}-1};
\end{align*}
combining these and the definition $R_\varepsilon:=\varepsilon^{-\frac{p-1}{n-1}}$,
we can actually conclude that, if $\varepsilon$ is sufficiently small,
then, for every $x\in B(R_\varepsilon \mathbf{e}_1, 2)$,
\begin{align}\label{lem-bubble-second-e1}
|\nabla U(x)|\sim\varepsilon
\quad\text{and}\quad
|\nabla^2U(x)|\lesssim\varepsilon,
\end{align}
but $|U(x)|\sim\varepsilon^{\frac{n-p}{n-1}}>\varepsilon$. Therefore,
the sequence $\{R_\varepsilon\}_{\varepsilon\in(0,1)}$
considered here does not satisfied \eqref{eq:small-tail}.
Fortunately, the idea used in the proof of Lemma \ref{lem-tail-1} still valids.
Observe that $\nabla U$ and $\nabla^2U$ have satisfied the desired estimates.
Furthermore, the decreasing assumption on $U$
in Lemma \ref{lem-tail-1} is only used to obtain \eqref{eq:zero-order-bound-e1}.
Repeating the proof of \eqref{eq:zero-order-bound-e1}, we
obtain the following more exact estimate:
\begin{align*}
\left||f_\varepsilon|^{p^*-2}f_\varepsilon-U^{p^*-1} \right| \lesssim
|\varepsilon\varphi_\varepsilon|^{p^*-1}+U^{p^*-2}|\varepsilon\varphi_\varepsilon|
\lesssim
\varepsilon^{p^*-1}{\bf 1}_{B(R_\varepsilon\mathbf{e}_1,1)}
+\varepsilon U^{p^*-2}{\bf 1}_{B(R_\varepsilon\mathbf{e}_1,1)}.
\end{align*}
From the definition $R_\varepsilon=\varepsilon^{-\frac{p-1}{n-1}}$,
it follows that, if $\varepsilon$ is sufficiently small,
then, for every $x\in B(R_\varepsilon \mathbf{e}_1, 2)$,
\begin{align*}
\varepsilon U(x)^{p^*-2}
\sim \varepsilon\left(\varepsilon^{-\frac{p-1}{n-1}}\right)^{-\frac{n-p}{p-1}(p^*-2)}
=\varepsilon^{\frac{np-n+2p-1}{n-1}}< \varepsilon^{p-1},
\end{align*}
which, combined with $p^*>p$, then implies that
$\| |f_\varepsilon|^{p^*-2}f_\varepsilon-U^{p^*-1} \|_q \lesssim\varepsilon^{p-1}.$
Substituting this estimate into the proof of Lemma \ref{lem-tail-1}, we can still obtain
$\delta_{\rm aff}(f_\varepsilon)\lesssim\varepsilon^{p-1}$ for sufficiently
small $\varepsilon$.

Now, we deal with the estimate on $\widetilde{d}_\varepsilon$.
Using the support assumption on $\varphi$ and using
\eqref{lem-bubble-second-e1}, we find that, for every $\varepsilon\in(0,1)$,
\begin{align*}
\widetilde{d}_\varepsilon\le
\left(\int_{\mathbb{R}^n}|\nabla U|^{p-2}|\varepsilon\nabla
\varphi_\varepsilon|^2\right)^{\frac{p-1}{p}}
=\left[\int_{B(R_\varepsilon \mathbf{e}_1,1)}|\nabla U|^{p-2}
|\varepsilon\nabla\varphi(x-R_\varepsilon\mathbf{e}_1)|^2\,dx\right]^{\frac{p-1}{p}}
\sim\varepsilon^{p-1}.
\end{align*}
Finally, we show the lower estimate by contradiction.
Assume that the lower estimate is not true.
Then there are $\{\varepsilon_j\}_{j\in\mathbb{N}}$
in $(0,1)$,
$\{\lambda_j\}_{j\in\mathbb{N}}$ in $(0,\infty)$,
symmetric matrices $\{S_j\}_{j\in\mathbb{N}}$ with
$\operatorname{Tr}S_j=0$ for all $j\in\mathbb{N}$,
and $\{x_j\}_{j\in\mathbb{N}}$ in $\mathbb{R}^n$
such that, when $j\to\infty$,
$\varepsilon_j\to0$ and, for every $j\in\mathbb{N}$,
$f_j:=f_{\varepsilon_j}$ satisfies
\begin{align}\label{lem-bubble-second-e2}
\int_{\mathbb{R}^n}
|\nabla U|^{p-2}|\nabla(T_{\lambda_j,S_j,x_j}f_j-U)|^2
<\frac{1}{j}\varepsilon_j^p.
\end{align}
Applying this and the embedding $\dot{W}^{1,2}(|\nabla U|^{p-2})\hookrightarrow L^2(U^{p^*-2}) $,
we obtain, for every $j\in\mathbb{N}$,
\begin{align*}
\int_{\mathbb{R}^n}U^{p^*-2}
\left|T_{\lambda_j,S_j,x_j}U-U\right|^2
&\lesssim\int_{\mathbb{R}^n}
U^{p^*-2}\left|T_{\lambda_j,S_j,x_j}f-U\right|^2\\
&\qquad+\int_{\mathbb{R}^n}U^{p^*-2}
\left|T_{\lambda_j,S_j,x_j}(f-U)\right|^2\\
&\lesssim\frac{1}{j}\varepsilon_j^p
+\int_{\mathbb{R}^n}U^{p^*-2}
\left|T_{\lambda_j,S_j,x_j}(f-U)\right|^2.
\end{align*}
From a change of variables, we infer that, for any $j\in\mathbb{N}$,
\begin{align*}
\int_{\mathbb{R}^n}U^{p^*-2}
\left|T_{\lambda_j,S_j,x_j}(f-U)\right|^2
=\varepsilon_j^2\int_{\mathbb{R}^n}
|T_{\lambda_j,S_j,x_j}^{-1}U|^{p^*-2}|\nabla\varphi_{\varepsilon_j}|^2
\lesssim\varepsilon_j^2.
\end{align*}
Therefore, when $j\to\infty$,
$\int_{\mathbb{R}^n}U^{p^*-2}|T_{\lambda_j,S_j,x_j}U-U|^2\to0$.
This further implies that $\{T_{\lambda_j,S_j,x_j}U\}_{j\in\mathbb{N}}$ converges to $U$
locally in measure. Then, repeating the proof of
Corollary \ref{cor-escape}, we conclude that, when $j\to\infty$,
$\lambda_j\to1$,
$S_j\to{\bf 0}$, and $x_j\to{\bf 0}$. For given sufficiently large $j\in\mathbb{N}$,
define $R_j:=R_{\varepsilon_j}$ and $E_j:=x_j+\lambda_je^{S_j}B(R_j\mathbf{e}_1,1)$.
By the above convergence conditions, we have
$E_j\subset B(R_j\mathbf{e}_1,2)$.
Thus, applying \eqref{lem-bubble-second-e1}, we find that
\begin{align*}
&\left[\int_{\mathbb{R}^n}
|\nabla U|^{p-2}|\nabla(T_{\lambda_j,S_j,x_j}f_j-U)|^2\right]^{\frac12}\\
&\quad\gtrsim \varepsilon_{j}^{\frac{p}{2}-1}
\left[\int_{E_j}|\nabla(T_{\lambda_j,S_j,x_j}f_j-U)|^2\right]^{\frac12}\\
&\quad\ge \varepsilon_j^{\frac p2} \left\{\left[\int_{E_j}
|\nabla(T_{\lambda_j,S_j,x_j}\varphi_{\varepsilon_j})|^2\right]^{\frac12}
-\varepsilon_j^{-1}\left[\int_{E_j}|\nabla(T_{\lambda_j,S_j,x_j}U-U)|^2\right]^{\frac12}\right\}\\
&\quad=:\varepsilon_j^{\frac p2}\left(I_{1,j}-I_{2,j}\right).
\end{align*}
Using the change of variables $x:=T_j(y):=x_j+\lambda_j e^{S_j}
(R_j\mathbf{e}_1+y)$ with $y\in B({\bf 0},1)$, and using
the above convergence conditions on parameters, we obtain, when $j\to\infty$,
\begin{align*}
\int_{E_j}
|\nabla(T_{\lambda_j,S_j,x_j}\varphi_{\varepsilon_j})|^2
=\lambda_j^{n(1-\frac 2p)}\int_{B({\bf0},1)}\left|e^{-S_j}\nabla\varphi(y)\right|^2\,dy
\to\|\nabla\varphi\|_{p}^2.
\end{align*}
Consequently, when $j$ is sufficiently large, $I_{1,j}\gtrsim 1$. In addition,
from the construction of $U$ and the definition $R_j=\varepsilon_j^{-\frac{p-1}{n-1}}$, it follows that there is
a positive constant $c_{n,p}$, depending only on $n$ and $p$, such that,
for every $x\in\mathbb{R}^n$,
\begin{align*}
\varepsilon_j^{-1}\nabla U(x)
=-c_{n,p}\left(\frac{1}{R_j^{p'}}+\left|\frac{x}{R_j}\right|^{p'}\right)^{-\frac np}
\left|\frac{x}{R_j}\right|^{p'-2}\frac{x}{R_j}.
\end{align*}
This implies that, when $j\to\infty$,
\begin{align*}
\varepsilon_j^{-1}\nabla U(y+R_j\mathbf{e}_1)
\to -c_{n,p}\mathbf{e}_1\quad\text{and}\quad
\varepsilon_j^{-1}\nabla U(T_j(y))\to -c_{n,p}\mathbf{e}_1
\end{align*}
uniformly on $B({\bf0},1)$. Combining this, a change of variables, and
the above convergence conditions, we obtain, when $j\to\infty$,
\begin{align*}
I_{j,2}
=\lambda_j^{\frac n2}
\left[\int_{B({\bf0},1)}\left|\lambda_j^{-\frac np}
e^{-S_j}\left(\varepsilon_j^{-1}\nabla U(y+R_j\mathbf{e}_1)\right)
-\varepsilon_j^{-1}\nabla U(T_j(y))\right|^2\,dy\right]^{\frac12}\to0.
\end{align*}
From this and the estimate on $I_{j,1}$, we infer that, for sufficiently large $j$,
\begin{align*}
\int_{\mathbb{R}^n}
|\nabla U|^{p-2}|\nabla(T_{\lambda_j,S_j,x_j}f_j-U)|^2
\gtrsim\varepsilon_j^p,
\end{align*}
which contradicts \eqref{lem-bubble-second-e2}.
Hence, the desired lower bound is true, which concludes the proof.
\end{proof}


\begin{appendices}

\section{Vanishing under escaping affine transformations}

We record the following elementary compactness observation.
While the corresponding statements for translations and dilations are
standard in concentration-compactness arguments, the appearance
of anisotropic affine deformations in our setting requires us to state a version
for general linear maps. Recall that
the general affine transformation $\mathcal{T}$ is deinfed
in \eqref{def-tran-g}.

\begin{lemma}\label{lem-escape}
Let $q\in[1,\infty)$ and $f\in L^q({\mathbb R}^n)$.  Let
$\{\lambda_j\}_{j\in\mathbb{N}}\in(0,\infty)$,
$\{A_j\}_{j\in\mathbb{N}}$ in
$\operatorname{SL}(n)$, and $\{x_j\}_{j\in\mathbb{N}}$
in ${\mathbb R}^n$ be such that, when $j\to\infty$,
\begin{align*}
|\log\lambda_j|+
\|A_j\|_{\mathcal{L}(\mathbb{R}^n)}+|x_j|\to\infty .
\end{align*}
Then ${\mathcal T}_{\lambda_j,A_j,x_j}^{(q)}f\to0$
locally in measure as $j\to\infty$.
\end{lemma}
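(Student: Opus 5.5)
We will first reduce to a dense class and then treat the escape mechanisms one at a time. Since $\mathcal T^{(q)}_{\lambda,A,x}$ is an isometry on $L^q$, and convergence locally in measure on a ball $B$ is implied by $\|\cdot\|_{L^q(B)}\to0$ up to an arbitrarily small $L^q$-error, it suffices to prove the claim for $f\in C_{\rm c}(\mathbb R^n)$. Concretely, given $\eta>0$ we pick $g\in C_{\rm c}$ with $\|f-g\|_q<\eta$. Chebyshev's inequality then bounds $|\{|\mathcal T(f-g)|>t\}|\le t^{-q}\eta^q$ uniformly in $j$, so only $g$ matters. Moreover, it suffices to show that every subsequence has a further subsequence along which $|\{x\in B: |\mathcal T_j g(x)|>t\}|\to0$ for each fixed ball $B$ and $t>0$.

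We then pass to a subsequence so that one of three alternatives holds: (a) $\lambda_j\to\infty$ or $\lambda_j\to0$; (b) $\lambda_j$ stays bounded and bounded away from zero while $\|A_j\|\to\infty$; (c) $\lambda_j$ and $A_j$ stay bounded (so $A_j^{-1}$ is bounded too, because $\det A_j=1$) while $|x_j|\to\infty$.

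In case (c) we may assume $\lambda_j\to\lambda$ and $A_j\to A$. The support of $\mathcal T_jg$ is $x_j+\lambda_jA_j\operatorname{supp}g$, which lies in a ball of bounded radius around $x_j$ and therefore eventually misses $B$.

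In case (a) with $\lambda_j\to\infty$, we have the pointwise bound $|\mathcal T_j g|\le\lambda_j^{-n/q}\|g\|_\infty\to0$ uniformly. With $\lambda_j\to0$, the support $x_j+\lambda_jA_j\operatorname{supp}g$ is the image of a fixed compact set under an affine map of determinant $\lambda_j^n$. Its measure is therefore $\lambda_j^n|\operatorname{supp}g|\to0$, which gives convergence in measure.

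The main obstacle is case (b), where the support $x_j+\lambda_jA_j K$ (with $K:=\operatorname{supp}g$) has fixed measure but becomes an elongated, thin set. By the singular value decomposition $A_j=P_jD_jQ_j$ with $P_j,Q_j\in O(n)$ and $D_j$ diagonal with $\det D_j=1$, the largest singular value $\sigma_{1,j}$ tends to infinity. We must show that $|(x_j+\lambda_jA_jK)\cap B|\to0$. Changing variables, this measure equals $\lambda_j^n|K\cap(\lambda_jA_j)^{-1}(B-x_j)|$. The set $(\lambda_jA_j)^{-1}(B-x_j)$ is an ellipsoid of volume $\sim|B|$ whose smallest semi-axis is about $r/(\lambda_j\sigma_{1,j})\to0$. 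Hence it is contained in a slab of width $w_j\to0$ orthogonal to the direction $Q_j^{T}e_1$. Intersecting this slab with the bounded set $K\subset B(0,\rho)$ gives measure at most $C\rho^{n-1}w_j\to0$. This settles case (b) and completes the argument.
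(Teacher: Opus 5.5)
Your proof is correct and follows essentially the same route as the paper: a density reduction to $C_{\rm c}$, a subsequence argument, and the same case split into $\lambda_j\to\infty$ (uniform pointwise decay), $\lambda_j\to 0$ (support measure $\lambda_j^n|\operatorname{supp} g|$), degenerating $A_j$ handled by a thin-slab estimate, and escaping translations. The only cosmetic difference is in the degenerate-matrix case. You place the slab on the preimage side, using the largest singular value of $A_j$; the paper places it on the image side, using the smallest singular value, which tends to $0$ because $\det A_j=1$.
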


\begin{proof}
By the density argument,
we can assume $f\in C_{\rm c}$.
To prove the present lemma for such $f$, we only need to show
that, for every compact set $K\subset {\mathbb R}^n$ and
every $t\in(0,\infty)$,
\begin{align}\label{lem-escape-e1}
\lim_{j\to\infty}
\left|\left\{x\in K:
\left|{\mathcal T}_{\lambda_j,A_j,x_j}^{(q)}f(x)\right|>t\right\}\right|=0.
\end{align}
Assume that both
$\operatorname{supp}f$ and $K$ are contained in
$B({\bf0},R)$ for some $R\in(0,\infty)$.
Moreover, it is enough to prove \eqref{lem-escape-e1}
along subsequences. Indeed,
if \eqref{lem-escape-e1} does not hold,
then there are a subsequence, still denoted by $\{(\lambda_j,A_j,x_j)\}_{j\in\mathbb{N}}$,
and an $\varepsilon_0\in(0,\infty)$ such that, for all $j$,
\begin{align}\label{lem-escape-e2}
\left|\left\{x\in K:
\left|{\mathcal T}_{\lambda_j,A_j,x_j}^{(q)}f(x)\right|>t\right\}\right|\ge\varepsilon_0.
\end{align}
The argument below for subsequences shows that the left-hand side of
\eqref{lem-escape-e1} converges to zero along a further subsequence,
which contradicts \eqref{lem-escape-e2}.
Thus, after passing to a
subsequence and still denoting it by $\{(\lambda_j,A_j,x_j)\}_{j\in\mathbb{N}}$,
it suffices to consider the following four cases.

\emph{Case 1.} $\lim_{j\to\infty}
\lambda_j=\infty.$
In this case, for any $x\in\mathbb{R}^n$,
\begin{align*}
\left|{\mathcal T}_{\lambda_j,A_j,x_j}^{(q)}f(x)\right|
\le
\lambda_j^{-\frac nq}\|f\|_\infty
\to0
\end{align*}
as $j\to\infty$.
This further implies \eqref{lem-escape-e1}.

\emph{Case 2.} $\lim_{j\to\infty}
\lambda_j=0.$ In this case, by the support assumption,
we find that, for all $j\in\mathbb{N}$,
$\operatorname{supp}{\mathcal T}_{\lambda_j,A_j,x_j}^{(q)}f
\subset x_j+\lambda_jA_jB({\bf 0},R)$.
Applying this, we further obtain, when $j\to\infty$,
\begin{align*}
\left|\left\{x\in K:
\left|{\mathcal T}_{\lambda_j,A_j,x_j}^{(q)}f(x)\right|>t\right\}\right|
\le
\left|x_j+\lambda_jA_jB({\bf 0},R)\right|
=
\lambda_j^n |B({\bf 0},R)|
\to0,
\end{align*}
which proves \eqref{lem-escape-e1} in this case.

\emph{Case 3.} The sequence
$\{|\log\lambda_j|\}_{j\in\mathbb{N}}$ is bounded, but
\begin{align}\label{lem-escape-e3}
\lim_{j\to\infty}\|A_j\|_{\mathcal{L}(\mathbb{R}^n)}=\infty.
\end{align}  Let
$s_j^{(1)}\ge \cdots \ge s_j^{(n)}>0$ denote the singular value of $A_j$.
Then
$$s_j^{(1)}=\|A_j\|_{\mathcal{L}(\mathbb{R}^n)}
\quad\text{and}\quad
s_{j}^{(n)}=\frac{1}{\|A^{-1}_j\|_{\mathcal{L}(\mathbb{R}^n)}}
=\frac{1}{\|A^{-T}_j\|_{\mathcal{L}(\mathbb{R}^n)}}
=\min_{|v|=1}|A_j^{T}v|.$$
Applying this,
we conclude that, for every $j\in\mathbb{N}$,
\begin{align*}
1=|\det A_j|
=\prod_{k=1}^{n}s_j^{(k)}
\ge s_{j}^{(1)}\left[s_j^{(n)}\right]^{n-1}
=\|A_j\|_{\mathcal{L}(\mathbb{R}^n)}
\left[s_j^{(n)}\right]^{n-1}.
\end{align*}
From this and \eqref{lem-escape-e3}, it follows that
$\lim_{j\to\infty}s_j^{(n)}=0$.
Moreover, there is a unit vector $\nu_j$ such that
$|A_j^T\nu_j|=s_j^{(n)}$,
which implies that, for every $j\in\mathbb{N}$ and
$y\in x_j+\lambda_jA_jB({\bf 0},R)$,
$|\nu_j\cdot(y-x_j)|
\le\lambda_j
Rs_j^{(n)}.$
 Hence, by the support assumption and the boundedness of $\{|\log\lambda_j|\}_{j\in\mathbb{N}}$,
we obtain, when $j\to\infty$,
\begin{align*}
\left|\left\{x\in K:
\left|{\mathcal T}_{\lambda_j,A_j,x_j}^{(q)}f(x)\right|>t\right\}\right|
&\le \left|B({\bf 0},R)\cap \left[x_j+\lambda_jA_jB({\bf 0},R)\right]\right|\\
&\le\left|B({\bf 0},R)\cap\left\{y: |\nu_j\cdot(y-x_j)|\le \lambda_jRs_j^{(n)}\right\}\right|
\lesssim s_j^{(n)}\to0.
\end{align*}
Then the desired convergence holds.

\emph{Case 4.}
The sequences $\{|\log\lambda_j|\}_{j\in\mathbb{N}}$
and $\{\|A_j\|_{\mathcal{L}(\mathbb{R}^n)}\}_
{j\in\mathbb{N}}$ are both bounded.
In this case, it holds that
$\lim_{j\to\infty}|x_j|=\infty$.
In addition, there is an $M\in(0,\infty)$ such that,
for all $j\in\mathbb{N}$,
$x_j+\lambda_j A_jB({\bf 0},R)\subset x_j+B({\bf0},MR)$.
Thus, if $j$ is sufficiently large,
then $K\cap [x_j+\lambda_jA_jB({\bf 0},R)]=\emptyset$ and hence
$$\left\{x\in K:|{\mathcal T}_{\lambda_j,A_j,x_j}^{(q)}f(x)|>t\right\}=\emptyset.$$
This proves the desired convergence in \eqref{lem-escape-e1}.

Combining the above four cases, we complete the present proof.
\end{proof}

As a consequence, we have the following
convergence of bubbles in terms of parameters.

\begin{corollary}\label{cor-escape}
Let $p\in(1,n)$.  Assume that the
sequence of tuples of affine transformation parameters
$\{(c_j,\lambda_j,S_j,x_j)\}_{j\in\mathbb{N}}\in\mathscr{A}$
satisfies
\begin{align*}
\lim_{j\to\infty}\left\|T_{\lambda_j,S_j,x_j}U-c_j U\right\|_{p^*}=0.
\end{align*}
Then, when $j\to\infty$,
$(c_j,\lambda_j,S_j,x_j)\to(1,1,{\bf0},{\bf0})$
in $\mathscr{A}$.
\end{corollary}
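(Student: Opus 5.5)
The plan is to argue by contradiction, using Lemma \ref{lem-escape} to pin down the parameters $(\lambda_j,S_j,x_j)$ first and then reading off $c_j$. Since $U$ is radial, $T_{\lambda_j,S_j,x_j}U=\mathcal{T}^{(p^*)}_{\lambda_j,e^{S_j},x_j}U$, and every such function has $L^{p^*}$ norm $1$. The hypothesis gives $\|T_{\lambda_j,S_j,x_j}U\|_{p^*}-|c_j|\,\|U\|_{p^*}\to0$, so $|c_j|\to1$.

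Next I will show that the sequence $(\lambda_j,S_j,x_j)$ stays bounded in the sense that $|\log\lambda_j|+\|e^{S_j}\|_{\mathcal L(\mathbb R^n)}+|x_j|$ is bounded. Suppose it is not. Along a subsequence this quantity tends to infinity, and Lemma \ref{lem-escape} (with $q=p^*$, $A_j=e^{S_j}\in\operatorname{SL}(n)$, using $\det e^{S_j}=e^{\operatorname{Tr}S_j}=1$) gives $T_{\lambda_j,S_j,x_j}U\to0$ locally in measure. On the other hand, $T_{\lambda_j,S_j,x_j}U-c_jU\to0$ in $L^{p^*}$, hence also locally in measure. Therefore $c_jU\to0$ locally in measure. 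Because $|c_j|\to1$ and $U>0$ is bounded below on every ball, this is a contradiction.

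With boundedness in hand, pass to a convergent subsequence $(c_j,\lambda_j,S_j,x_j)\to(c,\lambda,S,x)$ with $|c|=1$, $\lambda>0$, $S$ symmetric and trace free. The map $(\lambda,S,x)\mapsto T_{\lambda,S,x}U$ is continuous into $L^{p^*}$ (continuity of dilations and translations in $L^{p^*}$), so the limit satisfies $T_{\lambda,S,x}U=cU$. Positivity of both sides forces $c=1$. Evaluating at maxima, $U$ has its unique maximum at $0$ and $T_{\lambda,S,x}U$ has its maximum at $x$, so $x=0$. Comparing maximal values gives $\lambda^{-(n-p)/p}=1$, i.e.\ $\lambda=1$. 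Comparing level sets, $\{U(\lambda^{-1}e^{-S}\cdot)>t\}$ are the ellipsoids $e^{S}B$ while those of $U$ are balls, so $e^{S}B=B$. This means $e^S$ is orthogonal; being symmetric positive definite, it equals the identity, so $S=0$. This is exactly the uniqueness of parameters noted after \eqref{def-min-equiv}.

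Since every subsequence has a further subsequence converging to $(1,1,\mathbf 0,\mathbf 0)$, the whole sequence converges to it. The main obstacle is the escape step: $e^{S_j}$ could degenerate anisotropically while $\lambda_j$ and $x_j$ stay bounded. This is handled precisely by Case 3 of Lemma \ref{lem-escape}, because $\|S_j\|\to\infty$ is equivalent to $\|e^{S_j}\|_{\mathcal L(\mathbb R^n)}\to\infty$ for trace-free symmetric $S_j$.
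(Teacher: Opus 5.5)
Your proof is correct and follows the same route as the paper. You get $|c_j|\to1$ from norm invariance, use Lemma \ref{lem-escape} against local convergence in measure to rule out escape of $(\lambda_j,e^{S_j},x_j)$, pass to a convergent subsequence, identify the limit by uniqueness of parameters, and finish with the subsequence principle. Two points are more careful than the paper's text: you only claim $|c_j|\to1$ and fix the sign by positivity, and you justify both the parameter uniqueness and why bounded $\|e^{S_j}\|$ forces bounded $\|S_j\|$ for trace-free symmetric $S_j$.
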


\begin{proof}
Since the $L^{p^*}$ norm is invariant under $T_{\lambda_j,S_j,x_j}$,
we infer that $c_j\to 1$ as $j\to\infty$.
Hence, in what follows, we can assume $c_j=1$ for every
$j\in\mathbb{N}$.
For every $j\in\mathbb{N}$, define $A_j:=e^{S_j}$.  Then
$T_{\lambda_j,S_j,x_j}U={\mathcal T}_{\lambda_j,A_j,x_j}^{(p^*)}U.$
If
\begin{align*}
|\log\lambda_j|+
\|A_j\|_{\mathcal{L}(\mathbb{R}^n)}+|x_j|\to\infty
\end{align*}
as $j\to\infty$,
then, using Lemma \ref{lem-escape}, we
find that $\{T_{\lambda_j,S_j,x_j}U\}_{j\in\mathbb{N}}$ converges to zero
locally in measure.  On the other hand, the convergence of
$\{T_{\lambda_j,S_j,x_j}U\}_{j\in\mathbb{N}}$
in $L^{p^*}$ implies the
local convergence in measure to $U$.  This is impossible because $U$ is not identically
zero. Therefore, there are a positive constant $M$ and
a subsequence, still denoted by
$\{(\lambda_j,S_j,x_j)\}_{j\in\mathbb{N}}$, such that,
for every $j\in\mathbb{N}$,
\begin{align*}
|\log\lambda_j|+
\|S_j\|_{\mathcal{L}(\mathbb{R}^n)}+|x_j|\le M.
\end{align*}
From this, we further deduce that
there are a subsequence, still denoted by
$\{(\lambda_j,S_j,x_j)\}_{j\in\mathbb{N}}$,
and $(\lambda,S,x)$ with $\lambda>0$ such that,
when $j\to\infty$,
$\lambda_j\to\lambda$, $S_j\to S$, and
$x_j\to x$.
By the Lebesgue dominated convergence theorem
and the construction of $U$,
we then conclude that
\begin{align*}
T_{\lambda_j,S_j,x_j}U\to T_{\lambda,S,x}U
\end{align*}
strongly in $L^{p^*}$. Therefore, $T_{\lambda,S,x}U=U$
and hence $\lambda=1$,
$S={\bf 0}$, and $x={\bf 0}$.
This establishes the conclusion of Corollary \ref{cor-escape} for a subsequence.
Then, arguing by contradiction similar to the proof of Lemma \ref{lem-escape},
we find that the conclusion actually holds for the full sequence, which completes the proof.
\end{proof}

\end{appendices}

\begin{appendices}

\section{Nonlinear Brezis--Lieb lemmas}\label{App-B}

Recall that the well-known Brezis--Lieb lemma (see \cite[Theorem 1.9]{ll01}):
If $(\Omega,\mu)$ is a measure space,
$p\in(0,\infty)$, and $\{f_k\}_{k\in\mathbb{N}}$ is a bounded sequence in
$L^p(\Omega)$ such that $f_k\to f$ almost everywhere as $k\to\infty$, then
\begin{align}\label{eq-BL}
\lim_{k\to\infty}\left\||f_k|^p-|f|^p-|f-f_k|^p\right\|_{L^1(\Omega)}=0.
\end{align}

In this section, we establish the following two nonlinear Brezis--Lieb lemmas,
which play an important role in our proof of the relative compactness.

\begin{lemma}\label{lem-BL}
Let $(\Omega,\mu)$ be a measure space.
\begin{enumerate}[{\rm(i)}]
  \item\label{it1-BL} If $p\in(1,\infty)$,
  $\{f_k\}_{k\in\mathbb{N}}$ is a bounded sequence in
  $L^p(\Omega)$, and $f_k\to f$ almost everywhere as $k\to\infty$,
  then
  \begin{align*}
  \lim_{k\to\infty}\left\||f_k|^{p-2}f_k-|f|^{p-2}f-|f_k-f|^{p-2}(f_k-f)
  \right\|_{L^{p'}{(\Omega)}}=0.
  \end{align*}
  \item\label{it2-BL} Further assume that $(\Omega,\mu)$ is a finite measure space and
  that the sequences
  $\{a_k\}_{k\in\mathbb{N}}$ and $\{b_{k}\}_{k\in\mathbb{N}}$ of nonnegative measurable functions
  and the measurable function $a$ satisfy that there are positive constants
  $c_0,c_1,c_2$ such that $a\ge c_0$ and $c_1\leq a_k\leq c_2$ for any
  $k\in\mathbb{N}$. If $\|a_k-a-b_k\|_{L^1(\Omega)}\to0$ as $k\to\infty$,
  then, for any $\alpha\in(0,\infty)$,
  \begin{align*}
  \lim_{k\to\infty}\left|\left(\int_{\Omega}a_k^{-\alpha}\right)^{-\frac1\alpha}
  -\left[\int_{\Omega}(a+b_k)^{-\alpha}\right]^{-\frac1\alpha}\right|=0.
  \end{align*}
\end{enumerate}
\end{lemma}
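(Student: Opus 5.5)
For part (i), I will reduce to a pointwise inequality plus dominated convergence, as in the classical proof of \eqref{eq-BL}. Write $\Phi(t):=|t|^{p-2}t$ and $g_k:=f_k-f$. Since $\Phi$ is continuous and homogeneous of degree $p-1$, an $\varepsilon$-splitting argument gives, for every $\varepsilon>0$, a constant $C_\varepsilon$ such that
\begin{align*}
|\Phi(g+f)-\Phi(g)|\le \varepsilon|g|^{p-1}+C_\varepsilon|f|^{p-1}
\qquad\text{for all}\ f,g\in\mathbb R.
\end{align*}
To see this, treat the case $|f|\le\delta|g|$ by the uniform continuity of $\Phi$ on the unit sphere after dividing by $|g|^{p-1}$, and bound the opposite case crudely by $C|f|^{p-1}$.

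Next define
\begin{align*}
W_{\varepsilon,k}:=\Bigl(|\Phi(f_k)-\Phi(f)-\Phi(g_k)|-\varepsilon|g_k|^{p-1}\Bigr)_+ .
\end{align*}
Then $W_{\varepsilon,k}\le (C_\varepsilon+1)|f|^{p-1}$. This bound lies in $L^{p'}$, because $|f|^{(p-1)p'}=|f|^p$ and $f\in L^p$ by Fatou's lemma. Moreover $W_{\varepsilon,k}\to0$ almost everywhere, since $g_k\to0$ almost everywhere. Dominated convergence therefore gives $\|W_{\varepsilon,k}\|_{p'}\to0$. The remainder satisfies
\begin{align*}
\varepsilon\||g_k|^{p-1}\|_{p'}=\varepsilon\|g_k\|_p^{p-1}\le C\varepsilon
\end{align*}
by boundedness. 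Letting $k\to\infty$ and then $\varepsilon\to0$ proves (i).

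For part (ii), I will compare $\int a_k^{-\alpha}$ with $\int(a+b_k)^{-\alpha}$ through the mean value theorem for $t\mapsto t^{-\alpha}$. Both arguments are bounded below: $a_k\ge c_1$, and $a+b_k\ge c_0$ because $b_k\ge0$. The derivative $\alpha t^{-\alpha-1}$ is therefore bounded by $\alpha\min\{c_0,c_1\}^{-\alpha-1}$ on the relevant range. This gives, pointwise,
\begin{align*}
|a_k^{-\alpha}-(a+b_k)^{-\alpha}|\le C|a_k-a-b_k|.
\end{align*}
Integrating, the difference of the two integrals is at most $C\|a_k-a-b_k\|_{L^1}\to0$.

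It remains to pass through the map $s\mapsto s^{-1/\alpha}$. This map is uniformly continuous on any interval $[m,M]$ with $m>0$. Such an interval contains both integrals for large $k$. Indeed, $\int a_k^{-\alpha}\in[c_2^{-\alpha}\mu(\Omega),c_1^{-\alpha}\mu(\Omega)]$, which is bounded away from $0$ because $\mu(\Omega)<\infty$ and positive, and the other integral is within $o(1)$ of it. If $\mu(\Omega)=0$, both sides are trivial.

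The main obstacle is the pointwise $\varepsilon$-inequality in (i). The cases $p<2$ and $p\ge2$ behave differently locally, so I would prove it uniformly by the homogeneity and compactness argument above rather than by Taylor expansion.
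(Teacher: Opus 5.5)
Your proposal is correct and follows essentially the same route as the paper. For (i), both use a pointwise $\varepsilon$-inequality for $\Phi(t)=|t|^{p-2}t$ inside the Brezis--Lieb truncation argument; the paper states it as $|\Phi(a+b)-\Phi(b)|^{p'}\le\varepsilon|b|^p+C_\varepsilon|a|^p$, proves it by contradiction, and applies reverse Fatou to $p'$-th powers where you use your $(\cdot)_+$ truncation with dominated convergence. For (ii), both use the mean value theorem for $t\mapsto t^{-\alpha}$ and then control $s\mapsto s^{-1/\alpha}$ away from $0$.

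One quibble: when $\mu(\Omega)=0$, the quantities $(\int a_k^{-\alpha})^{-1/\alpha}$ are $+\infty$, so the case is not ``trivial''; you should simply assume $\mu(\Omega)>0$, as the paper implicitly does.
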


To show Lemma \ref{lem-BL}\eqref{it1-BL}, we need the following
inequalities.

\begin{lemma}\label{lem-pine}
Let $p\in(1,\infty)$.
For any $\varepsilon\in(0,\infty)$, there is a positive constant
$C_{\varepsilon}$ such that, for any $a,b\in\mathbb{C}$,
\begin{align}\label{lem-pine-e0}
\left||a+b|^p-|b|^{p}\right|
\le\varepsilon|b|^p+C_{\varepsilon}|a|^p
\end{align}
and
\begin{align}\label{lem-pine-ee}
\left||a+b|^{p-2}(a+b)-|b|^{p-2}b\right|^{p'}
\le\varepsilon|b|^p+C_{\varepsilon}|a|^p.
\end{align}
\end{lemma}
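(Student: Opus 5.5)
The two inequalities in Lemma~\ref{lem-pine} are elementary pointwise estimates. The plan is to prove each by a homogeneity and compactness argument, or alternatively by the mean value theorem combined with Young's inequality. Both statements are invariant under the scaling $(a,b)\mapsto(ta,tb)$ with $t>0$, since every term is homogeneous of degree $p$. When $b=0$ both hold trivially with $C_\varepsilon\ge1$, so we may assume $b\ne0$, normalize $|b|=1$, and write $z:=a/b$.

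For \eqref{lem-pine-e0}, after normalization we need $\big||1+z|^p-1\big|\le\varepsilon+C_\varepsilon|z|^p$. We split into two cases.
\begin{enumerate}[{\rm(i)}]
  \item If $|z|\le\delta$, the mean value theorem gives $\big||1+z|^p-1\big|\le p(1+\delta)^{p-1}|z|\le p2^{p-1}\delta$. Choosing $\delta:=\min\{1,\varepsilon 2^{1-p}/p\}$ makes this at most $\varepsilon$.
  \item If $|z|>\delta$, then $\big||1+z|^p-1\big|\le(1+|z|)^p+1\le\big[(1+\delta^{-1})^p+\delta^{-p}\big]|z|^p$, using $1<\delta^{-1}|z|$.
\end{enumerate}
So $C_\varepsilon:=(1+\delta^{-1})^p+\delta^{-p}$ works in both cases.

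For \eqref{lem-pine-ee}, the map $w\mapsto|w|^{p-2}w$ on $\mathbb C\cong\mathbb R^2$ is continuous and homogeneous of degree $p-1$. After normalization we need $\big||1+z|^{p-2}(1+z)-1\big|^{p'}\le\varepsilon+C_\varepsilon|z|^p$, using $(p-1)p'=p$. Again we split into two cases.
\begin{enumerate}[{\rm(i)}]
  \item For small $|z|$, uniform continuity of $w\mapsto|w|^{p-2}w$ on the compact set $\{|w-1|\le1/2\}$ gives a $\delta$ such that $|z|\le\delta$ implies the left side is at most $\varepsilon$. This works for both $p<2$ and $p\ge2$, since the map is continuous away from $0$ in either case.
  \item For $|z|>\delta$, we bound the left side by $\big(|1+z|^{p-1}+1\big)^{p'}\le 2^{p'}\big((1+|z|)^{p}+1\big)$. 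Arguing as before, this is at most $C'_\delta|z|^p$.
\end{enumerate}

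The only delicate point is that the small-$|z|$ regime must be handled by continuity rather than differentiability when $p<2$. The compactness argument above covers this, so I do not expect a genuine obstacle.
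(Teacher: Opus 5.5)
Your proof is correct and uses the same ingredients as the paper's: you reduce by homogeneity to $|b|=1$, you use continuity of $w\mapsto|w|^{p-2}w$ at the normalized point when $|a|$ is small, and you use a crude growth bound when $|a|\gtrsim|b|$. The difference is organizational: you write it as a direct case split with an explicit $\delta$ (rotating $b$ to $1$ via $z=a/b$) and you also prove \eqref{lem-pine-e0}, which the paper simply cites from Brezis, whereas the paper proves \eqref{lem-pine-ee} by contradiction (normalizing $|b_k|=1$, forcing $a_k\to0$ and passing to a limit $b_k\to b_0$). One small remark: $w\mapsto|w|^{p-2}w$ is smooth on $\mathbb{C}\setminus\{0\}$ for every $p>1$, so differentiability near $w=1$ is available even for $p<2$; your caveat about $p<2$ is unnecessary but harmless.
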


\begin{proof}
Inequality \eqref{lem-pine-e0} is well known;
see, for instance, \cite[p.\,699]{Bre02}.
Next, we prove \eqref{lem-pine-ee} by contradiction.
Indeed, assume \eqref{lem-pine-ee} does not hold.
Then there are a positive constant $\varepsilon_0$
and two complex sequences $\{a_k\}_{k\in\mathbb{N}}$ and $\{b_{k}\}_{k\in\mathbb{N}}$
such that, for all $k\in\mathbb{N}$,
\begin{align}\label{lem-pine-e1}
\left||a_k+b_k|^{p-2}(a_k+b_k)-|b_k|^{p-2}b_k\right|^{p'}
>\varepsilon_0|b_k|^p+k|a_k|^p.
\end{align}
Then $b_k$ should not be zero and hence, by the homogeneity,
we can assume $|b_k|=1$ for all $k\in\mathbb{N}$.
This, together with the triangle inequality, implies that, for any $k\in\mathbb{N}$,
\begin{align*}
\left||a_k+b_k|^{p-2}(a_k+b_k)-|b_k|^{p-2}b_k\right|^{p'}
\le\left[(|a_k|+1)^{p-1}+1\right]^{p'}.
\end{align*}
From this and \eqref{lem-pine-e1}, we infer that,
for all $k\in\mathbb{N}$,
\begin{align*}
k|a_k|^p\le\left[(|a_k|+1)^{p-1}+1\right]^{p'}.
\end{align*}
This shows that $\{a_k\}_{k\in\mathbb{N}}$ can not have a positive lower bound.
Hence, we can choose the subsequences,
still denoted by $\{a_k\}_{k\in\mathbb{N}}$ and $\{b_k\}_{k\in\mathbb{N}}$,
such that, when $k\to\infty$, $a_k\to0$ and $b_k\to b_0$ for some $b_0\in\mathbb{C}$.
Then, letting $k\to\infty$ in \eqref{lem-pine-e1},
we obtain $0\ge \varepsilon_0$, which is impossible.
This then completes the proof of the present lemma.
\end{proof}

Now, we give the proof of Lemma \ref{lem-BL}.

\begin{proof}[Proof of Lemma \ref{lem-BL}]
We first consider \eqref{it1-BL}. To this end, fix $\varepsilon\in(0,1)$.
Applying Lemma \ref{lem-pine},
we find that there are two positive constants $C_{\varepsilon,1}$
and $C_{\varepsilon,2}$ such that, for all $k\in\mathbb{N}$,
\begin{align*}
F_k:=&\,\left||f_k|^{p-2}f_k-|f|^{p-2}f-|f_k-f|^{p-2}(f_k-f)\right|^{p'}\\
\le&\,(1+\varepsilon)\left||f_k|^{p-2}f_k-|f_k-f|^{p-2}(f_k-f)\right|^{p'}
+C_{\varepsilon,1}|f|^p\quad\text{by \eqref{lem-pine-e0}}\\
\le&\,\varepsilon(1+\varepsilon)|f_k-f|^p
+\left[C_{\varepsilon,1}+(1+\varepsilon)C_{\varepsilon,2}\right]
|f|^p\quad\text{by \eqref{lem-pine-ee}}.
\end{align*}
Thus, by Fatou's lemma, we obtain
\begin{align}\label{lem-BL-e1}
\limsup_{k\to\infty}\int_{\Omega}
\left[F_k-\varepsilon(1+\varepsilon)|f_k-f|^p\right]
\le \int_{\Omega}\limsup_{k\to\infty}
\left[F_k-\varepsilon(1+\varepsilon)|f_k-f|^p\right]=0.
\end{align}
On the other hand,
from the boundedness of $\{f_k\}_{k\in\mathbb{N}}$
in $L^p(\Omega)$, it follows that there is an
$M\in(0,\infty)$ such that, for all $k\in\mathbb{N}$,
\begin{align*}
\int_{\Omega}F_k&=
\int_{\Omega}
\left[F_k-\varepsilon(1+\varepsilon)|f_k-f|^p\right]+
\varepsilon(1+\varepsilon)\int_{\Omega}|f_k-f|^p\\
&\le \int_{\Omega}
\left[F_k-\varepsilon(1+\varepsilon)|f_k-f|^p\right]+M\varepsilon.
\end{align*}
Hence, applying this and \eqref{lem-BL-e1}, we conclude
$\limsup_{k\to\infty}\int_{\Omega}F_k\le M\varepsilon$.
Then the arbitrariness of $\varepsilon$, together
with the non-negativity of $F_k$,
further implies
$\int_{\Omega}F_k\to0$ as $k\to\infty$.
This shows \eqref{it1-BL}.

Next, we show \eqref{it2-BL}. It is much simpler.
Indeed, from the assumptions, we infer that
$a_k\ge c_1$ and $a+b_k\ge c_0$ for all $k\in\mathbb{N}$.
Using this and Taylor's formula, we find that,
for any $k\in\mathbb{N}$, there is a
$t\in(0,1)$ such that
\begin{align*}
\left|a_k^{-\alpha}-(a+b_k)^{-\alpha}\right|
=\alpha|ta_k+(1-t)(a+b_k)|^{-\alpha-1}
|a_k-a-b|
\lesssim|a_k-a-b|,
\end{align*}
where the implicit positive constant
is independent of $k$ and
is uniform on $\Omega$.
Therefore, using $\lim_{k\to\infty}\|a_k-a-b\|_{L^1(\Omega)}=0$,
we obtain
\begin{align}\label{lem-BL-e2}
\lim_{k\to\infty}\left\|a_k^{-\alpha}-(a+b_k)^{-\alpha}\right\|_{L^1(\Omega)}=0.
\end{align}
Since $a_k\le c_2$ for all $k\in\mathbb{N}$,
it follows that $\int_{\Omega}a_k^{-\alpha}\ge c_2^{-\alpha}|\Omega|$.
Then, by \eqref{lem-BL-e2}, we find that
$\int_{\Omega}(a+b_k)^{-\alpha}\gtrsim 1$ uniformly
for sufficiently large $k$.
Thus, using Taylor's formula again, we conclude that, for sufficiently large $k$,
there is a $t\in(0,1)$ such that
\begin{align*}
&\left|\left(\int_{\Omega}a_k^{-\alpha}\right)^{-\frac1\alpha}
-\left[\int_{\Omega}(a+b_k)^{-\alpha}\right]^{-\frac1\alpha}\right|\\
&\quad\le\frac{1}{\alpha}
\left\{t\int_{\Omega}a_k^{-\alpha}
+(1-t)\left[\int_{\Omega}(a+b_k)^{-\alpha}\right]\right\}^{-1-\frac1\alpha}
\left\|a_k^{-\alpha}-(a+b_k)^{-\alpha}\right\|_{L^1(\Omega)}\\
&\quad\lesssim\left\|a_k^{-\alpha}-(a+b_k)^{-\alpha}\right\|_{L^1(\Omega)}
\end{align*}
with the implicit positive constant independent of $k$.
Then \eqref{lem-BL-e2} further implies
\eqref{it2-BL}.
This completes the proof of the present lemma.
\end{proof}

\end{appendices}

\begin{appendices}

\section{Asymptotic decoupling formulae}\label{App-C}

In the relative compactness argument, we need to deal with the asymptotic decoupling
of Lebesgue norms or directional derivatives.
Hence, we establish the following two general
asymptotic decoupling formulae.

\begin{lemma}
Let $p\in(1,\infty)$, $J\in\mathbb{N}$, and $\{f_j\}_{j=1}^J$
be a sequence in $L^p$.
If the sequence $$\{\mathfrak{g}_{j,k}\}_{j=1,\ldots J,\,k\in\mathbb{N}}
=\{(\lambda_{j,k},x_{j,k})\}_{j=1,\ldots J,\,k\in\mathbb{N}}$$ in
$(0,\infty)\times\mathbb{R}^n$ satisfies that, for any $j_1\ne j_2$,
$\{\mathfrak{g}_{j_1,k}\}_{k\in\mathbb{N}}$
and $\{\mathfrak{g}_{j_2,k}\}_{k\in\mathbb{N}}$ are orthogonal, then
\begin{align}\label{lem-decou-e0}
\lim_{k\to\infty}\left|\left\|\sum_{j=1}^{J}
\mathfrak{g}_{j,k}(f_j;p)\right\|_{p}^p
-\sum_{j=1}^{J}\|f_j\|_p^p\right|=0
\end{align}
and
\begin{align}\label{lem-decou-ee}
\lim_{k\to\infty}
\left\|\sum_{j=1}^{J}
|\mathfrak{g}_{j,k}(f_j;p)|^{p-2}\mathfrak{g}_{j,k}(f_j;p)
-\left|\sum_{j=1}^{J}\mathfrak{g}_{j,k}(f_j;p)\right|^{p-2}
\sum_{j=1}^{J}\mathfrak{g}_{j,k}(f_j;p)
\right\|_{p'}=0.
\end{align}
\end{lemma}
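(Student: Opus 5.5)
The plan is to proceed by induction on $J$. The case $J=1$ is trivial, since $\mathfrak{g}_{1,k}(\cdot;p)$ is an isometry on $L^p$ and the expression in \eqref{lem-decou-ee} vanishes. The real content is the case $J=2$, together with a reduction to nice functions.

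\emph{Reduction.} By density we first reduce to $f_j\in C_{\rm c}$. The maps $F\mapsto \|F\|_p^p$ and $F\mapsto |F|^{p-2}F$ (from $L^p$ to $L^{p'}$) are uniformly continuous on bounded sets. Each $\mathfrak{g}_{j,k}(\cdot;p)$ is an isometry, so replacing $f_j$ by $\tilde f_j\in C_{\rm c}$ with $\|f_j-\tilde f_j\|_p<\eta$ changes every quantity in \eqref{lem-decou-e0} and \eqref{lem-decou-ee} by at most $\omega(\eta)$, uniformly in $k$. The functions $\tilde f_j$ may be chosen bounded and supported in $B({\bf 0},R)$.

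\emph{Key decoupling.} For $j_1\ne j_2$ we prove that
$$\lim_{k\to\infty}\int_{\mathbb{R}^n}|\mathfrak{g}_{j_1,k}(f_{j_1};p)|^{a}\,|\mathfrak{g}_{j_2,k}(f_{j_2};p)|^{b}=0 \qquad\text{whenever } a,b>0,\ a+b=p.$$
Changing variables by $\mathfrak{g}_{j_1,k}^{-1}$, this integral becomes $\int |f_{j_1}|^a|\mathfrak{h}_k(f_{j_2};p)|^b$ with $\mathfrak{h}_k:=\mathfrak{g}_{j_1,k}^{-1}\mathfrak{g}_{j_2,k}=(\mu_k,y_k)$, which tends to infinity by orthogonality. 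Passing to subsequences, we split into three cases.
\begin{itemize}
\item If $\mu_k\to\infty$, the second factor is at most $\mu_k^{-n/p}\|f_{j_2}\|_\infty\to0$ on the fixed support of $f_{j_1}$.
\item If $\mu_k\to0$, the supports overlap in a set of measure at most $\mu_k^nR^n\to0$, while the integrand is bounded by Hölder's inequality: $\|f_{j_1}\|_\infty^a$ times $\int_{\text{overlap}}|\mathfrak{h}_k f|^b\le \|\mathfrak{h}_k f\|_p^b\,|\text{overlap}|^{a/p}\to0$.
\item If $\mu_k$ stays bounded, then $|y_k|\to\infty$, and the supports are eventually disjoint.
\end{itemize}
This is essentially the argument of Lemma \ref{lem-escape}, which here needs no linear part.

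\emph{Proof of the two limits.} For \eqref{lem-decou-e0}, write $F_j:=\mathfrak{g}_{j,k}(f_j;p)$. We use the elementary pointwise bound
$$\Bigl|\,\Bigl|\sum_j F_j\Bigr|^p-\sum_j|F_j|^p\Bigr|\lesssim \sum_{i\ne j}|F_i|\,|F_j|^{p-1}.$$
Both sides vanish unless at least two terms are nonzero, and the bound follows by homogeneity and a case analysis on the largest term; in particular the right-hand side is at least of the size of the left whenever two terms are comparable. Integrating and applying the key decoupling with $a=1$, $b=p-1$ yields the claim. For \eqref{lem-decou-ee} the analogous pointwise bound is
$$\Bigl|\,\Bigl|\sum_j F_j\Bigr|^{p-2}\sum_j F_j-\sum_j|F_j|^{p-2}F_j\Bigr|\lesssim \sum_{i\ne j}|F_i|^{\theta}|F_j|^{p-1-\theta}$$
for some $\theta\in(0,p-1)$, say $\theta=\min\{1,p-1\}/2$. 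It follows from the Hölder continuity of $z\mapsto|z|^{p-2}z$ when $p<2$, and from the mean value theorem when $p\ge2$, after ordering the terms by size. Raising this to the power $p'$ gives products $|F_i|^{\theta p'}|F_j|^{(p-1-\theta)p'}$ with exponents summing to $p$; by the convexity of $t\mapsto t^{p'}$ the sum over pairs stays controlled. The key decoupling then applies again.

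The main obstacle is the pointwise inequality behind \eqref{lem-decou-ee}: it must produce only mixed products with \emph{both} exponents strictly positive, uniformly in the configuration of the terms. If this proves awkward, the alternative is to localize. On the set where $|F_i|\le\eta|F_j|$ for all $i\ne j$, the mean value theorem gives an error of order $\eta|F_j|^{p-1}$. On the complementary set two terms are comparable, and the key decoupling shows this set carries vanishing $L^p$ mass. Letting $k\to\infty$ and then $\eta\to0$ finishes the proof.
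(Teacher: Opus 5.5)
Your proof is correct, but it takes a genuinely different route from the paper.

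The paper argues by induction on $J$. With $F_k:=\sum_{j\le J}\mathfrak{g}_{j,k}(f_j;p)$, it changes variables by $\mathfrak{g}_{J+1,k}^{-1}$ and uses orthogonality only to see that $\mathfrak{g}_{J+1,k}^{-1}(F_k;p)\to0$ locally in measure, hence a.e.\ along a subsequence. It then applies the Brezis--Lieb lemma to $f_{J+1}+\mathfrak{g}_{J+1,k}^{-1}(F_k;p)$ for \eqref{lem-decou-e0}, and the nonlinear Brezis--Lieb lemma (Lemma \ref{lem-BL}\eqref{it1-BL}) for \eqref{lem-decou-ee}. A subsequence argument recovers the full limit.

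You instead reduce by density to bounded, compactly supported profiles. You prove the mixed-product decay $\int|F_{j_1}|^a|F_{j_2}|^b\to0$ ($a,b>0$, $a+b=p$) by the three-case scale analysis, and dominate both defects pointwise by such mixed products. This treats all $J$ at once, so the induction you announce at the start is never actually used.

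Your pointwise bounds are sound. Take $M=|F_{j_0}|$ maximal and $S=\sum_{j\ne j_0}F_j$. For $p<2$, the H\"older bound $|S|^{p-1}$ becomes $|S|^\theta\bigl((J-1)M\bigr)^{p-1-\theta}$, because $|S|\le(J-1)M$ and $p-1-\theta>0$. For $p\ge2$, one uses $M^{p-2}|F_j|\le|F_j|^\theta M^{p-1-\theta}$. Strict positivity of both exponents is exactly what your decoupling needs: $b>0$ in the case $\mu_k\to\infty$, and $a>0$ in the case $\mu_k\to0$.

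What your route buys is a self-contained, essentially quantitative argument that bypasses the Brezis--Lieb machinery. It also makes the $L^{p'}$ statement \eqref{lem-decou-ee} fully explicit, which the paper only sketches. What the paper's route buys is that it needs no density step, no $L^\infty$ bounds on the profiles and no pointwise inequalities. It relies only on qualitative vanishing in measure, the same mechanism as in Lemma \ref{lem-escape}.
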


\begin{proof}
We first prove \eqref{lem-decou-e0} by induction.
This is obvious when $J=1$. Now, assume \eqref{lem-decou-e0} holds
for $J\in\mathbb{N}$ and we aim to show this for $J+1$.
For any $k\in\mathbb{N}$, define
$F_k:=\sum_{j=1}^{J}\mathfrak{g}_{j,k}(f_j;p)$;
then, by a change of variables, we have
\begin{align*}
\left\|\sum_{j=1}^{J+1}\mathfrak{g}_{j,k}(f_j;p)\right\|_p^p
=\left\|F_k+\mathfrak{g}_{J+1,k}(f_{J+1};p)\right\|_p^p
=\left\|\mathfrak{g}_{J+1,k}^{-1}(F_k;p)+f_{J+1}\right\|_p^p.
\end{align*}
By the orthogonality assumption on $\mathfrak{g}_{j,k}$, we
can easily prove that, for all $j\in\{1,\ldots, J\}$,
the function $\mathfrak{g}_{J+1,k}^{-1}\mathfrak{g}_{j,k}(f_j;p)$
converges to $0$ locally in measure as $k\to\infty$.
Hence, there is a subsequence of $\{\mathfrak{g}_{J+1,k}^{-1}(F_k;p)\}_{k\in\mathbb{N}}
=\{\sum_{j=1}^{J}\mathfrak{g}_{J+1,k}^{-1}(f_j;p)\}_{k\in\mathbb{N}}$,
denoted by $\{\mathfrak{g}_{J+1,k_\ell}^{-1}(F_{k_\ell};p)\}_{\ell\in\mathbb{N}}$,
such that $\mathfrak{g}_{J+1,{k_\ell}}^{-1}(F_{k_\ell};p)\to0$ almost everywhere as $\ell\to\infty$.
Since $\mathfrak{g}_{j,k}(\cdot;p)$ keeps the $L^p$ norm, from
the Brezis--Lieb lemma [see \eqref{eq-BL}], we deduce that, when $\ell\to\infty$,
\begin{align}\label{lem-decou-e1}
\left\|\mathfrak{g}_{J+1,k_\ell}^{-1}(F_{k_\ell};p)+f_{J+1}\right\|_p^p
=\|F_{k_\ell}\|_p^p+\|f_{J+1}\|_p^p+o(1).
\end{align}
Indeed, this also holds for the whole sequence. Otherwise,
there are a positive constant $\varepsilon_0$ and
a subsequence $\{\mathfrak{g}_{J+1,k_m}^{-1}(F_{k_m};p)\}_{m\in\mathbb{N}}$
such that, for all $m\in\mathbb{N}$,
\begin{align*}
\left|\left\|\mathfrak{g}_{J+1,k_m}^{-1}(F_{k_m};p)+f_{J+1}\right\|_p^p
-\|F_{k_m}\|_p^p-\|f_{J+1}\|_p^p\right|\ge\varepsilon_0.
\end{align*}
Repeating the argument used above, we can further choose
a subsequence satisfying \eqref{lem-decou-e1}. This is impossible.
Therefore, by the assumption that \eqref{lem-decou-e0} holds
for $J$, we find that, when $k\to\infty$,
\begin{align*}
\left\|\sum_{j=1}^{J+1}\mathfrak{g}_{j,k}(f_j;p)\right\|_p^p
&=\left\|\mathfrak{g}_{J+1,k}^{-1}(F_{k};p)+f_{J+1}\right\|_p^p
=\|F_{k}\|_p^p+\|f_{J+1}\|_p^p+o(1)\\
&=\sum_{j=1}^{J+1}\|f_j\|_p^p+o(1).
\end{align*}
This shows that \eqref{lem-decou-e0} also holds for
$J+1$, which then completes the proof of \eqref{lem-decou-e0}.
Furthermore, a similar induction argument, combined with
the nonlinear Brezis--Lieb lemma established in Lemma
\ref{lem-BL}\eqref{it1-BL}, also yields \eqref{lem-decou-ee}.
This then completes the present proof.
\end{proof}

As a consequence, we can easily show
the asymptotic decoupling for profile decompositions via Lebesgue norms.
Let $p\in(1,n)$ and $\{w_k\}_{k\in\mathbb{N}}$
be a bounded sequence in $\dot{W}^{1,p}$ such that $\lim_{k\to\infty}\|w_k\|_{p^*}=1$.
Then $\{w_k\}_{k\in\mathbb{N}}$ has the profile decomposition
(see Lemma \ref{lem-pdecom}). Moreover,
the sequence $\{V_j\}_{j\in\mathbb{N}}$ given in Lemma \ref{lem-pdecom}
satisfies the following asymptotic decoupling:
\begin{align}\label{eq-pdecom}
1=\lim_{k\to\infty}\|w_k\|_{p^*}^{p^*}
=\sum_{j=1}^{\infty}\|V_j\|_{p^*}^{p^*}.
\end{align}
Indeed, for fixed $J\in\mathbb{N}$ and any $k\in\mathbb{N}$,
by Proposition \ref{lem-pdecom}\eqref{it1-pdecom} and  the triangle inequality, we have
\begin{align*}
\|w_k\|_{p^*}\le \left\|\sum_{j=1}^J \mathfrak{g}_{j,k}(V_j;p^*)\right\|_{p^*}
+\|r_k^J\|_{p^*}.
\end{align*}
Hence, using \eqref{lem-pine-e0}, we find that, for any $\varepsilon\in(0,1)$,
there is a positive constant $C_\varepsilon$ such that
\begin{align*}
\|w_k\|_{p^*}^{p^*}\le (1+\varepsilon)\left\|\sum_{j=1}^J \mathfrak{g}_{j,k}(V_j;p^*)\right\|_{p^*}^{p^*}
+C_\varepsilon\|r_k^J\|_{p^*}^{p^*}.
\end{align*}
Letting $k\to\infty$ and applying \eqref{lem-decou-e0}, we obtain
\begin{align*}
1=\lim_{k\to\infty}\|w_k\|_{p^*}^{p^*}\le (1+\varepsilon)\sum_{j=1}^J\|V_j\|_{p^*}^{p^*}
+C_\varepsilon\limsup_{k\to\infty}\|r_k^J\|_{p^*}^{p^*}.
\end{align*}
This, combined with the arbitrariness of $J,\varepsilon$ and Lemma \ref{lem-pdecom}\eqref{it3-pdecom},
further implies that
$$1\le  \sum_{j=1}^\infty\|V_j\|_{p^*}^{p^*}.$$
We can then similarly conclude $1\ge  \sum_{j=1}^\infty\|V_j\|_{p^*}^{p^*}$,
and hence complete the proof of \eqref{eq-pdecom}.


\end{appendices}

\begin{appendices}

\section{The reverse Minkowski inequality}

The following inequality is probably known, but we give a proof for the sake of completeness. We follow \cite[Theorem 2.4]{ll01}.

\begin{lemma}\label{lem:minkowskireverse}
    Let $(\Omega,\mu)$ and $(\Gamma,\nu)$ be sigma-finite measure spaces,
    and let $f$ be a nonnegative, $\mu\times\nu$-measurable function on $\Omega\times\Gamma$. Then, for any $p\in(-\infty,1)\setminus\{0\}$,
    \begin{align}
        \label{eq:minkowskireverse}
    \left( \int_\Omega \left[ \int_\Gamma f(x,y)\,d\nu(y)\right]^p d\mu(x) \right)^{\frac1p}
    \geq \int_\Gamma \left[ \int_\Omega f(x,y)^p\,d\mu(x) \right]^{\frac1p}d\nu(y) \,.
    \end{align}
\end{lemma}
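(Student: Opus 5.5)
We will argue as in the proof of the ordinary Minkowski integral inequality in \cite[Theorem 2.4]{ll01}, replacing H\"older's inequality by its reverse form for exponents $p<1$. Set $F(x):=\int_\Gamma f(x,y)\,d\nu(y)\in[0,\infty]$ and let $L$ and $R$ denote the left- and right-hand sides of \eqref{eq:minkowskireverse}. We first dispose of degenerate cases. If $0<p<1$ and $L=\infty$ there is nothing to prove. If $p<0$ and $F=0$ on a set of positive $\mu$-measure, then $\int_\Omega F^p\,d\mu=\infty$ and $L=0$. In that case $f(x,\cdot)=0$ $\nu$-a.e.\ for those $x$, so for a.e.\ $y$ the inner integral $\int_\Omega f(x,y)^p\,d\mu(x)$ equals $\infty$ and its $1/p$-th power is $0$; hence $R=0$ as well. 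Monotone convergence, applied to truncations $\min\{f,k\}\mathbf 1_{E_k}$ with $E_k\uparrow\Omega\times\Gamma$ of finite measure (using $\sigma$-finiteness), lets us assume that $F$ is finite and positive wherever it matters and that $L$ is finite and nonzero. This approximation is compatible with both sides, since for $p<0$ the maps $t\mapsto (\int t^p)^{1/p}$ are monotone increasing in $t$.

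The core step is the identity
\[
\int_\Omega F^p\,d\mu=\int_\Omega F^{p-1}F\,d\mu=\int_\Gamma\int_\Omega F(x)^{p-1}f(x,y)\,d\mu(x)\,d\nu(y),
\]
which follows from Tonelli. For fixed $y$ we bound the inner integral using the \emph{reverse H\"older inequality}: for $q=p<1$ with $q\neq0$, conjugate exponent $q'=p/(p-1)$ (which is negative when $0<p<1$ and lies in $(0,1)$ when $p<0$), and nonnegative $g,h$,
\[
\int gh\ge \Big(\int g^{p}\Big)^{1/p}\Big(\int h^{p'}\Big)^{1/p'}.
\]
We apply it with $g=f(\cdot,y)$ and $h=F^{p-1}$, noting that $h^{p'}=F^p$. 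This gives
\[
\int_\Omega F^{p-1}f(\cdot,y)\,d\mu\ \ge\ \Big(\int_\Omega f(x,y)^p\,d\mu\Big)^{1/p}\Big(\int_\Omega F^p\,d\mu\Big)^{(p-1)/p}.
\]
Integrating in $y$ yields $\int_\Omega F^p\ge R\,(\int_\Omega F^p)^{1-1/p}$, that is, $(\int F^p)^{1/p}\ge R$, which is exactly \eqref{eq:minkowskireverse}. The division step is legitimate because $\int F^p\in(0,\infty)$.

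The reverse H\"older inequality itself is derived from ordinary H\"older. For $0<p<1$ we apply H\"older with exponents $1/p$ and $1/(1-p)$ to $g^p=(gh)^p\cdot h^{-p}$. For $p<0$ we instead write $h^{p'}=(gh)^{p'}g^{-p'}$ with $p'\in(0,1)$ and argue symmetrically. The main obstacle we expect is the bookkeeping of the edge cases: zeros and infinities of $F$ and of the inner integrals for negative $p$, where $0^p=\infty$, together with the correct direction of every inequality after raising to the negative powers $1/p$ and $(p-1)/p$. We will check each sign carefully, separately for the cases $0<p<1$ and $p<0$.
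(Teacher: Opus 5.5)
Your core computation is exactly the paper's proof: Tonelli, then reverse H\"older with $g=f(\cdot,y)$ and $h=F^{p-1}$, then division by $(\int_\Omega F^p\,d\mu)^{1-1/p}$. It is correct whenever $0<\int_\Omega F^p\,d\mu<\infty$.

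The gap is your reduction of the degenerate cases for $p<0$. There the truncation $f_k=\min\{f,k\}\mathbf{1}_{E_k}\le f$ pushes in the wrong direction. First, $F_k\le F$ gives $\int_\Omega F_k^p\,d\mu\ge\int_\Omega F^p\,d\mu$, so truncation can never turn $L=0$ into a nonzero value. Second, $f_k$ vanishes off $E_k$. Typically (e.g.\ when $\mu(\Omega)=\infty$) both $\int_\Omega F_k^p\,d\mu$ and $\int_\Omega f_k(x,y)^p\,d\mu(x)$ are then infinite, so both sides of the inequality for $f_k$ are $0$. The monotonicity you invoke only gives $L\ge L_k\ge R_k$. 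To conclude you would need $R_k\to R$, i.e.\ $\int_\Omega f_k(\cdot,y)^p\,d\mu\downarrow\int_\Omega f(\cdot,y)^p\,d\mu$. That is monotone convergence for a decreasing sequence with infinite initial integrals, and it fails.

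So the case $p<0$, $F>0$ $\mu$-a.e., $\int_\Omega F^p\,d\mu=\infty$ is left open; for instance $\mu(\Omega)=\infty$ with $F$ bounded. Here $L=0$, and you must show $\int_\Omega f(x,y)^p\,d\mu(x)=\infty$ for $\nu$-a.e.\ $y$. Your vanishing-set argument does not apply, and your final inequality degenerates to $\infty\ge R\cdot\infty$. The paper disposes of all degenerate cases in one sentence, but the reduction you propose does not work.

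The fix for $p<0$ is to approximate the domain rather than the function. Choose $A_k\uparrow\Omega$ with $\mu(A_k)<\infty$ and set $\Omega_k:=A_k\cap\{F\ge 1/k\}$. Run your argument on $(\Omega_k,\mu|_{\Omega_k})$. This leaves $F$ unchanged, because the $y$-integration still runs over all of $\Gamma$. Now $\int_{\Omega_k}F^p\,d\mu\le k^{-p}\mu(A_k)<\infty$, and it is positive for large $k$ unless $F=\infty$ a.e., where $L=\infty$ anyway. Hence
\[
\Big(\int_{\Omega_k}F^p\,d\mu\Big)^{1/p}\ge\int_\Gamma\Big(\int_{\Omega_k}f(x,y)^p\,d\mu(x)\Big)^{1/p}d\nu(y)\ge R,
\]
where the last step holds because shrinking the domain decreases each inner integral and $1/p<0$. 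Once the case $\mu(\{F=0\})>0$ is handled as you did, $\Omega_k$ increases to $\Omega$ up to a null set. Increasing monotone convergence then gives $\int_{\Omega_k}F^p\,d\mu\uparrow\int_\Omega F^p\,d\mu$, hence $L\ge R$, including when $\int_\Omega F^p\,d\mu=\infty$.

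For $0<p<1$ no approximation is needed. $L=\infty$ is trivial, and $L=0$ forces $f=0$ a.e., hence $R=0$.
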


\begin{proof}
    All integrals involve nonnegative measurable functions, so they are well-defined, though possibly infinite. Set
    \[
    H(x) := \int_\Gamma f(x,y)\,d\nu(y)
    \qquad\text{for all}\ x\in\Omega \,.
    \]
    Then, using Fubini's theorem,
    \[
    \int_\Omega H(x)^p\,d\mu(x) = \int_\Gamma
    \left[ \int_\Omega H(x)^{p-1} f(x,y)\,d\mu(x) \right] d\nu(y) \,.
    \]
    By the reverse H\"older's inequality with $p'=p/(p-1)$, we have, for every $y\in\Gamma$,
    \[
    \int_\Omega H(x)^{p-1} f(x,y)\,d\mu(x) \geq \left[ \int_\Omega H(x)^p\,d\mu(x) \right]^{\frac{1}{p'}}
    \left[ \int_\Omega f(x,y)^p\,d\mu(x) \right]^{\frac1p}.
    \]
    Combining the previous two equations gives
    \begin{align}
        \label{eq:minkowskireverseproof}
            \int_\Omega H(x)^p\,d\mu(x) \geq \left[ \int_\Omega H(x)^p\,d\mu(x) \right]^{\frac{1}{p'}}
            \int_\Gamma \left[ \int_\Omega f(x,y)^p\,d\mu(x) \right]^{\frac1p}d\nu(y) \,.
    \end{align}
    If $0<\int_\Omega H(x)^p\,d\mu(x)<\infty$, then \eqref{eq:minkowskireverseproof} implies \eqref{eq:minkowskireverse}. Otherwise, one easily verifies that \eqref{eq:minkowskireverse} holds trivially.
    This completes the proof of the present lemma.
\end{proof}

\end{appendices}


\smallskip
\noindent\textbf{Acknowledgements}\quad
The authors acknowledge the use of AI tools during the exploratory stage of this project.
All mathematical arguments and proofs in the final manuscript were checked
and written by the authors.

\bigskip

\noindent Rupert L. Frank

\smallskip

\noindent Mathematisches Institut, Ludwig-Maximilians Universit\"at M\"unchen,
Theresienstr.~39, 80333 M\"u-nchen, Germany;
Munich Center for Quantum Science and Technology,
Schellingstr.~4, 80799 M\"unchen, Germany;
Mathematics 253-37, Caltech,
Pasadena, CA 91125, USA

\smallskip

\noindent {\it E-mail}: \texttt{r.frank@lmu.de}

\bigskip

\noindent Yinqin Li and Dachun Yang

\medskip

\noindent Laboratory of Mathematics and Complex Systems
(Ministry of Education of China),
School of Mathematical Sciences, Institute for Advanced Study,
Beijing Normal University,
Beijing 100875, The People's Republic of China

\smallskip

\noindent{\it E-mails:} \texttt{yinqli@mail.bnu.edu.cn} (Y. Li)

\noindent\phantom{\it E-mails:} \texttt{dcyang@bnu.edu.cn} (D. Yang)


\begin{thebibliography}{99}

\bibitem{a02}
C. O. Alves,
Existence of positive solutions for a problem with lack of compactness involving the $p$-Laplacian,
Nonlinear Anal. 51 (2002), 1187--1206.

\vspace{-.3cm}

\bibitem{acg26}
C. A. Antonini, G. Ciraolo and M. Gatti,
On the anisotropic critical $p$-Laplace equation: classification, decomposition,
and stability results, arXiv:2604.13758.

\vspace{-.3cm}

\bibitem{a76}
T. Aubin,
Probl\`emes isoperimetriques et espaces de Sobolev,
J. Differ. Geom. 11 (1976), 573--598.

\vspace{-.3cm}

\bibitem{be91}
G. Bianchi and H. Egnell,
A note on the Sobolev inequality,
J. Funct. Anal. 100 (1991), 18--24.

\vspace{-.3cm}

\bibitem{bfr26}
K. J. B\"or\"oczky, A. Figalli and J. P. G. Ramos,
Isoperimetric Inequalities, Brunn--Minkowski Theory and Minkowski-Type
Monge-Amp\`ere Equations on the Sphere, Zur. Lect. Adv. Math.,
European Mathematical Society (EMS), Berlin, 2026.

\vspace{-.3cm}

\bibitem{Bre02}
H. Brezis,
How to recognize constant functions. A connection with Sobolev spaces,
Russian Math. Surveys 57 (2002), 693--708.

\vspace{-0.3cm}

\bibitem{bl85}
H. Brezis and E. H. Lieb,
Sobolev inequalities with remainder terms,
J. Funct. Anal. 62 (1985), 73--86.

\vspace{-.3cm}

\bibitem{cfmp09}
A. Cianchi, N. Fusco, F. Maggi and A. Pratelli,
The sharp Sobolev inequality in quantitative form,
J. Eur. Math. Soc. (JEMS) 11 (2009), 1105--1139.

\vspace{-.3cm}

\bibitem{clyz09}
A. Cianchi, E. Lutwak, D. Yang and G. Zhang,
Affine Moser--Trudinger and Morrey--Sobolev inequalities,
Calc. Var. Partial Differential Equations 36 (2009), 419--436.

\vspace{-.3cm}

\bibitem{cfm18}
G. Ciraolo, A. Figalli and F. Maggi,
A quantitative analysis of metrics on $\mathbb R^n$ with almost constant positive scalar curvature,
with applications to fast diffusion flows,
Int. Math. Res. Not. IMRN 2018 (2018), 6780--6797.

\vspace{-.3cm}

\bibitem{cfr20}
G. Ciraolo, A. Figalli and A. Roncoroni,
Symmetry results for critical anisotropic $p$-Laplacian equations in convex cones,
Geom. Funct. Anal. 30  (2020), 770--803.

\vspace{-.3cm}

\bibitem{cg26}
G. Ciraolo and M. Gatti,
On the stability of the critical $p$-Laplace equation,
J. Funct. Anal. 291 (2026), Paper No. 111575, 58 pp.

\vspace{-.3cm}

\bibitem{cnv04}
D. Cordero-Erausquin, B. Nazaret and C. Villani,
A mass-transportation approach to sharp Sobolev and
Gagliardo--Nirenberg inequalities,
Adv. Math. 182 (2004), 307--332.

\vspace{-.3cm}

\bibitem{dx13}
F. Dai and Y. Xu,
Approximation Theory and Harmonic Analysis on Spheres and Balls,
Springer Monogr. Math, Springer, New York, 2013.

\vspace{-.3cm}

\bibitem{dw26}
J. Dai and T. Wang,
A positive solution to the $L^p$ projection centroid conjecture,
arXiv: 2605.20840.

\vspace{-.3cm}

\bibitem{dsw25}
B. Deng, L. Sun and J.-C. Wei,
Sharp quantitative estimates of Struwe's decomposition,
Duke Math. J. 174 (2025), 159--228.

\vspace{-.3cm}

\bibitem{dt25}
C. Dietze and P. T. Nam,
Minimizing sequences of Sobolev inequalities revisited,
Int. Math. Res. Not. 2025 (2025), Paper No. rnaf190, 11 pp.

\vspace{-.3cm}

\bibitem{deffl25}
J. Dolbeault, M. J. Esteban, A. Figalli, R. L. Frank and M. Loss, Sharp stability for Sobolev and log-Sobolev inequalities, with optimal dimensional dependence,
Camb. J. Math. 13 (2025), 359--430.

\vspace{-.3cm}

\bibitem{DLTYY25}
O. Dom\'inguez, Y. Li, S. Tikhonov, D. Yang and W. Yuan,
New approach to
affine Moser--Trudinger inequalities via Besov polar projection bodies,
Math. Ann. 392 (2025), 3319--3366.

\vspace{-.3cm}

\bibitem{ens22}
M. Engelstein, R. Neumayer and L. Spolaor,
Quantitative stability for minimizing Yamabe metrics,
Trans. Amer. Math. Soc. Ser. B 9 (2022), 395--414.

\vspace{-.3cm}

\bibitem{flz26}
S. Fan, G.-D. Li and J. J. Zhang,
Sharp stability for the affine fractional Sobolev inequality,
arXiv:2605.03732.

\vspace{-.3cm}

\bibitem{flz26b}
S. Fan, G.-D. Li and J. J. Zhang,
Sharp Quantitative Stability for the Affine \(p\)-Sobolev Inequality, Part I: The Case \(2\le p<n\),
arXiv:2606.09555

\vspace{-.3cm}

\bibitem{f69}
H. Federer, Geometric Measure Theory,
Die Grundlehren der mathematischen Wissen-schaften 153,
Springer-Verlag New York, Inc., New York, 1969.

\vspace{-.3cm}

\bibitem{fg20}
A. Figalli and F. Glaudo,
On the sharp stability of critical points of the Sobolev inequality,
Arch. Ration. Mech. Anal. 237 (2020), 201--258.

\vspace{-.3cm}

\bibitem{fn19}
A. Figalli and R. Neumayer,
Gradient stability for the Sobolev inequality: the case $p\ge2$,
J. Eur. Math. Soc. (JEMS) 21  (2019), 319--354.

\vspace{-.3cm}

\bibitem{fz22}
A. Figalli and Y. R.-Y. Zhang,
Sharp gradient stability for the Sobolev inequality,
Duke Math. J. 171 (2022), 2407--2459.

\vspace{-.3cm}

\bibitem{f22}
R. L. Frank,
Degenerate stability of some Sobolev inequalities,
Ann. Inst. H. Poincar\'e Anal. Non Lin\'eaire 39 (2022), 1459--1484.

\vspace{-.3cm}

\bibitem{f24}
R. L. Frank,
The sharp Sobolev inequality and its stability: An introduction. In: Geometric and Analytic Aspects
of Functional Variational Principles. Cetraro, Italy 2022 (A. Cianchi, V. Maz’ya, and T. Weth, eds.), pp. 1--64, Springer, Cham, 2024.

\vspace{-.3cm}

\bibitem{fl22}
R. L. Frank and M. Loss,
Existence of optimizers in a Sobolev inequality for vector fields,
Ars Inven. Anal. (2022), Paper No. 1, 31 pp.

\vspace{-.3cm}

\bibitem{fp24a}
R. L. Frank and J. W. Peteranderl,
Degenerate stability of the Caffarelli--Kohn--Nirenberg inequality along the Felli--Schneider curve,
Calc. Var. Partial Differential Equations 63 (2024), Paper No. 44, 33 pp.

\vspace{-.3cm}

\bibitem{fp24}
R. L. Frank and J. W. Peteranderl,
The sharp $\sigma_2$-curvature inequality on the sphere in quantitative form, arXiv:2412.12819.

\vspace{-.3cm}

\bibitem{fpr25}
R. L. Frank, J. W. Peteranderl and L. Read,
Sharp quantitative integral inequalities for harmonic extensions,
arxiv:2508.09940.

\vspace{-.3cm}

\bibitem{g02}
R. J. Gardner,
The Brunn--Minkowski inequality,
Bull. Amer. Math. Soc. (N.S.) 39 (2002), 355--405.

\vspace{-.3cm}

\bibitem{hs09}
C. Haberl and F. E. Schuster,
Asymmetric affine $L_p$ Sobolev inequalities,
J. Funct. Anal. 257 (2009), 641--658.

\vspace{-.3cm}

\bibitem{glz25}
A. Guerra, X. Lamy and K. Zemas,
Sharp quantitative stability of the M\"obius group among sphere-valued maps in arbitrary dimension,
Trans. Amer. Math. Soc. 378 (2025), 1235--1259.

\vspace{-.3cm}

\bibitem{hjm16}
J. Haddad, C. H. Jim\'enez and M. Montenegro,
Sharp affine Sobolev type inequalities via the $L_p$
Busemann--Petty centroid inequality,
J. Funct. Anal. 271 (2016), 454--473.

\vspace{-.3cm}

\bibitem{hjm21}
J. Haddad, C. H. Jim\'enez and M. Montenegro,
From affine Poincar\'e inequalities to affine spectral inequalities,
Adv. Math. 386 (2021), Paper No. 107808, 35 pp.

\vspace{-.3cm}

\bibitem{hl24}
J. Haddad and M. Ludwig,
Affine fractional $L^p$ Sobolev inequalities,
Math. Ann. 388 (2024), 1091--1115.

\vspace{-.3cm}

\bibitem{hl24-2}
J. Haddad and M. Ludwig,
Affine fractional Sobolev and isoperimetric inequalities,
J. Differential Geom. 129 (2025), 695--724.

\vspace{-.3cm}

\bibitem{hl16}
Q. Huang and A.-J. Li,
Optimal Sobolev norms in the affine class,
J. Math. Anal. Appl. 436 (2016), 568--585.

\vspace{-.3cm}

\bibitem{iz25}
L.~I.~Ignat and E.~Zuazua,
Optimal convergence rates for the finite element approximation of the Sobolev constant,
arXiv:2504.09637.

\vspace{-.3cm}

\bibitem{j99}
S. Jaffard,
Analysis of the lack of compactness in the critical Sobolev embeddings,
J. Funct. Anal. 161 (1999), 384--396.

\vspace{-.3cm}

\bibitem{ll01}
E. H. Lieb and M. Loss,
Analysis, 2nd ed, Grad. Stud. Math. 14,
American Mathematical Society (AMS), Providence, RI, 2001.

\vspace{-.3cm}

\bibitem{l85}
P.-L. Lions,
The concentration-compactness principle in the calculus of variations. The limit case. I,
Rev. Mat. Iberoamericana 1 (1985), no. 1, 145--201.

\vspace{-.3cm}

\bibitem{lz25}
G. Liu and Y. R.-Y. Zhang,
Sharp stability for critical points of the Sobolev inequality
in the absence of bubbling,
arXiv:2503.02340.

\vspace{-.3cm}

\bibitem{lyz00}
E. Lutwak, D. Yang and G. Zhang,
$L^p$ affine isoperimetric inequalities,
J. Differ. Geom. 56 (2000), 111--132 .

\vspace{-.3cm}

\bibitem{lyz02}
E. Lutwak, D. Yang and G. Zhang,
Sharp affine $L_p$ Sobolev inequalities,
J. Differ. Geom. 62 (2002), 17--38.

\vspace{-.3cm}

\bibitem{mw10}
C. Mercuri and M. Willem,
A global compactness result for the $p$-Laplacian involving critical nonlinearities,
Discrete Contin. Dyn. Syst. 28 (2010), 469--493.

\vspace{-.3cm}

\bibitem{msy25}
E. Milman, S. Shabelman and A. Yehudayoff,
Fixed and periodic points of the intersection body operator,
Invent. Math. 241 (2025), 509--558.

\vspace{-.3cm}

\bibitem{n20}
R. Neumayer,
A note on strong-form stability for the Sobolev inequality,
Calc. Var. Partial Differ. Equ. 59 (2020), Paper No. 25, 8 pp.

\vspace{-.3cm}

\bibitem{n16}
V. H. Nguyen,
New approach to the affine P\'olya--Szeg\"o principle and
the stability version of the affine Sobolev inequality,
Adv. Math. 302 (2016), 1080--1110.

\vspace{-.3cm}

\bibitem{ps07}
P. Pucci and J. Serrin, The Maximum Principle,
Prog. Nonlinear Differ. Equ. Appl. 73, Birkh\"auser Verlag, Basel, 2007.

\vspace{-.3cm}

\bibitem{r26}
C. Reuter,
Local fixed point results for centroid body operators,
Trans. Am. Math. Soc. 379 (2026), 3043--3061.

\vspace{-.3cm}

\bibitem{r66}
E. Rodemich,
The Sobolev inequality with best possible constant.
Analysis Seminar Caltech, Spring 1966.

\vspace{-.3cm}

\bibitem{st18}
I. Schindler and C. Tintarev,
Compactness properties and ground states for the affine Laplacian,
Calc. Var. Partial Differential Equations 57 (2018), Paper No. 48,
14 pp.

\vspace{-.3cm}

\bibitem{s14}
R. Schneider,
Convex Bodies: The Brunn--Minkowski Theory,
2nd expanded ed.,
Encycl. Math. Appl. 151,
Cambridge University Press, Cambridge, 2014.

\vspace{-.3cm}

\bibitem{s84}
M. Struwe,
A global compactness result for elliptic boundary value problems involving limiting nonlinearities,
Math. Z. 187 (1984), 511--517.

\vspace{-.3cm}

\bibitem{s75}
G. Szeg\"o,
Orthogonal Polynomials, 4th ed,
Colloq. Publ., Am. Math. Soc. 23,
American Mathematical Society (AMS), Providence, RI, 1975.

\vspace{-.3cm}

\bibitem{t76}
G. Talenti,
Best constant in Sobolev inequality,
Ann. Mat. Pura Appl. (4) 110 (1976), 353--372.

\vspace{-.3cm}

\bibitem{t96}
A. C. Thompson, Minkowski Geometry, Cambridge Univ. Press, Cambridge, 1996.

\vspace{-.3cm}

\bibitem{wz25}
G. Wang and M. Zhang,
Stability of spinorial Sobolev inequalities on $\mathbb S^n$,
arXiv:2508.09047.

\vspace{-.3cm}

\bibitem{w12}
T. Wang,
The affine Sobolev--Zhang inequality on $BV(\mathbb R^n)$,
Adv. Math. 230 (2012), 2457--2473.

\vspace{-.3cm}

\bibitem{w13}
T. Wang,
The affine P\'olya--Szeg\"o principle: equality cases and stability,
J. Funct. Anal. 265 (2013), 1728--1748.

\vspace{-.3cm}

\bibitem{ww26}
J. Wei and Y. Wu,
Stability of the Caffarelli--Kohn--Nirenberg inequality along the Felli--Schneider curve: critical points at infinity.
Proc. Lond. Math. Soc. (3) 132 (2026), Paper No. e70146, 102 pp.

\vspace{-.3cm}

\bibitem{z99}
G. Zhang,
The affine Sobolev inequality,
J. Differ. Geom. 53 (1999), 183--202.

\vspace{-.3cm}

\bibitem{zz26}
Y. Zhou and W. Zou,
Degenerate stability of critical points of the Caffarelli--Kohn--Nirenberg inequality along the Felli--Schneider curve,
J. Lond. Math. Soc. (2) 113 (2026), Paper No. e70454, 37 pp.

\end{thebibliography}
\end{document}